\documentclass[reqno, 11pt]{amsart}
\usepackage{amsmath, amsthm, amssymb, mathrsfs}
\usepackage[a4paper, left=3cm, right=3cm, top=3cm, bottom = 2.5cm]{geometry}
\usepackage[dvipsnames]{xcolor}
\usepackage{multirow}
\usepackage{setspace}
\usepackage{tikz-cd}
\usetikzlibrary{decorations.pathreplacing}
\usepackage{enumitem}
\usepackage{hyperref}
\usepackage{aligned-overset}
\usepackage{subfiles}

\theoremstyle{plain}
\newtheorem{theo}{Theorem}[section]
\newtheorem{lemma}[theo]{Lemma}
\newtheorem{cor}[theo]{Corollary}

\newtheorem{prop}[theo]{Proposition}

\newtheorem{defi}[theo]{Definition}

\theoremstyle{remark}
\newtheorem{remark}[theo]{Remark}

\newcommand{\N}{\mathbb{N}}

\newcommand{\C}{\mathbb{C}}
\newcommand{\M}{\mathbb{M}}
\newcommand{\D}{\mathbb{D}}

\newcommand{\norm}[1]{\|#1 \|}
\newcommand{\tZ}[2]{\tilde{Z}_{#1,#2}}
\newcommand{\tD}[1]{\tilde{D}_{#1,#1+1}}
\newcommand{\piph}[1]{\pi_\varphi(e_{#1})}
\newcommand{\pips}[2]{\pi_\psi(e_{#1,#2})}
\newcommand{\Ad}[1]{\mathrm{Ad}(#1 )}
\newcommand{\cstarc}[1]{c_{#1}^*\hspace{0.05em} c_{#1}}
\newcommand{\ctil}[2]{\tilde{c}_{#1}^{(#2)}}
\newcommand{\stil}[1]{\tilde{s}^{(#1)}}
\newcommand{\tildeZ}{\tilde{\mathcal{Z}}}
\newcommand{\Id}[1]{\langle #1 \rangle}

\newcommand{\bc}{\bar{c}}
\newcommand{\bs}{\bar{s}}
\newcommand{\tinyfrac}[2]{{\text{\scriptsize$\frac{#1}{#2}$}}}

\newenvironment{nalign}{
    \begin{equation}
    \begin{aligned}
}{
    \end{aligned}
    \end{equation}
    \ignorespacesafterend
}

\allowdisplaybreaks

\begin{document}

\title{{\textbf{\sc A $\mathrm{C}^*$-diagonal in the Jiang--Su algebra via entangled matrix cones}}}
\author[L. Obermeyer]{Lukas Obermeyer}
\address{Lukas Obermeyer, Mathematical Institute, University of M\"unster, Ein\-stein\-strasse 62, 48149 M\"unster, Germany}
\email{lukas.obermeyer@uni-muenster.de}
\author[W. Winter]{Wilhelm Winter}
\address{Wilhelm Winter, Mathematical Institute, University of M\"unster, Ein\-stein\-strasse 62, 48149 M\"unster, Germany}
\email{wwinter@uni-muenster.de}

\maketitle

\begin{abstract}
 We construct the Jiang--Su algebra $\mathcal{Z}$ as an inductive limit of dimension drop algebras, describing the latter as entangled matrix cones to explicitly define the connecting $^*$-homomorphisms. This construction gives rise to a $\mathrm{C}^*$-diagonal in $\mathcal{Z}$ with one-dimensional spectrum which is not locally connected. Along the way, we give a new characterisation of normalisers in Cartan pairs in terms of state excision.
\end{abstract}

\section[]{Introduction}
\renewcommand{\thetheo}{\Alph{theo}}

The notion of $\mathrm{C}^*$-diagonals in $\mathrm{C}^*$-algebras---inspired by  Cartan subalgebras in von Neumann algebras studied by Feldman and Moore (\cite{RF1}, \cite{RF2}, \cite{RF3})---was introduced by Kumjian in \cite{Kum} and later generalised by Renault in \cite{Ren1} to the notion of Cartan subalgebras in $\mathrm{C}^*$-algebras. They showed that any separable Cartan pair, i.e.\@ a separable $\mathrm{C}^*$-algebra with a Cartan subalgebra, arises uniquely as the groupoid $\mathrm{C}^*$-algebra of a (possibly twisted) second countable, locally compact Hausdorff étale groupoid that is effective (principal in the case of $\mathrm{C}^*$-diagonals).  This yields a strong connection between $\mathrm{C}^*$-algebras and topological dynamics. In particular, Barlak and Li in \cite{BaLi1}, \cite{BaLi2}, building on seminal work by Tu in \cite{Tu}, used this connection to characterise Rosenberg's and Schochet's Universal Coefficient Theorem (UCT): A separable nuclear $\mathrm{C}^*$-algebra satisfies the UCT (i.e.\@ it is homotopy equivalent in a weak sense to an abelian $\mathrm{C}^*$-algebra) if and only if it contains a Cartan subalgebra. This intricate relation between Cartan subalgebras and the UCT sparked renewed interest in Cartan subalgebras from the Elliott classification programme, as the infamous UCT problem---asking whether any nuclear, separable $\mathrm{C}^*$-algebra satisfies the UCT---remains unsolved. \\

The study of Cartan subalgebras and $\mathrm{C}^*$-diagonals has led to numerous existence results, often by realising them as groupoid $\mathrm{C}^*$-algebras. Spielberg showed that any non-unital UCT Kirchberg algebra arises as a groupoid $\mathrm{C}^*$-algebra with groupoid models originating from graphs, and his techniques can be extended to the unital case (\cite{LiRe}, \cite{CFaH}). Combined with X.\@ Li's breakthrough result in \cite{Li1} that all classifiable stably finite $\mathrm{C}^*$-algebras admit a $\mathrm{C}^*$-diagonal, we have existence of Cartan subalgebras in all classifiable $\mathrm{C}^*$-algebras. There are further existence results in several cases beyond the classifiable setting (\cite{LiRa}, \cite{Oy}). However, even when we know existence of some Cartan subalgebra or $\mathrm{C}^*$-diagonal, there are only few instances in which we understand the entirety of the underlying dynamical structure of a $\mathrm{C}^*$-algebra, e.g.\@ in which we can achieve uniqueness or classification of Cartan subalgebras or $\mathrm{C}^*$-diagonals; see \cite{BaRa}, \cite{Li2}, \cite{LiRe} for results in rather restricted settings. Thus, the current goal is to find more (classes of) explicit examples of Cartan subalgebras and $\mathrm{C}^*$-diagonals and techniques to obtain these in prominent $\mathrm{C}^*$-algebras to add to the understanding of their underlying dynamical structure. This was recently addressed for the Cuntz algebras (\cite{SiWi}, \cite{EvSi1}, \cite{EvSi2}), and the CAR-algebra $M_{2^\infty}$ (\cite{KoWi}). In this article, we focus on the Jiang--Su algebra $\mathcal{Z}$.\\

The Jiang--Su algebra was first constructed in \cite{JiSu} as an inductive limit of dimension drop algebras. It plays a key role in the Elliott classification programme, which culminated after three decades of work in the classification of separable, simple, nuclear $\mathrm{C}^*$-algebras with UCT that are $\mathcal{Z}$-stable, i.e.\@ tensorially absorb $\mathcal{Z}$, by $K$-theoretic and tracial data (cf.\@ \cite{GongLinNiu}, \cite{CGSTW}, \cite{Elliott-Gong-Lin-Niu}). Naturally, the Jiang–Su algebra has been studied in great detail throughout the programme, with several characterisations and constructions developed over the years, each of them technically involved (e.g.\@  \cite{RWi}, \cite{JaWi}, \cite{Sch}).\\
A dynamical construction of $\mathcal{Z}$ was given by Deeley, Putnam, and Strung in \cite{DePuStr}, thereby providing the first example of a $\mathrm{C}^*$-diagonal in the Jiang--Su algebra. Their approach was to construct a minimal dynamical system on a homologically point-like infinite metric space and consider the orbit-breaking equivalence relation of this system over a point in the underlying space. The resulting groupoid $\mathrm{C}^*$-algebra matches the Elliott invariant of $\mathcal{Z}$ and falls in the scope of the classification programme, hence is isomorphic to the Jiang--Su algebra. However, the spectrum of the resulting $\mathrm{C}^*$-diagonal has dimension $>1$, whereas the original representation of $\mathcal{Z} $ as an inductive limit of dimension drop algebras suggests the existence of a $\mathrm{C}^*$-diagonal with one-dimensional spectrum. Later, a construction of a $\mathrm{C}^*$-diagonal in $\mathcal{Z}$ with spectrum homeomorphic to a one-dimensional Peano continuum, which can be arranged to be the Menger curve, was indeed exhibited by X.\@ Li; see \cite{Li1}, \cite{Li2}. This was an application of the sophisticated machinery developed in \cite{Li1} to give an inductive system of groupoids with limit yielding $\mathcal{Z}$ as its groupoid $\mathrm{C}^*$-algebra, applying the classification theorem mentioned above. An isomorphic model was obtained in \cite{AuMi}.\\
Our main result is a new construction of a $\mathrm{C}^*$-diagonal in the Jiang--Su algebra that is constructed explicitly with $\mathrm{C}^*$-algebraic generators and relations. Our approach yields an inductive limit of dimension drop algebras with techniques inspired by \cite{JaWi}; this limit will be isomorphic to the Jiang--Su algebra using the classification result from \cite{JiSu}. The construction is very explicit in the sense that we define the family of connecting $^*$-homomorphisms for the system by explicitly describing where a designated set of generating elements of the respective dimension drop algebra is mapped. Although technical, we can precisely point out all the required input (see Equations (\ref{eq:choiceofL'}), (\ref{eq:defictilde:a}), (\ref{eq:defictilde:b}), (\ref{eq:defi:tildec}), (\ref{eq:defi:stilde:a}), (\ref{eq:defi:stilde:b}), (\ref{eq:defi:tildes}), (\ref{eq:choiceofLn}) in Subsection \ref{subsec:Theconstruction}) and even estimate the size of the dimension drop algebras throughout the construction. This new presentation of the Jiang--Su algebra then naturally gives rise to a $\mathrm{C}^*$-diagonal in $\mathcal{Z}$.
This $\mathrm{C}^*$-diagonal has one-dimensional spectrum which is not locally connected, distinguishing it from the models in \cite{DePuStr}, \cite{Li2}.\\

Let us now outline the construction in more detail, beginning with the precise definition of Cartan subalgebras and $\mathrm{C}^*$-diagonals from \cite{Kum}, \cite{Ren1}.

\begin{defi}
\label{defi:diagonals}
Let $A$ be a $\mathrm{C}^*$-algebra and $ D\subset A$ an abelian $\mathrm{C}^*$-subalgebra. Then, $D$ is called Cartan subalgebra if the following hold:
\begin{enumerate}
\item[(0)] $D$ contains an approximate unit of $A$.
\item[(1)] $D$ is maximal abelian.
\item[(2)] $D$ is regular, i.e.\@ the set of normalisers $$N_A(D) := \{ x \in A: x^*Dx \subset D,\, x D x^* \subset D\}$$ generates $A$ as a $\mathrm{C}^*$-algebra.
\item[(3)] There exists a faithful conditional expectation $P:A \to D$.
\end{enumerate}
 $D$ is called $\mathrm{C}^*$-diagonal in $A$ if additionally
 \begin{enumerate}
     \item[(4)] $D$ satisfies the pure state extension property relative to $A$, i.e.\@ any pure state on $D$ extends uniquely to a (necessarily pure) state on $A$.
 \end{enumerate}
\end{defi}

To construct $\mathcal{Z}$ as an inductive limit of dimension drop algebras, which are given as
\begin{equation}
Z_{p,q}:= \{ f \in C( [0,1], \M_p \otimes \M_q) : f(0) \in \M_p \otimes 1_q,\, f(1) \in 1_p \otimes \M_q \}, \,\,\, p,q \geq 1 \label{eq:defidimdrop},
\end{equation}
we use the universal $\mathrm{C}^*$-algebra picture from \cite{RWi}, presenting dimension drop algebras of the form $Z_{L,L+1}$ as \emph{entangled matrix cones}. We use this picture to set up explicit connecting $^*$-homomorphisms $\Phi_{L_n,L_{n+1}}:Z_{L_n, L_{n}+1} \to Z_{L_{n+1},L_{n+1}+1}$ for a suitable increasing sequence of positive integers $(L_n)_n$ to achieve two things:\\
On the one hand, we build the connecting maps and choose the matrix sizes $(L_n)_n$ such that the inductive limit will be simple with unique tracial state. Then, the limit of this system will be isomorphic to the Jiang--Su algebra by the classification result in \cite{JiSu}.  \\
On the other hand, we identify a $\mathrm{C}^*$-diagonal $\tilde{D}_{L,L+1}\subset Z_{L,L+1}$ for arbitrary $L$ in terms of the defining generators and relations in these dimension drop algebras. The connecting $^*$-homomorphisms are constructed in a way that they preserve this diagonal and its structure. That is, we arrange our construction to fall in the scope of Theorem 3.6 in \cite{BaLi1} (see Theorem \ref{theo:XinLi}  for a $\mathrm{C}^*$-algebraic formulation), saying that Cartan subalgebras and $\mathrm{C}^*$-diagonals are well behaved under taking inductive limits when the connecting $^*$-homomorphisms preserve diagonal elements, normalisers, and intertwine the conditional expectations.\\
In our setting, the main challenge is to arrange that normalisers are preserved throughout the system. To control this, we give a new characterisation of normalisers in separable Cartan pairs in terms of pure state excision (see Theorem \ref{theocharnorm}), which will be of interest for upcoming work.
With all this in place, we get a $\mathrm{C}^*$-diagonal in the inductive limit we ensured to be $\mathcal{Z}$. Altogether, this yields our main result. 

\begin{theo}[see Theorem \ref{theo:constrindlim}, Theorem \ref{theo:diaginZ}]
There is an increasing sequence of integers $(L_n)_n$  and a sequence of $^*$-homomorphisms $\Phi_{L_n,L_{n+1}}: Z_{L_n,L_{n}+1} \to Z_{L_{n+1},L_{n+1}+1} $ described in terms of explicit generators and relations such that $$\mathcal{Z} \cong \varinjlim (Z_{L_n, L_{n}+1},\Phi_{L_n,L_{n+1}})$$  and
\begin{equation*}
\varinjlim (\tD{L_n}, \Phi_{L_n, L_{n+1}}\vert_{\tD{L_n}} ) \subset \mathcal{Z}
\end{equation*}
is a $\mathrm{C}^*$-diagonal.
\end{theo}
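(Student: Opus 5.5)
The plan splits into two independent halves: first build the inductive system and identify its limit with $\mathcal{Z}$; then check that the chosen diagonals pass to the limit.

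\emph{Step 1: presentation of the building blocks.} We start from the universal picture of \cite{RWi}. There $Z_{L,L+1}$ is generated by a positive contraction $c_L$, a matrix-unit-like family, and a partial isometry $s$ implementing the entanglement of the two matrix cones, subject to explicit relations. Inside $Z_{L,L+1}$ we single out $\tD{L}$, the abelian subalgebra generated by the diagonal matrix units of both cones together with functions of the cone parameter. Concretely, $\tD{L}$ is a twisted copy of $C([0,1])\otimes \D_{L(L+1)}$ compatible with the boundary conditions. We then check directly that $\tD{L}\subset Z_{L,L+1}$ is a $\mathrm{C}^*$-diagonal:
\begin{itemize}
\item maximal abelianness follows by pointwise evaluation in $\M_L\otimes\M_{L+1}$;
\item the conditional expectation is pointwise compression to the diagonal, which is faithful;
\item the normalisers include all matrix units and the generators, so they generate;
\item the unique extension property holds because the extension is unique fibrewise and at the endpoints.
\end{itemize}

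\emph{Step 2: the connecting maps.} We define $\Phi_{L,L'}$ by prescribing the images $\ctil{}{}$, $\stil{}$ of the generators, in the spirit of \cite{JaWi}, and verify the relations so that universality yields a $^*$-homomorphism. The images are built from finitely many "eigenvalue patterns": continuous paths $[0,1]\to[0,1]$ composed with diagonal permutation-type embeddings. This guarantees that $\Phi_{L,L'}$ maps $\tD{L}$ into $\tD{L'}$. We also arrange that the paths include the endpoint maps and a dense set of reparametrisations, with multiplicities controlled by the choice of $L_{n+1}$ relative to $L_n$. Two consequences follow:
\begin{itemize}
\item \emph{Simplicity.} Every nonzero element eventually becomes full, because the point evaluations spread over $[0,1]$.
\item \emph{Unique trace.} The variation of traces under the connecting maps tends to zero, as in \cite{JiSu}.
\end{itemize}
Since the algebras are unital and the $K$-theory is that of the Jiang--Su algebra, the classification theorem of \cite{JiSu} gives $\varinjlim\cong\mathcal{Z}$.

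\emph{Step 3: the diagonal in the limit.} We apply Theorem~\ref{theo:XinLi}. Its hypotheses are that the connecting maps preserve diagonals, preserve normalisers, and intertwine the conditional expectations, and that they are injective. Injectivity holds because the paths have full image. Diagonals are preserved by construction in Step 2. Intertwining of the expectations follows because each $\Phi_{L,L'}$ is pointwise a direct sum of permutation-conjugated maps, and compression to the diagonal commutes with such maps.

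\emph{The main obstacle: normalisers.} Showing that $\Phi_{L,L'}(N(\tD{L}))\subset N(\tD{L'})$ is the hardest step, since normalisers in $Z_{L,L+1}$ are not just products of generators. For this we use Theorem~\ref{theocharnorm}: an element $x$ is a normaliser exactly when, for each pure state of $\tD{L}$ (extended to $Z_{L,L+1}$), the state $\omega(x^*\cdot x)$ is again such an extension or zero, up to normalisation. We then verify that $\Phi_{L,L'}$ maps the unique extensions of pure states on $\tD{L'}$ back to convex combinations of extensions of pure states on $\tD{L}$, in a way compatible with this excision condition. The delicate point is the entangling partial isometry near the endpoints $0$ and $1$, where the fibrewise decomposition degenerates and the verification must be done by hand. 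The remaining properties then follow, so the limit of the $\tD{L_n}$ is a $\mathrm{C}^*$-diagonal in $\mathcal{Z}$.
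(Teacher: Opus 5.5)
\textbf{There is a genuine gap: the connecting maps are never built, and the normaliser step rests on a misquoted criterion.}

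Your outline never produces the connecting maps, and the property you attribute to them is exactly where the difficulty lies. The diagonal in the statement is the specific subalgebra $\tD{L}$ of Proposition~\ref{prop:tD}, and it is twisted. Its spectrum $X_L$ has $1_0$ as a cut point: each $0_j$ is joined to $1_j$ by $L$ parallel arcs and to $1_0$ by a single arc. By contrast, the fibrewise diagonal $\{f\in Z_{L,L+1}: f(t)\in\D_L\otimes\D_{L+1}\}$ has the complete bipartite graph $K_{L,L+1}$ as spectrum, which has no cut points. For a map $f\mapsto w(\bigoplus_k f\circ\xi_k)w^*$ to carry $\tD{L}$ into $\tD{L'}$, a single continuous unitary path $w$ must do two things at once. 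It must move the endpoint fibres into $\M_{L'}\otimes 1$ and $1\otimes\M_{L'+1}$. It must also conjugate the fibres of $\tD{L}$ at the points $\xi_k(t)$ onto the fibres of the (again twisted) $\tD{L'}$ at $t$. All of this has to coexist with the path choices that force simplicity and a unique trace. This compatibility between the twists at both levels and the boundary conditions is not automatic, and saying ``permutation-type embeddings'' does not establish it. The paper solves the construction problem with explicit formulas in the entangled-cone picture. There $\tilde c_l$ and $\tilde s$ are orthogonal sums of elements $\pi_\psi(e_{i,j})d$ and $\pi_\varphi(e_{1,2})d$ with suitably supported $d\in\tD{L'}$, which visibly normalise $\tD{L'}$ (Lemma~\ref{lemma:nicenormpreserved}). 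The parameters then do the rest: $K$ forces every trace towards $\tau^{(L)}$ via $\tau\circ\Phi_{L,L'}(\bs^*\bs)\le 2/K$, and $M$ gives simplicity via the essential ideal $J$. Note also that if you did have $w$ and $\xi_k$ matching diagonals fibrewise as above, normaliser preservation would essentially be a fibrewise check, so Theorem~\ref{theocharnorm} would not be your bottleneck. In the paper it is the bottleneck precisely because the maps are only given on generators.

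The normaliser step does not work as written either. The criterion you attribute to Theorem~\ref{theocharnorm} is one-sided, and it is false. Take $\D_2\subset\M_2$ with $\omega_x(a)=a_{x,x}$, and let $v=\frac{1}{\sqrt2}(e_{1,1}+e_{1,2})$. Then $\omega_x(v^*\,\cdot\,v)=\frac12\omega_1$ for both points $x$, yet $v^*e_{1,1}v\notin\D_2$. The theorem requires the NEP, formulated with excising sequences, for both $v$ and $v^*$. Pulling back extended pure states along $\Phi_{L,L'}$ also gives nothing. The unique extension $\omega'$ of a pure state $\rho_{x'}$ on $\tD{L'}$ pulls back to $\omega'\circ\Phi_{L,L'}$, which is automatically the unique extension of the character $\rho_{x'}\circ\Phi_{L,L'}|_{\tD{L}}$.

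The idea you are missing is a reduction from arbitrary normalisers to finitely many explicit ones, in three steps:
\begin{enumerate}
\item A normaliser $v$ moving $x$ to $y$ agrees along excising sequences, up to a nonzero scalar, with a product $w$ of generators and diagonal elements (Lemma~\ref{lemma:exnicenormaliser}, proved case by case via $I_1,I_2,J$).
\item Such $w$ are mapped to normalisers by the explicit formulas.
\item Since the NEP is local, a violation of the NEP by $\Phi_{L,L'}(v)$ would transfer to $\Phi_{L,L'}(w)$ (Lemma~\ref{lemma:normpreserved}).
\end{enumerate}
A minor point: in Proposition~\ref{prop:sato}, $s$ is not a partial isometry (only $s^2=0$ and $c_1s=s$), and there are $L$ generators $c_1,\dots,c_L$ with $c_1\ge0$.
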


This article is structured as follows. In Section \ref{sec:construction}, we construct the inductive system yielding the Jiang--Su algebra as its limit. Here, our construction is explicitly described in the self-contained  Subsection \ref{subsec:Theconstruction}, while conceptual explanations and verifications are given in Subsections \ref{subsec:visual} to \ref{subsec:simplicityanduniquetrace}. In Section \ref{sec:charnorm}, we derive the new characterisation of normalisers in Cartan pairs. This is used in Section \ref{sec:diaginZ} to show that the new construction of $\mathcal{Z}$ indeed yields a $\mathrm{C}^*$-diagonal in $\mathcal{Z}$; an explicit description of the diagonal is contained in the tandem of Proposition \ref{prop:tD} and Subsection \ref{subsec:Theconstruction}. Section \ref{sec:spectrum} is dedicated to the analysis of the spectrum of the resulting diagonal, showing that it is not homeomorphic to the Menger curve.

\subsection*{Acknowledgements} The first-named author would like to thank Andrea Vaccaro and Aleksandra Kwiatkowska for helpful discussions on the spectrum of the constructed $\mathrm{C}^*$-diagonal.
This work was funded by the Deutsche Forschungsgemeinschaft
(DFG, German Research Foundation) under Germany's Excellence Strategy EXC 2044-390685587,
Mathematics M{\"u}nster: Dynamics--Geometry--Structure, by the SFB 1442 of the DFG, and by ERC Advanced Grant 834267 -- AMAREC.

\section[]{Construction of the Jiang--Su algebra $\mathcal{Z}$}
\label{sec:construction}
\renewcommand{\thetheo}{\thesection.\arabic{theo}}
\setcounter{theo}{0}
\numberwithin{theo}{section}
In \cite{JiSu}, the Jiang--Su algebra is constructed as the inductive limit of prime dimension drop algebras with unital $^*$-homomorphisms as connecting maps. Moreover, it follows from the classification result therein that the limit of such a system is isomorphic to $\mathcal{Z}$ if and only if it is simple with unique tracial state (see also Theorem 2.2 in \cite{RWi}). Here, we construct $\mathcal{Z}$ in a new way as an inductive limit of prime dimension drop algebras $Z_{L,L+1}$ that already encodes a $\mathrm{C}^*$-diagonal in $\mathcal{Z}$.

\subsection{{\sc The construction} \nopunct}
\label{subsec:Theconstruction}
\quad \vspace{1ex}\\
In order to define the connecting maps in the inductive system, we will use the following universal $\mathrm{C}^*$-algebra picture from \cite{RWi}; see also Section 2 in \cite{Sato}. We think of this as a presentation using \emph{entangled matrix cones}, as opposed to the one from \cite{JiSu} which uses \emph{commuting matrix cones}.
\begin{prop}
\label{prop:sato}
Let $L \in \N$ and $ X = \{ c_1, \dots, c_L, s \}$. Further, let $\tZ{L}{L+1}$ denote the universal $\mathrm{C}^*$-algebra generated by $X$ with relations $(\mathcal{\tilde{R}}_L)$ given by
\begin{equation}
\label{eq:Therelations}
\tag{$\mathcal{\tilde{R}}_{L}$}
c_1 \geq 0,\hspace{0.5em} c_i c_i^* = c_1^2, \hspace{0.5em} c_i^*c_i \perp c_j^*c_j, \hspace{0.5em}  c_1 s = s, \hspace{0.5em} \sum\limits_{l=1}^L c_l^*c_l + s^*s = 1 \;\;\; \text{ for } 1 \leq i\neq j \leq L. 
\end{equation}
 Then, $\tZ{L}{L+1}$ is isomorphic to the dimension drop algebra $Z_{L,L+1}$ from (\ref{eq:defidimdrop}).
\end{prop}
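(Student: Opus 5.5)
We prove Proposition \ref{prop:sato} in two steps. First we exhibit elements of $Z_{L,L+1}$ satisfying $(\mathcal{\tilde{R}}_L)$, which gives a surjection $\pi:\tZ{L}{L+1}\to Z_{L,L+1}$. Then we show that $\pi$ is injective by controlling every irreducible representation of $\tZ{L}{L+1}$.

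\textbf{Concrete generators.} Let $e_{ij}$ denote the matrix units of $\M_{L+1}$, indexed by $0,\dots,L$. For $t\in[0,1]$ set
\[
c_1(t)=(1-t)^{1/2}\,e_{11}+\dots,
\]
or, more invariantly, use the standard order-zero picture of $Z_{L,L+1}$. Take a c.p.c.\ order zero map $\phi:\M_L\to Z_{L,L+1}$ and an element $s$ with
\[
\phi(e_{11})s=s, \qquad s^*s=1-\phi(1_L), \qquad ss^*\le \phi(e_{11}).
\]
This is the Rørdam--Winter characterisation of $Z_{L,L+1}$: with $h=\mathrm{id}_{[0,1]}$, let $\phi(x)=(1-h)\otimes x\otimes 1$ inside suitable matrix coordinates, and let $s$ be a partial isometry times $h^{1/2}$ implementing the $\M_{L+1}$ side. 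Setting
\[
c_i=\phi(e_{1i}) \quad\text{for } i=1,\dots,L,
\]
the order zero relations give $c_1=\phi(e_{11})\ge 0$, $c_ic_i^*=\phi(e_{11})^2=c_1^2$ and $c_i^*c_i=\phi(e_{ii})^2\perp c_j^*c_j$. The remaining relations are $c_1s=s$ and $\sum_l c_l^*c_l+s^*s=1$. One caveat: the sum relation as stated uses $\sum_l\phi(e_{ll})^2$ rather than $\phi(1)$. We therefore define $s$ with $s^*s=1-\sum_l\phi(e_{ll})^2$ and check that this still lies under $c_1$ on the correct side.

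\textbf{Generation.} The image of $\pi$ contains $\phi(\M_L)$ and $s$. Together these give
\[
\M_L\otimes C_0([0,1))\quad\text{and}\quad \M_{L+1}\otimes C_0((0,1]),
\]
via $s$ and the $c_i$ (the $L+1$ matrix units come from $s$ and the $c_i^*c_1$-type products). A Stone--Weierstrass argument for subhomogeneous algebras then shows that the image is all of $Z_{L,L+1}$: it separates the irreducible representations and has full fibres.

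\textbf{Injectivity.} This is the main obstacle. We classify the irreducible representations $\rho$ of $\tZ{L}{L+1}$.
\begin{enumerate}
\item The relations $c_ic_i^*=c_1^2$ and the orthogonality of the $c_i^*c_i$ mean that the $c_i$ come from an order zero map $\M_L\to\tZ{L}{L+1}$, $e_{1i}\mapsto c_i$. By the structure theorem this map has the form $h\otimes\sigma$ with $h=\sum_l c_l^*c_l$ central in the generated algebra.
\item We then show that $h$, or rather $h+s^*s=1$, is compatible with $s$, so that $h$ is central in all of $\tZ{L}{L+1}$. Concretely, $c_1s=s$ forces $s$ to map into the $e_{11}$-corner where $c_1$ acts as the identity, and one derives $hs=sh$ from this.
\item In an irreducible representation, $h$ is therefore a scalar $\lambda\in[0,1]$. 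For $\lambda\in(0,1)$, the algebra generated is $\M_L$ amplified, together with $s$, which relates the orthogonal complement of the support to a copy inside the $e_{11}$-corner. Counting dimensions shows that $\rho$ has dimension $L(L+1)$ and agrees with the evaluation of $\pi$ at the point $t=t(\lambda)$.
\item For $\lambda=0$ or $\lambda=1$, the representation is of dimension $L+1$ or $L$ respectively, and matches the endpoint representations.
\end{enumerate}
Hence every irreducible representation factors through $\pi$, so $\pi$ is isometric and thus an isomorphism. The delicate points are the centrality of $h$ in the presence of $s$ and the dimension count in the interior fibres. If a direct argument stalls, we would instead cite the computation in \cite{RWi}, Proposition 5.1, and \cite{Sato}, Section 2.
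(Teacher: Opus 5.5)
The paper gives no proof of this proposition. It imports it from \cite{RWi} (see also Section 2 of \cite{Sato}), so your fallback is exactly the paper's route. The direct argument you sketch, however, breaks down in both halves, for one underlying reason: it treats $h=\sum_l c_l^*c_l$ as a central, fibrewise scalar ``time'' parameter, and the relations forbid this. Suppose $h$ acts as a scalar $\lambda$ in some representation in which $s\neq 0$. Then $c_1s=s$ gives $c_1$ the eigenvalue $1$, and $c_1^2\le h=\lambda\le 1$ forces $\lambda=1$. Hence $s^*s=1-h=0$, a contradiction.

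\textbf{Existence fails.} Your model $c_i=\phi(e_{1i})$ with $\phi(x)=(1-h)\otimes x\otimes 1$ has $\sum_l c_l^*c_l=(1-t)^2\cdot 1$. So $c_1s=s$ forces $s=0$ on $(0,1]$, although $s^*s=1-(1-t)^2$ is required there. The $\phi(1)$ versus $\phi(1)^2$ caveat is cosmetic, and changing coordinates does not help. What you need is a model in which $c_1(t)$ has eigenvalue $1$ on an $L$-dimensional subspace carrying the range of $s$. For instance, index a basis of $\C^{L(L+1)}$ by $(i,j)$ with $1\le i\le L$, $0\le j\le L$, and put
\[
c_k(t)=\sqrt{1-t}\,e_{(1,0),(k,0)}+\sum_{i=1}^{L}e_{(i,1),(i,k)},\qquad s(t)=\sqrt{t}\,\sum_{i=1}^{L}e_{(i,1),(i,0)}.
\]
These satisfy $(\tilde{\mathcal{R}}_L)$ for every $t$. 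At $t=0$ they give $\M_L\otimes 1_{L+1}$ and at $t=1$ they give $1_L\otimes\M_{L+1}$, but with respect to two identifications $\C^{L(L+1)}\cong\C^L\otimes\C^{L+1}$ that differ by a permutation of the basis. Only after conjugating by a continuous path of unitaries joining these two identifications do you land in $Z_{L,L+1}$. This is the entanglement. It also invalidates your generation step, which presupposes commuting copies of $\M_L\otimes C_0([0,1))$ and $\M_{L+1}\otimes C_0((0,1])$ in the image. Surjectivity has to be checked fibrewise on the entangled model instead: irreducibility in each fibre, with fibres separated by the central element $z$ below.

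\textbf{Injectivity fails at step 2.} From $c_1s=s$ you get $ss^*=c_1^2ss^*$, and the orthogonality relations give $s^*s\,c_1^2=c_1^2-c_1^4$. Hence $s^*s\perp ss^*$, so $s^*s\,s=0$ and
\[
hs=s,\qquad sh=s-ss^*s,
\]
which differ whenever $s\neq 0$. In the model above, $h$ has the two eigenvalues $1-t$ and $1$ in each interior fibre. The element that is actually central is
\[
z=s^*s+\sum_{j=1}^{L}c_j^*ss^*c_j,
\]
which is the paper's $\hat{\sigma}_{L+1}(1)$ and equals $t\cdot 1$ in the model. The irreducible representations must be sorted by the value of $z$. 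The case $z=0$ forces $s=0$ and yields $\M_L$. The case $z=1$ yields $\M_{L+1}$. The substantive point, which your dimension count does not reach, is that for each $t\in(0,1)$ there is exactly one irreducible representation with $z=t$ up to unitary equivalence, namely the $L(L+1)$-dimensional one above. Once that is established, your overall strategy (every irreducible representation factors through $\pi$) goes through.
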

Observe that the relations (\ref{eq:Therelations}) immediately give us 
\begin{equation}
\label{eq:basicCT}
\norm{c_i}\leq 1,\hspace{0.7em}  \norm{s} \leq 1, \hspace{0.7em} s^2 = 0, \hspace{0.7em}  c_j c_i = 0, \hspace{0.7em}  s^*s \in \mathrm{C}^*(c_1, \dots, c_L)' 
\end{equation}
for all $1 \leq i \leq L$, $2 \leq j \leq L$. Now, recall, for example from \cite{Lor}, that $C_0((0,1], \M_L)$ can be written as the universal $\mathrm{C}^*$-algebra with generators $\{ x_i  :  1 \leq i \leq L \}$ and relations
\begin{equation}
\label{eq:relationscone}
\norm{x_i} \leq 1, \hspace{0.5em} x_1 \geq 0 , \hspace{0.5em} x_{i} x_{i}^* = x_1^2, \hspace{0.5em} x_i^*x_i \perp x_j^*x_j \;\; \; \text{ for } 1\leq i \neq j \leq L.
\end{equation} 
Here, identifying  $C_0((0,1]) \otimes \mathbb{M}_L$ with  $C_0((0,1], \M_L)$, the element $\mathrm{id}^{\frac{1}{2}} \otimes e_{1,i} $ corresponds to $x_i$. In particular, $C_0((0,1], \mathbb{M}_2)$ can be identified with the universal $\mathrm{C}^*$-algebra given as $ \mathrm{C}^*(x : \norm{x}\leq 1,\, x^2=0)$. That is why we view $\tZ{L}{L+1}$ as generated by an $L$-dimensional and a $2$-dimensional cone, which are entangled. In contrast, the description of $Z_{L,L+1}$ as a universal $\mathrm{C}^*$-algebra from \cite{JiSu} arises from the standard definition of dimension drop algebras; see (\ref{eq:defidimdrop}). The generating set is $\{a_1, \ldots, a_L, b_1, \ldots , b_{L+1}\}$ subject to the relations $\mathcal{R}_L$ ensuring that the $a_i$'s and $b_j$'s generate commuting copies of an $L$-dimensional and an $L+1$-dimensional cone, such that the identity functions on the respective intervals add up to the unit. However, the relations (\ref{eq:Therelations}) have the conceptual advantage of featuring no commutation relations, which tend not to be weakly stable in the sense of Loring (\cite{Lor}). \\

Now, take $L \geq 2$ and let us set up $^*$-homomorphisms $\Phi_{L,L'}: \tZ{L}{L+1} \to \tZ{L'}{L'+1}$ for suitable $L' \geq L$ which will serve as the connecting maps in our construction. Choose $K', L'$ depending on $L$ and parameters $M,K \geq 2$ as
\begin{equation}
\label{eq:choiceofL'}
L'=KML^2+ML, \,\,\, K'=KML.
\end{equation}
In the following, when we write $L'(L,M,K)$, we mean that $L'$ is chosen as in (\ref{eq:choiceofL'}) with respect to $L,M,K$.
Write $c_{1,L}, \dots , c_{L,L}, s_L \in \tZ{L}{L+1}$ and $c_{1,L'}, \dots, c_{L',L'}, s_{L'} \in \tZ{L'}{L'+1}$ for the respective generators from Proposition \ref{prop:sato}. The following table gives an overview of the situation.
\vspace{2ex}
\begin{center}
    \begin{tabular}{c|c|c|c}
        Dimension drop & \multirow{2}{6em}{\centering Matrix size}& \multirow{2}{14em}{\centering Canonical generators} & \multirow{2}{4em}{Relations}  \\ 
        algebra &&& \\
         \hline
     \multirow{2}{4em}{ $Z_{L,L+1}$}   & \multirow{2}{4em}{\centering $L$}& \small$(1-\mathrm{id})^{1/2} \otimes e_{1,i} \otimes 1_{L+1} \text{ for } 1 \leq i \leq L $, & \multirow{2}{4em}{\centering$(\mathcal{R}_L)$} \\
      & & \small $\mathrm{id}^{1/2} \otimes 1_L \otimes e_{1,j} \text{ for } 1 \leq j \leq L+1$ & \\
      \hline
       \multirow{2}{4em}{\centering $\tZ{L}{L+1}$} & \multirow{2}{4em}{\centering$L$}& \multirow{2}{4em}{\centering $c_{1,L}, \ldots, c_{L,L}, s_L$} & \multirow{2}{4em}{\centering (\ref{eq:Therelations})} \\
       &&& \\
      \hline
        \multirow{2}{4em}{\centering $\tZ{L'}{L'+1}$} & \multirow{2}{8em}{\centering $L' = KML^2+ML$}& \multirow{2}{4em}{\centering $c_{1,L'}, \ldots, c_{L',L'}, s_{L'}$} & \multirow{2}{4em}{\centering (\hyperref[eq:Therelations]{$\mathcal{\tilde{R}}_{L'}$})} \\
       &&& 
          \end{tabular}
\end{center}

\vspace{2ex}
To define $^*$-homomorphisms between dimension drop algebras, we now use the second and third row of the table above instead of the standard picture from the first row. To simplify notation, we henceforth write 
\begin{align*}
&\bar{c}_1, \ldots, \bar{c}_L, \bar{s} \,\text{ for }\, c_{1,L}, \ldots, c_{L,L}, s_L \, \text{ in } \tZ{L}{L+1},\\
&c_1, \ldots, c_{L'},s \,\text{ for } \,c_{1, L'}, \ldots, c_{L',L'}, s_{L'} \,\text{ in } \tZ{L'}{L'+1} .
\end{align*}

The goal now is to find $\tilde{c}_1, \dots, \tilde{c}_{L}, \tilde{s} \in \tZ{L'}{L'+1}$ expressed in terms of $c_1,\ldots, c_{L'}, s \in \tZ{L'}{L'+1}$ satisfying the relations  (\ref{eq:Therelations}), since then we obtain a $^*$-homomorphism 
\begin{equation*}
\label{eq:PhiL,L'}
    \Phi_{L,L'}: \tZ{L}{L+1} \to \tZ{L'}{L'+1},\;\; \bar{c}_i \mapsto \tilde{c}_i, \,\bar{s} \mapsto \tilde{s}.
\end{equation*}
For the explicit definition of $\tilde{c}_1, \dots , \tilde{c}_L, \tilde{s} $, let us establish some auxiliary c.p.c.\@ order zero maps. First, note that there is a $^*$-homomorphism $C_0((0,1], \M_2) \to \tZ{L'}{L'+1}$ mapping $\mathrm{id}^{\frac{1}{2}} \otimes e_{1,2}$ to $s$ since $s^2=0$. This induces a c.p.c.\@ order zero map $\varphi:\M_2 \to \tZ{L'}{L'+1}$ with supporting $^*$-homomorphism denoted by $\pi_\varphi$. Similarly, the elements $c_1, \dots, c_{L'} \in \tZ{L'}{L'+1}$ satisfy the relations of the $L'$-dimensional cone from (\ref{eq:relationscone}). Hence, we get a $^*$-homomorphism $C_0((0,1], \M_{L'}) \to \tZ{L'}{L'+1}$ with $\mathrm{id}^{\frac{1}{2}} \otimes e_{1,i} \mapsto c_i$, which induces a c.p.c.\@ order zero map $\psi:\mathbb{M}_{L'} \to \tZ{L'}{L'+1}$. The respective supporting $^*$-homomorphism is denoted by $\pi_\psi$.\\

With this, we can define the elements $\tilde{c}_1, \dots , \tilde{c}_L, \tilde{s}$ we aim for.
    Let $g,h \in C_0((0,1])$ be piecewise linear functions defined as follows:
\begin{equation*}
    g(t) = \begin{cases}
        4t & \text{if } t \in [0, \frac{1}{4}]\\
        1 & \text{if } t \in [ \frac{1}{4}, 1],
    \end{cases} 
   \,\,\,\,\,
   h(t) = \begin{cases}
        0 & \text{if } t \in [0, \frac{3}{4}],\\
        4(t-\frac{3}{4}) & \text{if } t \in [\frac{3}{4},1].     
    \end{cases} 
\end{equation*}

Then, we define for $1 \leq l \leq L$ with $\pi_\varphi, \pi_\psi$ as above
\begin{align}
    \label{eq:defictilde:a}
    \ctil{l}{1}&= \sum\limits_{i=1}^{K'} \pips{i}{(l-1)(K'+M)+i} g^{\frac{1}{2}}(\cstarc{(l-1)(K'+M)+i}),\\
    \label{eq:defictilde:b}
    \ctil{l}{2} & = \sum\limits_{i=1}^M \pips{K'+i}{ lK'+(l-1)M+i}(g-\frac{i}{M}h)^{\frac{1}{2}}(\cstarc{lK'+(l-1)M+i}) ,
\end{align}
and put 
\begin{equation}
\label{eq:defi:tildec}
   \tilde{c}_l= \ctil{l}{1} + \ctil{l}{2}.
\end{equation} 
Moreover, we set
\begin{align}
\label{eq:defi:stilde:a}
    \stil{1}&= \piph{1,2}h^{\frac{1}{2}}(s^*s),\\
\label{eq:defi:stilde:b}
    \stil{2}&= \sum\limits_{l=1}^{L}\sum\limits_{i=1}^M \pips{(l-1)M+i+1}{lK'+(l-1)M+i}(\frac{i}{M}h)^{\frac{1}{2}}(\cstarc{lK'+(l-1)M+i}),
\end{align}
and define 
\begin{equation}
\label{eq:defi:tildes}
\tilde{s}= \stil{1} + \stil{2}.
\end{equation}
These elements should be compared to the images of the generators of the building blocks $\tZ{L}{L+1}$ in the construction in \cite{JaWi}, where generators and relations are expressed in terms of c.p.c.\@ order zero maps. The alterations we made here ensure that the $\mathrm{C}^*$-diagonal in $\tZ{L}{L+1}$ we will specify in Proposition \ref{prop:tD} and its structure are preserved throughout the inductive system we construct.\\

By the universal property of $\tZ{L}{L+1}$, the elements defined in  (\ref{eq:defi:tildec}) and (\ref{eq:defi:tildes}) now implement a $^*$-homomorphism $\Phi_{L,L'}:\tZ{L}{L+1} \to  \tZ{L'}{L'+1}$. For that, the only thing to check is that  $\tilde{c}_1, \dots, \tilde{c}_{L}, \tilde{s}$ satisfy the relations (\ref{eq:Therelations}), which will be addressed in Subsection \ref{subsec:Checkingtherelations} below. As $L'$ only depends on $L$ and the parameters $M,K$ as in (\ref{eq:choiceofL'}), this gives us a whole family of $^*$-homomorphisms. Using these as connecting maps, we can then construct an inductive system of dimension drop algebras, still written as universal $\mathrm{C}^*$-algebras, as follows:\\
Take some $L_1\geq 2$ and then inductively choose sequences of positive integers $(M_n)_n, (K_n)_n$ determining $(L_n)_n$ by  
\begin{equation}
\label{eq:choiceofLn}
L_n := L'(L_{n-1}, M_{n-1}, K_{n-1}), \,n \geq 2,
\end{equation}
as in (\ref{eq:choiceofL'}) in such a way that
\begin{equation*}
\tZ{L_1}{L_1+1} \overset{\Phi_{L_1,L_2}}{\longrightarrow} \tZ{L_2}{L_2+1} \overset{\Phi_{L_2, L_3}}{\longrightarrow} \tZ{L_3}{L_3+1} \overset{\Phi_{L_3, L_4}}{\longrightarrow} \dots
\end{equation*}
yields a simple limit with unique tracial state, which is thus isomorphic to $\mathcal{Z}$. Essentially, both $(K_n)_n$, $(M_n)_n$ have to increase sufficiently fast to achieve that. More precisely, choosing $(K_n)_n$ with Proposition \ref{prop:tooltraces} will ensure that the inductive limit has a unique tracial state, whereas choosing $(M_n)_n$ with Proposition \ref{prop:maintoolsimple} will ensure that the inductive limit is simple. This results in the following theorem. 

\begin{theo}
\label{theo:constrindlim}
There are sequences of positive integers $(K_n)_n$, $(M_n)_n$, such that the following holds: Take $L_1\geq2$ and $L_n := L'(L_{n-1}, M_{n-1}, K_{n-1})$ for $ n \geq 2$, as in (\ref{eq:choiceofL'}). Let 
\begin{equation*}
    \Phi_{L_n,L_{n+1}}: \tZ{L_n}{L_{n+1}} \to \tZ{L_{n+1}}{L_{n+1}+1}
\end{equation*}
denote the $^*$-homomorphism induced by the elements defined as in (\ref{eq:defi:tildec}), (\ref{eq:defi:tildes}) (with $L= L_n, L'=L_{n+1}$). Further, let $\tildeZ$ denote  the  inductive limit of
\begin{equation*}
\tZ{L_1}{L_1+1} \overset{\Phi_{L_1,L_2}}{\longrightarrow} \tZ{L_2}{L_2+1} \overset{\Phi_{L_2, L_3}}{\longrightarrow} \tZ{L_3}{L_3+1} \overset{\Phi_{L_3, L_4}}{\longrightarrow} \dots
\end{equation*}
Then, $\tildeZ$ is simple and has a unique tracial state. In particular, we have $\tilde{\mathcal{Z}} \cong \mathcal{Z}$.
\end{theo}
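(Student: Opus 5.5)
The plan is to reduce the theorem to two propositions the paper announces, Proposition \ref{prop:tooltraces} for traces and Proposition \ref{prop:maintoolsimple} for simplicity, and then invoke the Jiang--Su classification. Under the isomorphism $\tZ{L}{L+1}\cong Z_{L,L+1}$ of Proposition \ref{prop:sato}, each $\Phi_{L_n,L_{n+1}}$ becomes a unital $^*$-homomorphism between prime dimension drop algebras. It is well defined once the relations (\ref{eq:Therelations}) are checked for $\tilde c_1,\dots,\tilde c_L,\tilde s$ (Subsection \ref{subsec:Checkingtherelations}), and it is unital because $\sum_l \tilde c_l^*\tilde c_l+\tilde s^*\tilde s=1$.

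The first step is to compute $\Phi_{L,L'}$ in the function picture. For each $t\in[0,1]$ I would find the multiplicities with which the point evaluations $\mathrm{ev}_x$ of $Z_{L,L+1}$ appear in $\mathrm{ev}_t\circ\Phi_{L,L'}$, together with the corresponding reparametrisation maps $\lambda_k:[0,1]\to[0,1]$. From the formulas one reads off the following.
\begin{itemize}
\item The pieces $\ctil{l}{1}$ use $g(\cdot)$, which is $1$ on most of the cone. This gives roughly $K'L=KML^2$ "constant'' summands, i.e.\ evaluations at $x=0$.
\item The pieces involving $(g-\tfrac iM h)$, $\tfrac iM h$ and $h(s^*s)$ give on the order of $ML$ summands whose eigenvalue functions move across $[0,1]$. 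They run through $M$ evenly spaced ramps, indexed by $i/M$, as $t$ varies.
\end{itemize}

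For uniqueness of the trace I would use the standard criterion. Every tracial state on $Z_{L,L+1}$ is given by a probability measure on $[0,1]$. A trace on the limit is unique if, for every $f$ in a dense subset, the oscillation of $\tau\circ\Phi_{L_m,L_n}(f)$ over traces tends to $0$. Since a fraction about $K/(K+1)$ of the multiplicity is spent at the fixed point $x=0$, any two measures pulled back through $\Phi_{L,L'}$ differ by at most a mass of order $1/K$. This gives an estimate $|\tau_1(\Phi(f))-\tau_2(\Phi(f))|\le C\|f\|/K$. Choosing $K_n$ with $\sum_n 1/K_n$ small, or simply $K_n\to\infty$, via Proposition \ref{prop:tooltraces} gives a unique tracial state.

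For simplicity I would use the criterion that an inductive limit of dimension drop algebras with injective connecting maps is simple when, for each $n$ and each nonempty open $U\subset[0,1]$, some $m>n$ has every point evaluation of $Z_{L_m,L_m+1}$ composed with $\Phi_{L_n,L_m}$ containing some $\mathrm{ev}_x$ with $x\in U$. Equivalently, the images of the $\lambda_k$ cover $[0,1]$ up to an $\varepsilon$-dense set. The $M$ ramps with slopes $i/M$ make the eigenvalue paths $\varepsilon$-dense in $[0,1]$ once $M\gtrsim 1/\varepsilon$. Choosing $M_n\to\infty$ through Proposition \ref{prop:maintoolsimple}, and composing, keeps this density over longer compositions.

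Putting these together, $\tildeZ$ is simple with a unique trace. It is a limit of prime dimension drop algebras with unital connecting maps, so $\tildeZ\cong\mathcal{Z}$ by \cite{JiSu}, as in Theorem 2.2 of \cite{RWi}.

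I expect the main obstacle to be the explicit spectral computation of $\Phi_{L,L'}$ in the function picture, i.e.\ identifying the eigenvalue functions contributed by $\tilde s^{(1)}$ versus $\tilde s^{(2)}$, which are entangled rather than commuting cones. Once those paths are known, both estimates are routine.
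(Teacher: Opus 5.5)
Your trace argument is sound in substance. For every $t$, all but a fraction $O(1/K)$ of $\mathrm{ev}_t\circ\Phi_{L,L'}$ sits at the $\mathbb{M}_L$-endpoint. This gives a bound $|\tau_1\circ\Phi_{L,L'}(f)-\tau_2\circ\Phi_{L,L'}(f)|\le C\|f\|/K$ that is uniform in $f$; the paper records this in Remark \ref{remark:Explicitchoice} as $\tau\circ\Phi_{L,L'}\approx_{8/K}\tau^{(L)}$. With it, $K_n\to\infty$ suffices. The paper instead proves the weaker, non-uniform Proposition \ref{prop:tooltraces}, working in the universal picture from Lemma \ref{lemma:starstrace}. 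So it never needs the function-picture multiplicity computation you single out as the main obstacle. The price is a bookkeeping argument with summable $\varepsilon_n$.

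The genuine gap is the last step of your simplicity argument.

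\begin{itemize}
\item Your one-step claim is right. For every $t$, $\mathrm{ev}_t\circ\Phi_{L,L'}$ contains $\mathrm{ev}_{i/M}$ for all $1\le i\le M$. These come from the blocks $j=lK'+(l-1)M+i$, where $c_j^*c_j\equiv 1$ and $\tilde s^*\tilde s\equiv i/M$.
\item But your criterion fixes $n$ and lets $U$ be arbitrarily small. One step only gives $1/M_n$-density, which is frozen once $M_n$ is chosen.
\item For $m>n+1$ you need a grid point $i/M_{m-1}$ inside $V_{m-1}=\{y:\mathrm{ev}_y\circ\Phi_{L_n,L_{m-1}}\text{ meets }U\}$. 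These open sets can shrink along the system. The channel $\tilde s^{(1)}=\pi_\varphi(e_{1,2})h^{1/2}(s^*s)$ pulls an interval back to its $h$-preimage, which is a quarter of the length. So naively transporting the grid $\{i/M_{m-1}\}$ back to stage $n$ only yields density of order $4^{m-n}/M_{m-1}$.
\item Hence ``$M_n\to\infty$, and composing keeps the density'' is not justified. This is why Remark \ref{remark:Explicitchoice} asks for $M_n$ to outgrow $4^n$.
\item You also cannot simply choose $M_n$ ``through'' Proposition \ref{prop:maintoolsimple}, because it produces an $M$ for one element at a time.
\end{itemize}

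What is missing is the paper's diagonal argument:
\begin{itemize}
\item take nested finite sets $F_n^{(i)}$ with dense union and $\Phi_{L_n,L_{n+1}}(F_n^{(i+1)})\subset F_{n+1}^{(i)}$;
\item at each stage, choose $M_n$ so that $\Phi_{L_n,L_{n+1}}((x-\tfrac{\|x\|}{2}\cdot 1)_+)$ is full for the finitely many nonzero $x\in F_n^{(1)}$;
\item use Lemma 2.2 of \cite{RKir} to see that every nonzero ideal of $\tilde{\mathcal{Z}}$ contains such an element.
\end{itemize}
Alternatively, you would need a genuine quantitative analysis of how the sets $V_m$ evolve under all channels, to justify a concrete growth condition on $(M_n)_n$.
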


At this point, we have an inductive limit construction which works for pairs of sequences $(K_n)_n$, $(M_n)_n$ in $\N$ increasing sufficiently fast. One possible configuration is $M_n:=n4^n$, $K_n:= 2^{n+3}$; we will discuss this in Remark \ref{remark:Explicitchoice}.  Albeit technical, the description is very explicit, requiring only the definitions given in (\ref{eq:choiceofL'}), (\ref{eq:defictilde:a}), (\ref{eq:defictilde:b}), (\ref{eq:defi:tildec}), (\ref{eq:defi:stilde:a}), (\ref{eq:defi:stilde:b}), (\ref{eq:defi:tildes}), (\ref{eq:choiceofLn}). In the next subsection, we attempt to explain the structure of the relations (\ref{eq:defi:tildec}), (\ref{eq:defi:tildes}) a little more conceptually. Afterwards we will prove the theorem; to this end, we need to check that the elements from (\ref{eq:defi:tildec}), (\ref{eq:defi:tildes}) satisfy the relations (\ref{eq:Therelations})---we do this in Subsection \ref{subsec:Checkingtherelations} below---and we need to describe how the sequences $(K_n)_n, (M_n)_n$ can be chosen to indeed obtain the Jiang--Su algebra; this will be done in Subsection \ref{subsec:simplicityanduniquetrace}.

\subsection{{\sc A visualisation}\nopunct}
\label{subsec:visual}
\quad \vspace{1ex} \\
The definition in (\ref{eq:defi:tildec}), (\ref{eq:defi:tildes}) may seem rather ad hoc, so let us explain what the underlying idea is by visualising the situation. With (\hyperref[eq:Therelations]{$\mathcal{\tilde{R}}_{L'}$}) and (\ref{eq:basicCT}),  we observe that for $f \in C_0((0,1))$ and $1 \leq i,j \leq L'$,
\begin{itemize}
    \item the elements $c_1^2, \ldots, c_{L'}^*c_{L'}$ are mutually orthogonal but not orthogonal to $s^*s$,
    \item $\sum_{l=1}^{L'}c_l^*c_l +s^*s=1$, 
    \item the elements $c_i^*sf(c_j^*c_j)s^*c_i$ are positive and mutually orthogonal for varying $j$,
    \item $c_i^*c_i$ acts as a unit on $c_i^*sf(c_j^*c_j)s^*c_i$,
    \item the elements $c_i^*c_is^*s$ are mutually orthogonal, positive,  and $c_i^*c_i +s^*s$ acts as a unit on $c_i^*c_is^*s$,
    \item $c_i^*sf(c_j^*c_j)s^*c_i$ and elements of the form  $c_i^*c_is^*s$ are orthogonal.
\end{itemize}
Thus, we may think of the elements $c_1^2, \dots, c_{L'}^*c_{L'},s^*s$ as arranged in $\tZ{L'}{L'+1}$ as follows.

\begin{figure}[h!]
\centering
\begin{tikzpicture}[scale=0.7, transform shape]
 \foreach \i in {0,1,2,3,4,5,6,7,8,9} {
        \draw[red, thick] (2,-\i/3 + 9/6) -- (5,-\i/3+9/6) ;
            }
  \foreach \i in {1,2,3,4,5,6,7,8} {
        \draw[blue,thick] (2,-\i/3 +1.5 - 0.04) -- (5,-\i/3 +1.5 - 0.04) ;
     }

\foreach \i in {0,1,2} {
\draw[blue, thick] (0.5,  -\i*0.1+ 0.1- \i/3+1/3) -- (1.5, -\i/3 + 1/3);

}
\draw[blue, thick, dotted] (1.,1) -- (1, 1.5);
\draw[blue, thick, dotted] (1,-1) -- (1, -1.5);

\draw[blue,thick] (2,-0.04 + 1.5) -- (0,3);
\draw[blue, thick]  (2, -0.04 + 1.5) -- (5, -0.04+ 1.5) ;

\foreach \i in {0,1,2,3,4,5,6,7,8,9} {
        \draw[blue,thick] (-3,4.5-\i/3) -- (0,4.5-\i/3) ;
     
        }  
    
\draw[blue,thick] (-3,4.5) -- (-3,1.5) node[left, midway]{\LARGE $c_1^2$ \hspace{1em}};
\draw[blue,thick] (0,4.5) -- (0,1.5);

\draw[blue, very thick, dotted] (-1.5, 0.75) -- (-1.5,-0.75);

\draw[blue,thick] (2, -1.5 -0.04) -- (0,-3);
\draw[blue,thick]  (2, -1.5 -0.04) -- (5, -1.5 -0.04) ;

\foreach \i in {0,1,2,3,4,5,6,7,8,9} {
        \draw[blue,thick] (-3,-1.5-\i/3) -- (0,-1.5 -\i/3) ;
     }      
    
\draw[blue,thick] (-3,-1.5) -- (-3,-4.5) node[left, midway]{\LARGE $c_{L'}^*c_{L'}$ \hspace{1em}};
\draw[blue,thick] (0,-1.5) -- (0,-4.5);

\draw[red,very thick] (5,+9/6) -- (5, -3+9/6) node[right, midway] {\LARGE \hspace{0.5em} $s^*s$};

\end{tikzpicture}
\caption{Visualisation of $ c_1^2, \ldots, c_{L'}^*c_{L'},s^*s \in \tilde{Z}_{L',L'+1}$.}

\end{figure}

Here, each block on the left-hand side illustrates the $L'$ intervals coming from the positive, mutually orthogonal elements of the form $c_i^*s f(c_j^*c_j)s^*c_i$, glued together in the endpoints illustrated by the vertical line connecting them. Thus, we view $c_i^*c_i$ as being constant on the $i$-th block on the left. The right block indicates the $L'$ intervals coming from the elements $c_i^*c_is^*s$ that lie under $c_i^*c_i+s^*s$, glued together in the right endpoint. Here, we think of $c_i^*c_i$ as being $1$ in the left endpoint  of the $i$-th interval (identified with the right endpoint of the intervals in the $i$-th block on the left, hence connected by an auxiliary line) and going to $0$ in the right endpoint, whereas $s^*s$ is $1$ in the right endpoint and goes to $0$ in the left endpoints along the $L'$ intervals.\\
With this in mind, we can now visualise the elements $\tilde{c}_1^*\tilde{c}_1, \dots, \tilde{c}_{L}^*\tilde{c}_{L}, \tilde{s}^*\tilde{s} \in \tZ{L'}{L'+1}$ which are sums of functions with values in $[0,1]$ applied to the elements $c_1^2, \dots, c_{L'}^*c_{L'}, s^*s \in \tZ{L'}{L'+1}$. For $L=2$ and $L'=L'(2,M,K)$ for some $M,K \in \N$, the elements look as depicted in Figure \ref{fig:defielements}. \\

\begin{figure}[h]
\centering
\begin{minipage}{0.475\textwidth}
\centering
\begin{tikzpicture}[scale=0.9, transform shape]
\begin{scope}[shift={(-1.25,0)}]

\draw[decorate,decoration={brace,amplitude=10pt}] (-2.65,6.1)
-- (-2.65,8)  node[black,left, midway]{$K'$ \,\,\quad}
;

\draw[decorate,decoration={brace,amplitude=10pt}] (-2.65,0.25)
-- (-2.65,6)  node[black,left, midway]{$M$ \,\,\quad}
;
\draw[decorate,decoration={brace,amplitude=10pt}] (-2.65,-1.75)
-- (-2.65,0.1)  node[black,left, midway]{$K'$ \,\,\quad}
;
\draw[decorate,decoration={brace,amplitude=10pt}] (-2.65,-7.6)
-- (-2.65,-1.85)  node[black,left, midway]{$M$ \,\,\quad}
;

\foreach \i in {0,1,...,15} {
        \draw[Gray,thick] (1.5,-\i/15 *3 +1.5) -- (4.5,-\i/15*3 + 1.5) ;
}

\draw[Gray, thick, dotted] (0.75, 3.85) -- (0.75, 3.6);
\draw[Gray, thick, dotted] (0.75, 2.55) -- (0.75, 2.3);
\draw[Gray, thick, dotted] (0.75, 1.35) -- (0.75, 1.1);
\draw[Gray, thick, dotted] (0.75, -0.95) -- (0.75, -1.2);
\draw[Gray, thick, dotted] (0.75, -2.1) -- (0.75, -2.35);
\draw[Gray, thick, dotted] (0.75, -3.35) -- (0.75, -3.6);

\draw[Gray,thick] (1.5,1.5) -- (0,7.25 ) ;
\draw[Green!35,thick] (1.5,-3/15 *3 +1.5) -- (0,5.25) ;
\draw[Green!35,thick, dashed] (1.5,-3/15 *3 +1.5) -- (2.25,-3/15 *3 +1.5) ;

\draw[Green!45,thick, dashed] (1.5,-4/15 *3 +1.5) -- (2.25,-4/15 *3 +1.5) ;
\draw[Green!55,thick] (1.5,-5/15 *3 +1.5) -- (0,3.25);
\draw[Green!55,thick, dashed] (1.5,-5/15 *3 +1.5) -- (2.25,-5/15 *3 +1.5);

\draw[Green!80,thick, dashed] (1.5,-6/15 *3 +1.5) -- (2.25,-6/15 *3 +1.5);
\draw[Green,thick] (1.5,-7/15 *3 +1.5) -- (0,1.25);
\draw[Green,thick, dashed] (1.5,-7/15 *3 +1.5) -- (2.25,-7/15 *3 +1.5);

\draw[Gray,thick] (1.5,-8/15 *3 +1.5) -- (0,-0.75);
\draw[Green!35,thick] (1.5,-11/15 *3 +1.5) -- (0,-2.75) ;
\draw[Green!35,thick, dashed] (1.5,-11/15 *3 +1.5) -- (2.25,-11/15 *3 +1.5) ;
\draw[Green!45,thick, dashed] (1.5,-12/15 *3 +1.5) -- (2.25,-12/15 *3 +1.5) ;
\draw[Green!55,thick] (1.5,-13/15 *3 +1.5) -- (0,-4.75);
\draw[Green!55,thick, dashed] (1.5,-13/15 *3 +1.5) -- (2.25,-13/15 *3 +1.5);
\draw[Green!80,thick, dashed] (1.5,-14/15 *3 +1.5) -- (2.25,-14/15 *3 +1.5);
\draw[Green,thick] (1.5,-15/15 *3 +1.5) -- (0,-6.75);
\draw[Green,thick, dashed] (1.5,-15/15 *3 +1.5) -- (2.25,-15/15 *3 +1.5);

\foreach \x in { 0, 8}{
 \foreach \i in {0,1,2,3,...,12,13,14,15} {
        \draw[Gray, thick] (0,-\i/15 *1.5 + \x) -- (-2.5,-\i/15*1.5 + \x) ;
        
}

\draw[Gray, thick,  dotted] (-1.25, \x-1.6) -- (-1.25, \x-1.9);
\draw[Gray,  thick] (-2.5,\x) -- (-2.5,\x-1.5);
\draw[Gray,  thick] (0,\x) -- (0,\x-1.5);

}

\draw[Green,  thick] (4.5,1.5) -- (4.5, -1.5);

\foreach \i in {0,1,...,15} {
        \draw[Green,  thick, dashed] (3.75,-\i/15 *3 +1.5) -- (4.5,-\i/15*3 + 1.5) ;
    
}

\foreach \x in {2,-6}{
\foreach \i in {0,1,2,3,...,12,13,14,15} {
        \draw[Green,thick] (0,-\i/15 *1.5+ \x) -- (-2.5,-\i/15*1.5 +\x ) ;}
\draw[Green,  thick] (-2.5,\x) -- (-2.5,\x-1.5);
\draw[Green, thick] (0, \x) -- (0,\x-1.5);

}
\foreach \x in {-4,4} {
\foreach \i in {0,1,2,3,...,12,13,14,15} {
        \draw[Green!55, thick] (0,-\i/15 *1.5 +\x) -- (-2.5,-\i/15*1.5 +\x ) ;}
\draw[Green!55, thick] (-2.5,\x) -- (-2.5,\x-1.5);
\draw[Green!55, thick] (0,\x) -- (0,\x-1.5);
\draw[Gray, thick,  dotted] (-1.25, \x-1.6) -- (-1.25, \x-1.9);
}
 
\foreach \x in { -2,6}{
 \foreach \i in {0,1,2,3,...,12,13,14,15} {
        \draw[Green!35, thick] (0,-\i/15 *1.5 + \x) -- (-2.5,-\i/15*1.5 + \x) ;

}
\draw[Gray, thick,  dotted] (-1.25, \x-1.6) -- (-1.25, \x-1.9);
\draw[Green!35,  thick] (-2.5,\x) -- (-2.5,\x-1.5);
\draw[Green!35,  thick] (0,\x) -- (0,\x-1.5);
}
\end{scope}
\end{tikzpicture}
\end{minipage}
\hfill
\begin{minipage}{0.475\textwidth}
\centering
\begin{tikzpicture}[scale=0.9, transform shape]
\begin{scope}[shift={(-1.25,0)}]

\draw[Gray] (3,5.9) rectangle (4.5,8.1);
\filldraw[Magenta] (3.3,7.8) circle(2pt) node[right]{\,\,$\tilde{c}_1^2$};
\filldraw[Blue]  (3.3 , 7) circle(2pt) node[right]{\,\,$\tilde{c}_2^*\tilde{c}_2$};
\filldraw[Green] (3.3 , 6.2) circle(2pt) node[right]{\,\,$\tilde{s}^*\tilde{s}$};

\draw[Gray, thick, dotted] (0.75, 3.85) -- (0.75, 3.6);
\draw[Gray, thick, dotted] (0.75, 2.55) -- (0.75, 2.3);
\draw[Gray, thick, dotted] (0.75, 1.35) -- (0.75, 1.1);
\draw[Gray, thick, dotted] (0.75, -0.95) -- (0.75, -1.2);
\draw[Gray, thick, dotted] (0.75, -2.1) -- (0.75, -2.35);
\draw[Gray, thick, dotted] (0.75, -3.35) -- (0.75, -3.6);

\foreach \i in {0,1,...,15} {
        \draw[Gray,thick] (1.5,-\i/15 *3 +1.5) -- (4.5,-\i/15*3 + 1.5) ;
}
\draw[Gray, thick] (1.5,+1.5) -- (1.5,- 1.5);
\draw[Gray, thick] (4.5, 1.5) -- (4.5, -1.5);

\draw[Magenta,thick] (1.5,1.5) -- (0,7.25 ) ;
\draw[Magenta!75,thick] (1.5,-3/15*3+1.5) -- (0,5.25 ) ;
\draw[Magenta!45,thick] (1.5,-5/15 *3 +1.5) -- (0,3.25);
\draw[Gray,thick] (1.5,-7/15 *3 +1.5) -- (0,1.25);

\draw[Blue,thick] (1.5,-8/15 *3 +1.5) -- (0,-0.75);
\draw[Blue!45,thick] (1.5,-11/15 *3 +1.5) -- (0,-2.75) ;
\draw[Blue!25,thick] (1.5,-13/15 *3 +1.5) -- (0,-4.75);
\draw[Gray,thick] (1.5,-15/15 *3 +1.5) -- (0,-6.75);

\foreach \x in { -6, 2}{
 \foreach \i in {0,1,2,3,...,12,13,14,15} {
        \draw[Gray, thick] (0,-\i/15 *1.5 + \x) -- (-2.5,-\i/15*1.5 + \x) ;
        
}

\draw[Gray,  thick] (-2.5,\x) -- (-2.5,\x-1.5);
\draw[Gray,  thick] (0,\x) -- (0,\x-1.5);

}

\draw[Gray,  thick] (4.5,1.5) -- (4.5, -1.5);

\foreach \i in {0,1,...,7} {
        \draw[Magenta,  thick, dashed] (3.75,-\i/15 *3 +1.5) -- (4.5,-\i/15*3 + 1.5) ;
        \draw[Magenta, thick](3.75,-\i/15 *3 +1.5) -- (2.25,-\i/15*3 + 1.5);
}
\foreach \i in {0,1,2}{
        \draw[Magenta, thick](1.5,-\i/15 *3 +1.5) -- (2.25,-\i/15*3 + 1.5);
}
\draw[Magenta!90, thick, dashed](1.95, - 3/15*3+1.5) -- ( 2.25, -3/15*3 + 1.5);
\draw[Magenta!80, thick, dashed](1.85, - 3/15*3+1.5) -- ( 1.75, -3/15*3 + 1.5);
\draw[Magenta!70, thick, dashed](1.55, - 3/15*3+1.5) -- ( 1.65, -3/15*3 + 1.5);

\draw[Magenta!85, thick, dashed](1.95, - 4/15*3+1.5) -- ( 2.25, -4/15*3 + 1.5);
\draw[Magenta!70, thick, dashed](1.85, - 4/15*3+1.5) -- ( 1.75, -4/15*3 + 1.5);
\draw[Magenta!55, thick, dashed](1.55, - 4/15*3+1.5) -- ( 1.65, -4/15*3 + 1.5);

\draw[Magenta!80, thick, dashed](1.95, - 5/15*3+1.5) -- ( 2.25, -5/15*3 + 1.5);
\draw[Magenta!65, thick, dashed](1.85, - 5/15*3+1.5) -- ( 1.75, -5/15*3 + 1.5);
\draw[Magenta!50, thick, dashed](1.55, - 5/15*3+1.5) -- ( 1.65, -5/15*3 + 1.5);

\draw[Magenta!75, thick, dashed](1.95, - 6/15*3+1.5) -- ( 2.25, -6/15*3 + 1.5);
\draw[Magenta!57.5, thick, dashed](1.85, -6/15*3+1.5) -- ( 1.75, -6/15*3 + 1.5);
\draw[Magenta!45, thick, dashed](1.55, - 6/15*3+1.5) -- ( 1.65, -6/15*3 + 1.5);

\draw[Magenta!70, thick, dashed](1.95, - 7/15*3+1.5) -- ( 2.25, -7/15*3 + 1.5);
\draw[Magenta!40, thick, dashed](1.85, - 7/15*3+1.5) -- ( 1.75, -7/15*3 + 1.5);
\draw[Magenta!10, thick, dashed](1.55, - 7/15*3+1.5) -- ( 1.65, -7/15*3 + 1.5);

\foreach \i in {8,9,10} {
        \draw[Blue,  thick, dashed] (3.75,-\i/15 *3 +1.5) -- (4.5,-\i/15*3 + 1.5) ;
        \draw[Blue, thick](3.75,-\i/15 *3 +1.5) -- (1.5,-\i/15*3 + 1.5);
}

\foreach \i in {11,12,13,14,15} {
        \draw[Blue,  thick, dashed] (3.75,-\i/15 *3 +1.5) -- (4.5,-\i/15*3 + 1.5) ;
        \draw[Blue, thick](3.75,-\i/15 *3 +1.5) -- (2.25,-\i/15*3 + 1.5);
}

\draw[Blue!90, thick, dashed](1.95, -11/15*3+1.5) -- ( 2.25, -11/15*3 + 1.5);
\draw[Blue!80, thick, dashed](1.85, - 11/15*3+1.5) -- ( 1.75, -11/15*3 + 1.5);
\draw[Blue!70, thick, dashed](1.55, - 11/15*3+1.5) -- ( 1.65, -11/15*3 + 1.5);

\draw[Blue!85, thick, dashed](1.95, - 12/15*3+1.5) -- ( 2.25, -12/15*3 + 1.5);
\draw[Blue!70, thick, dashed](1.85, - 12/15*3+1.5) -- ( 1.75, -12/15*3 + 1.5);
\draw[Blue!55, thick, dashed](1.55, - 12/15*3+1.5) -- ( 1.65, -12/15*3 + 1.5);

\draw[Blue!80, thick, dashed](1.95, - 13/15*3+1.5) -- ( 2.25, -13/15*3 + 1.5);
\draw[Blue!65, thick, dashed](1.85, - 13/15*3+1.5) -- ( 1.75, -13/15*3 + 1.5);
\draw[Blue!50, thick, dashed](1.55, - 13/15*3+1.5) -- ( 1.65, -13/15*3 + 1.5);

\draw[Blue!75, thick, dashed](1.95, -14/15*3+1.5) -- ( 2.25, -14/15*3 + 1.5);
\draw[Blue!57.5, thick, dashed](1.85, -14/15*3+1.5) -- ( 1.75, -14/15*3 + 1.5);
\draw[Blue!45, thick, dashed](1.55, - 14/15*3+1.5) -- ( 1.65, -14/15*3 + 1.5);

\draw[Blue!70, thick, dashed](1.95, - 15/15*3+1.5) -- ( 2.25, -15/15*3 + 1.5);
\draw[Blue!40, thick, dashed](1.85, - 15/15*3+1.5) -- ( 1.75, -15/15*3 + 1.5);
\draw[Blue!10, thick, dashed](1.55, - 15/15*3+1.5) -- ( 1.65, -15/15*3 + 1.5);

\foreach \x in {8}{
\foreach \i in {0,1,2,3,...,12,13,14,15} {
        \draw[Magenta,thick] (0,-\i/15 *1.5+ \x) -- (-2.5,-\i/15*1.5 +\x ) ;}
\draw[Magenta,  thick] (-2.5,\x) -- (-2.5,\x-1.5);
\draw[Magenta, thick] (0, \x) -- (0,\x-1.5);
\draw[Gray, thick,  dotted] (-1.25, \x-1.6) -- (-1.25, \x-1.9);
}
\foreach \x in {0}{
\foreach \i in {0,1,2,3,...,12,13,14,15} {
        \draw[Blue,thick] (0,-\i/15 *1.5+ \x) -- (-2.5,-\i/15*1.5 +\x ) ;}
\draw[Blue,  thick] (-2.5,\x) -- (-2.5,\x-1.5);
\draw[Blue, thick] (0, \x) -- (0,\x-1.5);
\draw[Gray, thick,  dotted] (-1.25, \x-1.6) -- (-1.25, \x-1.9);
}
\foreach \x in {4} {
\foreach \i in {0,1,2,3,...,12,13,14,15} {
        \draw[Magenta!45, thick] (0,-\i/15 *1.5 +\x) -- (-2.5,-\i/15*1.5 +\x ) ;}
\draw[Magenta!45, thick] (-2.5,\x) -- (-2.5,\x-1.5);
\draw[Magenta!45, thick] (0,\x) -- (0,\x-1.5);
\draw[Gray, thick,  dotted] (-1.25, \x-1.6) -- (-1.25, \x-1.9);

}
\foreach \x in {-4} {
\foreach \i in {0,1,2,3,...,12,13,14,15} {
        \draw[Blue!45, thick] (0,-\i/15 *1.5 +\x) -- (-2.5,-\i/15*1.5 +\x ) ;}
\draw[Blue!35, thick] (-2.5,\x) -- (-2.5,\x-1.5);
\draw[Blue!35, thick] (0,\x) -- (0,\x-1.5);
\draw[Blue, thick,  dotted] (-1.25, \x-1.6) -- (-1.25, \x-1.9);

}
 
\foreach \x in { 6}{
 \foreach \i in {0,1,2,3,...,12,13,14,15} {
        \draw[Magenta!75, thick] (0,-\i/15 *1.5 + \x) -- (-2.5,-\i/15*1.5 + \x) ;

}
\draw[Gray, thick,  dotted] (-1.25, \x-1.6) -- (-1.25, \x-1.9);

\draw[Magenta!75,  thick] (-2.5,\x) -- (-2.5,\x-1.5);
\draw[Magenta!75,  thick] (0,\x) -- (0,\x-1.5);
}
\foreach \x in { -2}{
 \foreach \i in {0,1,2,3,...,12,13,14,15} {
        \draw[Blue!75, thick] (0,-\i/15 *1.5 + \x) -- (-2.5,-\i/15*1.5 + \x) ;

}
\draw[Gray, thick,  dotted] (-1.25, \x-1.6) -- (-1.25, \x-1.9);
\draw[Blue!65,  thick] (-2.5,\x) -- (-2.5,\x-1.5);
\draw[Blue!65,  thick] (0,\x) -- (0,\x-1.5);
}
\end{scope}
\end{tikzpicture}
\end{minipage}
\caption{Illustration of the elements $\tilde{c}_1^2, \tilde{c}_2^*\tilde{c}_2, \tilde{s}^*\tilde{s}\in \tilde{Z}_{L',L'+1}$ for $L=2$ and $L'=L'(2,M,K)$ with $M,K\geq2$.\label{fig:defielements}}
\end{figure}

Here, the dashed lines indicate that the respective applied function is linearly increasing or decreasing, the non-dashed parts indicate that the applied function is constant, and the shade of the colour indicates the value of the function in $[0,1]$. The picture shows that the element $\tilde{s}^*\tilde{s}$ is $0$ on the first $K'$ blocks, then has non-zero, constant, growing values on the following $M$
blocks, being $1$ on the last one. The behaviour repeats itself for the next $K'+M$ blocks. On the other hand, the element $\tilde{c}_1^2$ is constant $1$ on the first $K'$ blocks and attains non-zero, constant, decreasing values on the following $M$ blocks, being $0$ on the last one. $\tilde{c}_2^*\tilde{c}_2$ looks the same, but on the second $K'+M$ blocks.\\
In the depiction, we already see that the elements $\tilde{c}_1^2$ and $ \tilde{c}_2^*\tilde{c}_2$ are mutually orthogonal and that $\tilde{c}_1^2 + \tilde{c}_2^*\tilde{c}_2 + \tilde{s}^*\tilde{s}=1$. Heuristically speaking, $\tilde{c}_2$ is chosen such that it transports the support of $\tilde{c}_2^*\tilde{c}_2$  to the support of $\tilde{c}_1^2$, thus ensuring $\tilde{c}_2\tilde{c}_2^*= \tilde{c}_1^2$. Similarly, $\tilde{s}$ is defined to transport the element $\tilde{s}^*\tilde{s}$ under the element $\tilde{c}_1^2$, so we get $\tilde{c}_1\tilde{s}=\tilde{s}$. Hence, the illustration already indicates that the relations (\ref{eq:Therelations}) will be satisfied.

\subsection{{\sc Checking the relations}\nopunct}
\label{subsec:Checkingtherelations}
\quad \vspace{1ex} \\
Let us now check that the elements defined in (\ref{eq:defi:tildec}), (\ref{eq:defi:tildes}) satisfy the relations (\ref{eq:Therelations}). 
\begin{prop}
    \label{theo:starhom}
Let  $L,M,K\geq 2$  determining $K',L'$ as in (\ref{eq:choiceofL'}). Then,  $\tilde{c}_1, \ldots, \tilde{c}_L, \tilde{s}$ defined in (\ref{eq:defi:tildec}), (\ref{eq:defi:tildes}) satisfy the relations (\ref{eq:Therelations}). Hence, they implement a $^*$-homomorphism
\begin{equation*}
\Phi_{L,L'} : \tZ{L}{L+1} \to \tZ{L'}{L'+1}
\end{equation*}
satisfying
\begin{equation*}
\Phi_{L,L'}(\bar{c}_l) = \tilde{c}_l, \;\; \Phi_{L,L'}(\bar{s}) = \tilde{s},\;\; \text{ for } 1 \leq l \leq L.
\end{equation*}
\end{prop}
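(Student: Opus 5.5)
The plan is to compute everything inside the image of the supporting $^*$-homomorphisms $\pi_\varphi$ and $\pi_\psi$, using the standard calculus of c.p.c.\ order zero maps. For $\psi$ this gives $\psi(x) = \pi_\psi(x)\, c_1^2$, and by construction $\pi_\psi(e_{i,j})\, f(c_j^*c_j)\,\pi_\psi(e_{j,i}) = f(c_i^*c_i)$. Moreover $\pi_\psi(e_{i,j})\, f(c_k^*c_k) = \delta_{jk}\, \pi_\psi(e_{i,j})\, f(c_j^*c_j)$ for $f \in C_0((0,1])$, where $c_j^*c_j = \psi(e_{j,j})$ and $f(c_j^*c_j) = \pi_\psi(e_{j,j}) f(c_1^2)$ in the bidual. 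The analogous formulas hold for $\varphi$ with $s^*s = \varphi(e_{2,2})$, $ss^* = \varphi(e_{1,1})$ and $\pi_\varphi(e_{1,2}) h^{1/2}(s^*s) = h^{1/2}(ss^*)\pi_\varphi(e_{1,2})$. With these rules, each product of two summands of $\tilde c_l^{(1)},\tilde c_l^{(2)},\tilde s^{(1)},\tilde s^{(2)}$ reduces to a matrix-unit product times a continuous function of a single $c_k^*c_k$ or of $s^*s$. Most cross terms then vanish by orthogonality of the indices. The index blocks $(l-1)(K'+M)+1,\dots,(l-1)(K'+M)+K'$ and $lK'+(l-1)M+1,\dots,lK'+lM$ for $l=1,\dots,L$ partition $\{1,\dots,L(K'+M)\}=\{1,\dots,L'\}$, since $L(K'+M)=KML^2+ML=L'$.

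First I will compute the "source" elements. Each summand of $\tilde c_l$ has its right support on a distinct index, so
\[ \tilde c_l^*\tilde c_l = \sum_{i=1}^{K'} g(c_{k_i}^*c_{k_i}) + \sum_{i=1}^{M} (g-\tfrac iM h)(c_{m_i}^*c_{m_i}), \]
where $k_i$ and $m_i$ denote the indices of the $l$-th blocks. These are orthogonal for different $l$ because the index sets are disjoint, which gives $\tilde c_i^*\tilde c_i\perp \tilde c_j^*\tilde c_j$. Similarly,
\[ \tilde s^*\tilde s = h(s^*s) + \sum_{l,i} \tfrac iM h(c_{m_i}^*c_{m_i}), \]
where the cross terms between $\tilde s^{(1)}$ and $\tilde s^{(2)}$ vanish since $\pi_\varphi(e_{2,1})\pi_\psi(\cdot)$ is killed by the support functions. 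I expect this to be the main obstacle. The cleanest route is to check that $h^{1/2}(ss^*)\,\cdot\, (\tfrac iMh)^{1/2}(c_k c_k^*)$-type products vanish using $c_1 s=s$ and the spectral picture, namely that $ss^*$ sits under $c_1^2$ where $c_1^2$ is near $1$ only at the glued end, while $h$ lives near $1$. If this fails, I would work in the concrete picture $Z_{L',L'+1}\subset C([0,1],\mathbb M_{L'}\otimes\mathbb M_{L'+1})$ and verify it pointwise.

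Next comes the unit relation. Summing over $l$ gives
\[ \sum_l \tilde c_l^*\tilde c_l+\tilde s^*\tilde s = \sum_{k=1}^{L'} g(c_k^*c_k) + h(s^*s), \]
because the $h$-terms in the $M$-blocks cancel. So I need $\sum_k g(c_k^*c_k)+h(s^*s)=1$. I would prove this from $\sum c_k^*c_k + s^*s = 1$ together with the commutation $s^*s\in \mathrm C^*(c_1,\dots,c_{L'})'$ and the orthogonality of the $c_k^*c_k$. These make $c_1^2,\dots,c_{L'}^*c_{L'},s^*s$ generate a commutative algebra whose spectrum is the star-shaped graph of the visualisation, with $t_k+t_s=1$ on each edge, and $g(t)+h(1-t)=1$ for $t\in[0,1]$ holds for the given piecewise linear $g,h$ (check: $g=1$ on $[1/4,1]$, and $h(1-t)=0$ there, whereas for $t\le 1/4$ one needs $4t+4(3/4-t)=1$, which would fail). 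Indeed $h(1-t)=4(1/4-t)=1-4t$ for $t\le 1/4$, so the sum is $1$. Good.

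Finally, the range relations. $\tilde c_1\ge0$ holds since for $l=1$ the matrix units are diagonal ($e_{i,i}$ and $e_{K'+i,K'+i}$), so $\tilde c_1=\sum_k f_k(c_k^*c_k)$. The relation $\tilde c_l\tilde c_l^*=\tilde c_1^2$ follows from the conjugation rule, because $\pi_\psi(e_{i,k}) f(c_k^*c_k)\pi_\psi(e_{k,i}) = f(c_i^*c_i)$ moves the $l$-th block functions onto the first block with the same functions $g$, $g-\frac iMh$. For $\tilde c_1\tilde s=\tilde s$, I will note that $\tilde s^{(2)}$ has range projection in the indices $(l-1)M+i+1\in\{2,\dots,LM+1\}\subset\{1,\dots,K'\}$ (using $K'=KML\ge LM+1$), where $\tilde c_1$ acts as $g^{1/2}(c_k^*c_k)$, which is a unit for $(\tfrac iMh)^{1/2}(c_k^*c_k)$ since $g=1$ on the support of $h$. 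For $\tilde s^{(1)}$, the range lies under $h^{1/2}(ss^*)$, and by $c_1s=s$ together with $g=1$ on $[1/4,1]$ the element $g^{1/2}(c_1^2)$ acts as a unit on it. The universal property of Proposition \ref{prop:sato} then yields $\Phi_{L,L'}$.
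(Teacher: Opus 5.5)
\noindent Apart from one step, your plan is the paper's proof. The order-zero rules of Lemma~\ref{lemma:cpctricks} give $\tilde c_l^*\tilde c_l$, $\tilde c_l\tilde c_l^*$ and $\tilde s^*\tilde s$. The unit relation reduces to $g\circ(1-\mathrm{id})(s^*s)+h(s^*s)=1$. The relation $\tilde c_1\tilde s=\tilde s$ follows from $c_1s=s$, $gh=h$ and $K'\geq LM+1$.

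The step you leave open is the vanishing of the cross term $(\tilde s^{(1)})^*\tilde s^{(2)}$ and its adjoint. This is a genuine gap, and the check you propose fails as written. Since $c_kc_k^*=c_1^2$ for every $k$, and $c_1s=s$ gives $ss^*f(c_1^2)=f(1)\,ss^*$, you get $h^{1/2}(ss^*)\,(\tfrac{i}{M}h)^{1/2}(c_kc_k^*)=(\tfrac{i}{M})^{1/2}h^{1/2}(ss^*)\neq 0$. The heuristic about where $h$ is supported is beside the point: the vanishing has nothing to do with $h$. The fallback of checking pointwise in $C([0,1],\mathbb{M}_{L'}\otimes\mathbb{M}_{L'+1})$ would first need explicit formulas for $c_1,\dots,c_{L'},s$ there. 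Proposition~\ref{prop:sato} does not supply these, and the fallback is unnecessary anyway.

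\noindent The missing idea is the index shift in (\ref{eq:defi:stilde:b}): the range indices $k=(l-1)M+i+1$ of $\tilde s^{(2)}$ are all $\geq 2$.
\begin{itemize}
\item By (\ref{eq:lemma:cpctricks2}), $\tilde s^{(2)}=\sum_{l,i}(\tfrac{i}{M}h)^{1/2}(c_k^*c_k)\,\pi_\psi(e_{k,m})$. Since $c_1\perp c_k^*c_k$ for $k\geq 2$, this gives $c_1\tilde s^{(2)}=0$.
\item On the other hand, $\tilde s^{(1)}=h^{1/2}(ss^*)\pi_\varphi(e_{1,2})$ and $c_1s=s$ give $c_1\tilde s^{(1)}=\tilde s^{(1)}$.
\end{itemize}
Hence $(\tilde s^{(1)})^*\tilde s^{(2)}=(\tilde s^{(1)})^*c_1\tilde s^{(2)}=0$, which is exactly the paper's argument. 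Had the index $1$ occurred among the range indices $k$, this cross term would be nonzero.

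The same orthogonality, $ss^*\perp c_k^*c_k$ for $k\geq 2$, is also silently needed in your last step. It is what shows that the summands of $\tilde c_1$ other than $g^{1/2}(c_1^2)$ annihilate $\tilde s^{(1)}$.

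Finally, correct your setup formulas. You have $\psi(1)=\sum_k c_k^*c_k=1-s^*s$, not $c_1^2$. So $\psi(x)=\pi_\psi(x)(1-s^*s)$ and $f(c_j^*c_j)=\pi_\psi(e_{j,j})f(1-s^*s)$. As you wrote it, $\pi_\psi(e_{j,j})f(c_1^2)=0$ for $j\neq 1$. You do not use these formulas later, but conflating $\psi(1)$ with $c_1^2$ is likely what produced the $c_kc_k^*$ slip.
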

 Note that the induced $^*$-homomorphism will automatically be unital and injective as prime dimension drop algebras do not contain non-trivial projections. Before we prove the proposition, let us observe some auxiliary facts. First, note that 
\begin{gather}
f(c_j^*c_j)\perp f'(c_k^*c_k),\label{eq:polynomialtrick1} \\
 f(1-s^*s) =f \big(\sum\limits_{i=1}^{L'} c_i^*c_i \big) = \sum\limits_{i=1}^{L'} f(c_i^*c_i),\label{eq:polynomialtrick2}\\
f(c_k^*c_k) a(1-s^*s) = f(c^*_kc_k)a(c_k^*c_k) =  a(1-s^*s)  f(c_k^*c_k), \label{eq:polynomialtrick3}
\end{gather}
for $f, f' \in C_0((0,1])$, $a \in C([0,1])$, and $1 \leq j \neq k \leq L'$. This is straightforward to check for polynomials and hence holds for arbitrary continuous functions. Moreover, observe the following general facts on c.p.c.\@ order zero maps.

\begin{lemma}
\label{lemma:cpctricks}
Let $A$ be a $\mathrm{C}^*$-algebra, $n \in \N$ and $\sigma: \mathbb{M}_n \to A$ a c.p.c.\@ order zero map with supporting $^*$-homomorphism denoted by $\pi_\sigma$. Then, we have for $f \in C_0((0,1])$:
\begin{align}
&\pi_\sigma(e_{i,i}) f(\sigma(e_{k,k})) = \delta_{ik}  f(\sigma(e_{k,k})) = f(\sigma(e_{k,k})) \pi_\sigma(e_{i,i}), \label{eq:lemma:cpctricks1}\\[3pt]
&\pi_\sigma(e_{i,j}) f(\sigma(e_{k,k})) = \delta_{jk} f(\sigma(e_{i,i}))  \pi_\sigma(e_{i,j})
\label{eq:lemma:cpctricks2}.
\end{align}
\end{lemma}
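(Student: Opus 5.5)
We argue via the structure theorem for c.p.c.\@ order zero maps (Winter--Zacharias). It provides a $^*$-homomorphism $\pi_\sigma:\mathbb{M}_n\to \mathcal{M}(\mathrm{C}^*(\sigma(\mathbb{M}_n)))\cap\{h\}'$, where $h=\sigma(1_n)\ge 0$ is a positive contraction, such that $\sigma(x)=h\,\pi_\sigma(x)=\pi_\sigma(x)\,h$ for all $x\in\mathbb{M}_n$. If one prefers to avoid multiplier algebras, one may equivalently use the induced $^*$-homomorphism $\rho:C_0((0,1])\otimes\mathbb{M}_n\to A$ with $\rho(\mathrm{id}\otimes x)=\sigma(x)$. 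In that picture $\pi_\sigma(e_{i,j})$ acts via multiplication by $1\otimes e_{i,j}$, and the identities reduce to elementary computations in $C_0((0,1],\mathbb{M}_n)$. I would carry out the proof in the second picture, since it makes the functional calculus transparent.

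\emph{Step 1.} Identify $f(\sigma(e_{k,k}))$. Since $\rho$ is a $^*$-homomorphism, $f(\sigma(e_{k,k}))=f(\rho(\mathrm{id}\otimes e_{k,k}))=\rho\big(f(\mathrm{id}\otimes e_{k,k})\big)$. Because $e_{k,k}$ is a projection and $f(0)=0$, we have $f(\mathrm{id}\otimes e_{k,k})=f\otimes e_{k,k}$. This can be checked for polynomials without constant term and then extended by continuity, since $f\in C_0((0,1])$.

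\emph{Step 2.} Verify the identities on the level of $C_0((0,1])\otimes\mathbb{M}_n$, reading $\pi_\sigma(e_{i,j})\rho(y)=\rho((1\otimes e_{i,j})y)$ and similarly on the right. This holds because $\pi_\sigma$ is the extension of $\rho$ to the multiplier/unitisation level; on $\rho(g\otimes x)$ it is given by left multiplication with $1\otimes e_{i,j}$. For (\ref{eq:lemma:cpctricks1}) we compute $(1\otimes e_{i,i})(f\otimes e_{k,k})=\delta_{ik}\,f\otimes e_{k,k}=(f\otimes e_{k,k})(1\otimes e_{i,i})$. For (\ref{eq:lemma:cpctricks2}) we compute $(1\otimes e_{i,j})(f\otimes e_{k,k})=\delta_{jk}\,f\otimes e_{i,k}=\delta_{jk}\,f\otimes e_{i,j}$, and $(f\otimes e_{i,i})(1\otimes e_{i,j})=f\otimes e_{i,j}$. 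Applying $\rho$ and Step 1 then yields both claims.

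\emph{Step 3 (main obstacle).} The main point needing care is justifying that $\pi_\sigma$ acts on $\rho(C_0((0,1])\otimes\mathbb{M}_n)$ precisely by multiplication with $1\otimes e_{i,j}$. This requires making precise where products such as $\pi_\sigma(e_{i,j})f(\sigma(e_{k,k}))$ live. I would handle it by noting that $\pi_\sigma(x)\sigma(y)=\sigma(xy)$, which follows from $\sigma(xy)h=\sigma(x)\sigma(y)$ together with the structure theorem. Hence $\pi_\sigma(x)\rho(g\otimes y)=\rho(g\otimes xy)$ for $g=\mathrm{id}^m$, and by linearity and continuity this extends to all $g\in C_0((0,1])$. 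The right-hand analogue follows by taking adjoints. With this in hand, the remaining steps are routine.
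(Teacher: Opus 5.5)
Your proof is correct and follows essentially the same route as the paper. The paper checks the identities for polynomials (monomials $t^m$) using the structure-theorem relations $\sigma(x)=\sigma(1)\pi_\sigma(x)=\pi_\sigma(x)\sigma(1)$ and then extends by continuity; your cone homomorphism $\rho$ only organises that same check, since your Step 3 identity $\pi_\sigma(x)\rho(\mathrm{id}^m\otimes y)=\rho(\mathrm{id}^m\otimes xy)$ is exactly the monomial case.
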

\begin{proof}
    It suffices to check both identities for polynomials, which is straightforward using $\sigma(1) \pi_\sigma(e_{i,j}) = \sigma(e_{i,j})$ and the fact that $\sigma $ preserves orthogonality. 
\end{proof}
Now, we can check (\ref{eq:Therelations}) for the elements $\tilde{c}_1, \ldots, \tilde{c}_L, \tilde{s} \in \tZ{L'}{L'+1}$.

\begin{proof}[Proof of Proposition \ref{theo:starhom}]
First, observe that 
\begin{equation*}
    \tilde{c}_1^2 \overset{(\ref{eq:lemma:cpctricks1})}{=} \sum\limits_{i=1}^{K'} g(\cstarc{i}) + \sum\limits_{i=1}^{M} (g-\frac{i}{M}h)(\cstarc{K'+i})\geq 0.
\end{equation*}
Moreover, we compute for $ 1 \leq l \leq L$ that
\begin{align*}
    \tilde{c}_l^*\tilde{c}_l  = (\ctil{l}{1})^*\ctil{l}{1}+ (\ctil{l}{2})^*\ctil{l}{2} 
    \overset{(\ref{eq:lemma:cpctricks1})}&{=} \sum\limits_{i=1}^{K'} g(\cstarc{(l-1)(K'+M)+i})\\
     &  \hspace{0.5cm} + \sum\limits_{i=1}^{M} (g-\frac{i}{M}h)(\cstarc{lK'+(l-1)M+i}).
\end{align*}
In particular, this implies $\tilde{c}_i^* \tilde{c}_i \perp \tilde{c}_j^*\tilde{c}_j$ for $1 \leq i\neq j \leq L$ with (\ref{eq:polynomialtrick1}). Analogously, we obtain
\begin{equation*}
    \tilde{c}_l\tilde{c}_l^* \overset{(\ref{eq:polynomialtrick1})}{=} \ctil{l}{1} (\ctil{l}{1})^* + \ctil{l}{2} ( \ctil{l}{2})^* \overset{(\ref{eq:lemma:cpctricks1}),(\ref{eq:lemma:cpctricks2})}{=} \tilde{c}_1^2.
\end{equation*}
To check $\sum_{l=1}^{L} \tilde{c}_l^*\tilde{c}_l + \tilde{s}^* \tilde{s} =1 $, we note that 
\begin{equation*}
c_1\stil{1} \overset{(\ref{eq:lemma:cpctricks2})}{=} c_1 h^{\frac{1}{2}}(ss^*) \piph{1,2} =  h^{\frac{1}{2}}(ss^*) \piph{1,2} \overset{(\ref{eq:lemma:cpctricks2})}{=} \stil{1}
\end{equation*}
since $c_1s=s$ by the relations of $\tZ{L'}{L'+1}$.
With that, we compute
\begin{align*}
    \tilde{s}^* \tilde{s} 
    \overset{(\ref{eq:polynomialtrick1}),(\ref{eq:lemma:cpctricks2})}&{=} (\stil{1})^* \stil{1} + (\stil{2})^* \stil{2} \\
    \overset{(\ref{eq:lemma:cpctricks1})}&{=} h(s^*s) + \sum\limits_{l=1}^L \sum\limits_{i=1}^M \frac{i}{M} h (\cstarc{lK'+(l-1)M+i}).
\end{align*}
This yields
\begin{align*}
\tilde{s}^*\tilde{s} + \sum\limits_{l=1}^{L} \tilde{c}_l^*\tilde{c}_l  & = h(s^*s) + \sum\limits_{l=1}^L \sum\limits_{i=1}^M \frac{i}{M} h (\cstarc{lK'+(l-1)M+i})\\
& \hspace{0.5cm} + \sum\limits_{l=1}^{L} \sum\limits_{i=1}^{K'} g(\cstarc{(l-1)(K'+M)+i})\\
&  \hspace{0.5cm} +\sum\limits_{l=1}^{L} \sum\limits_{i=1}^{M} (g-\frac{i}{M}h)(\cstarc{lK'+(l-1)M+i}) \\
& = h(s^*s) + \sum\limits_{i=1}^{L'} g(\cstarc{i}) \\
& \overset{(\ref{eq:polynomialtrick2})}{=} h(s^*s) + g\circ(1-\mathrm{id})(s^*s)\\
&=1.
\end{align*}
It remains to show $\tilde{c}_1 \tilde{s} = \tilde{s}$. As $c_1s=s$ and $gh=h$, we get for $1 \leq l \leq L, 1 \leq i \leq M$, that
\begin{align*}
    g^\frac{1}{2}(c_1^2) \piph{1,2} h^{\frac{1}{2}}(s^*s) \overset{(\ref{eq:lemma:cpctricks2})}&{=} \piph{1,2} h^{\frac{1}{2}}(s^*s),\\
    \big(\sum\limits_{j=1}^{K'} g^\frac{1}{2}(c_j^*c_j)\big) (\frac{i}{M}h)^{\frac{1}{2}}(\cstarc{(l-1)M+i+1})\overset{(\ref{eq:polynomialtrick1})}&{=}   (\frac{i}{M}h)^{\frac{1}{2}}(\cstarc{(l-1)M+i+1}).
\end{align*}
For the second equation, we in particular used that $ K'=KML \geq LM +1$. This implies $\ctil{1}{1} \stil{1} = \stil{1}$ and $\ctil{1}{1} \stil{2} = \stil{2}$, and thus $\tilde{c}_1 \tilde{s} = \tilde{s}$ since $\ctil{1}{1} \perp \ctil{1}{2}$.
\end{proof}

\subsection{{\sc Simplicity and monotraciality} \nopunct}
\label{subsec:simplicityanduniquetrace}
\quad \vspace{1ex}
\quad \\
In the following, we prepare and conduct the construction of the inductive system of dimension drop algebras in  Theorem \ref{theo:constrindlim}, such that the inductive limit has a unique tracial state and is simple, hence is isomorphic to the Jiang--Su algebra. 
Recall that the idea is to find sequences $(M_n)_n, (K_n)_n$ of positive integers that determine $(L_n)_n$ by $L_n :=L'(L_{n-1}, M_{n-1}, K_{n-1})$, $n \geq 2$, as in (\ref{eq:choiceofL'}). Here, the parameters $(M_n)_n$ will make sure that the inductive limit will be simple, while the choice of $(K_n)_n $ will ensure the existence of a unique tracial state on the limit.\\
We start with analysing the behaviour of tracial states under the $^*$-homomorphisms from Proposition \ref{theo:starhom} in order to derive the parameters $(K_n)_n$.
\begin{lemma}
\label{lemma:starstrace}
Let $L, M,K \geq 2$ and $L':=L'(L,M,K)$ as in (\ref{eq:choiceofL'}). Then,
\begin{equation*}
\tau \circ \Phi_{L,L'}(\bs^*\bs)\leq \frac{2}{K}
\end{equation*}
holds for any tracial state $\tau \in T(\tZ{L'}{L'+1})$.
\end{lemma}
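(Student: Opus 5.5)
From the proof of Proposition \ref{theo:starhom} we already have the explicit formula
\begin{equation*}
\Phi_{L,L'}(\bs^*\bs) = \tilde{s}^*\tilde{s} = h(s^*s) + \sum_{l=1}^L \sum_{i=1}^M \frac{i}{M}\, h(\cstarc{lK'+(l-1)M+i}).
\end{equation*}
The plan is to bound the trace of each summand separately. The key input is that the $L'$ elements $c_j^*c_j$ all have the same trace, and that this common value dominates the trace of any function of them.

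\textbf{Step 1: the $c_j^*c_j$ are tracially equivalent.} Let $\tau$ be a tracial state on $\tZ{L'}{L'+1}$. Through the cone $\psi$ we have $\pi_\psi(e_{1,j})$ implementing $c_j^*c_j \sim c_1^2$. More precisely, by Lemma \ref{lemma:cpctricks},
\begin{equation*}
f(c_j^*c_j) = \pi_\psi(e_{j,1})\, f(c_1^2)\, \pi_\psi(e_{1,j}) \quad \text{for } f\in C_0((0,1]).
\end{equation*}
Writing $f(c_1^2) = f^{1/2}(c_1^2) f^{1/2}(c_1^2)$ and applying the trace property, we get
\begin{equation*}
\tau(f(c_j^*c_j)) = \tau(f(c_1^2)) =: t_f,
\end{equation*}
independent of $j$. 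Here I use that $\pi_\psi(e_{1,j})\pi_\psi(e_{j,1}) = \pi_\psi(e_{1,1})$ acts as a unit on $f(c_1^2)$, again by (\ref{eq:lemma:cpctricks1}). (Alternatively one can argue via $c_j c_j^* = c_1^2$ together with $\tau(g(x^*x)) = \tau(g(xx^*))$.)

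\textbf{Step 2: bound $t_f$.} From (\ref{eq:polynomialtrick1}) the elements $g(c_j^*c_j)$, $1\le j\le L'$, are mutually orthogonal positive contractions. Hence their sum has norm at most $1$, which gives $L' t_g \le 1$, so
\begin{equation*}
\tau(h(c_j^*c_j)) \le t_g \le 1/L' \quad \text{since } h\le g.
\end{equation*}
The double sum has $LM$ terms with coefficients $\frac{i}{M}\le 1$, so its trace is at most $LM/L' \le 1/(KL) \le 1/K$.

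\textbf{Step 3: bound $\tau(h(s^*s))$.} This is the main obstacle, because $s^*s$ is not among the orthogonal family. The idea is to transport $h(s^*s)$ under $c_1^2$ using $s$. By Lemma \ref{lemma:cpctricks} applied to $\varphi$,
\begin{equation*}
\tau(h(s^*s)) = \tau(h(ss^*)).
\end{equation*}
Since $c_1 s = s$, we have $c_1^2\, ss^* = ss^*$, so $h(ss^*) \le \chi$-type support of $c_1^2$. Concretely, $ss^* \le g(c_1^2)$, because $g(c_1^2)$ acts as a unit on $ss^*$: $g(c_1^2) s = s$ follows from $c_1^2 s = s$ and $g(1)=1$. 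Hence
\begin{equation*}
h(ss^*) = g(c_1^2)\, h(ss^*)\, g(c_1^2) \le g(c_1^2),
\end{equation*}
so $\tau(h(s^*s)) \le t_g \le 1/L' \le 1/K$.

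\textbf{Conclusion.} Adding the two bounds gives $\tau\circ\Phi_{L,L'}(\bs^*\bs) \le 2/K$. The routine checks are the identities from Lemma \ref{lemma:cpctricks} and (\ref{eq:polynomialtrick1}). The delicate point is Step 3, which uses $c_1 s = s$ to dominate $h(ss^*)$ by a function of $c_1^2$.
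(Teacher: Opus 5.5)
Your proof is correct, and it differs from the paper's proof in how it bounds the second summand.

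Your Step 3 is essentially the paper's treatment of $(\tilde{s}^{(1)})^*\tilde{s}^{(1)} = h(s^*s)$. In both, you pass to $h(ss^*)$ via the trace property and dominate it using $c_1 s = s$. The paper dominates by $c_1^2$ directly and you by $g(c_1^2)$; either works.

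For $(\tilde{s}^{(2)})^*\tilde{s}^{(2)}$ the paper does not bound the summands individually. It builds $K$ elements $v_1,\dots,v_K$ with $v_j^*v_j = \tilde{s}^{(2)}(\tilde{s}^{(2)})^*$ for every $j$, and with mutually orthogonal $v_jv_j^*$ whose sum is at most $1$. The trace property then gives $K\,\tau((\tilde{s}^{(2)})^*\tilde{s}^{(2)}) \le 1$.

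You instead extend the equal-trace observation $\tau(c_j^*c_j)=\tau(c_1^2)$, which the paper also uses for the first summand. You show $\tau(f(c_j^*c_j)) = \tau(f(c_1^2))$ for positive $f\in C_0((0,1])$, which yields $\tau(h(c_j^*c_j))\le 1/L'$ for every $j$. You then simply count the $LM$ summands against $L' = ML(KL+1)$.

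Your route is more direct and makes the auxiliary $v_j$ unnecessary. It also gives the sharper bound $1/(KL+1)$ for that part. The paper's version instead bounds the whole block $\tilde{s}^{(2)}$ at once by fitting $K$ orthogonal copies of it under $1$. This is the ``same proof'' the authors invoke in Remark \ref{remark:Explicitchoice} for traces on the ideal $I_2$.
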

\begin{proof}
    Take a tracial state $\tau$ on $\tZ{L'}{L'+1}$. Recall $\Phi_{L,L'}(\bar{s}^*\bar{s}) = (\stil{1})^*\stil{1} + (\stil{2})^*\stil{2}$ from the proof of Proposition \ref{theo:starhom} with $\stil{1}, \stil{2}$ from (\ref{eq:defi:stilde:a}), (\ref{eq:defi:stilde:b}), and observe that 
\begin{equation}
   \label{eq:proof:lemma:starstrace:1}
       \tau((\stil{1})^*\stil{1}) = \tau (h(s^*s)) = \tau(h(ss^*)) \leq \tau(c_1^2) = \frac{1}{L'} \tau (\sum\limits_{l=1}^{L'} c_l^*c_l) \leq \frac{1}{K}
\end{equation}
by the trace property and the relations in $\tZ{L'}{L'+1}$.
Now, let us define elements $v_j$ for $1\leq j \leq K$ by 
\begin{equation*}
v_j := \sum\limits_{l=1}^{L} \sum\limits_{i=1}^M  \pips{(j-1)ML + (l-1)M + i +1}{(l-1)M + i +1}(\frac{i}{M} h)^\frac{1}{2} ( \cstarc{(l-1)M + i +1}) .
\end{equation*}
Then, we have 
\begin{align*}
v_j^ *v_j \overset{(\ref{eq:lemma:cpctricks1}), (\ref{eq:lemma:cpctricks2})}&{=} \sum\limits_{l=1}^{L} \sum\limits_{i=1}^M \frac{i}{M} h ( \cstarc{(l-1)M + i +1}) \overset{(\ref{eq:lemma:cpctricks1}), (\ref{eq:lemma:cpctricks2})}{=} \stil{2} (\stil{2})^*,\\
v_j v_j^* \overset{(\ref{eq:lemma:cpctricks1}), (\ref{eq:lemma:cpctricks2})}&{=} \sum\limits_{l=1}^{L} \sum\limits_{i=1}^M  \frac{i}{M} h ( \cstarc{(j-1)ML + (l-1)M + i +1}),
\end{align*}
for each $ 1 \leq j \leq K$. In particular, observe that
\begin{equation*}
\sum\limits_{j=1}^{K} v_j v_j^*  
=\sum\limits_{j=1}^{K} \sum\limits_{l=1}^{L} \sum\limits_{i=1}^M  \frac{i}{M} h ( \cstarc{(j-1)ML + (l-1)M + i +1})
\leq  \sum\limits_{l=1}^{L'} c_l^*c_l \leq 1.
\end{equation*}
Thus, we get
\begin{equation}
\label{eq:proof:lemma:starstrace:2}
\tau ( (\stil{2})^* \stil{2})  = \frac{1}{K} \tau ( \sum \limits_{j=1}^K v_j^*v_j) 
\leq \frac{1}{K} 
\end{equation}
using the trace property. 
Combining (\ref{eq:proof:lemma:starstrace:1}) and  (\ref{eq:proof:lemma:starstrace:2}) now yields the statement.
\end{proof}
With this, we can define a distinguished tracial state on $\tZ{L}{L+1}$ which will give rise to the unique tracial state on the inductive limit later.

\begin{prop}
\label{prop:defitau_L}
Let $L\geq 2$ and let $(M_n)_n$, $(K_n)_n$ be sequences of natural numbers with $K_n \to \infty$. For each $n$, we set $L_n:= L'(L,M_n, K_n)$ as in (\ref{eq:choiceofL'}). Further, let $(\tau_n)_n$ be any sequence of tracial states with  $\tau_n \in T(\tZ{L_n}{L_n+1})$ for $n \in \N$.  Then, we have a well-defined tracial state given by
\begin{equation*}
\tau^{(L)}: \tZ{L}{L+1} \to \C,\;\; \tau^{(L)}(x):= \lim\limits_{n \to \infty} \tau_n \circ \Phi_{L,L_n} (x)
\end{equation*}
that is independent of the choice of $(M_n)_n$, $(K_n)_n$, and $(\tau_n)_n$.
\end{prop}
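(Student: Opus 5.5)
The plan is to identify the limit explicitly as the tracial state ``evaluation at the endpoint where $\bar{s}$ vanishes''. Lemma \ref{lemma:starstrace} will force the traces $\tau_n\circ\Phi_{L,L_n}$ to concentrate there. Since $\Phi_{L,L_n}$ is a unital $^*$-homomorphism, each $\sigma_n:=\tau_n\circ\Phi_{L,L_n}$ is a tracial state on $\tZ{L}{L+1}$. By Lemma \ref{lemma:starstrace}, applied with $M=M_n$, $K=K_n$, it satisfies
\begin{equation*}
\sigma_n(\bar{s}^*\bar{s})\leq \frac{2}{K_n}\longrightarrow 0 .
\end{equation*}
It therefore suffices to show the following: there is a tracial state $\tau^{(L)}$ on $\tZ{L}{L+1}$, depending only on $L$, such that every sequence of tracial states $(\sigma_n)_n$ with $\sigma_n(\bar{s}^*\bar{s})\to 0$ converges pointwise to $\tau^{(L)}$. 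Both existence of the limit and independence of $(M_n)_n$, $(K_n)_n$, $(\tau_n)_n$ then follow at once.

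To prove this, I pass via Proposition \ref{prop:sato} to the concrete picture $Z_{L,L+1}\subset C([0,1],\M_L\otimes\M_{L+1})$. There every tracial state has the form $\sigma(x)=\int_{[0,1]}\mathrm{tr}(x(t))\,d\mu(t)$ for a Borel probability measure $\mu$. This holds because the irreducible representations are the point evaluations at $t\in(0,1)$ together with the unique irreducible representations at the endpoints, and each has a unique trace. Set $\delta(t):=\mathrm{tr}(\bar{s}^*\bar{s}(t))$, which is continuous and non-negative. The key input is that $\delta$ vanishes at exactly one point, namely the endpoint $t_0$ where the generators $c_i$ are full cones. In the table's picture $\bar{s}$ is built from $\mathrm{id}^{1/2}$, so $t_0=0$, and there the fibre is $\M_L\otimes 1_{L+1}$. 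Given this, Markov's inequality gives
\begin{equation*}
\mu_n(\{t:\delta(t)\geq\eta\})\leq \sigma_n(\bar{s}^*\bar{s})/\eta\to 0 \quad\text{for every } \eta>0 .
\end{equation*}
Since $\delta$ is positive on the compact set $[0,1]$ away from $t_0$, this implies $\mu_n\to\delta_{t_0}$ weak$^*$. Because $t\mapsto \mathrm{tr}(x(t))$ is continuous, we get $\sigma_n(x)\to\mathrm{tr}(x(t_0))$, so we may set $\tau^{(L)}(x):=\mathrm{tr}(x(t_0))$. This is a tracial state, because the fibre at $t_0$ is a full matrix algebra with unique tracial state.

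The main obstacle is to verify that $\bar{s}^*\bar{s}$ has nonzero fibre trace at every $t\neq t_0$, which requires tracking the isomorphism of Proposition \ref{prop:sato}. I would first try to read it off from the explicit form of the isomorphism in \cite{RWi}, where $\bar{s}^*\bar{s}$ should correspond to a function of the form $\mathrm{id}\otimes p$ with $p$ a nonzero projection. Failing that, I would argue intrinsically, fibre by fibre. If $\bar{s}^*\bar{s}(t)=0$, then the $L$ mutually orthogonal, mutually equivalent positive elements $\bar{c}_l^*\bar{c}_l(t)$ sum to $1$ in $\M_{L(L+1)}$. Hence $\bar{c}_1(t)$ is a projection of rank $L+1$, and the images of the $\bar{c}_l(t)$ form a system of matrix units splitting the fibre as $\M_L\otimes 1_{L+1}$. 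The fibre over an interior point is all of $\M_L\otimes\M_{L+1}$, and there the element $s$ (one of the generators) must act nontrivially; I would use this to show the fibre can be of this degenerate form only at the endpoint $t_0$ where the algebra drops to $\M_L\otimes 1_{L+1}$. Even without pinning down $t_0$ exactly, the argument only needs the zero set of $\delta$ to be a single point at which the fibre has a unique trace.
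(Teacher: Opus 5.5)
Your proof is correct, but it takes a different route from the paper. The paper stays entirely inside the universal picture. It checks convergence of $\tau_n\circ\Phi_{L,L_n}$ on words in $\bar{c}_2,\dots,\bar{c}_L,\bar{s}$. Every word containing $\bar{s}$ or $\bar{s}^*$ is killed by Cauchy--Schwarz together with Lemma~\ref{lemma:starstrace}. The remaining words in the $\bar{c}_i$ are reduced via the relations to a short list whose limiting traces are forced by the trace property and the same estimate. Convergence on this dense set, with limits visibly independent of all choices, then gives $\tau^{(L)}$ without ever identifying it.

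You instead pass to $C([0,1],\M_L\otimes\M_{L+1})$, represent traces by probability measures, and use a Markov-type estimate to show that the measures concentrate at the unique zero $t_0$ of $\delta=\mathrm{tr}(\bar{s}^*\bar{s}(\cdot))$. This buys two things the paper does not prove:
\begin{itemize}
\item an explicit formula $\tau^{(L)}=\mathrm{tr}\circ\mathrm{ev}_{t_0}$, i.e.\ the trace factoring through $\tZ{L}{L+1}/I_2\cong\M_L$, which the paper only asserts in Remark~\ref{remark:Explicitchoice};
\item the stronger statement that \emph{every} sequence of tracial states with $\sigma_n(\bar{s}^*\bar{s})\to 0$ converges to it.
\end{itemize}
The price is two extra inputs. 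The first is the measure description of traces on dimension drop algebras; your one-line justification via irreducible representations is only a sketch, though the fact is standard. The second is locating the zero set of $\delta$, and your intrinsic fibre argument does handle this. If $\bar{s}(t)=0$, then $\mathrm{C}^*(\bar{c}_1(t),\dots,\bar{c}_L(t))$ is a unital copy of $\M_L$. That can be neither the image $\M_L\otimes\M_{L+1}$ of $\mathrm{ev}_t$ at an interior point nor the image $\M_{L+1}$ at the other endpoint. The zero set is also nonempty, since otherwise $\min\delta>0$ would contradict Lemma~\ref{lemma:starstrace}.

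One remark of yours is not accurate: the table does not show that $\bar{s}$ is built from $\mathrm{id}^{1/2}$, since it does not describe the isomorphism of Proposition~\ref{prop:sato}. As you note, nothing in your argument depends on it.
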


\begin{proof}
    For well-definedness of $\tau^{(L)}$, we first consider $x \in \mathcal{W}( \bar{c}_2, \dots, \bar{c}_L,\bar{s})$, which denotes the set of words in the generators $\bar{c}_2, \dots, \bar{c}_L, \bar{s}$ of $\tZ{L}{L+1}$ and their adjoints. If \@ $x= y\bar{s}z$ for some $y,z \in \mathcal{W}( \bar{c}_2, \dots, \bar{c}_L,\bar{s}) \cup \{1\}$, then
\begin{equation}
\label{eq:proof:prop:defitau_L:1}
\vert \tau_n \circ \Phi_{L,L_n}(x) \vert \leq \vert \tau_n \circ \Phi_{L,L_n}(\bar{s}^*\bar{s}) \vert^\frac{1}{2}\; \vert \tau_n \circ \Phi_{L,L_n}((zy)^*zy) \vert^\frac{1}{2} 
\overset{n }{\longrightarrow} 0,
\end{equation}
by the Cauchy--Schwarz inequality, Lemma \ref{lemma:starstrace} and the trace property. Therefore,  $\tau^{(L)}(x)$ is well-defined in that case and hence also if $\bar{s}^*$ appears in $x$. Thus, it remains to consider $x \in \mathcal{W}( \bar{c}_2, \dots, \bar{c}_L)$.
With the relations in (\ref{eq:Therelations}), (\ref{eq:basicCT}) and by replacing $\bar{c}_i\bar{c}_i^*$ with $\bar{c}_1^2$ for $2 \leq i \leq L$ whenever possible, we get that $x$ is either $0$ or we have
\begin{align*}
    x\in \{\bar{c}_i, \bar{c}_i^*\bar{c}_i, \bar{c}_i^*\bar{c}_j, (\bar{c}_1^2)^k, (\bar{c}_1^2)^k\bar{c}_i, \bar{c}_i^*(\bar{c}_1^2)^k \bar{c}_j \, : \, 2 \leq i,j \leq L, \; k \in \N \}.
\end{align*}
In each of these cases, the limit $\lim_{n \to \infty} \tau_n \circ \Phi_{L,L_n}(x)$ exists by the relations in (\ref{eq:Therelations}) and (\ref{eq:basicCT}), the trace property and (\ref{eq:proof:prop:defitau_L:1}). That is, $\tau^{(L)}(x) $ is well-defined for any $x \in \mathcal{W}(\bar{c}_2, \dots,\bar{c}_L,\bar{s})$ and thus for all $x \in \mathrm{C}^*(\bar{c}_2, \dots, \bar{c}_L,\bar{s}) = \tZ{L}{L+1}$. Then $\tau^{(L)}$ clearly is
a tracial state on $\tZ{L}{L+1}$.\vspace{1pt} 
In particular, the arguments did not depend on the choice of $(K_n)_n$, $(M_n)_n$, or $(\tau_n)_n$, hence $\tau^{(L)}$ does not.
\end{proof}
This tracial state $\tau^{(L)}$ on $\tZ{L}{L+1}$ is the key ingredient to make sure that the inductive limit constructed later has a unique tracial state. Heuristically speaking, throughout the inductive system, the traces on the later stages will get closer and closer to $\tau^{(L_n)}$ when restricted to $\tZ{L_n}{L_n+1}$. We will use the following proposition to ensure this in the construction.
\begin{prop}
\label{prop:tooltraces}
Let $L\geq 2$ and $\tau^{(L)}$ be defined as in Proposition \ref{prop:defitau_L}. Further, let $x \in \tZ{L}{L+1}$ and $\varepsilon > 0$. Then, there exists  $\bar{K} \in \N$ such that for any $K\in \N $ with $K \geq \bar{K}$ and any $M \in \N$,
\begin{equation*}
\vert \tau \circ \Phi_{L,L'} (x) - \tau^{(L)}(x) \vert < \varepsilon
\end{equation*}
holds for any tracial state $\tau$ on $\tZ{L'}{L'+1}$, where $L':= L'(L,M,K)$ as in (\ref{eq:choiceofL'}). 
\end{prop}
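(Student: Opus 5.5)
We reduce to a dense set of words and use a uniform version of the estimates in the proof of Proposition \ref{prop:defitau_L}. The map $x \mapsto \tau\circ\Phi_{L,L'}(x) - \tau^{(L)}(x)$ is linear and bounded by $2\norm{x}$, uniformly in $\tau$, $M$, $K$. So an $\varepsilon/3$-argument reduces the claim to $x$ a finite linear combination of words in $\mathcal{W}(\bar{c}_2,\dots,\bar{c}_L,\bar{s})\cup\{1\}$. By linearity it then suffices to treat a single word $x$ with the target accuracy $\varepsilon/N$, where $N$ is the number of words. For each word we produce a threshold $\bar{K}_x$ and take $\bar K$ to be the maximum over the finitely many words.

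\emph{Words containing $\bar{s}$ or $\bar{s}^*$.} Write $x = y\bar{s}z$ (or with $\bar{s}^*$, handled by taking adjoints). Cauchy--Schwarz and the trace property give
\[
\vert\tau\circ\Phi_{L,L'}(x)\vert \leq \tau(\Phi_{L,L'}(\bar{s}^*\bar{s}))^{1/2}\,\norm{zy} \leq (2/K)^{1/2}
\]
by Lemma \ref{lemma:starstrace}, uniformly in $\tau$ and $M$. The same reasoning, applied along the sequence defining $\tau^{(L)}$, shows $\tau^{(L)}(x)=0$. Hence any $K \geq \bar{K}_x$ with $(2/\bar{K}_x)^{1/2}<\varepsilon/N$ works.

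\emph{Words in $\bar{c}_2,\dots,\bar{c}_L$ only.} By the reduction in the proof of Proposition \ref{prop:defitau_L}, these words are $0$ or lie in the explicit list $\bar{c}_i$, $\bar{c}_i^*\bar{c}_j$, $(\bar{c}_1^2)^k$, $(\bar{c}_1^2)^k\bar{c}_i$, $\bar{c}_i^*(\bar{c}_1^2)^k\bar{c}_j$. We show the traces are asymptotically determined.

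\emph{Off-diagonal terms.} For $i\neq j$ the element $\bar{c}_i^*(\bar c_1^2)^k\bar{c}_j$ is nilpotent, indeed a product with orthogonal range and source. Hence its trace vanishes for every tracial state, since $\tau(ab)=\tau(ba)=0$ when $ba=0$; for example $\bar c_j\bar c_i^*=0$. The same holds for $\bar{c}_i$ and $(\bar{c}_1^2)^k\bar{c}_i$ with $i\geq 2$, since $\bar c_i\bar c_1=0$, giving $\tau(\bar c_1^{2k}\bar c_i)=\tau(\bar c_i\bar c_1^{2k})=0$. These terms are therefore exactly $0$, both for $\tau\circ\Phi_{L,L'}$ and for $\tau^{(L)}$.

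\emph{Diagonal terms.} By the trace property, $\tau(\bar{c}_i^*\bar c_1^{2k}\bar{c}_i)$ equals $\tau(\bar c_1^{2k}\bar c_i\bar c_i^*) = \tau(\bar c_1^{2k+2})$, so everything reduces to $\tau\circ\Phi_{L,L'}(\bar c_1^{2m})$ with $m\geq1$. Using $\sum_{l}\bar c_l^*\bar c_l = 1-\bar s^*\bar s$ and $\tau(\bar c_l^*\bar c_l)=\tau(\bar c_1^2)$, we get
\[
L\,\tau\Phi(\bar c_1^2) = 1-\tau\Phi(\bar s^*\bar s),
\]
so $\vert\tau\Phi(\bar c_1^2)-1/L\vert\le 2/(KL)$. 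For higher powers, note that $\bar c_1^2 - \bar c_1^{2m}$ is dominated by $\bar c_1^2 - \bar c_1^{2m}\le$ a function of $\bar c_1^2$ vanishing at $1$. In $\tZ{L}{L+1}$ the element $\bar c_1^2(1-\bar c_1^2)$ lies in the ideal generated by $\bar s^*\bar s$, as $\bar c_1^2 = 1-\bar{s}^*\bar{s}$ on the image of $\bar{c}_1$, roughly $\bar c_1^2\bar s^*\bar s \cdot$stuff. Concretely, $\sum_l \bar c_l^*\bar c_l^{}\,(1-\sum_l \bar c_l^*\bar c_l) = \sum_l\bar c_l^*\bar c_l\,\bar s^*\bar s$, and Cauchy--Schwarz gives
\[
\vert \tau\Phi(\bar c_1^{2m}) - \tau\Phi(\bar c_1^{2})\vert \le C_m\,\tau\Phi(\bar s^*\bar s)^{1/2} \le C_m (2/K)^{1/2}.
\]
Thus every listed word has $\tau\circ\Phi_{L,L'}(x)$ within $O(K^{-1/2})$ of a constant independent of $\tau$ and $M$. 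That constant must be $\tau^{(L)}(x)$, which gives $\bar K_x$.

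The main obstacle is this last step: making the comparison between $\bar c_1^{2m}$ and $\bar c_1^2$ rigorous. Concretely, one must express $\bar c_1^{2}(1-\bar c_1^{2})$-type differences as a product involving $\bar s^*\bar s$ inside $\tZ{L}{L+1}$, so that Lemma \ref{lemma:starstrace} and Cauchy--Schwarz apply with constants independent of $M$ and $\tau$. If an exact algebraic identity proves awkward, I would instead work in the concrete picture $Z_{L,L+1}$. There $\bar c_1^2$ corresponds to $(1-\mathrm{id})\otimes e_{11}\otimes 1$ and $\bar{s}^*\bar{s}$ to a function supported where $\mathrm{id}>0$, and $f(\bar c_1^2)\in\overline{\bar s^*\bar s\,A\,\bar s^*\bar s}$ for $f$ vanishing at $0,1$; I would then use continuity of functional calculus together with the uniform estimate $\tau\Phi(\bar s^*\bar s)\le 2/K$.
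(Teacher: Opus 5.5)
Your argument is correct in substance, but it takes a different route from the paper. The paper argues by contradiction in two lines. If the statement failed, one could choose $K_n\geq n$, $M_n$ and $\tau_n$ with $\vert\tau_n\circ\Phi_{L,L_n}(x)-\tau^{(L)}(x)\vert\geq\varepsilon$. This contradicts Proposition~\ref{prop:defitau_L}, which says the limit exists and equals $\tau^{(L)}(x)$ for \emph{every} choice of sequences with $K_n\to\infty$. So the paper obtains uniformity in $M$ and $\tau$ softly, from the independence of choices.

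You instead redo the word-by-word analysis behind Proposition~\ref{prop:defitau_L}, with explicit bounds uniform in $\tau$ and $M$:
\begin{itemize}
\item words containing $\bar{s}$ are $O(K^{-1/2})$ by Lemma~\ref{lemma:starstrace};
\item off-diagonal words have trace exactly $0$;
\item diagonal words reduce to $\tau\circ\Phi_{L,L'}(\bar{c}_1^{2m})$, which you pin to $1/L$.
\end{itemize}
This buys three things. It is more self-contained, since it supplies the details that the proof of Proposition~\ref{prop:defitau_L} only sketches. It identifies $\tau^{(L)}$ concretely as the trace factoring through $\mathbb{M}_L$. And it gives explicit per-word rates in $K$, in the spirit of Remark~\ref{remark:Explicitchoice}. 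The paper's version is shorter but purely qualitative.

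Two small points need tidying.

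First, the step you flag as the main obstacle is immediate. Multiply $\sum_l \bar{c}_l^*\bar{c}_l+\bar{s}^*\bar{s}=1$ by $\bar{c}_1^2$ and use orthogonality to get the exact identity $\bar{c}_1^2-\bar{c}_1^4=\bar{c}_1^2\bar{s}^*\bar{s}$, where $\bar{s}^*\bar{s}$ commutes with $\bar{c}_1$. Since $0\leq t-t^m\leq (m-1)(t-t^2)$ on $[0,1]$, this gives $0\leq \bar{c}_1^2-\bar{c}_1^{2m}\leq (m-1)\bar{s}^*\bar{s}$. Hence $0\leq\tau\circ\Phi_{L,L'}(\bar{c}_1^2-\bar{c}_1^{2m})\leq 2(m-1)/K$, with no need for Cauchy--Schwarz or the concrete picture.

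Second, your justification of $\tau(\bar{c}_i)=0$ via $\bar{c}_i\bar{c}_1=0$ only covers powers $k\geq 1$. For $k=0$, write $\bar{c}_i=\lim_n(\bar{c}_1^2)^{1/n}\bar{c}_i$, which holds because $\bar{c}_i\bar{c}_i^*=\bar{c}_1^2$. Then apply the trace property to each term.
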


\begin{proof}
Assume that there are $x \in \tZ{L}{L+1}$ and $\varepsilon>0$ such that for any $n \in \N$ there is $K_n \geq n$, $M_n \in \N$, and a tracial state $\tau_n$ on $\tZ{L_n}{L_n+1}$ with 
\begin{equation*}
\vert \tau_n \circ \Phi_{L,L_n} (x) - \tau^{(L)}(x) \vert \geq \varepsilon,
\end{equation*}
where $L_n:= L'(L,M_n,K_n)$ as in (\ref{eq:choiceofL'}). 
Then, for the sequences $(K_n)_n$, $(M_n)_n $, and $(\tau_n)_n$ 
\begin{equation*}
\lim\limits_{n\to \infty}\tau_n \circ \Phi_{L,L_n} (x) \neq  \tau^{(L)}(x)
\end{equation*}
holds. This contradicts the definition of $\tau^{(L)}$ from Proposition \ref{prop:defitau_L}, which was independent of the choices made.
\end{proof}

Now, let us derive a tool to choose the sequence $(M_n)_n$ that ensures that the constructed inductive limit is simple.
The idea is to arrange the inductive system in such a way that, for each stage, there is a dense subset of elements that will generate the whole building block as an ideal at some later stage in the inductive system.\\
The main conceptual obstacle is the following: Although the description of dimension drop algebras as entangled matrix cones we work with is useful for defining $^*$-homomorphisms or handling tracial states, the ideal structure is not as easily accessible as in the standard picture. In order to get a better understanding of the ideals, let us introduce some auxiliary c.p.c.\@ order zero maps. For now, take $L \geq 2$ and recall that $\bar{c}_1, \dots, \bar{c}_L,\bar{s}$ denote the generators of $\tZ{L}{L+1}$. Then, observe that the elements $(\bar{s}^*\bar{s})^\tinyfrac{1}{2}, \bar{s}^*\bar{c}_1, \dots, \bar{s}^*\bar{c}_L$ satisfy the relations of $C_0((0,1], \M_{L+1})$ from (\ref{eq:relationscone}). Thus, we get a $^*$-homomorphism
\begin{equation*}
 C_0((0,1], \M_{L+1}) \to \tZ{L}{L+1},\;\; \mathrm{id}^\frac{1}{2} \otimes e_{0,i} \mapsto \begin{cases} (\bar{s}^*\bar{s})^\frac{1}{2} & \text{ if } i = 0, \\
\bar{s}^*\bar{c}_i & \text{ if } i \geq 1, \end{cases}
\end{equation*}
where we denote the matrix units of $ \M_{L+1}$ by $\{ e_{i,j} \hspace{0.1em} : \hspace{0.1em} 0 \leq i,j \leq L\}$.
This induces a c.p.c.\@ order zero map
$\hat{\sigma}_{L+1}:\M_{L+1} \to \tZ{L}{L+1}$ with supporting $^*$-homomorphism $\pi_{\hat{\sigma}_{L+1}}$.
Similarly, we get a $^*$-homomorphism 
\begin{equation*}
C_0((0,1], \M_{L}) \to \tZ{L}{L+1},\;\; \mathrm{id}^\frac{1}{2} \otimes e_{1,i} \mapsto (1-\bar{s}\bar{s}^*)^\frac{1}{2} \bar{c}_i
\end{equation*}
inducing a c.p.c.\@ order zero map $\hat{\sigma}_L:\M_{L} \to \tZ{L}{L+1}$ with supporting $^*$-homomorphism $\pi_{\hat{\sigma}_{L}}$. In the same way, we define yet another c.p.c.\@ order zero map  $\hat{\theta}: \mathbb{M}_{L} \otimes \mathbb{M}_{L+1} \to \tZ{L}{L+1}$ induced by
\begin{equation*}
\{ \bar{c}_i \hat{\sigma}_{L+1}(1_{L+1})^\frac{1}{2 } \pi_{\hat{\sigma}_{L+1}}(e_{0,j})\, : \; 1 \leq i \leq L, \, 0 \leq j \leq L\}
\end{equation*}
which satisfies the relations  of $C_0((0,1], \M_L \otimes \M_{L+1})$ using $\mathbb{M}_L \otimes \mathbb{M}_{L+1} \cong \mathbb{M}_{L(L+1)}$. Now, take piecewise linear functions  $a,b \in C_0((0,1])$ defined as in Figure \ref{fig:fg}.
\begin{figure}[h!]
    \centering
	\begin{tikzpicture}[scale=0.825]
	\draw[gray, dashed] (1,2) -- (6,2) node[black, left, at start] {$1$};
    \draw[black] (1,0) -- (6,0) ;
    \draw[black]  (1,0.1) -- (1,-0.1) node[below]{$0$};
    \draw[black]  (6,0.1) -- (6,-0.1) node[below]{$1$};
    \draw[black]  (6,0.1) -- (6,-0.1) node[below]{$1$};
     \draw[black]  (3.5,0.1) -- (3.5,-0.1) node[below]{$\frac{1}{2}$};

\draw[black] (1,0) -- (3.5,2) node[above, at end]{$b$};
  \draw[black] (6,0) -- (3.5,2); 
  \draw[black] (6,2) -- (3.5,0.025) node[above, at start]{$a$}; 
  \draw[black] (3.5,0.025) -- (1,0.025);

      \end{tikzpicture}
	\caption{ \label{fig:fg} Graphs of $a,b \in C_0((0,1])$.}
	
\end{figure}

We put 
$\sigma_L := a(\hat{\sigma}_L)$ and $\sigma_{L+1} := a(\hat{\sigma}_{L+1})$ using functional calculus for order zero maps from \cite{WiZa}.
Moreover, let $b' \in C_0((0,1])$ with $b' \circ (\mathrm{id} - \mathrm{id}^2)=b$. Then, $\theta:= b'(\hat{\theta})$ satisfies
\begin{align*}
&\theta( e_{i,i} \otimes e_{j,j} ) = \begin{cases} b(\bar{c}_i^*\bar{c}_i)& \text{ if } j =0,\\
\pi_{\hat{\sigma}_{L+1}} (e_{j,0}) b(\bar{c}_i^*\bar{c}_i) \pi_{\hat{\sigma}_{L+1}} (e_{0,j}) & \text{ if } j \geq 1.
\end{cases} 
\end{align*}

With this setup, we now identify some prominent ideals in $\tZ{L}{L+1}$, in particular, an essential ideal. Throughout, for a subset $\{x_\gamma : \gamma \in \Gamma\} $ in some $\mathrm{C}^*$-algebra $A$, we write $\Id{x_\gamma : \gamma \in \Gamma }$ for the closed two-sided ideal in $A$ generated by the subset.

\begin{prop}
    \label{prop:essideal}
Define ideals  in $\tZ{L}{L+1}$ by 
\begin{align*}
&I_1:= \Id{ \sigma_{L}(\M_{L}), \theta(\M_L \otimes \M_{L+1})} , \\
&I_2 :=\Id{ \sigma_{L+1}(\M_{L+1}), \theta(\M_{L} \otimes \M_{L+1})}, \\
&J:= \Id{\theta(\M_L \otimes \M_{L+1})}. 
\end{align*}
Then, the following hold:
\begin{itemize}
    \item[\upshape{(i)}] We have
\begin{equation}
\label{eq:prop:essideal}
J  \cong C_0((0,1)) \otimes \M_L \otimes \M_{L+1}, \;\; \tZ{L}{L+1} /I_1 \cong \M_{L+1},\; \; \tZ{L}{L+1} / I_2 \cong \M_{L}.
\end{equation}
\item[\upshape{(ii)}] $J \trianglelefteq \tZ{L}{L+1}$ is essential, i.e.\@ $xJ =0$ implies $x=0$ for any positive $x \in \tZ{L}{L+1}$.
\item[\upshape{(iii)}] \vspace{2pt} We have $J = \Id{q_i: 1 \leq i\leq L }$ where 
$q_i:= \bar{c}_i^*\bar{c}_i \bar{s}^*\bar{s}$  for $1 \leq i \leq L.$
\end{itemize}
\end{prop}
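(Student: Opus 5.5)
The plan is to move everything to the concrete picture (\ref{eq:defidimdrop}) via the isomorphism $\tZ{L}{L+1}\cong Z_{L,L+1}$ of Proposition \ref{prop:sato}, and to read off the ideals from point evaluations. Recall that the irreducible representations of $Z_{L,L+1}$ are, up to equivalence, $\mathrm{ev}_t$ for $t\in(0,1)$ (onto $\M_L\otimes\M_{L+1}$), the compression of $\mathrm{ev}_0$ onto $\M_L$, and the compression of $\mathrm{ev}_1$ onto $\M_{L+1}$. Closed ideals therefore correspond to closed subsets $F\subset[0,1]$: the ideal $I_F$ consists of the $f$ that vanish on $F$.

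\textbf{Step 1.} Fix the isomorphism of Proposition \ref{prop:sato} explicitly, as in \cite{RWi}/\cite{Sato}, so that $\bar c_1,\dots,\bar c_L,\bar s$ are concrete matrix-valued functions. I then compute the images of $\hat\sigma_L$, $\hat\sigma_{L+1}$ and $\hat\theta$, and hence of $\sigma_L=a(\hat\sigma_L)$, $\sigma_{L+1}=a(\hat\sigma_{L+1})$ and $\theta=b'(\hat\theta)$. This is the step I expect to be the main obstacle. The entangled generators do not split as a tensor product, so it has to be checked carefully which endpoint each cone is concentrated at. In particular one must verify:
\begin{itemize}
\item $\hat\sigma_{L+1}(1)$ involves $\bar s^*\bar s+\sum_i\bar s^*\bar c_i\bar c_i^*\bar s$, which should be ``full'' at the $\M_{L+1}$-endpoint;
\item $\hat\sigma_L(1)$ should be full at the $\M_L$-endpoint;
\item $\theta(e_{i,i}\otimes e_{j,j})$, a function $b(\cdot)$ vanishing at $0$ and $1$, should be supported exactly on the interior.
\end{itemize}
I would first try to avoid the full formulas and argue representation by representation instead. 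In each irreducible representation $\pi$, the relations force the spectral data: at the $\M_L$-endpoint $\pi(\bar s)=0$ and the $\pi(\bar c_i)$ form matrix units, and at the $\M_{L+1}$-endpoint the $\pi(\bar s^*\bar c_i)$ and $\pi((\bar s^*\bar s)^{1/2})$ generate $\M_{L+1}$ as a full cone at level $1$.

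\textbf{Step 2.} With these evaluations in hand, part (i) follows from ideal-theoretic bookkeeping.
\begin{itemize}
\item $\theta$ vanishes at both endpoints. At each interior point its values $\theta(e_{i,i}\otimes e_{j,j})$, $1\le i\le L$, $0\le j\le L$, are nonzero, mutually orthogonal, and of rank one in $\M_L\otimes\M_{L+1}$ (there are $L(L+1)$ of them), because $b>0$ on $(0,1)$. So the $\theta$-part generates $I_{\{0,1\}}=C_0((0,1))\otimes\M_L\otimes\M_{L+1}=J$.
\item Since $a$ vanishes on $[0,\tfrac12]$ and is $1$ at $1$, $\sigma_L$ is nonzero at the $\M_L$-endpoint and vanishes at the other endpoint. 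Hence $I_1=I_{\{1\}}$ (with $1$ the $\M_{L+1}$-endpoint) and $\tZ{L}{L+1}/I_1\cong\M_{L+1}$.
\item Symmetrically, $\sigma_{L+1}$ gives $I_2=I_{\{0\}}$ and $\tZ{L}{L+1}/I_2\cong\M_L$.
\end{itemize}

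\textbf{Step 3.} Part (ii) follows because $(0,1)$ is dense in $[0,1]$. Let $x\ge0$ with $xJ=0$. Then $x(t)$ annihilates $\M_L\otimes\M_{L+1}$ for every interior $t$, so $x(t)=0$ on $(0,1)$, and by continuity $x=0$.

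\textbf{Step 4.} For part (iii), note first that $q_i=\bar c_i^*\bar c_i\,\bar s^*\bar s$ is positive, since $\bar s^*\bar s$ commutes with $\bar c_i^*\bar c_i$ by (\ref{eq:basicCT}). Summing gives $\sum_i q_i=(1-\bar s^*\bar s)\bar s^*\bar s$.
\begin{itemize}
\item $q_i\in J$: at the $\M_L$-endpoint $\bar s=0$, and at the $\M_{L+1}$-endpoint $\sum_i\bar c_i^*\bar c_i=1-\bar s^*\bar s=0$, so $q_i$ vanishes at both endpoints.
\item $J\subset\Id{q_i: 1\le i\le L}$: the ideal generated by the $q_i$ contains $(1-\bar s^*\bar s)\bar s^*\bar s$. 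In the concrete picture this element is invertible at every interior point, as $\bar s^*\bar s$ has spectrum in $(0,1)$ there (this is read off from Step 1). So $\Id{q_i}$ is not contained in the kernel of any $\mathrm{ev}_t$ with $t\in(0,1)$, and hence $\Id{q_i}\supseteq I_{\{0,1\}}=J$.
\end{itemize}
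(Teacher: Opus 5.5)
Your strategy is sound: pass to $Z_{L,L+1}$, use $\mathrm{Prim}(Z_{L,L+1})\cong[0,1]$, and identify each ideal by its hull. Steps 2 and 3 are correct. However, Step 4 contains two false claims. Both conclusions survive with short repairs, given below.

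\textbf{Step 1 is easier than you expect.} Note that $\hat\sigma_{L+1}(1)$ is central and $\hat\sigma_L(1)=1-\hat\sigma_{L+1}(1)$. Moreover, $\sigma_L(1)=a(1-\hat\sigma_{L+1}(1))$ and $\theta(1)=b(\hat\sigma_{L+1}(1))$. Dimension counting then shows that $\hat\sigma_{L+1}(1)$ acts as $0$, as $1$, and as a scalar in $(0,1)$ in the irreducible representations of dimension $L$, $L+1$ and $L(L+1)$, respectively.

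\textbf{Comparison with the paper.} The paper never uses the concrete model.
\begin{itemize}
\item It shows that $\sigma_L(1),\sigma_{L+1}(1),\theta(1)$ are central and sum to $1$, that $\sigma_L\perp\sigma_{L+1}$, and that the three maps generate $\tZ{L}{L+1}$.
\item It writes down the isomorphism $\alpha\colon f\otimes x\otimes y\mapsto f(\hat\sigma_{L+1}(1))\pi_\theta(x\otimes y)$ onto $J$, which is injective because $\tZ{L}{L+1}$ is projectionless.
\item The quotients come from $Q_{I_1}\circ\sigma_{L+1}$ and $Q_{I_2}\circ\sigma_L$ being unital $^*$-homomorphisms.
\item Essentiality follows from $\tZ{L}{L+1}/J\cong\M_L\oplus\M_{L+1}$ together with projectionlessness.
\end{itemize}
Your route makes the ideal lattice transparent and reduces everything to pointwise checks, but it yields only abstract isomorphisms. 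The paper's route needs nothing beyond projectionlessness. It also produces the explicit $\alpha$ and properties (a)--(d), which are reused later for the diagonal $\tD{L}$ and its spectrum.

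\textbf{The errors in Step 4.} Both come from treating $\bar s^*\bar s$ as the central coordinate. That role is played by $\hat\sigma_{L+1}(1)=\bar s^*\bar s+\sum_i\bar c_i^*\bar s\bar s^*\bar c_i$. Your Step 1 formula is off here: $\bar s^*\bar c_i\bar c_i^*\bar s$ equals $\bar s^*\bar s$, not $\bar c_i^*\bar s\bar s^*\bar c_i$.

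\emph{First error.} At the $\M_{L+1}$-endpoint, $1-\bar s^*\bar s\neq0$. Whenever $\bar s\neq 0$, the relation $\bar c_1\bar s=\bar s$ forces $\|\bar c_1\|=1$, so $1-\bar s^*\bar s\geq\bar c_1^2\neq 0$. Concretely, up to unitary equivalence the $(L+1)$-dimensional irreducible representation is $\bar s\mapsto e_{1,0}$, $\bar c_i\mapsto e_{1,i}$, so $\bar s^*\bar s\mapsto e_{0,0}$. The conclusion $q_i\in J$ still holds, for either of two reasons:
\begin{itemize}
\item $q_i=\bar c_i^*\bar c_i-(\bar c_i^*\bar c_i)^2$ by orthogonality, and $\bar c_i^*\bar c_i$ is a projection at both endpoints;
\item as in the paper, $q_i$ is a function of $b(\bar c_i^*\bar c_i)=\theta(e_{i,i}\otimes e_{0,0})$.
\end{itemize}

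\emph{Second error.} At interior points $\bar s^*\bar s$ does not have spectrum in $(0,1)$, so $(1-\bar s^*\bar s)\bar s^*\bar s$ is never invertible there. To see this, note that $\bar s\bar c_j^*\bar s=0$, since its squared norm is $\|\bar s^*(\bar c_1^2-\bar c_1^4)\bar s\|=0$. Hence $\bar s^*\bar s$ is orthogonal to $\hat\sigma_{L+1}(e_{j,j})=\bar c_j^*\bar s\bar s^*\bar c_j$. More precisely, in an irreducible representation $\pi$ of dimension $L(L+1)$ one has $\pi\circ\hat\sigma_{L+1}=\mu\rho$, with $\mu\in(0,1)$ and $\rho$ a unital $^*$-homomorphism. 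Thus $\pi(\bar s^*\bar s)=\mu\rho(e_{0,0})$ has eigenvalue $\mu$ with multiplicity $L$ and eigenvalue $0$ with multiplicity $L^2$.

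Your hull argument only needs non-vanishing at interior points, not invertibility. This holds, since $\pi((1-\bar s^*\bar s)\bar s^*\bar s)=\mu(1-\mu)\rho(e_{0,0})\neq0$. With these two repairs your proof goes through.
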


\begin{proof}
(i): First, observe that the following hold:
\begin{spacing}{1.1}
\begin{enumerate}
\item[\upshape{(a)}] $ \sigma_L(1), \sigma_{L+1}(1), \theta(1) \in Z(\tZ{L}{L+1})$, where $Z(\tZ{L}{L+1})$ denotes the centre, i.e.\@ all elements in $\tZ{L}{L+1}$ which commute with any other element, 
\item[\upshape{(b)}] $  \sigma_L(1) + \sigma_{L+1}(1) +  \theta(1) = 1$, 
\item[\upshape{(c)}] $ \sigma_{L} \perp \sigma_{L+1}$, 
\item[\upshape{(d)}] $\mathrm{C}^*( \sigma_L(\M_L), \sigma_{L+1}(\M_{L+1}), \theta(\M_L \otimes \M_{L+1})) = \tZ{L}{L+1}$.
\end{enumerate}
\end{spacing}
To check these, let us compute
\begin{nalign}
\label{eqproofprop:essideal0}
\sigma_L(1)&= a( \sum\limits_{i=1}^L \bc_i^*(1-\bs\bs^*)\bc_i) = a\circ(1 - \mathrm{id})(\hat{\sigma}_{L+1}(1)),\\
\sigma_{L+1}(1) & =a( \hat{\sigma}_{L+1}(1)) = a(\bs^*\bs) +\sum\limits_{i=1}^La(\bc_i^*\bs\bs^*\bc_i),\\
\theta(1) &= b(\bs^*\bs) + \sum\limits_{j=1}^{L} b(\bc_j^*\bs\bs^*\bc_j)
 = b(\hat{\sigma}_{L+1}(1)).
\end{nalign}
Now, for (a) it suffices to show that $\hat{\sigma}_{L+1}(1)$ lies in the centre, which is straightforward. Statements (b) and (c) are immediate from the definition of $a,b$. To see (d), we first note that $ \mathrm{C}^*(\hat{\sigma}_{L}(\mathbb{M}_L), \hat{\sigma}_{L+1}(\mathbb{M}_{L+1})) = \tZ{L}{L+1}$ since each generator of $\tZ{L}{L+1}$ is contained  in the left hand side which is elementary using (\ref{eq:Therelations}). Thus, it suffices to show that
\begin{equation*}
    \hat{\sigma}_L(\M_L), \hat{\sigma}_{L+1}(\M_{L+1}) \subset \mathrm{C}^*( \sigma_L(\M_L), \sigma_{L+1}(\M_{L+1}), \theta(\M_L \otimes \M_{L+1}))=: A.
\end{equation*}
Indeed, we have that $\hat{\theta}(\M_L \otimes \M_{L+1}) \subset A$ since $\hat{\theta}(1)= f(\theta(1))$ for $f \in C_0((0,1])$ with $ f \circ b =\mathrm{id}- \mathrm{id}^2 $. Thus $ \hat{\theta}(1) \pi_{\hat{\sigma}_{L+1}}(e_{i,j}) \in A, $ for $1 \leq i \leq L,$ $ 0 \leq j \leq L$ as
\begin{equation*}
 \hat{\theta}(1) \pi_{\hat{\sigma}_{L+1}}(e_{i,j}) = \sum\limits_{l=1}^L \pi_{\hat{\sigma}_{L+1}} (e_{i,0}) \bc_l^*\hat{\sigma}_{L+1}(1) \bc_l   \pi_{\hat{\sigma}_{L+1}}  (e_{0,j}) \in \hat{\theta}(\mathbb{M}_L \otimes \mathbb{M}_{L+1}) \subset A. 
\end{equation*}
As $\theta(1)$ is a function in $\hat{\theta}(1)$, this implies
\begin{equation*}
    b(\hat{\sigma}_{L+1}(1))  \pi_{\hat{\sigma}_{L+1}}(e_{i,j}) =\theta(1) \pi_{\hat{\sigma}_{L+1}}(e_{i,j})  \in A.
\end{equation*}
\begin{spacing}{1,1}
Since $a(\hat{\sigma}_{L+1}(1)) \pi_{\hat{\sigma}_{L+1}}(e_{i,j}) \in A $ and $\mathrm{C}^*(a,b) = C_0((0,1])$, we get $\hat{\sigma}_{L+1}(1) \pi_{\hat{\sigma}_{L+1}}(e_{i,j}) \in A$ for any $i,j$, hence $\hat{\sigma}_{L+1}(\M_{L+1}) \subset A.$ The proof of $\hat{\sigma}_L(\mathbb{M}_L) \subset A$ goes analogously. 
    
\end{spacing}
Now, to show the first identification in (\ref{eq:prop:essideal}), we observe that 
\begin{equation}
\label{eq:proof:prop:essideal:1}
J = \mathrm{C}^* (\theta(\M_L \otimes \M_{L+1}), \theta(1) \sigma_{L}(1), \theta(1 ) \sigma_{L+1}(1) )=:B . 
\end{equation} 
Indeed, we have $\bc_i \theta(1), \bs \theta(1)\in B$ due to basic computations. Thus, $B$ is a closed ideal as $\theta(1)\in Z(\tZ{L}{L+1})$, which suffices to show $J=B$.
Next, let us consider the $^*$-homomorphism
\begin{nalign}
\label{eq:definalpha}
\alpha: C_0((0,1)) \otimes \M_L \otimes \M_{L+1} &\to \mathrm{C}^* (\theta(\M_L \otimes \M_{L+1}),\hspace{0.1em} \theta(1) \sigma_{L}(1),\hspace{0.1em} \theta(1 ) \sigma_{L+1}(1) ),\\
f \otimes x \otimes y & \mapsto f(\hat{\sigma}_{L+1}(1)) \pi_{\theta}(x \otimes y),
\end{nalign}
where $\pi_\theta$ is the supporting $^*$-homomorphism of $\theta$.
Since $C_0((0,1)) = \mathrm{C}^*(b, ab, a \circ (1 - \mathrm{id}) b)$, $\alpha$ is fully determined by the assignments
\begin{align*}
a \circ (1 - \mathrm{id})b \otimes x \otimes y & \mapsto \sigma_{L}(1) \theta(1) \pi_{\theta}(x \otimes y)= \sigma_{L}(1) \hspace{0.1em}\theta(x \otimes y), \\
ab \otimes x \otimes y & \mapsto \sigma_{L+1}(1) \theta(1) \pi_{\theta}(x \otimes y) =  \sigma_{L+1}(1)\hspace{0.1em} \theta(x \otimes y), \\
b \otimes x \otimes y &\mapsto \theta(x \otimes y).
\end{align*}
This also shows that $\alpha$ is surjective. For injectivity, assume that $f \otimes x\otimes y \in \ker (\alpha)$ with $f\neq0$. Then, $f \otimes 1 \otimes e_{0,0} \in \ker (\alpha) $ as $\ker (\alpha)$ is a closed ideal and we have
\begin{equation*}
0= f(\hat{\sigma}_{L+1}(1)) \theta(1 \otimes e_{0,0}) = f(\hat{\sigma}_{L+1}(e_{0,0})) \theta(1 \otimes e_{0,0})= fb(\bs^*\bs),
\end{equation*}
hence the spectrum of $\bs^*\bs$ has a gap. This contradicts the fact that $\tZ{L}{L+1}$ contains no non-trivial projection.

In order to show $\tZ{L}{L+1} /I_1 \cong \M_{L+1}$, consider the c.p.c.\@ order zero map
\begin{equation*}
Q_{I_1} \circ \sigma_{L+1} : \M_{L+1} \to \tZ{L}{L+1} / I_1,
\end{equation*}
 where $Q_{I_1}$ denotes the quotient map. This map is non-trivial as $\sigma_{L+1}(1) \notin I_1$. Indeed, $\sigma_{L+1}(1) \in I_1$ would imply $\sigma_{L+1}(1) \in J $ as $\sigma_L \perp \sigma_{L+1}$ and $\sigma_{L+1}(1) \in Z(\tZ{L}{L+1})$.  Let $(m_n)_n$ be an approximate unit of $C_0((0,1])$. Then, $(m_n(\theta(1)))_n$ is an approximate unit of $J$, but $ m_n(\theta(1)) \sigma_{L+1}(1) \nrightarrow \sigma_{L+1}(1)$ by (\ref{eqproofprop:essideal0}).\\ 
 Moreover,  $Q_{I_1} \circ \sigma_{L+1}$ is a $^*$-homomorphism since it maps the unit to a projection by (b). By simplicity of $\M_{L+1}$  and (d), the map is even a $^*$-isomorphism. The third isomorphism in (\ref{eq:prop:essideal}) can be defined analogously with $Q_{I_1} \circ \sigma_{L+1}$ replaced with $Q_{I_2} \circ \sigma_{L}$.\\

(ii): Let $Q_J: \tZ{L}{L+1} \to \tZ{L}{L+1} / J$ denote the quotient map. Then, $Q_J \circ \sigma_{L+1}$ is a $^*$-homomorphism as the unit is sent to a projection by (b) and (c). The map is non-trivial since $\sigma_{L+1}(1) \notin J$, as observed above, and hence injective. Similarly, one checks that $Q_J \circ \sigma_L$ is an injective $^*$-homomorphism. With these observations and (c), we now get a $^*$-homomorphism
\begin{equation*}
      \mathbb{M}_L \oplus \mathbb{M}_{L+1} \to \tZ{L}{L+1}/J,\;\; (x,y) \mapsto Q_J \circ \sigma_L(x) + Q_J\circ \sigma_{L+1} (y)
\end{equation*}
which is injective as $Q_J \circ \sigma_L$ and $ Q_J \circ \sigma_{L+1}$ are and by (c). It is also surjective by (c) and (d),
hence $\tZ{L}{L+1}/ J \cong \mathbb{M}_L \oplus \mathbb{M}_{L+1}$. Composing $Q_{J}$ with the projection onto $\mathbb{M}_L$, respectively $\mathbb{M}_{L+1}$, yields $^*$-homomorphisms
\begin{align*}
\pi_0: \tZ{L}{L+1} \twoheadrightarrow \M_L \oplus \M_{L+1} \twoheadrightarrow \M_L,\;\;\;
\pi_1: \tZ{L}{L+1} \twoheadrightarrow \M_L \oplus \M_{L+1}\twoheadrightarrow \M_{L+1}.
\end{align*}
Now, let us assume that there is $x \geq 0$ in $\tZ{L}{L+1}$ with $xJ =0$.
Without loss of generality, we may suppose that $\pi_0(x)$ is a projection and $\pi_1(x) = 0 $. Take $(u_n)_n$ an approximate unit of $J$ and note that $x^2-x = (x^2-x)(1-u_n)$ for any $n \in \N$. We get
\begin{equation*}
\norm{x^2-x} = \lim\limits_{n \to \infty} \norm{(x^2-x)(1-u_n)} =  \norm{Q_J(x^2-x)} = \norm{\pi_0(x^2-x)} =0,
\end{equation*}
i.e.\@ $x \in \tZ{L}{L+1}$ is a projection, which implies $x=0$.\\

(iii): To show $J \subset \Id{q_i:  1 \leq i\leq L}=: I$, it suffices to check that $ \theta(1) \in I$. Recall $\theta(1)$ from (\ref{eqproofprop:essideal0}). Since  $\bc_i^*\bc_i-(\bc_i^*\bc_i)^2 \in I$ for any $1 \leq i \leq L$, we have that $b(\bc_i^*\bc_i) \in I$ and thus $b(\bs^*\bs) \in I$ by (\ref{eq:polynomialtrick2}). Further, we have $\bc_j^*\bs(\bc_i^*\bc_i-(\bc_i^*\bc_i)^2)\bs^*\bc_j \in I$ for $1 \leq i,j \leq L$ which implies $\bc_j^*\bs f(\bc_i^*\bc_i)\bs^*\bc_j \in I$
for any $f \in C_0((0,1))$. Hence, 
\begin{equation*}
\bc_j^*\bs f(\bs^*\bs)\bs^*\bc_j= \bc_j^*\bs\bs^*f(\bs\bs^*)\bc_j \in I
\end{equation*}
and thus $\bc_j^*f(\bs\bs^*)\bc_j \in I$ for any such $f$ and in particular for $f=b$. For the reverse inclusion, note that $b(\bc_i^*\bc_i) \in J$ for $1 \leq i \leq L$ yields $q_i = \bc_i^*\bc_i -(\bc_i^*\bc_i)^2  \in J$. 
\end{proof}
From the statements above, we will mainly need that $J$ is an essential ideal for constructing the inductive limit. The other observations will play a crucial role when we describe a diagonal in $\tZ{L}{L+1}$ later. The following proposition is the last ingredient required for the construction.
\begin{prop}
\label{prop:maintoolsimple}
Let $L \geq 2$ and $x \in \tZ{L}{L+1}$ be positive and non-zero. Then, there exists $M\in \N$ such that $\Id{\Phi_{L,L'}(x)}= \tZ{L'}{L'+1}$, where $L':= L'(L,M,K)$ as in (\ref{eq:choiceofL'}) for $K \geq 2$ arbitrary.
\end{prop}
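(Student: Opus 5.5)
The plan is to reduce the ideal statement to a pointwise non-vanishing statement, and then to use the $M$ intermediate blocks in $\ctil{l}{2}$ and $\stil{2}$ to force non-vanishing everywhere.

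\emph{Step 1: reduction to irreducible representations.} For a positive element $y$ of a $\mathrm{C}^*$-algebra, $\Id{y}$ is proper if and only if $\pi(y)=0$ for some irreducible representation $\pi$. By Proposition~\ref{prop:sato} and (\ref{eq:defidimdrop}), the irreducible representations of $\tZ{L'}{L'+1}\cong Z_{L',L'+1}$ are of three kinds: the evaluations $\mathrm{ev}_t$ for $t\in(0,1)$, and the two endpoint representations $\pi_0,\pi_1$ onto $\M_{L'}$ and $\M_{L'+1}$. The same holds for $\tZ{L}{L+1}$. It therefore suffices to find $M$ such that $\pi(\Phi_{L,L'}(x))\neq 0$ for every such $\pi$ of the target.

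\emph{Step 2: where $x$ lives.} Since $J$ is essential (Proposition~\ref{prop:essideal}(ii)), we have $xJ\neq 0$. Under $J\cong C_0((0,1))\otimes\M_L\otimes\M_{L+1}$, this means $\mathrm{ev}_{t_0}(x)\neq 0$ for some $t_0\in(0,1)$. By continuity, $x$ is non-zero on an open interval $U=(t_0-\delta,t_0+\delta)\subset(0,1)$.

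\emph{Step 3: decompose $\pi\circ\Phi_{L,L'}$ (the main obstacle).} For each irreducible $\pi$ of the target, I want to show that $\pi\circ\Phi_{L,L'}$ is unitarily equivalent to a direct sum of irreducible representations of the source. I would do this by reading off the spectra of $\tilde c_l^*\tilde c_l$ and $\tilde s^*\tilde s$ in $\pi$, using the formulas from the proof of Proposition~\ref{theo:starhom}. These elements are the functions $g$, $g-\frac{i}{M}h$, $\frac{i}{M}h$ and $h$ applied to $c_j^*c_j$ and $s^*s$. The matrix-unit structure of $\pi_\psi$ and $\pi_\varphi$ (Lemma~\ref{lemma:cpctricks}) should then identify the summands. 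A point $u$ of the target interval should contribute:
\begin{itemize}
\item source points determined by $h(u)$, coming from the $\stil{1}$-part;
\item source endpoint representations, coming from the $K'$ blocks where $g\equiv 1$ and $h\equiv 0$;
\item for each $1\le i\le M$, source points whose $\bar s^*\bar s$-eigenvalue is $\frac{i}{M}h(u')$, coming from the $\stil{2}$-part, where $u'$ ranges over the corresponding points in the left-hand blocks.
\end{itemize}
The delicate part is the bookkeeping of the indices in (\ref{eq:defictilde:b}) and (\ref{eq:defi:stilde:b}). I would carry this out in the visual picture of Subsection~\ref{subsec:visual}: inside the left-hand blocks, $c_j^*c_j$ is constant, so $h(c_j^*c_j)$ takes the value $1$ there.

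\emph{Step 4: choose $M$ and conclude.} If Step 3 works as expected, then for every target representation $\pi$, the representation $\pi\circ\Phi_{L,L'}$ contains the source evaluations at the points $\frac{i}{M}$ for all $1\le i\le M$, or at least a $\frac1M$-dense set of points in $(0,1)$. This is because each target point meets one of the $L$ blocks of $M$ entries whose $\bar s^*\bar s$-values are $\frac{i}{M}$. Choose $M$ with $\frac1M<\delta$, so that some $\frac{i}{M}$ lies in $U$. Then $\pi(\Phi_{L,L'}(x))$ has a non-zero direct summand $\mathrm{ev}_{i/M}(x)$, hence is non-zero, and Step 1 finishes the proof. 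The choice of $M$ depends only on $U$, that is, on $x$, and not on $K$, since $K$ only affects the number of $g$-blocks. This gives the required uniformity in $K\ge 2$.

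The endpoint representations $\pi_0,\pi_1$ need separate treatment. I expect them to factor through the same block structure with $c_j^*c_j\in\{0,1\}$, so that they again contain all the $\frac{i}{M}$-evaluations.
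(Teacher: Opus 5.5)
Your strategy is viable and genuinely different from the paper's: you show $\pi(\Phi_{L,L'}(x))\neq 0$ for every irreducible $\pi$ of the target by finding a source summand at parameter $\frac iM$. As written, though, the decisive Step~3 is only an expectation, and Step~4 depends on it.

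The full decomposition of $\pi\circ\Phi_{L,L'}$ with index bookkeeping is unnecessary. The heuristic that ``each target point meets one of the $L$ blocks'' also conflates two things: points of the diagonal's spectrum $X_{L'}$, which is what Subsection~\ref{subsec:visual} depicts, and irreducible representations of $\tZ{L'}{L'+1}$. An irreducible representation sees \emph{all} blocks at once. That is exactly what makes the argument uniform in $\pi$, so the endpoint representations need no separate treatment.

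What is missing are three short facts.
\begin{itemize}
\item[(1)] For every irreducible $\pi$ of $\tZ{L'}{L'+1}$ and every $k$, $1$ is an eigenvalue of $\pi(\cstarc{k})$.
\begin{itemize}
\item If $\pi(s)\neq0$: $\cstarc{k}$ acts as a unit on $c_k^*ss^*c_k$, since $c_kc_k^*=c_1^2$ and $c_1s=s$. Moreover $\|\pi(c_k^*ss^*c_k)\|=\|\pi(s^*c_1^2s)\|=\|\pi(s^*s)\|\neq0$.
\item If $\pi(s)=0$: the $\pi(\cstarc{k})$ are mutually orthogonal with sum $1$, hence projections. They all have the same norm $\|\pi(c_1)\|^2$, which is non-zero because they sum to $1$.
\end{itemize}
\item[(2)] If $\pi(\cstarc{K'+i})\xi=\xi$, then $\pi(\cstarc{l})\xi=0$ for $l\neq K'+i$, and $\pi(s^*s)\xi=0$ by $s^*s=1-\sum_l\cstarc{l}$. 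The proof of Proposition~\ref{theo:starhom} gives $\Phi_{L,L'}(\bs^*\bs)=h(s^*s)+\sum_{l,i}\frac iM h(\cstarc{lK'+(l-1)M+i})$. Hence $\pi(\Phi_{L,L'}(\bs^*\bs))\xi=\frac iM\xi$.
\item[(3)] In every irreducible $\rho$ of $\tZ{L}{L+1}$, the central element $\hat\sigma_{L+1}(1)$ acts as a scalar $t_\rho$. Since $\hat\sigma_{L+1}$ is order zero, $\rho(\bs^*\bs)=\rho(\hat\sigma_{L+1}(e_{0,0}))=t_\rho P$ for a projection $P$. So the eigenvalue $\frac iM\in(0,1)$ forces a summand $\rho$ of $\pi\circ\Phi_{L,L'}$ with $t_\rho=\frac iM$. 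Via $\alpha$ from Proposition~\ref{prop:essideal}, this is the interior evaluation at $\frac iM$.
\end{itemize}

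Fact (3) also shows that Step~2 must be run in the coordinate $t_\rho$ given by $\hat\sigma_{L+1}(1)$, i.e.\ via $\alpha$. It cannot go through the unspecified isomorphism of Proposition~\ref{prop:sato}, or else your interval $U$ and the points $\frac iM$ live in different coordinates. With (1)--(3) in place, Steps~1, 2 and~4 go through, and $M$ depends only on $x$.

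For comparison, the paper stays inside the universal picture throughout:
\begin{itemize}
\item It extracts $0\neq f\in C_0((0,1))$ with $f(\bs^*\bs)\in\Id{x}$, using essentiality of $J$ and the compressions $q_iyq_i$.
\item It picks $\frac iM$ with $f(\frac iM)=1$ and bounds $\Phi_{L,L'}(f(\bs^*\bs))\geq f'(\cstarc{K'+i})$ with $f'(1)=1$.
\item It proves by functional calculus that this element is full.
\end{itemize}
Your fact (1) is the representation-theoretic counterpart of that fullness computation. Your route replaces the algebraic extraction of $f(\bs^*\bs)$ with an identification of the irreducible representations of the source.
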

\begin{proof}
First, let us observe the following.\\
\textbf{Claim}: There exists $0 \neq f \in C_0((0,1))$ such that $f(\bs^*\bs) \in \Id{x}$.\\
    Indeed, note that $q_iyq_i \in \overline{\mathrm{C}^*(\bs^*\bs) \mathrm{C}^*(\bc_i^*\bc_i) }$ for $1 \leq i \leq L$ for any $y \in \tZ{L}{L+1}$, where $q_i$ is defined as in Proposition \ref{prop:essideal}. To see that, it suffices to show $q_iyq_i \in \mathrm{C}^*(\bs^*\bs) \mathrm{C}^*(\bc_i^*\bc_i) $ in the case $y \in \mathcal{W}(\bc_2, \dots, \bc_L, \bs)$, the set of words in $\bc_2, \ldots, \bc_L, \bs$ and their adjoints (cf.\@ proof of \ref{prop:defitau_L}). By the relations (\ref{eq:Therelations}) and (\ref{eq:basicCT}), we either get $q_iyq_i=0$ or
    \begin{equation*}
    q_iyq_i = (\bs^*\bs)^n q_i y' q_i = (\bs^*\bs)^{n+2} (\bc_i^* \bc_i)^m \in \mathrm{C}^*(\bs^*\bs)\mathrm{C}^*(\bc_i^*\bc_i)
    \end{equation*}
    for some $n\geq0, m\geq 2$ and $y'\in \mathcal{W}(\bc_2, \ldots, \bc_L)$. We thus have for any $y \in \tZ{L}{L+1}$ that
    \begin{equation}
    \label{eq:proof:lemma:simpleintermediategoal}
    q_iyq_i \in \overline{\mathrm{C}^*(\bs^*\bs) \mathrm{C}^*(\bc_i^*\bc_i) } \overset{(\ref{eq:polynomialtrick3})}{=} \{ f(\bc_i^*\bc_i) : f \in C_0((0,1))\}.
    \end{equation}
    
Now, since the ideal $J = \Id{q_i : 1 \leq i \leq L }\trianglelefteq \tZ{L}{L+1}$ from Proposition \ref{prop:essideal} is essential and $x \neq 0$ by assertion, we have $\{ 0 \} \neq J \Id{x}$. Therefore, there is a positive, non-zero $y \in \Id{x}$ and $1 \leq i \leq L$ such that $q_i y q_i\neq 0$. Hence, there exists  $f \in C_0((0,1))$ with $f \neq 0$ such that $f(\bc_i^*\bc_i)= q_i y q_i \in \Id{x}$ by (\ref{eq:proof:lemma:simpleintermediategoal}). This implies $f(\bc_l^*\bc_l)\in \Id{x}$ for all $1 \leq l \leq L$, and thus $f\circ ( 1 -\mathrm{id)}(\bs^*\bs)\in \Id{x}$  by (\ref{eq:polynomialtrick2}). This concludes the proof of the claim.\\
Now, let us show the statement of the proposition. Let $0 \neq f\in C_0((0,1))$ such that $f(\bs^*\bs) \in \Id{x}$, which exists by the claim. Since $f \neq 0$, there is $M \in \N$ and $1 \leq i \leq M$ such that $f(\frac{i}{M})\neq 0$. We may assume that $f(\frac{i}{M}) =1$, otherwise scale $f $ appropriately. Take $K',L'$ depending on $L,K,M$ as in (\ref{eq:choiceofL'}) and recall from the proof of Proposition \ref{theo:starhom} that $\Phi_{L,L'}(\bar{s}^*\bar{s}) = (\stil{1})^*\stil{1} + (\stil{2})^* \stil{2}$ with $\stil{1}, \stil{2}$ defined as in (\ref{eq:defi:stilde:a}), (\ref{eq:defi:stilde:b}) in terms of the generators $c_1, \ldots, c_L, s \in \tZ{L'}{L'+1}$. We observe
    \begin{equation*}
        \Phi_{L,L'}(f(\bar{s}^*\bar{s}))= f(\Phi_{L,L'}(\bar{s}^*\bar{s})) \geq f((\stil{2})^*\stil{2})) \geq f \circ (\frac{i}{M} h)(\cstarc{K'+i})
    \end{equation*}
    and put $f'=f \circ (\frac{i}{M} h)$. In particular, we have $f' \in C_0((0,1])$ with $f'(1)=1$. We set 
    \begin{equation*}
    I:= \Id{f'(\cstarc{K'+i})} \subset \Id{\Phi_{L,L'}(f(\bs^*\bs))}\subset \Id{\Phi_{L,L'}(x)}
    \end{equation*}
    and note that $f'(c_l^*c_l) \in I$
    for any $ 1 \leq l \leq L'$ by (\ref{eq:lemma:cpctricks1}),(\ref{eq:lemma:cpctricks2}). This implies
    \begin{equation*}
         f'\circ (1-\mathrm{id})(s^*s) \overset{(\ref{eq:polynomialtrick2})}{=} \sum \limits_{l=1}^{L'} f'(c_l^*c_l) \in I.
    \end{equation*}
    Now, we have for any $F \in C_0((0,1])$ that $F(ss^*)=F(ss^*)f'(c_1^2) \in I$ and hence $F(s^*s) \in I$ by Lemma \ref{lemma:cpctricks}. Taking $F= 1-  f'\circ (1- \mathrm{id})$ thus yields $1 \in I$. We conclude that $ \tZ{L'}{L'+1}= I \subset \Id{\Phi_{L,L'}(x)}$.
    \end{proof}
We now finalise our construction of the Jiang--Su algebra proving Theorem \ref{theo:constrindlim}. In the following, for  $a\in A_{\mathrm{sa}}$, we write $a_+$ for the positive part of $a$.
\begin{proof}[Proof of Theorem \ref{theo:constrindlim}]
    Let $(\varepsilon_n)_n$ be a summable, decreasing sequence of positive reals.
    Put $L_1:=2$ and fix an increasing sequence of finite subsets   $ F_1^{(1)} \subset F_1^{(2)} \subset \dots \subset (\tZ{L_1}{L_1+1})_+^1$  with dense union.\\ 
\textbf{Claim 1}: There are sequences of positive integers $(K_n)_n$, $(M_n)_n$, such that the following holds: Put $L_n:= L'(L_{n-1}, M_{n-1}, K_{n-1})$, $n \geq 2$, as in (\ref{eq:choiceofL'}), i.e.\@ we have $^*$-homomorphisms $\Phi_{L_n,L_{n+1}}:\tZ{L_n}{L_n+1} \to \tZ{L_{n+1}}{L_{n+1}+1}$ as in Proposition \ref{theo:starhom} determined by the elements in (\ref{eq:defi:tildec}), (\ref{eq:defi:tildes}). Then, we have for each $n \in \N$ that
\begin{enumerate}
\item[a)] \label{enum:proof:theo:constrindlim:a}
there is a sequence of finite subsets $ F_{n+1}^{(1)} \vspace{2pt} \subset F_{n+1}^{(2)} \subset F_{n+1}^{(3)}\subset \dots \subset (\tZ{L_{n+1}}{L_{n+1}+1})_+^1$ with dense union, such that $\Phi_{L_{n},L_{n+1}}(F_{n}^{(i+1)}) \subset F_{n+1}^{(i)}$ for $i \in \N$,
\item[b)] for each  $x \in F_{n}^{(1)}$ and any tracial state $\tau$ on $\tZ{L_{n+1}}{L_{n+1}+1}$, we have
\begin{equation*}
\vert \tau \circ \Phi_{L_{n},L_{n+1}} (x) - \tau^{(L_{n})}(x) \vert < \varepsilon_{n},
\end{equation*} where $\tau^{(L_n)}$ is defined as in Proposition \ref{prop:defitau_L}, \label{enum:proof:theo:constrindlim:b} \vspace{1pt}
\item[c)] 
 $\Id{\Phi_{L_{n},L_{n+1}}((x - \frac{\norm{x}}{2}\cdot 1)_+)} =\tZ{L_{n+1}}{L_{n+1}+1}$ for any non-zero $x \in F_{n}^{(1)}$.  \label{enum:proof:theo:constrindlim:c}
\end{enumerate}
We prove the claim by choosing $(K_n)_n,(M_n)_n$ inductively, starting with $K_1$ and $M_1$. 
Apply Proposition \ref{prop:tooltraces} to each element of the finite set $F_{1}^{(1)}$ to find $K_1$ such that 
\begin{equation*}
\vert \tau \circ \Phi_{L_1,L'} (x) - \tau^{(L_1)}(x) \vert < \varepsilon_1
\end{equation*}
holds for any $M \in \N$, each $x \in F_{1}^{(1)}$, and any  $\tau \in T(\tZ{L'}{L'+1})$, where $L'$ is chosen as in (\ref{eq:choiceofL'}). Then, apply Proposition \ref{prop:maintoolsimple} for the finite set of positive, non-zero elements
\begin{equation*}
\{ (x - \frac{\norm{x}}{2}\cdot 1)_+ : x \in F_{1}^{(1)}, x\neq 0 \} \subset (\tZ{L_1}{L_1+1})_+^1
\end{equation*}
to find $M_1\in \N$ such that $\Phi_{L_1,L_2}((x - \frac{\norm{x}}{2}\cdot 1)_+)$ generates $\tZ{L_2}{L_2+1}$ as an ideal for any non-zero $x \in F_{1}^{(1)}$, where $L_2:= L'(L_1, M_1,K_1)$. In particular, \hyperref[enum:proof:theo:constrindlim:b]{b)} and \hyperref[enum:proof:theo:constrindlim:c]{c)} are satisfied for $n=1$. Moreover, there is a sequence $ \tilde{F}_{2}^{(1)}\vspace{2pt} \subset \tilde{F}_{2}^{(2)} \subset \tilde{F}_{2}^{(3)}\subset \dots \subset (\tZ{L_{2}}{L_{2}+1})_+^1$  of finite subsets with dense union. We define $F_{2}^{(i)}:= \tilde{F}_{2}^{(i)} \cup  \Phi_{L_{1}, L_{2}}( F_{1}^{(i+1)})$, $i \in \N$. Note that $(F_{2}^{(i)})_i$ is still an increasing sequence of finite sets with dense union in $ (\tZ{L_{2}}{L_{2}+1})_+^1$. Thus,  \hyperref[enum:proof:theo:constrindlim:a]{a)} is satisfied for $n=1$.\\
Now, assume that there is $n \in \N$ such that $K_1,\dots, K_{n-1}$ and $M_1,\dots, M_{n-1}$, and thereby $L_1, \dots, L_n$, are already defined and \hyperref[enum:proof:theo:constrindlim:a]{a)}, \hyperref[enum:proof:theo:constrindlim:b]{b)}, and \hyperref[enum:proof:theo:constrindlim:c]{c)} hold for all $1 \leq m \leq n-1$. 
We apply Proposition \ref{prop:tooltraces} 
to get $K_n$ such that
\begin{equation*}
\vert \tau \circ \Phi_{L_n,L'} (x) - \tau^{(L_n)}(x) \vert < \varepsilon_n
\end{equation*}
holds for any $M \in \N$, each $x \in F_{n}^{(1)}$, and any tracial state $\tau$ on $\tZ{L'}{L'+1}$, where $L'$ is chosen as $L'(L_n, M, K_n)$ as in (\ref{eq:choiceofL'}).
As before, we now apply Proposition \ref{prop:maintoolsimple} to find $M_n \in \N$ such that $\Phi_{L_{n},L_{n+1}}((x - \frac{\norm{x}}{2}\cdot 1)_+)$ generates $\tZ{L_{n+1}}{L_{n+1}+1}$ as an ideal for any non-zero $x \in F_{n}^{(1)}$, where $L_{n+1}:= L'(L_n, M_n,K_n)$. Hence,  \hyperref[enum:proof:theo:constrindlim:b]{b)} and \hyperref[enum:proof:theo:constrindlim:c]{c)} hold. \vspace{1pt} For  \hyperref[enum:proof:theo:constrindlim:a]{a)}, take a sequence  $ \tilde{F}_{n+1}^{(1)}\vspace{2pt} \subset \tilde{F}_{n+1}^{(2)} \subset \tilde{F}_{n+1}^{(3)}\subset \dots \subset (\tZ{L_{n+1}}{L_{n+1}+1})_+^1$ of finite subsets with dense union and put $F_{n+1}^{(i)}:= \tilde{F}_{n+1}^{(i)} \cup \vspace{2pt} \Phi_{L_{n}, L_{n+1}}( F_{n}^{(i+1)})$, $i \in \N$. These are still finite and nested with dense union, and hence \hyperref[enum:proof:theo:constrindlim:a]{a)} is satisfied for $n$. This concludes the proof of Claim 1.\\
Now, we define $\tildeZ := \varinjlim( \tZ{L_n}{L_n+1}, \Phi_{L_n,L_{n+1}})$ and show that $\tildeZ$ is simple and admits a unique tracial state using \hyperref[enum:proof:theo:constrindlim:a]{a)}, \hyperref[enum:proof:theo:constrindlim:b]{b)}, and \hyperref[enum:proof:theo:constrindlim:c]{c)}. Let $\Phi_{n,\infty}:\tZ{L_n}{L_n+1} \to \tildeZ $, $n \in \N,$ denote the $^*$-homomorphism induced by the inductive limit structure and write
\begin{equation*}
\Phi_{n,m} := \Phi_{L_{m-1},L_m} \circ \dots \circ \Phi_{L_{n+1},L_{n+2}} \circ \Phi_{L_n, L_{n+1}} : \tZ{L_n}{L_{n}+1} \to \tZ{L_m}{L_m+1}
\end{equation*}
for $m \geq n+1$ to shorten notation. Note that $\Phi_{n,\infty} $ and $\Phi_{n,m}$ are injective and unital.\\ 
\textbf{Claim 2}: $\tildeZ$ is simple.\\
Let $x \in \tildeZ_+^1$ be non-zero. Then, there exists $n \in \N$ and $y' \in (\tZ{L_n}{L_n+1})_+^1$ non-zero with 
\begin{equation*}
\norm{x - \Phi_{n, \infty} (y') } \leq \frac{\norm{x}}{6}.
\end{equation*}
By \hyperref[enum:proof:theo:constrindlim:a]{a)}, there is $m \in \N$ and $y \in F_{n}^{(m)}$ non-zero such that $\norm{y - y'} \leq \frac{\norm{x}}{6}$. We get that
\begin{equation}
\label{eqtheo:constrindlimproof3}
\norm{x - \Phi_{n, \infty}(y) } \leq \frac{\norm{x}}{3},
\end{equation}
which implies $\norm{x} - \norm{y}  \leq \frac{\norm{x}}{3}$, and hence $\norm{x} \leq \frac{3}{2}\norm{y}$.  Inserting this in (\ref{eqtheo:constrindlimproof3}) yields
\begin{equation*}
\norm{x - \Phi_{n, \infty} (y) } \leq \frac{\norm{y}}{2}.
\end{equation*}
Thus, \vspace{1pt} $(\Phi_{n, \infty} (y)- \frac{\norm{y}}{2} \cdot 1)_+ \in \Id{x}$ by Lemma 2.2 in \cite{RKir}. Note that  $\Phi_{n,k}(y) \in F_{k}^{(1)}$ for any $k \geq n+m$ by construction and consequently
\begin{equation*}
\Phi_{k,k+1} ( (\Phi_{n,k}(y) - \frac{\norm{\Phi_{n,k}(y)}}{2} \cdot 1)_+) = (\Phi_{n,k+1}(y) - \frac{\norm{y}}{2} \cdot 1)_+
\end{equation*} 
generates $\tZ{L_{k+1}}{L_{k+1}+1}$ as an ideal by \hyperref[enum:proof:theo:constrindlim:c]{c)}. Therefore, we have that
\begin{align*}
\bigcup_{i \in \N} \Phi_{i, \infty}(\tZ{L_i}{ L_{i}+1}) 
 \subset \hspace{-0.75em} \bigcup_{ k \geq n+m+1} \hspace{-1em} \Phi_{k, \infty}(\Id{(\Phi_{n,k}(y) - \frac{\norm{y}}{2} \cdot 1)_+} ) 
\subset \Id{(\Phi_{n, \infty}(y)- \frac{\norm{y}}{2} \cdot 1)_+} \subset \Id{x}.
\end{align*}
This implies $ \Id{x} = \tildeZ $ since it contains a dense subset of $\tildeZ$.\\ 
\textbf{Claim 3}:\vspace{0.5pt} There is a unique tracial state on $\tildeZ$.\\
For the existence, take $n  \in \N$ and $x \in  (\tZ{L_n}{L_n+1})_+^1$. Then, the sequence  $(\tau^{(L_i)} \circ \Phi_{n,i}(x))_{i\hspace{0.1em}>\hspace{0.1em} n} \vspace{1pt}$ is Cauchy, where the tracial states $\tau^{(L_i)}$ are defined as in Proposition \ref{prop:defitau_L}. Indeed, take $\varepsilon>0$ and $N \in \N$ such that $\sum_{k \geq N} \varepsilon_k < \frac{\varepsilon}{3}$. Note that there are $m \in \N$ and $y \in F_{n}^{(m)}$ such that $\norm{x-y} < \frac{\varepsilon}{3}$ by  \hyperref[enum:proof:theo:constrindlim:a]{a)}.
As in the proof of Claim 2, we have $\Phi_{n,k}(y) \in F_{k}^{(1)}$ for any $k \geq n+m$. Thus, we get for $j > i\geq \max \{ n+m, N \}$ that
\begin{align*}
 \norm{\tau^{(L_j)} \circ \Phi_{n,j}(x) - \tau^{(L_i)} \circ \Phi_{n,i}(x)} 
& \leq  \frac{2\varepsilon}{3} + \sum\limits_{k=i}^{j-1}\norm{\tau^{(L_{k+1})} \circ \Phi_{n,k+1}(y) - \tau^{(L_k)} \circ \Phi_{n,k}(y)}\\
\overset{\hyperref[enum:proof:theo:constrindlim:b]{b)}}&{\leq} \frac{2\varepsilon}{3} + \sum\limits_{k=i}^{j-1} \varepsilon_k < \varepsilon,
\end{align*}
hence  $(\tau^{(L_i)} \circ \Phi_{n,i}(x))_{i\hspace{0.1em}> \hspace{0.1em}n}$ is Cauchy, thus convergent. With that, define 
\begin{equation*}
\tau: \bigcup\limits_{n \in \N} \Phi_{n,\infty} ( \tZ{L_n}{L_n+1}) \to \C, \;\; \tau (\Phi_{n,\infty}(x)) := \lim\limits_{i \to \infty} \tau^{(L_i)} \circ \Phi_{n,i}(x),
\end{equation*}
which is well-defined by injectivity of $\Phi_{n, \infty}$. 
Further, $\tau$ is clearly linear, positive, bounded and satisfies  $\tau(1_{\tildeZ}) =1$ and the trace property. Thus, $\tau $ extends uniquely to a tracial state on $\tildeZ$, also denoted by $\tau$.\\ 
It remains to prove that $\tau$ is unique. So let $\tau'$ be an arbitrary tracial state on $\tildeZ$. Take  $x \in \tZ{L_n}{L_n+1}$ for some $n \in \N$ and let $\varepsilon > 0$. We may assume that $x \in F_{n}^{(m)}$ for some $m \in \N$ by  \hyperref[enum:proof:theo:constrindlim:a]{a)}. Then, choose $i_0 \geq n$ such that $\varepsilon_{i} < \varepsilon$ and $\Phi_{n,i}(x) \in F_{i}^{(1)}$ for any $i \geq i_0$,  again using  \hyperref[enum:proof:theo:constrindlim:a]{a)}. With that, we compute for $i \geq i_0$ that
\begin{align*}
\vert \tau' \circ \Phi_{n,\infty}(x) - \tau^{(L_i)}\circ \Phi_{n,i}(x) \vert = \vert  (\tau' \circ \Phi_{i+1,\infty}) \circ \Phi_{i,i+1} (\Phi_{n,i}(x)) - \tau^{(L_i)}(\Phi_{n,i}(x)) \vert \overset{\hyperref[enum:proof:theo:constrindlim:b]{b)}}{<} \varepsilon.
\end{align*}
Thus, $\tau'$ and $\tau$ coincide on $\bigcup_{n \in \N} \Phi_{n,\infty} ( \tZ{L_n}{L_n+1})$ and hence on all of $\tildeZ$. 
\end{proof}
 
\begin{remark}
\label{remark:Explicitchoice}
In the construction above, we can even choose the parameters $(M_n)_n, (K_n)_n$ and thereby $(L_n)_n$ explicitly. In the proof of Proposition \ref{prop:maintoolsimple}, it suffices to ensure that the support of $f(\tilde{s}^*\tilde{s})$ in the interval induced by $s^*s$ contains $\frac{i}{M}$ for some $1 \leq i \leq M$. By the choice of $\tilde{s}^{(1)}$, this support appears shrunk by factor $4$ after each application of the connecting $^*$-homomorphism. Hence, we require $(M_n)_n$ to increase faster than $n \mapsto 4^n$, e.g.\@ we may take $M_n:=n4^n$. The sequence $(K_n)_n$ is required to have a summable sequence of inverses. The reason is the following: Using the ideal structure determined in Proposition \ref{prop:essideal}, each trace on $\tZ{L}{L+1}$ is given by a convex combination of two traces factorising through $\mathbb{M}_L, \mathbb{M}_{L+1}$, respectively, and a trace supported on $J$. The trace $\tau^{(L)}$ is exactly the trace factorising through $\mathbb{M}_L$. By the same proof as for Lemma \ref{lemma:starstrace}, we get that $\norm{\tau\circ \Phi_{L,L'}\vert_{I_2}}\leq \frac{4}{K}$, where $\tau \in T(\tZ{L'}{L'+1})$ arbitrary and $L'$ is chosen as in (\ref{eq:choiceofL'}). Hence, up to error $\frac{4}{K}$, the trace $\tau \circ \Phi_{L,L'}$ factorises through $\mathbb{M}_L$. Therefore, we have $\tau\circ \Phi_{L,L'} \approx_{\frac{8}{K}} \tau^{(L)}$. Following that argument,
if the sequence $(\varepsilon_n)_n \in \ell^1(\N)_+$ is given by $\varepsilon_n=2^{-n}$, we may choose $K_n:= 2^{n+3}$, for instance. With $L_1:=2$ and these choices, one checks that $L_n\leq 2^{3^n}$ for all $n \geq 4$, which was the size of the dimension drop algebras in the construction in \cite{JaWi}.
\end{remark}

\section{Characterising normalisers}
\label{sec:charnorm}
In the following, we establish a new characterisation of normalisers in terms of state excision, as introduced by Akemann, Anderson, and Pedersen in \cite{AAP}. For a state $\rho \in S(B)$ on the separable $\mathrm{C}^*$-algebra $B$, a sequence $(k_n)_n$ of norm one elements in $B_+$ is said to excise $\rho$ if we have $\norm{k_nbk_n - \rho(b)k_n^2} \to 0$ for any $b \in B$. If $\rho$ is pure, such a sequence exists and can be chosen such that $\rho(k_n)=1$ for all $n$. 

For this section, we consider a separable Cartan pair $D \subset A$ and write $\rho_x$ for the pure state on $D$ given by evaluation in $x \in \hat{D}$, where $\hat{D}$ is the spectrum of $D$. We will characterise the normalisers $N_A(D)$ using the following normaliser excision property (NEP). 
\begin{defi}
\label{defi:propertyNEP}
Let $(D\subset A)$ be a separable Cartan pair. We say that $v \in A$ satisfies the NEP if the following is satisfied: \\ For each $x \in \hat{D}$ there is at most one $y \in \hat{D}$ such that there exist sequences $(k_n)_n$ in $D$ excising $\rho_x \in PS(D)$ and $(l_n)_n$ in $D$ excising $\rho_y  \in PS(D)$ with 
    \begin{equation}
        \norm{l_nvk_n} \nrightarrow 0,
    \end{equation}
    i.e.\@ the sequence $(\norm{l_nvk_n})_n$ in $ [0, \infty)$ does not converge to $0$.
\end{defi}

Heuristically, if $(k_n)_n, (l_n)_n$ are excising sequences for some $\rho_x, \rho_y \in PS(D)$, respectively, and $v \in A$ satisfies $l_nvk_n \nrightarrow 0$, we think of $v$ as moving $x$ to $y$ in $\hat{D}$. That is, $v$ satisfies the NEP if $v$  moves any point $ x \in \hat{D}$ to at most one point $y$ in this sense.
This turns out to characterise normalisers of $(D \subset A)$ in the following way.

\begin{theo}
\label{theocharnorm}
Let $(D\subset A)$ be a separable Cartan pair. Then, 
$v \in A$ is a normaliser of $D$ if and only if  $v$ and $v^*$ satisfy the NEP.
\end{theo}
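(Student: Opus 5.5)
Since $(D\subset A)$ is a separable Cartan pair, Renault's theorem lets me identify $A \cong C^*_r(G,\Sigma)$ for a twisted, second countable, locally compact Hausdorff étale effective groupoid $G$, with $D\cong C_0(G^{(0)})$ and $\hat D \cong G^{(0)}$. Each $a\in A$ is then a continuous function $j(a)$ on $\Sigma$ (a section of the line bundle over $G$), and $P(a)=j(a)\vert_{G^{(0)}}$. For a pure state $\rho_x$, an excising sequence $(k_n)_n$ in $D$ has to concentrate at $x$: $k_n\ge 0$, $\norm{k_n}=1$, and $k_n(z)\to 0$ uniformly outside every neighbourhood of $x$, with $k_n(x)\to 1$. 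To get this, apply the excision condition to $b\in D$ with $b(x)=0$ and $b=1$ off a neighbourhood of $x$. I first want to show that this forces
\[
\norm{l_n v k_n}\to |j(v)(\gamma)|
\]
for the unique $\gamma\in G$ with $r(\gamma)=y$ and $s(\gamma)=x$ if such $\gamma$ exists, and $\norm{l_nvk_n}\to 0$ otherwise (principal case). In general the limit should not vanish if and only if $j(v)$ is nonzero at some arrow from $x$ to $y$.

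To prove this, I compute $j(l_nvk_n)(\eta)=l_n(r(\eta))\,j(v)(\eta)\,k_n(s(\eta))$. I approximate $v$ by compactly supported continuous sections whose supports are finite unions of open bisections. Such sections are normed by the reduced norm, which is dominated by the $I$-norm and dominates the sup norm. On a bisection $U$ containing only one relevant arrow, the norm of $l_n f k_n$ is essentially the sup of its values, and that sup converges to $|f(\gamma)|$. Arrows from $x$ to $y$ form a discrete set, so for large $n$ the supports separate. The upper bound for $\norm{l_nvk_n}$ uses the $I$-norm estimate on bisections; the lower bound uses the sup norm. \emph{Key lemma:} $\norm{l_nvk_n}\nrightarrow 0$ for some excising sequences if and only if $j(v)$ does not vanish on $G_x^y$. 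The ``if'' direction needs excising sequences that exist for any pure state, as stated before the definition.

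Given the key lemma, $v$ satisfies the NEP if and only if, for each $x$, the set $\{\gamma\in G_x : j(v)(\gamma)\neq 0\}$ has all its elements with the same range. Since $G$ is effective rather than principal, this may need care, but I will phrase everything through the open support $\mathrm{supp}(j(v))$. The condition for $v^*$ is the analogous statement with ranges and sources swapped, because $j(v^*)(\gamma)=\overline{j(v)(\gamma^{-1})}$. Together, $v$ and $v^*$ satisfy the NEP if and only if the restriction of $r$ and $s$ to the open support of $j(v)$ is injective, up to isotropy. I then invoke the known characterisation (Renault, Kumjian): $v$ is a normaliser if and only if the open support of $j(v)$ is a bisection. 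The forward direction ($v$ normaliser $\Rightarrow$ NEP) follows from this immediately.

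The main obstacle is the effective but non-principal case, where $G_x^x$ may contain nontrivial isotropy. Several arrows could then go from $x$ to $y$, and the NEP cannot see the difference, so the support may fail to be a bisection even though the NEP holds. My first attempt would use effectiveness: the interior of the isotropy is $G^{(0)}$, and the set of points with trivial isotropy is dense. Since $\mathrm{supp}(j(v))$ is open, I would check bisection-ness at points of trivial isotropy and extend by openness and continuity, showing that two distinct arrows in the support with the same source would, after perturbation, produce arrows with distinct ranges from a nearby point. Failing that, I would give a direct $\mathrm{C}^*$-algebraic proof of ``NEP $\Rightarrow$ $v^*dv\in D$'': show $P(v^*dv)=v^*dv$ by testing $v^*dv-P(v^*dv)$ against excising sequences and using the NEP to kill the off-diagonal terms.
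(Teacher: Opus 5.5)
Your route is genuinely different from the paper's, and with two repairs it works. The paper never passes to a groupoid model. For the forward direction it computes $\norm{k_nv^*l_n^2vk_n}$ using Renault's homeomorphism $\alpha_v$. For the converse it argues by contradiction: Lemma~\ref{lemma:existenceef} turns $v^*dv\notin D$ into points $x_0\neq y_0$ with $l_nv^*dvk_n\nrightarrow 0$. Approximating $v$ by sums of normalisers and conjugating excising sequences then produces a point $z_0$ that $v^*$ moves to both $x_0$ and $y_0$. You instead use the reconstruction $A\cong C^*_r(G,\Sigma)$ together with the characterisation of normalisers as the elements whose open support $S=\{j(v)\neq 0\}$ is a bisection. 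This makes the theorem a pointwise statement about $S$ and explains why both $v$ and $v^*$ are needed: they give injectivity of $s$ and of $r$ on $S$. The price is the full reconstruction theorem and Renault's Proposition 4.8; the paper's argument is intrinsic to $(D\subset A)$.

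The step you flag as the main obstacle is in fact harmless. For the converse you only need the lower bound. Taking $k_n(x)=1=l_n(y)$ gives $\norm{l_nvk_n}\geq |j(v)(\gamma)|$ for $\gamma\in G_x^y$. So the NEP for $v$ says that arrows of $S$ with a common source have a common range, and the NEP for $v^*$ gives the symmetric statement. Now suppose $\gamma_1\neq\gamma_2\in S$ had the same source. Take disjoint open bisections $\gamma_i\in U_i\subseteq S$ with $s(U_1)=s(U_2)$. Then $U_2^{-1}U_1$ is a non-empty open set of isotropy disjoint from $G^{(0)}$, contradicting effectiveness. Your perturbation to a nearby point with trivial isotropy is an equivalent way of saying this. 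Hence $S$ is a bisection and $v$ is a normaliser.

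Two of your stated claims are wrong and must be dropped or restricted.

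First, an excising sequence need not satisfy $k_n(x)\to 1$, and $\norm{l_nvk_n}$ need not converge to $|j(v)(\gamma)|$. Already for $A=D=C([0,1])$, $v=1$ and $x=y=0$, disjoint bumps $k_n,l_n$ shrinking to $0$ both excise $\rho_0$, yet $l_nvk_n=0$. Only the ``for some sequences'' form survives, and that is all you use.

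Second, your sketch does not establish the upper-bound half of the key lemma when $G_x^y$ contains several arrows. After cutting an approximant $f$ down near $x$ and $y$, the pieces near the different arrows of $G_x^y\cap\operatorname{supp}f$ are not orthogonal. Their number is also not controlled as $f\to v$, so $|f|<\varepsilon$ on $G_x^y$ gives no uniform bound. The claimed equivalence between the NEP and the support condition is therefore unproved in general.

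You only need that half for normalisers in the forward direction, and there it is easy. Since $j(v)\in C_0(G)$ vanishes off the bisection $S$, $v$ lies in the closure of sections compactly supported in $S$, where the reduced norm equals the sup norm. For such $f$ you get $\norm{l_nfk_n}=\sup_{\eta\in S}l_n(r(\eta))\,|f(\eta)|\,k_n(s(\eta))\to 0$ whenever no arrow of $S$ goes from $x$ to $y$. Alternatively, use the paper's $\alpha_v$ computation.
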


The rest of this section is dedicated to proving this theorem. First, recall that we have $v^*v,vv^* \in D$  for $v \in N_A(D)$ and we write 
\begin{equation*}
\mathrm{dom}(v)= \{x \in \hat{D} \, : \, v^*v(x)>0\}, \;\;\; \mathrm{ran}(v)= \{x \in \hat{D} \, : \, vv^*(x)>0\}.
\end{equation*}
By Lemma 4.6 in \cite{Ren1}, there is a unique homeomorphism $\alpha_v: \mathrm{dom}(v) \to \mathrm{ran}(v)$ such that for all $d \in D$ and $x \in \mathrm{dom}(v)$
    \begin{equation}
    \label{eq:homeoalphav}
    v^*dv(x) =d(\alpha_v(x)) v^*v(x).
    \end{equation}
We have $\alpha_{v^*} = \alpha_v^{-1}$, and $\alpha_v = \mathrm{id}_{\mathrm{dom}(v)}$ if $v \in D$.
Next, let us characterise pure state excision in the abelian subalgebra $D$. The following actually holds for an arbitrary abelian $\mathrm{C}^*$-algebra, regardless of being a Cartan subalgebra.

\begin{lemma}
\label{lemma:charexcseq}
     Let $x_0 \in \hat{D}$ and $(k_n)_n$ be a sequence of positive elements of norm $1$. Then, $(k_n)_n$ excises $\rho_{x_0}$ if and only if for every open neighbourhood $U$ of $x_0$, we have 
    \begin{equation*}
        \sup\limits_{x \in \hat{D}\backslash U} \vert k_n(x) \vert \overset{n }{\longrightarrow} 0.
    \end{equation*} 
\end{lemma}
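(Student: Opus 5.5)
We identify $D$ with $C_0(\hat{D})$ via Gelfand duality, so that each $k_n$ is a function $\hat{D}\to[0,1]$ with $\sup_x k_n(x)=1$, and $\rho_{x_0}(b)=b(x_0)$. The excision condition $\norm{k_nbk_n-\rho_{x_0}(b)k_n^2}\to 0$ for all $b\in D$ then reads
\begin{equation*}
\sup_{x\in\hat{D}} k_n(x)^2\,\vert b(x)-b(x_0)\vert \overset{n}{\longrightarrow} 0 \quad \text{for all } b \in C_0(\hat{D}).
\end{equation*}
Both directions of the lemma are then elementary manipulations with Urysohn functions on the locally compact Hausdorff space $\hat{D}$.

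\textbf{``Only if''.} Let $U$ be an open neighbourhood of $x_0$. By Urysohn's lemma for locally compact Hausdorff spaces, choose $b\in C_c(\hat{D})$ with $0\le b\le 1$, $b(x_0)=1$ and $\mathrm{supp}(b)\subset U$. For $x\notin U$ we have $\vert b(x)-b(x_0)\vert=1$, so the displayed condition gives $\sup_{x\notin U}k_n(x)^2\to 0$. Hence $\sup_{x\notin U}\vert k_n(x)\vert\to 0$.

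\textbf{``If''.} Fix $b\in D$ and $\varepsilon>0$, and set $U:=\{x : \vert b(x)-b(x_0)\vert<\varepsilon\}$, which is an open neighbourhood of $x_0$. Split the supremum over $U$ and over $\hat{D}\setminus U$:
\begin{itemize}
\item on $U$ the integrand is at most $\varepsilon$, since $k_n\le 1$;
\item on $\hat{D}\setminus U$ it is at most $2\norm{b}\sup_{x\notin U}k_n(x)^2$, which tends to $0$ by hypothesis.
\end{itemize}
So $\limsup_n\norm{k_nbk_n-b(x_0)k_n^2}\le\varepsilon$, and since $\varepsilon$ was arbitrary the sequence excises $\rho_{x_0}$.

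There is no real obstacle. The only points needing care are the identification of the operator norm with the sup-norm on $\hat{D}$, and, in the ``only if'' direction, that $\hat{D}$ may be non-compact, so the Urysohn function must be taken in $C_c(\hat{D})$. Separability and the Cartan assumptions are not used, which is consistent with the remark that the lemma holds for any abelian $\mathrm{C}^*$-algebra.
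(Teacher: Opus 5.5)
Your proof is correct.

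\emph{The ``only if'' direction.} This is the paper's argument, run directly instead of by contradiction. The paper also takes a Urysohn function $d\in D$ that equals $1$ near $x_0$ and vanishes off $U$; it uses a compact neighbourhood $K\subset U$, whereas you only need the point $x_0$. It then bounds $\norm{k_ndk_n-\rho_{x_0}(d)k_n^2}$ from below by $\sup_{\hat{D}\setminus U}k_n^2$, exactly as you do.

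\emph{The ``if'' direction.} Here your route differs from the paper's. The paper extracts from the hypothesis only that $\norm{k_nd}\to 0$ for every $d\in D$ with $\rho_{x_0}(d)=0$. It reads this as $(1-k_n)_n$ being an approximate unit for $\ker(\rho_{x_0})$ and then appeals to Proposition 2.2 of \cite{AAP} to obtain excision. You instead verify excision directly from the identity $\norm{k_nbk_n-\rho_{x_0}(b)k_n^2}=\sup_{x}k_n(x)^2\vert b(x)-b(x_0)\vert$, splitting over $U=\{\vert b-b(x_0)\vert<\varepsilon\}$ and its complement.

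Your argument is self-contained and elementary, and it clearly needs no normalisation such as $k_n(x_0)=1$. The paper's version is shorter on the page and places the lemma inside the noncommutative excision framework of \cite{AAP}, which that section relies on again later.
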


\begin{proof}
 First, suppose that $(k_n)_n$ excises $\rho_{x_0}$. Assume there are $\varepsilon > 0$ and $U$ an open neighbourhood of  $x_0$ such that there exists a subsequence $(k_{n_i})_{i \in \N}$ and  $y_i \in \hat{D}\backslash U$, $i \in \N$, with $\vert k_{n_i} (y_i) \vert \geq \varepsilon$. Take $K \subset U$ a compact neighbourhood of $x_0$.
 Then, by Urysohn's lemma for locally compact spaces (cf.\@ 2.12 in \cite{Rud}), there is $d \in D$ with $d\equiv 1$ on $K$ and $0$ on $\hat{D}\backslash U$. Consequently, we have
   \begin{equation*}
        0 \overset{i}{\longleftarrow} \norm{k_{n_i}d k_{n_i} - \rho_{x_0}(d) k_{n_i}^2} \geq \sup\limits_{x \in \hat{D}\backslash U} \vert 
        k_{n_i}^2(x) \vert \geq  \vert k_{n_i}^2(y_i) \vert \geq \varepsilon^2,
    \end{equation*}
    a contradiction. For the reverse implication, note that 
    $\norm{k_nd} \to 0$ for all $d \in D$ with $\rho_{x_0}(d)=0$ by the assumption. That is, $(1-k_n)_n$ is an approximate unit of $\ker(\rho_{x_0})$. The claim then follows by Proposition 2.2 in \cite{AAP}.
\end{proof}
In particular, if sequences $(k_n)_n, (m_n)_n$ excise $\rho_x\in PS(D)$ for some $x \in \hat{D}$ with $m_n(x) \to 1$, there is a subsequence $(k_n')_n$ of $(k_n)_n$ still excising $\rho_x$, with
\begin{equation*}
    \lim\limits_{n \to \infty} k_n'(1-m_n) =0.
\end{equation*}
Moreover, the previous lemma has a useful consequence if we restrict ourselves to diagonal pairs.

\begin{cor}
\label{corexcisingextensions}
    Let $(D \subset A)$ be a diagonal pair, $\rho_{x_0} \in PS(D)$ for some $x_0 \in \hat{D}$ and $\rho\in PS(A)$ its unique extension. Further, let $(k_n)_n$ be a sequence in $D$ excising $\rho_{x_0}$. Then,  $(k_n)_n$ excises $\rho$. 
\end{cor}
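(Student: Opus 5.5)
Since $\rho$ is pure on $A$ and $(k_n)_n$ already excises $\rho_{x_0}$ on $D$, the goal is to show $\norm{k_nak_n - \rho(a)k_n^2}\to 0$ for every $a \in A$. We use the faithful conditional expectation $P: A \to D$. In a diagonal pair the unique extension of $\rho_{x_0}$ is $\rho = \rho_{x_0}\circ P$: this composition is a state extending $\rho_{x_0}$, so uniqueness forces equality. Hence $\rho(a) = \rho_{x_0}(P(a))$, and we can split
\[
k_nak_n - \rho(a)k_n^2 = k_n(a-P(a))k_n + \big(k_nP(a)k_n - \rho_{x_0}(P(a))k_n^2\big).
\]
The second summand tends to $0$ by the hypothesis that $(k_n)_n$ excises $\rho_{x_0}$. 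It therefore suffices to show $\norm{k_n(a-P(a))k_n}\to 0$, that is, $\norm{k_nbk_n}\to 0$ for all $b\in\ker P$.

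We first prove this for normalisers. By regularity (condition (2) of Definition \ref{defi:diagonals}), the span of finite products of normalisers is dense in $A$, and such products are again normalisers. Since $P$ is contractive, it is enough to treat $b = v - P(v)$ with $v\in N_A(D)$. For a normaliser $v$, the pure state extension property (equivalently, principality of the groupoid) gives the following: $P(v)$ is supported on the interior of the fixed point set of $\alpha_v$, and $v - P(v)$ is a normaliser whose homeomorphism has no fixed points at all. This is standard in Kumjian's/Renault's setting, and it is the step we regard as the main obstacle; one could alternatively argue directly via the excising sequence.

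Write $w := v-P(v)$. Two cases arise.
\begin{itemize}
\item If $x_0\notin \mathrm{dom}(w)$, then $\norm{k_nwk_n}^2=\norm{k_nw^*k_n^2wk_n}\le \norm{k_n w^*w k_n}$. Since $w^*w\in D$ vanishes at $x_0$, this tends to $0$ by Lemma \ref{lemma:charexcseq}.
\item If $x_0 \in \mathrm{dom}(w)$, then $y_0 = \alpha_w(x_0)\neq x_0$. Choose disjoint neighbourhoods $U\ni x_0$ and $V \ni y_0$, with $V$ chosen so that $\alpha_w^{-1}(V)\subset U$ near $x_0$. Using \eqref{eq:homeoalphav}, compute
\[
\norm{k_nwk_n}^2 = \norm{k_n\, w^*k_n^2w\, k_n}, \qquad w^*k_n^2w(x) = k_n^2(\alpha_w(x))\,w^*w(x).
\]
Now split $\hat D$ into the region where $x\notin U$, where $k_n(x)$ is small, and the region $x\in U$, where $\alpha_w(x)$ lies outside a neighbourhood of $x_0$ (after shrinking $U$ so that $\alpha_w(U)\cap U=\emptyset$), so that $k_n(\alpha_w(x))$ is small. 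In either case the product $k_n(x)^2 k_n(\alpha_w(x))^2 w^*w(x)$ tends to $0$ uniformly by Lemma \ref{lemma:charexcseq}.
\end{itemize}

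Finally, extend by linearity and density. The map $b\mapsto k_nbk_n$ is contractive uniformly in $n$, so a standard $\varepsilon/3$ argument passes the convergence from the dense span to all of $\ker P$, and hence to all $a - P(a)$, completing the proof.
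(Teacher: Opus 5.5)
Your argument is correct and is essentially the paper's: identify $\rho=\rho_{x_0}\circ P$, reduce to $\norm{k_nbk_n}\to 0$ for $b\in\ker P$ approximated by normalisers that move $x_0$, and then estimate $\norm{k_nwk_n}^2\le\norm{k_nw^*k_n^2w}$ via (\ref{eq:homeoalphav}) and Lemma \ref{lemma:charexcseq}. The only difference is the structural input: the paper invokes Kumjian's characterisation that $\ker P$ is densely spanned by free normalisers ($v^2=0$, hence $\mathrm{dom}(v)\cap\mathrm{ran}(v)=\emptyset$), whereas you obtain the fixed-point-free normalisers $v-P(v)$ from the principal groupoid model (using that $G^{(0)}$ is clopen), which is equally valid.
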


\begin{proof}
    Let $\varepsilon >0$, $a \in A$, and denote by $E: A \twoheadrightarrow D$  the faithful conditional expectation. We have $\rho(a) = \rho_{x_0}(E(a))$ as observed in \cite{Exel}. Moreover, by the equivalent definition of $\mathrm{C}^*$-diagonals in \cite{Kum}, there are $v_1, \ldots, v_N \in N_A(D)$ with $v_i^2=0$ such that 
    \begin{equation*}
        \norm{a-E(a) - \sum\limits_{i=1}^Nv_i} < \varepsilon.
    \end{equation*}
    In particular, we have $\mathrm{dom}(v_i) \cap \mathrm{ran}(v_i)=\emptyset$ for each $1 \leq i \leq N$. This implies
    \begin{equation*}
        \norm{k_nv_ik_n}^2 \leq  \norm{k_nv_i^*k_n^2v_i} \overset{(\ref{eq:homeoalphav})}{=}  \sup\limits_{x \in \mathrm{dom}(v_i)} \vert k_n(x) k_n^2(\alpha_{v_i}(x)) v_i^*v_i(x) \vert \overset{n}{\longrightarrow} 0,
    \end{equation*}
    using Lemma \ref{lemma:charexcseq}. We conclude that
    \begin{equation*}
        \limsup\limits_{n \to \infty} \norm{k_nak_n- \rho(a)k_n^2}\leq \varepsilon+\limsup\limits_{n \to \infty} \norm{k_nE(a)k_n- \rho_{x_0}(E(a))k_n^2}= \varepsilon.
    \end{equation*}
    As $\varepsilon>0$ was arbitrary, this finishes the proof.
\end{proof}
A further key observation is that by conjugating an excising sequence with a suitable normaliser, we get a sequence excising a pure state at some other point in the spectrum. 

\begin{lemma}
\label{lemmaexcisingalphavx0}
Let $(D\subset A)$ be a separable Cartan pair and let $v \in N_A(D)$. Further, let $x_0 \in \mathrm{dom}(v) \subset \hat{D}$ and $(k_n)_n$ be a sequence excising $\rho_{x_0} \in PS (D)$ with $k_n(x_0)= 1$ for all $n$. Then, the sequence $(\lambda_n vk_n^2v^*)_n$ excises $\rho_{\alpha_v(x_0)}$ and satisfies $\lambda_n vk_n^2v^*(\alpha_v(x_0)) \to 1$, where $\lambda_n= \norm{vk_n^2v^*}^{-1}$.
\end{lemma}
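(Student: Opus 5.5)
Since $v \in N_A(D)$ and $k_n^2 \in D$, each $vk_n^2v^*$ lies in $D$ and is positive. The plan is to compute it pointwise via the homeomorphism $\alpha_{v^*}=\alpha_v^{-1}\colon \mathrm{ran}(v)\to\mathrm{dom}(v)$ from (\ref{eq:homeoalphav}), applied to $v^*$ in place of $v$. Since $(v^*)^*dv^* = vdv^*$, this gives for $y \in \mathrm{ran}(v)$:
\begin{equation*}
vk_n^2v^*(y) = k_n^2(\alpha_v^{-1}(y))\, vv^*(y).
\end{equation*}
For $y\notin \mathrm{ran}(v)$ we have $vk_n^2v^*(y)=0$, since $0\le vk_n^2v^*\le vv^*$ and $vv^*(y)=0$. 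With this formula, $\lambda_n vk_n^2v^*$ is a positive element of $D$ of norm one (well defined once $vk_n^2v^*\neq 0$, which we check below), and we verify the criterion of Lemma \ref{lemma:charexcseq} at $y_0:=\alpha_v(x_0)$.

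\textbf{Step 1: control of $\lambda_n$.} Put $y_0:=\alpha_v(x_0)$. Then
\begin{equation*}
vk_n^2v^*(y_0)=k_n^2(x_0)\,vv^*(y_0)=vv^*(y_0)=v^*v(x_0)>0,
\end{equation*}
using $k_n(x_0)=1$. Hence $\norm{vk_n^2v^*}\ge vv^*(y_0)>0$, so $\lambda_n$ is well defined and bounded: $\lambda_n\le vv^*(y_0)^{-1}$.

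\textbf{Step 2: localisation.} Fix an open neighbourhood $U$ of $y_0$. The main care is needed here, because $\mathrm{ran}(v)$ is only open, $\alpha_v^{-1}$ is only defined on it, and $vv^*$ may be small near its boundary. Given $\varepsilon>0$, I would split $\hat D\setminus U$ into two parts.
\begin{itemize}
\item On $\{y: vv^*(y)<\varepsilon\}$ we have $vk_n^2v^*(y)\le \varepsilon$, since $\norm{k_n}=1$.
\item The set $C:=\{y\notin U : vv^*(y)\ge \varepsilon\}$ is a closed subset of the compact set $\{vv^*\ge\varepsilon\}\subset\mathrm{ran}(v)$, hence compact. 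Its image $\alpha_v^{-1}(C)$ is a compact subset of $\mathrm{dom}(v)$ not containing $x_0$.
\end{itemize}
Let $V:=\hat D\setminus \alpha_v^{-1}(C)$. This is an open neighbourhood of $x_0$, so Lemma \ref{lemma:charexcseq} applied to $(k_n)_n$ gives $\sup_{C} k_n^2\circ\alpha_v^{-1}\to 0$. Therefore
\begin{equation*}
\limsup_n \sup_{\hat D\setminus U}\lambda_n vk_n^2v^*\le \varepsilon\,vv^*(y_0)^{-1}.
\end{equation*}
Since $\varepsilon$ is arbitrary, the supremum tends to $0$, and Lemma \ref{lemma:charexcseq} shows that $(\lambda_nvk_n^2v^*)_n$ excises $\rho_{y_0}$.

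\textbf{Step 3: value at $y_0$.} It remains to show $\lambda_n vk_n^2v^*(y_0)\to1$, i.e.\@ $\norm{vk_n^2v^*}\to vv^*(y_0)$; the lower bound holds by Step 1. For the upper bound, use continuity of $vv^*$: choose $U$ with $vv^*<vv^*(y_0)+\varepsilon$ on $U$. On $U$ the function $vk_n^2v^*$ is at most $vv^*(y)\le vv^*(y_0)+\varepsilon$. Outside $U$, Step 2 (before normalising, which is harmless since $\lambda_n$ is bounded) shows $vk_n^2v^*$ is eventually below $\varepsilon$. Hence $\limsup_n\norm{vk_n^2v^*}\le vv^*(y_0)+\varepsilon$, which gives the claim. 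I expect Step 2, namely handling the boundary of $\mathrm{ran}(v)$ via the compactness of the superlevel sets of $vv^*$, to be the only genuinely delicate point.
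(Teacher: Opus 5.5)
Your proof is correct, but it is organised differently from the paper's.

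\textbf{Comparison of routes.} The paper uses the pointwise formula $vk_n^2v^*(y)=k_n^2(\alpha_{v^*}(y))\,vv^*(y)$ together with Lemma \ref{lemma:charexcseq} only to assert, in one line, that $\lambda_n^{-1}\to vv^*(y_0)$; this is your Step 3. It then proves the excision directly from the definition. It writes $l_ndl_n-\rho_{y_0}(d)l_n^2$ as $\lambda_n^2vk_n(\cdots)k_nv^*$, uses $\rho_{y_0}(d)=\rho_{x_0}(v^*dv)/v^*v(x_0)$, and uses that $(k_n)_n$ excises $\rho_{x_0}$ on the elements $v^*dv,\,v^*v\in D$.

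That algebraic transport of excision through conjugation by $v$ is short. For the excision part it needs only that $\lambda_n$ stays bounded, which is your Step 1. Your route instead derives both the excision and the value at $y_0$ from the single localisation estimate of Step 2, via the topological criterion of Lemma \ref{lemma:charexcseq}. In particular, you supply the details of the norm limit that the paper leaves implicit.

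\textbf{A simplification of Step 2.} The point you flag as delicate is easier than you make it. Since $\mathrm{dom}(v)$ is open in $\hat D$ and $\alpha_v^{-1}\colon\mathrm{ran}(v)\to\mathrm{dom}(v)$ is a homeomorphism, $W:=\alpha_v^{-1}(U\cap\mathrm{ran}(v))$ is already an open neighbourhood of $x_0$ in $\hat D$. By injectivity, $\alpha_v^{-1}(y)\notin W$ for every $y\in\mathrm{ran}(v)\setminus U$. Hence $\sup_{\hat D\setminus U}vk_n^2v^*\le\norm{v}^2\sup_{\hat D\setminus W}k_n^2\to 0$ directly, with no need to split off superlevel sets of $vv^*$.

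Your aside $vv^*(y_0)=v^*v(x_0)$ is true (apply (\ref{eq:homeoalphav}) with $d=vv^*$), but it is not needed.
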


\begin{proof}
    Let $d \in D$ and write $y_0:= \alpha_v(x_0)$. Then, we have 
    \begin{equation}
    \label{eq:proof:lemma:excsingalphavx0}
        \lim\limits_{n \to \infty} \lambda_n^{-1}= \lim\limits_{n \to \infty} \sup_{x \in \mathrm{ran}(v)} \vert k_n^2(\alpha_{v^*}(x)) vv^*(x) \vert = vv^*(y_0)
    \end{equation}
    by (\ref{eq:homeoalphav}) and Lemma \ref{lemma:charexcseq}. We thus get
    that $(l_n)_n:=(\lambda_n vk_n^2v^*)_n$ satisfies
  \begin{equation*}
  l_n(y_0) \overset{(\ref{eq:homeoalphav})}{=}\norm{vk_n^2v^*}^{-1} k_n^2(x_0) vv^*(y_0)\overset{n}{\longrightarrow} 1.
   \end{equation*}
   Moreover, we have
    \begin{equation*}
    \rho_{y_0}(d)= d(y_0) \overset{(\ref{eq:homeoalphav})}{=} \frac{1}{v^*v(x_0)} (v^*dv)(x_0) = \frac{1}{v^*v(x_0)} \rho_{x_0}(v^*dv).
    \end{equation*}
    With that, we compute
    \begin{equation*}
       \limsup\limits_{n \to \infty} \norm{l_n d l_n - \rho_{y_0}(d) l_n^2} 
      \leq \norm{v}^2   \limsup\limits_{n \to \infty} \vert\lambda_n \vert^2\norm{k_nv^* d v k_n - \rho_{x_0}(v^*dv) k_n^2 } 
      = 0,
    \end{equation*}
    as $(k_n)_n$ excises $\rho_{x_0}$. Thereby, we showed that  $(l_n)_n$ excises $\rho_{y_0}.$
\end{proof}

We need one more technical lemma before we can start proving Theorem \ref{theocharnorm}.
\begin{lemma}
\label{lemma:existenceef}
Let $(D\subset A)$ be a separable Cartan pair and $s \in A_+^1$ such that $\mathrm{C}^* (D,s)$ is not abelian. Then, we have:
\begin{itemize}
    \item[ \upshape (i)] There are $e,f \in D_+^1$ with $e\perp f$ and $esf \neq 0$.
    \item[\upshape (ii)] There are $x_0 \neq y_0 \in \hat{D}$ with sequences $(k_n)_n, (l_n)_n$ excising $\rho_{x_0}, \rho_{y_0} \in PS(D)$, respectively, such that $$l_n s k_n \nrightarrow 0.$$
\end{itemize}
\end{lemma}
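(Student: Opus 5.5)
For (i) I will argue by contraposition. Suppose $esf=0$ for all $e,f\in D_+^1$ with $e\perp f$; I will show that $s$ commutes with $D$. Then $\mathrm{C}^*(D,s)$ is abelian, since $D$ is abelian and $s$ is self-adjoint.

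Take $d\in D$ self-adjoint and $\varepsilon>0$. Partition the (compact) range of $d$ into finitely many intervals of length $<\varepsilon$ and choose a continuous partition of unity $\{g_j\}$ subordinate to slightly enlarged intervals, so that $\|d-\sum_j t_j g_j(d)\|<\varepsilon$ for suitable $t_j$. In the non-unital case I first cut down by an element of $D$ acting approximately as a unit on $s$, using (0) of the definition of a Cartan subalgebra. Write $e_j=g_j(d)$. Whenever the supports of $g_i$ and $g_j$ are disjoint we have $e_i\perp e_j$, and the hypothesis gives $e_ise_j=0$. Hence $s\approx\sum_{i,j}e_ise_j$, where only pairs with adjacent or equal intervals survive. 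For such pairs $|t_i-t_j|\lesssim 2\varepsilon$, so
\[
ds-sd\approx\sum_{i,j}(t_i-t_j)e_ise_j,
\]
and this is of norm $O(\varepsilon)$. To make that last estimate rigorous, I split the index pairs into finitely many (three) families, each being a block-diagonal sum of mutually orthogonal pieces, and bound each family by $\|s\|$ times its maximal coefficient. This gives $ds=sd$, so $s\in D'$. Since $D$ is maximal abelian, $s\in D$ and $\mathrm{C}^*(D,s)=D$ is abelian, a contradiction. An alternative route to the same conclusion uses the conditional expectation together with the regularity of $D$, but the spectral-partition argument is self-contained.

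For (ii), take $e,f$ as in (i). Since $esf\neq0$, I will find points $x_0\in\operatorname{supp}f$ and $y_0\in\operatorname{supp}e$ where $esf$ does not vanish locally. This is the main obstacle: passing from a nonzero element to a nonvanishing pair of points. I plan to argue by contradiction. Suppose that for every pair $(y,x)$ there are neighbourhoods $V\ni y$ and $U\ni x$ with $\ell s k=0$ for all $\ell,k\in D$ supported in $V$ and $U$ respectively. The supports of $e$ and $f$ are $\sigma$-compact (by separability). Approximate $e$ and $f$ in norm by elements whose supports are compact in the open supports, and cover these compact sets by finitely many such neighbourhoods. A partition of unity then gives $e'sf'=0$ for approximants $e'$, $f'$, and letting the approximations improve yields $esf=0$, a contradiction.

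So there exist $y_0,x_0$ such that for every pair of neighbourhoods there are $\ell,k\in D_+^1$ supported there with $\ell sk\neq0$. Necessarily $x_0\neq y_0$: both points are non-vanishing points of $e$ and $f$ respectively, otherwise $e$ or $f$ would annihilate the localisation, and $e\perp f$. Choose decreasing neighbourhood bases $U_n\ni x_0$, $V_n\ni y_0$, and norm-one $k_n,l_n\in D_+$ supported in $U_n,V_n$ with $k_n(x_0)=l_n(y_0)=1$. These excise $\rho_{x_0}$ and $\rho_{y_0}$ by Lemma~\ref{lemma:charexcseq}.

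It remains to keep $\|l_nsk_n\|$ bounded away from $0$, which needs more than $l_nsk_n\neq0$. Here I would use the positivity of $s$: if $\|l_nsk_n\|\to0$ for every such excising choice, then the nonvanishing at the point $(y_0,x_0)$ is lost. I would redefine the bad set using the quantity $\inf$ over neighbourhoods of $\sup\|\ell sk\|$, an upper semicontinuous function of $(y,x)$, and rerun the compactness argument with $\delta$-thresholds. Together with $\|esf\|>0$, this produces a pair at which this quantity is positive, and then the excising sequences can be chosen with $\|l_nsk_n\|\ge\delta>0$.
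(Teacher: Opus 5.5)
Your part (i) is correct but argues differently from the paper. You show directly that $esf=0$ for all orthogonal $e,f\in D_+^1$ forces $s\in D'$. You do this by approximating a self-adjoint $d\in D$ by $\sum_j t_j g_j(d)$ for a fine partition of unity, and bounding $\sum_{i,j}(t_i-t_j)e_ise_j$ through finitely many families of mutually orthogonal pieces. The paper instead takes a pure state of $\mathrm{C}^*(D,s)$ that is not a character, uses Kadison transitivity to see that the support projection of $\pi_\rho(e')$ does not commute with $\pi_\rho(s)$, and then cuts down by functions of $e'$ and an approximate unit. Your route is more elementary and uses only that $D$ is abelian and contains an approximate unit of $A$.

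Part (ii), however, has a genuine gap in its last step. The localisation step is fine once the contradiction hypothesis is restricted to pairs in $C_e\times C_f$; that restriction is also what guarantees $x_0\neq y_0$. The problem is the final threshold argument. If your quantity is $<\delta$ at every point of $C_e\times C_f$, compactness writes $e'sf'$ as a sum of $N(\delta)$ terms $\ell_m s k_m$, each of norm $<\delta$. But $N(\delta)$ is unbounded as $\delta\to 0$, and these terms are not mutually orthogonal, so you only get $\|e'sf'\|<N(\delta)\delta$, which contradicts nothing. A better topological argument cannot fix this: your reasoning in (ii) uses only that $D$ is abelian, $s\geq 0$ and $esf\neq 0$, and in that generality the conclusion is false. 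Let $D=C([0,1])$ act by multiplication on $L^2[0,1]$, let $P$ be the projection onto the constants, and let $A=\mathrm{C}^*(D,P)=C([0,1])+\mathcal{K}(L^2[0,1])$. Then $\ell Pk$ is rank one with $\|\ell Pk\|=\|\ell\|_2\|k\|_2$, so $ePf\neq 0$ for disjointly supported bumps $e,f$. Yet any sequence in $D$ excising a point evaluation has $L^2$-norms tending to $0$ by Lemma \ref{lemma:charexcseq}, so $l_nPk_n\to 0$ for every choice of excising sequences. Here $P\geq 0$ and $D$ is even maximal abelian in $A$, but there is no faithful conditional expectation $A\to D$. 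So positivity of $s$ is not the relevant input; the Cartan hypotheses must be used.

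The missing idea is the one in the paper's proof. Faithfulness of $E$ gives $x_0$ with $\rho_{x_0}(E((esf)^*esf))\geq\delta>0$, hence $\|esf\,k_n\|^2\geq\delta$ along an excising sequence at $x_0$ with $k_n(x_0)=1$. Regularity gives $esf\approx\sum_i u_i$ with normalisers $u_i$; each $u_i$ with $x_0\in\mathrm{dom}(u_i)$ moves $x_0$ to a single point $y_i=\alpha_{u_i}(x_0)\neq x_0$. Lemma \ref{lemmaexcisingalphavx0} then supplies excising sequences $l_{n,i}=\|u_ik_n^2u_i^*\|^{-1}u_ik_n^2u_i^*$ at $y_i$ that act approximately as units on $u_ik_{n+1}$. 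Summing over the finitely many distinct $y_i$ recovers $esf\,k_{n+1}$ up to a small error, so $l_{n-1,i}\,s\,k_{n+1}\nrightarrow 0$ for some $i$. This finiteness of target points, coming from regularity, is what replaces your compactness step, and it is exactly what fails in the example above.
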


\begin{proof}
(i) We may assume $A:=\mathrm{C}^*(D,s)$. Since $A$ is non-abelian, there is $\rho \in PS(A)$ which is not a character. Let $(\pi_\rho, H_\rho)$ be the associated irreducible GNS representation. 
Now, there are $e',f' \in D_+^1$ with $e' \perp f'$ and $\norm{\pi_\rho(e')} = \norm{\pi_\rho(f')} =1$ (cf.\@ the proof of 1.8 in \cite{LLW}). Take $\xi' \in \overline{\pi_\rho(e')H_\rho}$ and $\eta' \in \overline{\pi_\rho(f')H_\rho}$ with $ \norm{\xi'} = \norm{\eta'} = 1$. Then, Kadison's transitivity theorem yields $z \in A$ with $\pi_\rho(z)\xi'= \eta'$. Let $p \in B(H_\rho)$ be given as the limit
\begin{equation*}
    p :=\text{s.o.\@-} \lim\limits_{n \to \infty} h_n(\pi_\rho(e'))
\end{equation*}
in the strong operator topology. Here, $h_n \in C_0((0,1])$, $n \in \N$, are  taken to satisfy
\begin{equation*}
    h_n(t) = \begin{cases}
        1 & \text{ if }t \in [\frac{1}{n+1},1], \\
        0 & \text { if } t \in [0, \frac{1}{n+2}],
    \end{cases}
\end{equation*}
and to be linear on $[\frac{1}{n+2}, \frac{1}{n+1}].$ One checks that $p$ is the projection onto $\overline{\pi_\rho(e')H_\rho}$. We observe that  $ p \pi_\rho(s)(1-p) \neq 0$.
Indeed, $ p \pi_\rho(s)(1-p) = 0$ implies 
\begin{equation*}
    p \pi_\rho(s) = p \pi_\rho(s) p = ( p \pi_\rho(s) p )^* = \pi_\rho(s) p,
\end{equation*}
i.e.\@ $p \in \pi_\rho(A)'.$ 
With that, we have
\begin{equation*}
    \eta' = \pi_\rho(z) \xi' = \pi_\rho(z) p \xi' = p \pi_\rho(z) \xi' = p \eta' = 0,
\end{equation*}
 which contradicts $\norm{\eta'}=1$. \\
 Consequently, there are elements $\xi \in p H_\rho$ and $\eta \in (1-p) H_\rho$ with $\langle \xi, \pi_\rho(s)\eta \rangle \neq 0.$ In particular, we can write
 \begin{equation*}
     \xi = \lim\limits_{n \to \infty} h_n(\pi_\rho(e')) \xi, \;\;\; \eta = \lim\limits_{n \to \infty} (1 - h_{n+1})(\pi_\rho(e')) \eta.
 \end{equation*}
Thus, there is $n_0 \in \N$ such that
\begin{equation*}
    \pi_\rho(h_{n_0}(e')s(1-h_{n_0+1})(e')) = \pi_\rho(h_{n_0}(e')) \pi_\rho (s) \pi_\rho((1-h_{n_0+1})(e')) \neq 0.
\end{equation*}
As $D$ is a Cartan subalgebra, it contains an approximate unit $(u_m)_m$ of $A$ and hence we get for $m_0 \in\N$ sufficiently large that
 \begin{equation*}
     h_{n_0}(e') s (u_{m_0} - u_{m_0}h_{n_0+1}(e')) \neq 0.
 \end{equation*}
Then, $e:= h_{n_0}(e')$ and $f:=(u_{m_0}-u_{m_0}h_{n_0+1}(e'))$ are elements in $D_+^1$ as desired.\\
(ii):
Take $e,f \in D_+^1$ with  $e \perp f$ but $esf \neq 0$ from (i). Let $E: A \twoheadrightarrow D$ be the faithful conditional expectation. Then, there is $x_0 \in \hat{D}$ and $\delta >0$  such that 
\begin{equation}
\label{eq:E(esf)large}
\rho_{x_0} \circ E ((esf)^*esf) \geq \delta > 0.
\end{equation}
With 2.2 in \cite{AAP}, one can find $(k_n)_n$ excising $\rho_{x_0}$ with $k_{n+1}k_n=k_n$ and $k_n(x_0) =1$ for each $n \in \N$. Now, take $\varepsilon >0$. As $D \subset A$ is regular, there are $u_1, \dots, u_N \in N_A(D)$ with 
\begin{equation}
\label{eqlemmacharelementsofdiagproof0}
\norm{esf - \sum_{i=1}^N u_i} < \varepsilon.
\end{equation}
We may assume that
$ \mathrm{dom}(u_i) \subset \mathrm{supp}(f)$ and $ \mathrm{ran}(u_i) \subset \mathrm{supp}(e).$ For $i \in \{1, \dots, N\}$ with $x_0 \in \mathrm{dom}(u_i)$, write $y_i:=\alpha_{u_i}(x_0) $, and note that $y_i \neq x_0$ as $e \perp f$. Define the sequence $(l_{n,i})_n :=(\lambda_{n,i}u_ik_n^2u_i^*)_n$ with $\lambda_{n,i} = \norm{u_ik_n^2u_i^*}^{-1}$. This excises the pure state $\rho_{y_i} \in PS(D)$ and satisfies $l_{n,i}(y_i) \to 1$
by Lemma \ref{lemmaexcisingalphavx0}. If on the other hand $x_0 \notin \mathrm{dom}(u_i)$, we observe that
 \begin{equation}
 \label{eqlemmacharelementsofdiagproof2}
     \lim\limits_{n \to \infty} \norm{u_ik_n^2u_i^* } = 0
 \end{equation}
 since $(k_n)_n $ excises $\rho_{x_0}$. Define $J:=\{ i \, : \, x_0 \in \mathrm{dom}(u_i) \} $ and
 choose  $I \subset J $ such that $\{y_i \, : \, i \in I \} = \{y_j \, : \, j \in J\}$ but $y_i \neq y_j$ for $i \neq j \in I$. By Lemma \ref{lemma:charexcseq}, we may assume that $l_{n,i} \perp l_{n,j}$ for $n \in \N $ and $i \neq j \in I$. Moreover, upon passing to suitable subsequences, we may assume that for $i \in I$, $j \in J$ with $y_i = y_j$, we have
\begin{equation}
\label{eqlemmacharelementsofdiagproof3}
    \norm{l_{n-1,i} l_{n,j} - l_{n,j}} \overset{n}{\longrightarrow} 0,
\end{equation}
since $l_{n,i}(y_i) \to 1$. Next, let us note that for $i \in J$
\begin{equation*}
    \lim\limits_{n \to \infty} \norm{(u_iu_i^*-u_iu_i^*(y_i))u_ik_{n+1} } = 0
\end{equation*}
using that $(\lambda_{n+1,i} u_ik_{n+1}^2 u_i^*)_n$ excises $\rho_{y_i}$ and $\lambda_{n,i} \to u_iu_i^*(y_i)^{-1} >0 $ as in (\ref{eq:proof:lemma:excsingalphavx0}). With that, we get for $i \in J$ that
\begin{equation}
\label{eqlemmacharelementsofdiagproof4}
     \lim\limits_{n \to \infty} \norm{l_{n,i}u_ik_{n+1} - u_ik_{n+1} } \overset{(\ref{eq:proof:lemma:excsingalphavx0})}{=}  \lim\limits_{n \to \infty} \vert \lambda_{n,i}\vert \, \norm{(u_iu_i^*- u_iu_i^*(y_i)) u_ik_{n+1}}=0
\end{equation}
 in particular since $k_{n+1}k_n=k_n$. Altogether, we get for $n$ sufficiently large that 
\begin{align*}
    \sum\limits_{i \in I} l_{n-1,i} esf k_{n+1}     &\overset{\hspace{-0.75em}(\ref{eqlemmacharelementsofdiagproof0}), (\ref{eqlemmacharelementsofdiagproof2})}{\approx_{ \varepsilon} } \sum\limits_{i \in I} l_{n-1,i} \sum\limits_{j \in J} u_j k_{n+1} \\
    & \overset{ \hspace{-0.75em}  (\ref{eqlemmacharelementsofdiagproof4})}{\approx_\varepsilon}  \sum\limits_{i \in I} l_{n-1,i} \sum\limits_{j \in J} l_{n,j} u_j k_{n+1} \\
    & \overset{\hspace{-0.75em} (\ref{eqlemmacharelementsofdiagproof3})}{\approx_\varepsilon} \sum\limits_{j \in J} l_{n,j}u_j k_{n+1} \\
    & \overset{\hspace{-0.75em} (\ref{eqlemmacharelementsofdiagproof4})}{\approx _\varepsilon} \sum\limits_{j \in J} u_j k_{n+1} \\
    & \overset{\hspace{-0.75em}(\ref{eqlemmacharelementsofdiagproof0}),  (\ref{eqlemmacharelementsofdiagproof2})}{\approx_{\varepsilon}}  esf k_{n+1},
\end{align*}
where $a\approx_\varepsilon b$ abbreviates $\norm{a-b}\leq \varepsilon$.
As $\varepsilon$ was chosen arbitrarily, we showed that $$
\lim\limits_{n \to \infty} \norm{\sum\limits_{i \in I} l_{n-1,i} esf k_{n+1} - esf k_{n+1}}=0.
$$
This implies for sufficiently large $n$ that
\begin{equation*}
\rho_{x_0} \circ E((\sum\limits_{i \in I} l_{n-1,i} esf k_{n+1} )^* \sum\limits_{i \in I} l_{n-1,i} esf k_{n+1}) \overset{(\ref{eq:E(esf)large})}{\geq} \frac{\delta}{2}.
\end{equation*}
Thus, there is at least one $i \in I$ with $l_{n-1,i} esf k_{n+1} \nrightarrow 0$, which shows the statement.
\end{proof}

Now, let us finally prove Theorem \ref{theocharnorm}.

\begin{proof}[Proof of Theorem \ref{theocharnorm}]
First, we suppose that $v \in N_A(D)$. Let $x_0, y_0 \in \hat{D}$ and $(k_n)_n, (l_n)_n$ be sequences excising $\rho_{x_0}, \rho_{y_0} \in PS(D)$, respectively.
If $x_0 \notin \mathrm{dom}(v)$, we get $\norm{vk_n} \to 0$ by the excision property. If $y_0 \notin \mathrm{ran}(v)$, we get $\norm{v^*l_n} \to 0$ in the same way. Next, let us consider the case that $x_0 \in \mathrm{dom}(v), y_0 \in \mathrm{ran}(v)$, but $\alpha_v(x_0 )\neq y_0$. Since $\alpha_v$ from (\ref{eq:homeoalphav}) is a homeomorphism, there is $z \neq x_0$ in $\mathrm{dom}(v)$ with $\alpha_v(z) = y_0$. Take open neighbourhoods $U,W \subset \mathrm{dom}(v)$ of $x_0,z$, respectively, with $U\cap W = \emptyset$. Now, applying Lemma \ref{lemma:charexcseq} for the open neighbourhoods $U$ of $x_0$ and $\alpha_v(W)$ of $y_0$ and using (\ref{eq:homeoalphav}) yields
\begin{equation*}
        \lim\limits_{n \to \infty} \norm{k_nv^*l_n^2vk_n} = \lim\limits_{n \to \infty} \sup\limits_{x \in \hat{D}\backslash (U \cup W)} \vert k_n^2(x) l_n^2(\alpha_v(x)) v^*v(x) \vert = 0,
    \end{equation*}
i.\@e.\@ $\lim_{n \to \infty} \norm{l_n v k_n}=0$. Thus, $\norm{l_nvk_n} \nrightarrow 0$ can only occur if $x_0 \in \mathrm{dom}(v), y_0 \in \mathrm{ran}(v)$ and $\alpha_v(x_0) = y_0$. Therefore, $v$ satisfies the NEP.\\
For the converse, let $v \in A^1$ such that $v$ and $v^*$ satisfy the NEP. To show $v\in N_A(D)$ via contradiction, we assume there is $d \in D_+^1$ with $v^*dv \notin D$. Apply Lemma \ref{lemma:existenceef} to get $\varepsilon>0$ and sequences $(k_n)_n, (l_n)_n$ excising $\rho_{x_0}, \rho_{y_0}\in PS(D)$, respectively, for $x_0 \neq y_0 \in \hat{D}$, with 
\begin{equation}
\label{eq:proof:theocharnorm:1}
    \norm{l_n v^*dvk_n} \geq \varepsilon, \; n \in \N.
\end{equation}
By Lemma \ref{lemma:charexcseq}, we may suppose that $(k_n)_n$ and $(l_n)_n$ are decreasing and satisfy $k_n(x_0)=1$, $l_n(y_0)=1$ for each $n$.
By (\ref{eq:proof:theocharnorm:1}) and since $D \subset A$ is regular, there is $u \in N_A(D)$ with 
\begin{equation*}
l_nu^*dvk_n \nrightarrow 0.
\end{equation*}
With Lemma \ref{lemmaexcisingalphavx0}, the sequence $(m_n)_n:=(\norm{ul_n^2u^*}^{-1} ul_n^2u^*)_n$ excises $\rho_{\alpha_u(y_0)}$ and satisfies $m_n(\alpha_u(y_0)) \to 1$. Moreover, we have $m_n v k_n \nrightarrow 0$ by choice of $u$. We put $z_0:= \alpha_u(y_0)$.\\
Now, let $\varepsilon\geq \delta > 0$. As $D \subset A$ is regular, there are  $u_1, \dots, u_N \in N_A(D)$ with 
\begin{equation}
\label{eq:regularityforv}
\norm{\sum_{i=1}^N u_i - v} < \delta.
\end{equation}
Consider  $i \in \{1, \dots, N \}$ with $y_0 \in \mathrm{dom}(u_i)$ and put $\lambda_{n,i}:= \norm{u_il_n^2u_i^*}^{-1}$. Then, by Lemma \ref{lemmaexcisingalphavx0}, the sequence $(\lambda_{n,i} u_il_n^2u_i^*)_n$ excises pure states $\rho_i \in PS(D)$. We note that 
\begin{equation}
\label{eqtheocharnormproof1}
    \norm{l_nu_i^*dvk_n}^2 \leq \lambda_{n,i}^{-1}  \; \norm{ (\lambda_{n,i} u_i l_n^2u_i^*) v k_n} \overset{n}{\longrightarrow} 0
\end{equation}
if $\rho_i \neq \rho_{z_0}$ by the NEP for $v$. Put $$I := \{ i \in \{ 1, \dots, N \} \, : \, y_0 \in \mathrm{dom}(u_i), \, \rho_i = \rho_{z_0} \}.$$ In particular, we have $l_nu_i^*dvk_n \to 0$ for all $i \notin I$.
By the comment after Lemma \ref{lemma:charexcseq}, we may assume, after possibly passing to a subsequence of $(l_n)_n$ still excising $\rho_{y_0}$, that
\begin{equation*}
\lim\limits_{n \to \infty} \norm{\lambda_{n,i} u_il_{n}^2u_i^*(1-m_n)}= 0,\; i \in I. 
\end{equation*}
This implies
\begin{equation}
\label{eqtheocharnormproof2}
   \lim\limits_{n \to \infty}  \norm{l_nu_i^*(1-m_n)}^2= \lim\limits_{n \to \infty} \lambda_{n,i}^{-1} \norm{(1-m_n) (\lambda_{n,i}u_i l_n^2u_i^*)(1-m_n)} =0, \; i \in I.
\end{equation}
We thus obtain for sufficiently large $n$
\begin{align*}
   l_nv^*dvk_n \overset{(\ref{eq:regularityforv})}{\approx}_{\hspace{-0.3em}\delta} \sum\limits_{i=1}^N l_nu_i^*dvk_n \overset{(\ref{eqtheocharnormproof1})}{\approx}_{\hspace{-0.3em}\delta} \sum\limits_{i\in I} l_nu_i^*dvk_n \overset{(\ref{eqtheocharnormproof2})}{\approx}_{\hspace{-0.3em}\delta} \sum\limits_{i\in I} l_nu_i^*m_ndvk_n \overset{(\ref{eq:regularityforv}),(\ref{eqtheocharnormproof1})}{\approx}_{\hspace{-1.1em}\delta \hspace{0.5em}} l_nv^*m_ndvk_n.
\end{align*}
If $\delta$ was chosen sufficiently small depending on $\varepsilon$, we get with  (\ref{eq:proof:theocharnorm:1}) that $l_nv^*m_n \nrightarrow 0$ and $k_nv^*m_n \nrightarrow 0$. This contradicts the NEP for $v^*$.
\end{proof}

\section[]{A $\mathrm{C}^*$-diagonal in $\mathcal{Z}$}
\label{sec:diaginZ}
The plan for detecting a $\mathrm{C}^*$-diagonal in $\mathcal{Z}$ now is to determine $\mathrm{C}^*$-diagonals in each of the building blocks in the inductive system from Theorem \ref{theo:constrindlim} and then show that the inductive limit of these is a $\mathrm{C}^*$-diagonal in $\mathcal{Z}$. In the following, we write $\Ad{a}(b):=aba^*$ for $a,b$ in a $\mathrm{C}^*$-algebra $A$.

\begin{prop}
\label{prop:tD}
Let $L \geq 2$ and $\bc_1, \dots, \bc_L, \bs$ denote the generators of $\tZ{L}{L+1}$. We define 
\begin{equation}
\tD{L}:= \mathrm{C}^*(\bc_i^*\bc_i, \, \bs^*\bs, \, \bc_i^*\bs\bs^* \bc_i,\,  \bc_j^*\bs \bc_i^*\bc_i\bs^*\bc_j \,:\, 1 \leq i,j \leq L ) \subset \tZ{L}{L+1}.
\end{equation}
Then, $\tD{L}$ is a $\mathrm{C}^*$-diagonal in $\tZ{L}{L+1}. $
\end{prop}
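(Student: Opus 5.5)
The plan is to transport the problem to the standard picture $Z_{L,L+1}\subset C([0,1],\M_L\otimes\M_{L+1})$ and identify $\tD{L}$ there with a concrete diagonal. We show that it consists of functions taking values in a fixed maximal abelian subalgebra (up to a continuous unitary twist) with the right boundary behaviour. More intrinsically, we use the ideal structure from Proposition \ref{prop:essideal}. The essential ideal $J\cong C_0((0,1))\otimes\M_L\otimes\M_{L+1}$ is spanned by the image of $\theta$, and the quotients are $\M_L$ and $\M_{L+1}$ via $\sigma_L,\sigma_{L+1}$. The generators of $\tD{L}$ are exactly (functions of) the diagonal images $\hat\theta(e_{i,i}\otimes e_{j,j})$, $\hat\sigma_{L+1}(e_{j,j})$ and $\hat\sigma_L(e_{i,i})$. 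Indeed, $\bc_j^*\bs\bc_i^*\bc_i\bs^*\bc_j$ and $\bc_i^*\bc_i\bs^*\bs$ correspond to $e_{i,i}\otimes e_{j,j}$ in the $\theta$-picture, $\bc_i^*\bs\bs^*\bc_i$ and $\bs^*\bs$ are $\hat\sigma_{L+1}(e_{i,i})$, $\hat\sigma_{L+1}(e_{0,0})$, and $\bc_i^*\bc_i$ covers the $\M_L$-end.

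First, we show that $\tD{L}$ is abelian and compute its spectrum. All generators are positive and, by (\ref{eq:Therelations}), (\ref{eq:basicCT}), (\ref{eq:polynomialtrick1})--(\ref{eq:polynomialtrick3}) and Lemma \ref{lemma:cpctricks}, they pairwise commute. Using (a)--(d) from the proof of Proposition \ref{prop:essideal}, we show that $\tD{L}\cap J=C_0((0,1))\otimes\mathcal{D}_L\otimes\mathcal{D}_{L+1}$ (diagonal matrices) under $\alpha$. We also show that the images of $\tD{L}$ in $\tZ{L}{L+1}/J\cong\M_L\oplus\M_{L+1}$ are the diagonal masas. Hence $\tD{L}$ is identified with the algebra of $f\in C([0,1],\mathcal D_L\otimes\mathcal D_{L+1})$ with $f(0)\in\mathcal D_L\otimes1$ and $f(1)\in 1\otimes\mathcal D_{L+1}$. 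This is the standard dimension-drop diagonal, with spectrum $L(L+1)$ open intervals glued at $L$ points on one end and $L+1$ points on the other.

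With this description, the Cartan axioms become standard, and we verify them in the order of Definition \ref{defi:diagonals}. Axiom (0) is trivial since everything is unital. For maximal abelianness, if $x$ commutes with $\tD{L}$, then restricting to $J$ forces $x$ to be pointwise diagonal on $(0,1)$, hence everywhere by continuity. For the conditional expectation, we take pointwise compression onto the diagonal: $E(f)(t)=\sum_{i,j}(e_{ii}\otimes e_{jj})f(t)(e_{ii}\otimes e_{jj})$. This respects the boundary conditions and is faithful. For regularity, we note that the elements $\theta(e_{i,k}\otimes e_{j,l})$, $\sigma_L(e_{i,k})$ and $\sigma_{L+1}(e_{j,l})$ are normalisers by Lemma \ref{lemma:cpctricks}; likewise the generators $\bc_i$ and $\bs$ are normalisers, via direct computation with (\ref{eq:lemma:cpctricks2}). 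By property (d) they generate $\tZ{L}{L+1}$. For the pure state extension property, a point of the spectrum either lies in an open interval, where it corresponds to evaluation at $t\in(0,1)$ composed with a vector state on a rank one diagonal projection, or it is an endpoint, evaluating in $\M_L$ or $\M_{L+1}$ at a diagonal matrix unit. In each case any extension must be the corresponding vector state, because $D$ contains local matrix units killing off-diagonal terms. Alternatively, we can invoke Kumjian's criterion: the normalisers above are nilpotent (free) off the diagonal, so the Cartan pair is a diagonal pair.

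The main obstacle is the first step, namely rigorously proving the identification of $\tD{L}$, defined by abstract generators, with the concrete diagonal. In particular, the boundary behaviour must match, so that maximality is not lost at the endpoints. We will first try to show that $\tD{L}=\alpha(C_0((0,1))\otimes\mathcal D_L\otimes\mathcal D_{L+1})+\sigma_L(\mathcal D_L)+\sigma_{L+1}(\mathcal D_{L+1})$ (up to closure), using functional calculus with $a,b$ as in the proof of Proposition \ref{prop:essideal}.
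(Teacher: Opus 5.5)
\textbf{The identification fails.} The step you flag as the main obstacle cannot be carried out, because the claimed identification is false. $\tD{L}$ is not the standard diagonal $\{f\in C([0,1],\D_L\otimes\D_{L+1}) : f(0)\in\D_L\otimes 1,\ f(1)\in 1\otimes\D_{L+1}\}$, not even as an abstract $\mathrm{C}^*$-algebra. Knowing $\tD{L}\cap J$ via $\alpha$, and knowing the images of $\tD{L}$ in $\M_L\oplus\M_{L+1}$, does not determine how the $L(L+1)$ intervals attach to the endpoints. Here the attachment at the $\M_L$-end is twisted.

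Take $j\geq 1$ and consider $\hat\theta(e_{i,i}\otimes e_{j,j})=\bc_j^*\bs\bc_i^*\bc_i\bs^*\bc_j$.
\begin{itemize}
\item $\bc_j^*\bc_j$ acts as a unit on it, since $\bc_j^*\bc_j\bc_j^*\bs=\bc_j^*\bc_1^2\bs=\bc_j^*\bs$.
\item It is orthogonal to $\bc_k^*\bc_k$ for $k\neq j$.
\item It is orthogonal to $\bs^*\bs$, since $\bs^*\bs\bc_j^*\bs=\bc_j^*\bs^*\bs^2=0$.
\end{itemize}
So $\bc_j^*\bc_j\equiv 1$ along the $(i,j)$-interval. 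That interval therefore runs into the point where $\sigma_L(e_{j,j})$ equals $1$, independently of $i$. By contrast, the $(i,0)$-interval, which carries $\bc_i^*\bc_i\bs^*\bs$, runs into the point of $\sigma_L(e_{i,i})$. In the standard diagonal, the $(i,j)$-interval always ends at the $i$-th point.

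The resulting spectrum is the graph $X_L$ of Proposition \ref{prop:spectrumfinitestage}. For $1\leq j\leq L$, the points $0_j$ and $1_j$ are joined by $L$ parallel edges, and $1_0$ is joined to each $0_i$ by one edge. So $1_0$ is a cut point, which is exactly what Section \ref{sec:spectrum} exploits. The spectrum of the standard diagonal is the complete bipartite graph $K_{L,L+1}$, which has no cut point for $L\geq 2$.

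Consequently, two of your steps are carried out for a different subalgebra and say nothing about $\tD{L}$: the maximality argument (``pointwise diagonal on $(0,1)$, hence everywhere by continuity'') and the conditional expectation (pointwise compression onto $\D_L\otimes\D_{L+1}$). A concrete model would have to be a twisted diagonal $f(t)\in u_t(\D_L\otimes\D_{L+1})u_t^*$, with $u$ a unitary path ending in a nontrivial permutation relative to the boundary conditions, and you would have to compute that twist explicitly.

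\textbf{What survives, and how to finish.} Everything that uses only $J$ and the quotient still works.
\begin{itemize}
\item Abelianness.
\item Regularity, via the generators $\bc_i,\bs$ normalising a multiplicatively closed generating set of $\tD{L}$.
\item Your case analysis for the pure state extension property. A pure state with $\rho(\theta(1))>0$ lives on $J\cap\tD{L}\cong C_0((0,1))\otimes\D_L\otimes\D_{L+1}$ and extends uniquely because $J$ is an ideal. A pure state killing $J\cap\tD{L}$ forces every extension to kill $J$, because $J$ has the approximate unit $m_n(\theta(1))$ inside $\tD{L}$. One then reduces to $\D_L\oplus\D_{L+1}\subset\M_L\oplus\M_{L+1}$.
\end{itemize}
This is the paper's route, and it never needs a global model. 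Maximal abelianness and a unique conditional expectation $\tilde P$ follow from the pure state extension property. Faithfulness follows from essentiality of $J$: for $x\neq 0$ one has $0\neq x^*\theta(1)x\in J$. Moreover, $\tilde P$ restricts on $J$ to the canonical faithful expectation onto $C_0((0,1))\otimes\D_L\otimes\D_{L+1}$, and $\tilde P(x^*x)\geq\tilde P(x^*\theta(1)x)$. So drop the global identification and replace your maximality and expectation steps by these arguments.
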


\begin{proof}
With (\ref{eq:Therelations}) and (\ref{eq:basicCT}), it is elementary to check that $\tD{L} $ is abelian.
Moreover, $\tD{L}$ is regular in $\tZ{L}{L+1}$ since each generator $\bc_1, \dots, \bc_L,\bs$ normalises the set 
\begin{equation*}
\{\bc_i^*(\bs\bs^*)^k\bc_i, (\bc_i^*\bc_i)^n (\bs^*\bs)^k, \bc_j^*\bs f(\bc_i^*\bc_i) \bs^*\bc_j \, : \,  1 \leq i,j \leq L, \, k,n \in \N \cup \{0\},  f \in C_0((0,1]) \}  ,
\end{equation*}
which is a generating set of $\tD{L}$ closed under multiplication. To show that the pure state extension property is satisfied, recall the ideals $I_1, I_2, J$ in $\tZ{L}{L+1}$ from Proposition \ref{prop:essideal}, the functions $a,b \in C_0((0,1])$ and the c.p.c.\@ order zero maps $\hat{\sigma}_L, \hat{\sigma}_{L+1}, \hat{\theta}, \sigma_{L}, \sigma_{L+1}$ and $\theta$ used therein.
Further, recall Properties (a)--(d) from the proof of Proposition \ref{prop:essideal} and observe that, additionally, the following hold:
\begin{spacing}{1,1}
\begin{enumerate}
\item[(e)] $\sigma_L(\D_{L}), \sigma_{L+1}(\D_{L+1}),  \theta(\D_L \otimes \D_{L+1}) \subset \tD{L}$, 
\item[(f)] $\mathrm{C}^*( \sigma_L(\D_L), \sigma_{L+1}(\D_{L+1}), \theta(\D_L \otimes \D_{L+1})) = \tD{L}$.
\end{enumerate}
\end{spacing}
Here,  $\D_L \subset \M_L$ and $\D_{L+1} \subset \M_{L+1}$ denote the diagonal matrices. For (e), note that $\sigma_L(\D_{L}), \sigma_{L+1}(\D_{L+1}) \subset \tD{L}$ and $\theta(e_{i,i} \otimes e_{j,j}) \in \tD{L}$ for $j =0$ are immediate from the respective definitions. For $j\geq1$, we compute that
\begin{equation*}
\hat{\theta}(e_{i,i} \otimes e_{j,j} ) = \bc_j^*\bs\bc_i^*\bc_i\bs^*\bc_j \in \tD{L}.
\end{equation*}
Since $\theta(e_{i,i} \otimes e_{j,j}) $ is a function in $\hat{\theta}(e_{i,i} \otimes e_{j,j})$, this implies $\theta(e_{i,i} \otimes e_{j,j}) \in \tD{L}$. 
\vspace{0.1ex} To see (f), write $D:= \mathrm{C}^*(\sigma_L(\mathbb{D}_L), \sigma_{L+1}(\mathbb{D}_{L+1}), \theta ( \mathbb{D}_L \otimes \mathbb{D}_{L+1}))$ and observe that
\begin{equation*}
    \bc_i^*a(\bs\bs^*)\bc_i, a(\bs^*\bs), b(\bs^*\bs), b(\bc_i^*\bc_i), a(\bc_i^*(1-\bs\bs^*)\bc_i), b(\bc_i^*\bs\bs^*\bc_i), \Ad{\pi_{\hat{\sigma}_{L+1}}(e_{j,0})}(b(\bc_i^*\bc_i))\in D
\end{equation*}
for any $1 \leq i \leq L,$ $0\leq j\leq L$. Appropriately combining these using Lemma \ref{lemma:cpctricks} and (\ref{eq:Therelations}), (\ref{eq:basicCT}), one checks that $D$ contains each generator of $\tD{L}$. Now, let us recall the isomorphisms from (\ref{eq:prop:essideal}). Restricting these to $\tD{L}$, we get
\begin{align}
Q_{I_1} \circ \sigma_{L+1}\vert_{\D_{L+1}}: & \, \mathbb{D}_{L+1} \overset{\cong}{\longrightarrow} \tD{L} / (I_1\cap \tD{L}) \nonumber\\
Q_{I_2} \circ \sigma_{L}\vert_{\D_{L}}:& \, \mathbb{D}_{L} \overset{\cong}{\longrightarrow}  \tD{L}/ (I_2 \cap \tD{L})  , \label{eq:isosdiagonals}\\
\alpha': & \, C_0((0,1)) \otimes \D_L \otimes \D_{L+1} \overset{\cong}{\longrightarrow}  J \cap \tD{L}  \nonumber,
\end{align}
using facts (e) and (f) from above. Here, $Q_{I_1}$ and $Q_{I_2}$ denote the respective quotient maps. With this, we can show the pure state extension property for $\tD{L}$ relative to $\tZ{L}{L+1}$. Take a pure state $\rho$ on $\tD{L}$. Then, the following cases may occur.\\
\textbf{Case 1}: $\rho$ annihilates $I_1 \cap \tD{L}$. \\
There is a pure state $\rho'$ on $ \D_{L+1}$  such that $\rho' \circ Q_{I_1}\vert_{\tD{L}} = \rho$, using the first identification in (\ref{eq:isosdiagonals}). Now, let $\rho_1, \rho_2 $ be two state extensions of $\rho$ on $\tZ{L}{L+1}$. Then, $\rho_1, \rho_2 $ both annihilate $I_1$ and there are states $\rho_1', \rho_2'$ on $\M_{L+1}$ such that
\begin{equation*}
\rho_1' \circ Q_{I_1}= \rho_1 , \;\;\; \rho_2' \circ Q_{I_1}= \rho_2.
\end{equation*}
Thereby, $\rho_1'$ and $ \rho_2'$ are both extensions of $\rho'$. However, since $\D_{L+1} \subset \M_{L+1}$ satisfies the pure state extension property, $\rho_1'$ and $ \rho_2'$ coincide. Hence, $\rho$ has a unique state extension.\\
\textbf{Case 2}: $\rho$ annihilates $I_2 \cap \tD{L}$. This case can be dealt \vspace{0.5pt} with analogously.\\ 
\textbf{Case 3}\vspace{1.5pt}: $\rho(\theta(1)) >0$.\\
First note that, by (\ref{eq:prop:essideal}), (\ref{eq:isosdiagonals}), and since $J \trianglelefteq \tZ{L}{L+1}$ is an ideal, $J\cap \tD{L} \subset  \tZ{L}{L+1}$ has the pure state extension property. If now $\rho(\theta(1)) >0$, then $\rho\, \vert_{J \cap \tD{L}}$  is a pure state on $J\cap \tD{L}$ that extends uniquely to a state $\rho'$ on $\tZ{L}{L+1}$. Hence, $\rho'$ uniquely extends $\rho$.\\ 

The pure state extension property already ensures that $\tD{L}$ is maximal abelian and that there is a unique conditional expectation $\tilde{P}:\tZ{L}{L+1} \to \tD{L}$ (e.g.\@ see \cite{Exel}). So it remains to check that $\tilde{P}$ is faithful.
We take $x \in \tZ{L}{L+1}$ non-zero and observe that
$ x^*\theta(1)x$ is positive and non-zero as $J$ is essential. Now, the conditional expectation
\begin{equation*}
C_0((0,1)) \otimes \M_L \otimes \M_{L+1} \cong J \overset{\tilde{P}\vert _J}{\longrightarrow} J \cap \tD{L} \cong C_0((0,1)) \otimes \D_L \otimes \D_{L+1}
\end{equation*}
coincides with the unique such conditional expectation, which is faithful, and hence $\tilde{P}\vert_J$ is faithful. We conclude that $\tilde{P}(x^*x)  \geq \tilde{P}\vert_J(  x^* \theta(1)x ) \neq 0.$
\end{proof}

To show that these diagonals give rise to a $\mathrm{C}^*$-diagonal in $\mathcal{Z}$, we use the following theorem. 
\begin{theo}[1.10 in \cite{Li1}]
\label{theo:XinLi}
 Let $(D_n, A_n)_n$ be a sequence of Cartan pairs with normalisers $N_{A_n}(D_n)$ and faithful conditional expectations $P_n: A_n \to D_n$. Further, let $\Phi_n :A_n \to A_{n+1}$ be injective $^*$-homomorphisms such that 
\begin{equation*}
\Phi_n(D_n) \subset D_{n+1}, \;\;\; 
\Phi_n(N_{A_n}(D_n)) \subset N_{A_{n+1}}(D_{n+1}), \;\;\;\Phi_{n}\vert_{D_n} \circ P_n = P_{n+1} \circ \Phi_n,
\end{equation*}
hold for each $n$. Then, $D:=\varinjlim(D_n, \Phi_n\vert_{D_n})$ is a Cartan subalgebra in $A:=\varinjlim(A_n, \Phi_n)$. If each $D_n $ was a $\mathrm{C}^*$-diagonal in $A_n$, then $D$ is a $\mathrm{C}^*$-diagonal in $A$. 
\end{theo}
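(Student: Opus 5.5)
Since this statement is cited from X.\@ Li's work, my plan follows the standard inductive-limit argument and checks the four Cartan conditions for $D\subset A$ directly. I identify $A_n$ and $D_n$ with their images in $A$ via $\Phi_{n,\infty}$. Then $\bigcup_n A_n$ is dense in $A$ and $\bigcup_n D_n$ is dense in $D$.

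First, the conditional expectation. The compatibility $\Phi_n\circ P_n=P_{n+1}\circ\Phi_n$ says the maps $P_n$ agree on the increasing union $\bigcup_n A_n$. They are contractive, so they extend to a contractive idempotent $P:A\to D$ onto $D$, which is a conditional expectation by Tomiyama. Regularity is immediate: $\Phi_{n,\infty}(N_{A_n}(D_n))$ lies in $N_A(D)$, because for $v\in N_{A_n}(D_n)$ and $d\in D_m$ with $m\ge n$ we get $v^*dv\in D_m$ by the assumption on the $\Phi$'s, and then density handles general $d\in D$. These normalisers generate a dense subalgebra. For the approximate unit, an approximate unit of $A_n$ taken inside $D_n$ works approximately on $A_n$, and a diagonal choice over $n$ gives an approximate unit of $A$ in $D$.

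The main obstacle is faithfulness of $P$, since faithfulness is not preserved by norm limits in general. I would use Renault's characterisation: $P$ is faithful if and only if the family $\{\mathrm{ev}_x\circ P : x\in\hat D\}$ yields a faithful representation of $A$. To establish this I would take the GNS representations of the states $\mathrm{ev}_x\circ P$, with $x\in \hat D$. Their restrictions to $A_n$ are states of the form $\mathrm{ev}_{x_n}\circ P_n$, where $x_n$ is the image of $x$ under the restriction map $\hat D\to\hat D_n$, which is surjective by injectivity of $\Phi_n$. Consequently the direct sum of these GNS representations restricts on each $A_n$ to a representation that contains the faithful one from $(A_n,D_n)$. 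It is therefore isometric on each $A_n$, hence on $A$. Taking the kernel shows that $P(a^*a)=0$ forces $a=0$.

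Maximal abelianness then follows from Renault's result that a regular abelian subalgebra with a faithful conditional expectation (and an approximate unit) is masa, provided that $D$ is masa; alternatively, one can run an excision argument via the normalisers to get that $D'\cap A=D$. Finally, for the diagonal case I would verify the unique extension property. A pure state $\rho$ on $D$ restricts to pure states $\rho_n$ on $D_n$, each with a unique extension to $A_n$, namely $\rho_n\circ P_n$. Any state extension $\omega$ of $\rho$ to $A$ then satisfies $\omega|_{A_n}=\rho_n\circ P_n$ for every $n$, so $\omega=\rho\circ P$ by density.
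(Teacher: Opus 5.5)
The paper does not prove this statement; it quotes it from X.~Li and Barlak--Li. So your sketch has to stand on its own. The routine parts are correct:
\begin{itemize}
\item the compatible $P_n$ glue to a conditional expectation $P$;
\item the images of the finite-stage normalisers normalise $D$ and generate $A$;
\item $D$ contains an approximate unit of $A$;
\item your last paragraph proves the unique extension property.
\end{itemize}

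\textbf{Maximal abelianness.} This is the real content of the theorem, and your argument for it is empty. Your first version is circular (``is masa, provided that $D$ is masa''). Without the proviso, the principle you attribute to Renault is false: $C(\mathbb{T})\otimes 1\subset C(\mathbb{T}^2)$ is regular (it is normalised by $1\otimes z$), contains the unit and has the faithful expectation $\mathrm{id}\otimes\int$, yet it is not maximal abelian. ``An excision argument via normalisers'' is only a label. This is exactly where the hypothesis $\Phi_n(N_{A_n}(D_n))\subset N_{A_{n+1}}(D_{n+1})$ must be used, to carry maximality from the $D_m$ to $D$. One workable route:
\begin{itemize}
\item In the Cartan pair $(A_m,D_m)$, a normaliser $w$ with $\alpha_w=\mathrm{id}$ on an open set $V$ satisfies $wd\in D_m$ for every $d$ supported in $V$. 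The reason is that $wd$ is a normaliser acting trivially on its domain, so it commutes with $D_m$, and $D_m$ is maximal abelian.
\item The restriction maps $\pi_m\colon\hat D\to\hat D_m$ are surjective and satisfy $\pi_m\circ\alpha_v=\alpha_{\Phi_{n,m}(v_0)}\circ\pi_m$ for $v=\Phi_{n,\infty}(v_0)$. Basic open sets of $\hat D$ are preimages of open sets of some $\hat D_m$. Together these give the same property for such $v$ on open subsets of $\hat D$.
\item Now let $b\in D'\cap A$ with $P(b)=0$. One checks $P(bv^*)=0$ for all such $v$, hence $P(bb^*)=0$, and $b=0$ by faithfulness of $P$.
\item Apply this to $b=a-P(a)$ for $a\in D'\cap A$ to get $a\in D$.
\end{itemize}
For the diagonal assertion, maximality would also follow from your extension-property paragraph. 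But the first assertion concerns general Cartan pairs.

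\textbf{Faithfulness.} There is also a gap at the end of this argument. You correctly show that $\pi=\bigoplus_{x\in\hat D}\pi_{\mathrm{ev}_x\circ P}$ is faithful. However, $P(a^*a)=0$ only gives $\pi(a)\xi_x=0$ on the cyclic vectors. To get $\pi(a)=0$ you need $P(b^*a^*ab)=0$ for all $b$, i.e.\ that the left ideal $\{a:P(a^*a)=0\}$ is two-sided. That is not automatic, and the characterisation you quote fails for regular inclusions in general. Take $\mathbb{C}1\subset\mathbb{M}_2$, which is regular because scalar multiples of unitaries normalise it, with $P(a)=a_{11}1$. The GNS representation of $P$ is the identity representation, hence faithful, yet $P(e_{22})=0$.

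The missing ingredient is the invariance $P_m(w^*xw)=w^*P_m(x)w$ for normalisers $w$ of the Cartan pair $(A_m,D_m)$. By the compatibility and normaliser hypotheses, it passes to $P(v^*xv)=v^*P(x)v$ for $v\in\Phi_{n,\infty}(N_{A_n}(D_n))$. Combine this with the Cauchy--Schwarz inequality for $P$ and the density of the span of these $v$. Then $\{a:P(a^*a)=0\}$ is a closed two-sided ideal. It meets every $\Phi_{n,\infty}(A_n)$ trivially by faithfulness of $P_n$, so it is zero, and no representation is needed at all.
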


Note that $\Phi_n(N_{A_n}(D_n)) \subset N_{A_{n+1}}(D_{n+1})$ already implies $\Phi_n(D_n) \subset D_{n+1}$ since elements of $D_n$ are trivially normalisers and $D_{n+1}$ contains an approximate unit of $A_{n+1}$ (cf.\@ 1.6 in \cite{LiaoTikuisis}). Moreover, compatibility of the conditional expectations and the connecting maps is automatic in the case of unital $\mathrm{C}^*$-diagonals, as stated in the following lemma.
\begin{lemma}
\label{lemma:condexpindlim}
Let $D_i \subset A_i$  be unital $\mathrm{C}^*$-diagonals with faithful conditional expectations $P_i: A_i \to D_i$, $i=1,2$. Further, let $\Phi:A_1 \to A_2$ be a unital injective $^*$-homomorphism with $\Phi(D_1) \subset D_2$. Then, we have $\Phi\vert_{D_1} \circ P_1 = P_2 \circ \Phi$.
\end{lemma}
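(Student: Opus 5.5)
The idea is to compare both maps through pure states. Since $D_2$ is a unital $\mathrm{C}^*$-diagonal, the conditional expectation $P_2$ is determined by the unique extension property: for $y \in \hat{D}_2$ with evaluation $\rho_y$ and unique extension $\tilde\rho_y \in S(A_2)$, we have $\rho_y \circ P_2 = \tilde\rho_y$ (the fact from \cite{Exel} already used in Corollary \ref{corexcisingextensions}). Elements of $D_2$ are separated by the characters $\rho_y$, so it suffices to show, for every $a \in A_1$ and every $y \in \hat{D}_2$,
\begin{equation*}
\rho_y\big(\Phi(P_1(a))\big) = \rho_y\big(P_2(\Phi(a))\big).
\end{equation*}

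First I would treat the left-hand side. Since $\Phi(D_1) \subset D_2$ and $\Phi$ is unital, $\rho_y \circ \Phi\vert_{D_1}$ is a character on $D_1$, hence equal to $\rho_x$ for a unique $x \in \hat{D}_1$. Unitality is exactly what rules out the zero functional here. Consequently the left-hand side equals $\rho_x(P_1(a)) = \tilde\rho_x(a)$, where $\tilde\rho_x$ denotes the unique state extension of $\rho_x$ to $A_1$.

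For the right-hand side, $\rho_y(P_2(\Phi(a))) = \tilde\rho_y(\Phi(a))$. The functional $\tilde\rho_y \circ \Phi$ is a state on $A_1$, since $\Phi$ is unital. Its restriction to $D_1$ is $\rho_y \circ \Phi\vert_{D_1} = \rho_x$. By the pure state extension property of $D_1 \subset A_1$, this forces $\tilde\rho_y \circ \Phi = \tilde\rho_x$. Comparing with the previous paragraph, the two sides agree for every $y$, which finishes the proof.

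I do not expect a serious obstacle. The only point needing care is the identity $\rho \circ P = \tilde\rho$ for pure states. If one prefers not to cite it, it can be derived directly. The functional $\rho \circ P$ is a state on $A$ extending $\rho$, because $P$ is a unital conditional expectation onto $D$ and $\rho$ is a state on $D$. Uniqueness of the extension then gives equality. Injectivity of $\Phi$ is not needed for the argument.
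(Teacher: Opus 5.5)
Your proof is correct and is essentially the paper's argument, phrased directly rather than by contradiction: for a character $\rho$ of $D_2$, unitality makes $\rho\circ\Phi\vert_{D_1}$ a character of $D_1$, and $\rho\circ\Phi\vert_{D_1}\circ P_1$ and $\rho\circ P_2\circ\Phi$ are then two state extensions of it to $A_1$, so they coincide. Your detour through $\tilde\rho_y$ is harmless; the paper skips it by observing directly that $\rho\circ P_2\circ\Phi$ restricts to $\rho\circ\Phi\vert_{D_1}$ on $D_1$, since $\Phi(D_1)\subset D_2$.
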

\begin{proof}
Assume that there is a non-zero, positive $x \in A_1$ such that $\Phi\vert_{D_1}  \circ P_1(x) \neq  P_2 \circ \Phi (x)$. Then, there is a pure state $\rho$ on $D_2$ such that 
\begin{equation*}
\rho \circ \Phi\vert_{D_1}  \circ P_1(x) \neq \rho \circ  P_2 \circ \Phi (x).
\end{equation*}
Since $\Phi$ is unital and $1_{A_i} \in D_i$ for $i=1,2$, we have that $\rho \circ \Phi\vert_{D_1} $ is a pure state on $D_1$. However, then $\rho \circ \Phi\vert_{D_1}  \circ P_1$ and  $\rho \circ  P_2 \circ \Phi$ are different states on $A_1$ both extending $\rho \circ \Phi\vert_{D_1}$ as $\Phi(D_1) \subset D_2$. This contradicts $D_1$ being a $\mathrm{C}^*$-diagonal in $A_1$.
\end{proof} 

It remains to check that in the construction of the Jiang--Su algebra in Theorem \ref{theo:constrindlim}, the normalisers in each finite stage are again mapped to normalisers by the connecting $^*$-homomorphisms. To do that, let us fix $L \geq 2$ and $L'\geq L$ as in (\ref{eq:choiceofL'}) and consider $\Phi_{L, L'}:\tZ{L}{L+1} \to \tZ{L'}{L'+1}$ from Proposition \ref{theo:starhom}. We write 
\begin{equation*}
N:= N_{\tZ{L}{L+1}}(\tilde{D}_{L,L+1}), \;\;N':= N_{\tZ{L'}{L'+1}}(\tilde{D}_{L',L'+1})
\end{equation*}
and $X_L$ for the spectrum of $\tD{L}.$ Further, we write $\bar{c}_1, \dots, \bar{c}_L,\bar{s}$ and $c_1, \dots, c_{L'}, s$ for the generators of $\tZ{L}{L+1}$ respectively $\tZ{L'}{L'+1}$, as in Section \ref{sec:construction}. Let us first consider a well-tractable class of normalisers, namely words in the generators of $\tZ{L}{L+1}$, denoted by $\mathcal{W}(\bar{c}_1, \dots, \bar{c}_L, \bar{s})$. 
\begin{lemma}
\label{lemma:nicenormpreserved}
    Let $w $ be a word in the generators of $\tZ{L}{L+1}$, i.e.\@ $w\in \mathcal{W}(\bar{c}_1, \dots, \bar{c}_L, \bar{s})$. Then, we have $\Phi_{L,L'}(w) \in N'=N_{\tZ{L'}{L'+1}}(\tilde{D}_{L',L'+1})$.
\end{lemma}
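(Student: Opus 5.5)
Since $\Phi_{L,L'}$ is a $^*$-homomorphism, $\Phi_{L,L'}(w)$ is a word in $\tilde{c}_1,\dots,\tilde{c}_L,\tilde{s}$ and their adjoints. Each of these is a finite sum of \emph{elementary pieces} of the form $\pips{k}{m}\,F(\cstarc{m})$ or $\piph{1,2}h^{\frac12}(s^*s)$, with $F\in C_0((0,1])$. The plan is to avoid Theorem \ref{theocharnorm} and argue directly, using two algebraic facts.

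\emph{Fact 1: every elementary piece is a normaliser of $\tilde{D}_{L',L'+1}$.} For a piece $\pips{k}{m}F(c_m^*c_m)$, Lemma \ref{lemma:cpctricks} lets us rewrite it as $F(c_k^*c_k)^{1/2}\pips{k}{m}F(c_m^*c_m)^{1/2}$ up to the obvious reshuffling. Conjugation by $\pi_\psi(e_{k,m})$ maps each generator of $\tilde{D}_{L',L'+1}$ that lives under $c_m^*c_m$, namely $(c_m^*c_m)^n(s^*s)^k$, $c_m^*f(ss^*)c_m$ and $c_m^*sf(c_i^*c_i)s^*c_m$, to the corresponding generator under $c_k^*c_k$. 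This holds because $\pi_\psi(e_{k,m})$ intertwines $c_m$ with $c_k$: up to the functional calculus, $c_k=c_1\pi_\psi(e_{1,k})$-type identities coming from the cone relations. The argument for $\piph{1,2}h^{\frac12}(s^*s)$ is the same, using $\pi_\varphi$ and the corresponding generators involving $s^*s$ and $ss^*$. Here one uses the generating set closed under multiplication exhibited in the proof of Proposition \ref{prop:tD}.

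\emph{Fact 2: the sums are partial-isometry-like with disjoint domains and disjoint ranges.} Within each of $\tilde{c}_l$, $\tilde{s}$, and more generally within a product of them, the summands $v_1,\dots,v_r$ satisfy $v_i^*v_j=0=v_iv_j^*$ for $i\neq j$. This follows from the orthogonality relations (\ref{eq:polynomialtrick1}) and Lemma \ref{lemma:cpctricks}, and is exactly the orthogonality already used in the proof of Proposition \ref{theo:starhom}. A sum of normalisers with pairwise orthogonal sources and pairwise orthogonal ranges is again a normaliser. To see this, compute $v^*dv=\sum_{i,j}v_i^*dv_j$; for $i\neq j$ the cross term satisfies $v_i^*dv_j = v_i^*(v_iv_i^*)^{1/n}d(v_jv_j^*)^{1/n}v_j$ in the limit, and this vanishes because $d$ commutes with $D$-elements and $v_iv_i^*\perp v_jv_j^*$. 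Since normalisers are closed under products, the remaining task is to expand $\Phi_{L,L'}(w)$ as a product of sums and regroup it as a single sum of products of elementary pieces. Each such product is a normaliser by Fact 1, and then Fact 2 must be checked for this regrouped sum.

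\textbf{Main obstacle.} The hardest step is verifying the disjointness in Fact 2 after multiplication, i.e.\ that distinct surviving product terms still have orthogonal domains and ranges. The approach I would try first is to track each elementary piece as a ``partial homeomorphism'' moving block $m$ to block $k$, with an explicit label recording which block and which function it uses. A surviving product term is then determined by its starting block, since each generator $\tilde c_l,\tilde s$ sends a given block to at most one block. Distinct terms therefore have distinct starting blocks, whose supports under the $c_m^*c_m$ (respectively $s^*s$) are orthogonal, and the ranges are then orthogonal by injectivity of the index maps in (\ref{eq:defictilde:a}), (\ref{eq:defictilde:b}) and (\ref{eq:defi:stilde:b}). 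The $\tilde s^{(1)}$ piece, which overlaps with $\tilde s^{(2)}$ through $s^*s$, has to be handled separately via $h(s^*s)\perp h(\cstarc{m})$ type identities from (\ref{eq:polynomialtrick2}).
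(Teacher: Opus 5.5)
Your proof is correct and follows the paper's structure. Each summand $\pi_\psi(e_{k,m})F(c_m^*c_m)$ or $\pi_\varphi(e_{1,2})h^{1/2}(s^*s)$ is a normaliser, and $\tilde{c}_l$ and $\tilde{s}$ are sums of such normalisers with pairwise orthogonal sources and ranges; the only difference is that the paper gets the first fact by writing each summand as a norm limit of products of the generators $c_i^*, c_j, s$ (normalisers by Proposition~\ref{prop:tD}) with diagonal elements, rather than by your direct conjugation check. The step you flag as the main obstacle, expanding $\Phi_{L,L'}(w)$ into one regrouped sum, is unnecessary: once Facts 1 and 2 put each $\tilde{c}_l$ and $\tilde{s}$ in $N'$, closure of $N'$ under adjoints and products gives $\Phi_{L,L'}(w)\in N'$ at once.
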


\begin{proof}
First, let us establish the following.\\
   \textbf{Claim}:
  Let  $1\leq i,j \leq L'$ and $d,d' \in \tD{L'}$. Then, we have $\piph{1,2}d, \pips{i}{j}d' \in N'$ (where $ \pi_\varphi, \pi_\psi$ are defined as in Section \ref{sec:construction} before (\ref{eq:defi:tildec})), provided for every $\varepsilon >0$ there are $f, f' \in C_0((0,1])$ with $\norm{f(s^*s)d-d} < \varepsilon$ and $  \norm{f'(c_j^*c_j)d'-d'} < \varepsilon.$\\
    Indeed, let $\varepsilon > 0$ and $f \in C_0((0,1])$, $d \in \tD{L'}$ as above. Take $(a_n)_{n}$ a sequence of functions in $C_0((0,1])$ with $a_n  \mathrm{id}_{(0,1]} f \to f$ uniformly. Then, we compute
    \begin{equation*}
    \piph{1,2}d \approx_\varepsilon \piph{1,2} f(s^*s) d = \lim\limits_{n \to \infty}  (ss^*)^{\frac{1}{2}}s a_n(s^*s) f(s^*s)d \in N'
    \end{equation*}
   as $N'$ is a closed set and closed under taking products. Here, $a \approx_\varepsilon b$ abbreviates $\norm{a-b} \leq \varepsilon$. Since $\varepsilon$ was arbitrary, we get $\piph{1,2}d \in N'$. Analogously, one shows $\pips{i}{j}d' \in N'$. \\
   By the definitions in (\ref{eq:defictilde:a}), (\ref{eq:defictilde:b}), (\ref{eq:defi:tildec}), $\Phi_{L,L'}(\bar{c}_l)$ is a finite sum of pairwise orthogonal elements of the form considered in the claim for each $1\leq l \leq L$. Therefore, we have $\Phi_{L,L'}(\bar{c}_l) \in N'$. Analogously, we get $\Phi_{L,L'}(\bar{s}) \in N'$.
   \end{proof}

Now, denote $\mathcal{M}:= \{ w \in N \, \vert \, \Phi_{L,L'}(w) \in N'\} \subset N$.
Further, recall the normaliser excision property (NEP) from Definition \ref{defi:propertyNEP}. Then, $\mathcal{M} $ is a large subfamily of $N$ in the following sense: For each normaliser $v$ and each point $x\in X_L$ (the spectrum of $\tD{L}),$ which is transported to $y\in X_L$ by $v$,  there already is an element of $\mathcal{M}$ also transporting $x$ to $y$. This is made precise in the following lemma. 

\begin{lemma}
\label{lemma:exnicenormaliser}
    Let $v \in N$ and $x,y \in X_L$. Further, let $\rho_1,\rho_2 \in PS(\tD{L})$ be the evaluations in $x,y$ excised by sequences $(k_n)_n, (l_n)_n$ in $\tD{L}$, respectively. Suppose that $\rho_1(k_n)=1,$ $\rho_2(l_n)=1$ for each $n \in \N $ and $l_n v k_n \nrightarrow 0$. Then, there exist $w \in \mathcal{M}$ and $\mu \in (0, \infty)$, $\lambda \in \C \backslash \{0\}$ such that, 
    \begin{itemize}
    \item[\upshape{(i)}]$ \mu \rho_1\circ \Ad{w^*} = \rho_2,$
    \item[\upshape{(ii)}] after possibly passing to subsequences of $(k_n)_n$ and $(l_n)_n$, we have  
    \begin{equation*}
        (l_nvk_n - \lambda l_n wk_n)_n \overset{n}{\longrightarrow} 0.
    \end{equation*}
    \end{itemize}
    
\end{lemma}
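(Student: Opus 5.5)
We approximate $v$ by a linear combination of words in the generators and pick out one word that moves $x$ to $y$; Lemma \ref{lemma:nicenormpreserved} then shows that this word lies in $\mathcal{M}$.

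\textbf{Step 1: reduce to words.} Since $\tZ{L}{L+1}=\mathrm{C}^*(\bc_1,\dots,\bc_L,\bs)$, the linear span of $\mathcal{W}(\bc_1,\dots,\bc_L,\bs)$ is dense. Put $\varepsilon:=\limsup_n\norm{l_nvk_n}>0$, passing to a subsequence so that the limit exists. Choose words $w_1,\dots,w_N$ and scalars $\beta_i$ with
\[\Bigl\|v-\sum_{i=1}^N\beta_iw_i\Bigr\|<\varepsilon/4.\]
Each $w_i$ is a normaliser of $\tD{L}$, by the regularity argument in the proof of Proposition \ref{prop:tD}. By Lemma \ref{lemma:nicenormpreserved}, each $w_i$ also lies in $\mathcal{M}$.

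\textbf{Step 2: sort the words.} By the first half of the proof of Theorem \ref{theocharnorm}, $\norm{l_nw_ik_n}\to0$ unless $x\in\mathrm{dom}(w_i)$ and $\alpha_{w_i}(x)=y$. Hence some word $w:=w_{i_0}$ has $x\in\mathrm{dom}(w)$ and $\alpha_w(x)=y$; otherwise $\limsup_n\norm{l_nvk_n}\le\varepsilon/4$, a contradiction.

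\textbf{Step 3: prove (i).} Using (\ref{eq:homeoalphav}), for $d\in\tD{L}$ we have
\[\rho_1(w^*dw)=d(\alpha_w(x))\,w^*w(x)=w^*w(x)\,\rho_2(d).\]
So (i) holds with $\mu:=w^*w(x)^{-1}\in(0,\infty)$.

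\textbf{Step 4: prove (ii), the main obstacle.} The relevant terms are $l_nw_ik_n$ for all $i$ with $\alpha_{w_i}(x)=y$, and we must show they are asymptotically proportional to $l_nwk_n$.

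For any two normalisers $u,w$ with $\alpha_u(x)=\alpha_w(x)=y$, I would show that $u^*w$ is a normaliser fixing $x$. Then I would show that the compression $k_nu^*wk_n$ behaves like $\rho(u^*w)k_n^2$, where $\rho$ is the unique extension of $\rho_1$. This uses the pure state extension property of $\tD{L}$ (Proposition \ref{prop:tD}) together with Corollary \ref{corexcisingextensions}: since $(k_n)_n$ excises $\rho$,
\[k_nu^*wk_n\approx\rho(u^*w)\,k_n^2 .\]

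From this I would conclude that $l_nuk_n-\gamma\, l_nwk_n\to0$ with $\gamma=\rho(w^*u)/\rho(w^*w)$. Concretely, compute $\norm{l_n(u-\gamma w)k_n}^2$ and bound it by $\norm{k_n(u-\gamma w)^*l_n^2(u-\gamma w)k_n}$. Then use $l_n\to$ "evaluation at $y$" to replace $l_n^2$ by a scalar times something on $\mathrm{ran}$. The difficulty is that $\tD{L}$ fails to be central with respect to the words. I expect the cleanest route is to write $l_n\approx$ a function of $ww^*$ near $y$. With $l_n(y)=1$, $l_nw\approx w\,(l_n\circ\alpha_w)$ on $\mathrm{dom}(w)$, giving
\[l_nuk_n\approx w\, g_n\, w^*u\, k_n\]
with $g_n\in\tD{L}$ excising at $x$. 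Then apply excision of $\rho$ to $w^*u$.

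Summing over all relevant $i$ gives $l_nvk_n\approx_{\varepsilon/4}\lambda_\delta\, l_nwk_n$ asymptotically, with an approximation error $\delta$. Letting $\delta\to0$ along finer approximations, the coefficients $\lambda_\delta$ stay bounded, because $\norm{l_nwk_n}\to w^*w(x)^{1/2}>0$. They also stay bounded away from $0$, because $\limsup_n\norm{l_nvk_n}>0$. A diagonal subsequence argument then yields a limit $\lambda\neq0$ with $l_nvk_n-\lambda\, l_nwk_n\to0$.

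There is one gap here: the word $w$ depends on $\delta$. I would fix $w$ once at the first stage and express the later words relative to it using the proportionality above.
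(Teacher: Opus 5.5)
Your Steps 1--3 prove (i) correctly, by a different route from the paper. The paper splits into three cases: $\rho_1,\rho_2$ annihilate $I_1\cap\tD{L}$, or annihilate $I_2\cap\tD{L}$, or satisfy $\rho_m(\theta(1))>0$. In each case it writes down an explicit transporting element ($\bc_i^*\bc_j$, $\bc_i^*\bs$, $\bs^*\bc_j$, or $\hat{\theta}(e_{k,i}\otimes e_{l,j})$) via the isomorphisms (\ref{eq:isosdiagonals}). You instead do the following:
\begin{enumerate}
\item approximate $v$ by a combination of words, which lie in $\mathcal{M}$ by regularity and Lemma \ref{lemma:nicenormpreserved};
\item pick a word $w$ with $\alpha_w(x)=y$, using the first half of the proof of Theorem \ref{theocharnorm};
\item read off $\mu=w^*w(x)^{-1}$ from (\ref{eq:homeoalphav}).
\end{enumerate}
This bypasses the ideal structure entirely. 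Your $w$ depends on $v$, whereas the paper's depends only on $x,y$, but that is irrelevant for the lemma.

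Step 4, however, does not prove (ii). The estimate $l_nuk_n-\gamma\,l_nwk_n\to0$, on which everything rests, is only announced. The heuristic $l_nuk_n\approx wg_nw^*uk_n$ is not justified. Moreover, with $g_n$ an arbitrary sequence excising at $x$, it does not produce the form $k_n(\,\cdot\,)k_n$ that Corollary \ref{corexcisingextensions} needs.

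The missing device is Lemma \ref{lemmaexcisingalphavx0}. Since $\alpha_w(x)=y$, the sequence $(\lambda_nwk_n^2w^*)_n$ excises $\rho_2$, with $\lambda_n^{-1}\to ww^*(y)>0$ and values at $y$ tending to $1$. By the remark after Lemma \ref{lemma:charexcseq}, you may pass to a subsequence with $l_n(1-\lambda_nwk_n^2w^*)\to0$. Insert this factor, and use that $(k_n)_n$ excises the unique extension $\bar{\rho}_1$ of $\rho_1$, applied to $w^*v$ and to $w^*w$. This gives
\[
l_nvk_n\approx\lambda_n\,l_nwk_n\,(k_nw^*vk_n)\approx\lambda_n\bar{\rho}_1(w^*v)\,l_nwk_n^3\approx\frac{\bar{\rho}_1(w^*v)}{w^*w(x)}\,l_n(\lambda_nwk_n^2w^*)wk_n\approx\frac{\bar{\rho}_1(w^*v)}{w^*w(x)}\,l_nwk_n .
\]
This works for $v$ itself, so (ii) needs no approximation of $v$ by words at all. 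The $\delta$-dependence of $w$ that you flag, the bounds on $\lambda_\delta$, and the diagonal subsequence argument are all artefacts of that detour. Finally, $\lambda=\bar{\rho}_1(w^*v)/w^*w(x)$ is automatically nonzero because $l_nvk_n\nrightarrow0$.
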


\begin{proof}
Let us fix some notation. We write $\mathrm{ev}_i$ for the evaluation in the $i$-th entry of the diagonal matrices $\mathbb{D}_L$, $\mathrm{ev}_j$ on $\mathbb{D}_{L+1}$ likewise, and $\mathrm{ev}_{t,i,j}$ for the pure state defined on $C_0((0,1)) \otimes \mathbb{M}_L \otimes \mathbb{M}_{L+1}$ by
   \begin{equation*}
      \mathrm{ev}_{t,i,j}(f \otimes e_{k,l} \otimes e_{m,n}) = 
      \begin{cases}
        f(t) & \text{if } k=l=i,\, m=n=j, \\
        0 &\text{else},
       \end{cases}
   \end{equation*}
   for $1\leq i \leq L, 0 \leq j \leq L$ and $t \in (0,1)$. \\
    (i): Recall from the proof of Theorem \ref{theocharnorm} that $l_nvk_n \nrightarrow 0$ implies $\alpha_v(x) =y$ and $v^*v(x) >0$ with $\alpha_v$ from (\ref{eq:homeoalphav}).
    With that, we observe for any ideal $I \unlhd \tZ{L}{L+1}$, that $\rho_1$ does not vanish on $I\cap \tD{L}$ if and only if that holds for $\rho_2$. Further recall the ideals $I_1, I_2, J$, the functions $a,b$ and the c.\@p.\@c.\@ maps $\sigma_L, \sigma_{L+1}, \theta$ from Proposition \ref{prop:essideal}. Now, the following three cases may occur.\\
\textbf{Case 1}: $\rho_1, \rho_2$ annihilate $I_1 \cap \tD{L}$. \\
Recall the isomorphism $\beta:=Q_{L+1} \circ \sigma_{L+1}\vert_{\mathbb{D}_{L+1}}$ from (\ref{eq:isosdiagonals}), where $Q_{L+1}$ denotes the quotient map $ \tD{L} \twoheadrightarrow \tD{L} / (I_1 \cap \tD{L})$. For the rest of this proof, let us denote the restriction $\sigma_{L+1}\vert_{\mathbb{D}_{L+1}}$ again by $\sigma_{L+1}$. Now, there are $\rho_1', \rho_2'\in PS(\mathbb{D}_{L+1})$ such that $ \rho_m' \circ  \beta^{-1} \circ Q_{L+1} = \rho_m$ for $m=1,2$. In particular, we have $\rho_m' = \rho_m \circ \sigma_{L+1}$. To show (i) in this case, it now suffices to show that there is $w \in \mathcal{W}(\bar{c}_1, \dots, \bar{c}_L, \bar{s}) \subset \mathcal{M}$ with 
\begin{equation*}
\rho_1 \circ \Ad{w^*}\circ\sigma_{L+1} = \rho_2'.
\end{equation*}
Indeed, suppose we find such a $w$. Let $\bar{\rho}_1$ be the pure state extension of $\rho_1$ to $\tZ{L}{L+1}$. Then, we get with Fact 1.2 in \cite{Pit} that 
    \begin{equation*}
    \rho_2 = \rho_2' \circ \beta^{-1} \circ Q_{L+1} = \rho_1 \circ \Ad{w^*}\circ (\sigma_{L+1} \circ \beta^{-1} \circ Q_{L+1}) = \Ad{\bar{\rho_1}(w^*)} \circ \rho_1    = \rho_1\circ \Ad{w^*}.
    \end{equation*}
    Now, let us first suppose that $\rho_1' = \mathrm{ev}_i, \rho_2' = \mathrm{ev}_j$ for some $1 \leq i,j \leq L$. Then, we compute for $1 \leq k \leq L$
    \begin{equation*}
        \rho_1(\bar{c}_i^*\bar{c}_j \sigma_{L+1}(e_{k,k})\bar{c}_j^*\bar{c}_i) = \delta_{j,k} \rho_1( \sigma_{L+1}(e_{i,i})) = \delta_{j,k} = \rho_2'(e_{k,k})
    \end{equation*}
    and note for $k=0$ that
    \begin{equation*}
        \rho_1(\bar{c}_i^*\bar{c}_j\sigma_{L+1}(e_{0,0})\bar{c}_j^*\bar{c}_i) \overset{(\ref{eq:basicCT})}{=} \rho_1(\bar{c}_i^*\bar{c}_1^2 a(\bar{s}^*\bar{s})\bar{c}_i) = 0 = \rho_2'(e_{0,0})
    \end{equation*}
    as $\bar{c}_1^2 a(\bar{s}^*\bar{s}) \in I_1 \cap \tD{L}$.
    This shows $\rho_1\circ \Ad{\bar{c}_i^*\bar{c}_j} \circ \sigma_{L+1}= \rho_2'$. If $ \rho_1'= \mathrm{ev}_i$ for some $1\leq i \leq L$, $ \rho_2' = \mathrm{ev}_0$, we similarly get $\rho_2' = \rho_1 \circ \Ad{\bar{c}_i^*\bar{s}} \circ \sigma_{L+1}.$ Analogously for $\rho_1'= \mathrm{ev}_0$ and $\rho_2'= \mathrm{ev}_j$ for some $1 \leq j \leq L$, we have $\rho_2' = \rho_1 \circ \Ad{\bar{s}^*\bar{c}_j} \circ \sigma_{L+1}.$\\
    \textbf{Case 2}: $\rho_1, \rho_2$ annihilate $I_2$. 
    This case can be dealt with analogously.\\
    \textbf{Case 3}: $\rho_1(\theta(1)), \rho_2(\theta(1)) >0$.\\
    In particular, $\rho_1, \rho_2$ restrict to pure states on $J \cap \tD{L}$. Recall the definition of the $^*$-isomorphism $\alpha$ from (\ref{eq:definalpha}) that restricts to a $^*$-isomorphism 
    \begin{equation*}
        \alpha':C_0((0,1)) \otimes \mathbb{D}_{L} \otimes \mathbb{D}_{L+1} \to J \cap \tD{L}.
    \end{equation*}
    Define pure states $\rho_1' := \rho_1 \circ \alpha', \rho_2' :=\rho_2 \circ \alpha'$ on $C_0((0,1)) \otimes \mathbb{D}_{L} \otimes \mathbb{D}_{L+1}$.  In particular, we have 
    $\rho_1'= \mathrm{ev}_{t_1,i ,j} , \, \rho_2'= \mathrm{ev}_{t_2,k,l}$
for some $t_1, t_2 \in (0,1), 1 \leq i,k \leq L, 0 \leq j,l \leq L$. If $t_1 \neq t_2$, then $\rho_1$ annihilates the ideal 
\begin{equation*}
    \alpha'(\{ f \otimes x \otimes y \, \vert \, f(t_1)=0 \}) \unlhd J\cap \tD{L} \unlhd \tD{L},
\end{equation*}
but $\rho_2$ does not, a contradiction. Thus, we may assume $t:= t_1 = t_2$. Now, write $\iota:= \mathrm{id}_{(0,1]}$ and note that
\begin{equation*}
    \rho_2 \circ \alpha'= \frac{1}{(t-t^2)^2} \, \rho_1 \circ \alpha' \circ \Ad{(\iota-\iota^2)\otimes e_{i,k} \otimes e_{j,l}} .
\end{equation*}
    With the definition of $\alpha'$, we compute
\begin{equation*}
    \alpha' ((\iota-\iota^2)\otimes e_{k,i} \otimes e_{l,j}) = \hat{\theta} (e_{k,i} \otimes e_{l,j}) = 
    \begin{cases}
\bar{c}_l^*\bar{s}\bar{c}_k^*\bar{c}_i\bar{s}^*\bar{c}_j & \text{ if } 1\leq j,l \leq L,\\
    \bar{c}_l^*s(\bar{s}^*\bar{s})^\frac{1}{2}\bar{c}_k^*\bar{c}_i  &  \text{ if } j=0, 1 \leq l \leq L,\\
    \bar{c}_k^*\bar{c}_i (\bar{s}^*\bar{s})^\frac{1}{2} s^*c_j & \text{ if }  l=0, 1 \leq j \leq L,\\
    \bar{s}^*\bar{s}\bar{c}_k^*\bar{c}_i & \text{ if } j=l= 0.    
    \end{cases}
\end{equation*}
In particular, $\hat{\theta}(e_{k,i} \otimes e_{l,j}) \in \mathcal{M}$ as a product of generators of $\tZ{L}{L+1}$ and elements of $\tD{L}$.
This shows statement (i) of the lemma for $ \mu= \frac{1}{(t-t^2)^2}$ and $w:= \hat{\theta} (e_{k,i} \otimes e_{l,j}).$\\
(ii): We will show that (i) implies (ii). We observe that $x \in \mathrm{dom}(w)$ and
\begin{equation*}
    1 = \rho_2(l_n) = \mu \rho_1(w^*l_nw) = \mu w^* l_n w (x) \overset{(\ref{eq:definalpha})}{=}\mu l_n(\alpha_w(x)) w^*w(x),
\end{equation*}
for any $n \in \N$ where $\alpha_w$ is defined as in (\ref{eq:homeoalphav}). By Lemma \ref{lemma:charexcseq}, we get $\alpha_w(x)=y$. Therefore and since $w$ is a normaliser, there is a sequence $(\lambda_n)_n$ in $  \C \backslash \{{0}\}$ such that $(\lambda_n wk_n^2w^*)_n$ excises $\rho_2$ by Lemma \ref{lemmaexcisingalphavx0}. By possibly passing to suitable subsequences with Lemma \ref{lemma:charexcseq}, we may assume 
\begin{equation} 
l_n (1 - \lambda_n wk_n^2w^*)\overset{n}{\longrightarrow} 0. \label{eq:proof:lemma:exnicenormaliser}
\end{equation}
Take $\varepsilon > 0.$ Recall from Corollary \ref{corexcisingextensions} that the unique extension of $\rho_1$ to $\tZ{L}{L+1}$, denoted by $\bar{\rho}_1$, is still excised by $(k_n)_n$.  Then, we have for sufficiently large $n$
\begin{align*}
   l_nvk_n  
    \approx_\varepsilon & \,l_n ( \lambda_n wk_n^2w^*)vk_n \\ \approx_\varepsilon & \,\lambda_n l_n w \bar{\rho}_1(w^*v)k_n^3 \\
    \approx_\varepsilon \, &\frac{\bar{\rho}_1(w^*v)}{w^*w(x)} l_n (\lambda_n w k_n^2 w^*) w k_n \\
    \approx_\varepsilon \, & \frac{\bar{\rho}_1(w^*v)}{w^*w(x)} l_n wk_n,
\end{align*}
by the excision property and (\ref{eq:proof:lemma:exnicenormaliser}). This implies statement (ii) with $\lambda:=\frac{\bar{\rho}_1(w^*v)}{w^*w(x)}$.
\end{proof}

With this tool at hand, we can show that general normalisers are preserved under $\Phi_{L,L'}.$

\begin{lemma}
\label{lemma:normpreserved}
    Let $v\in N$. Then, we have $\Phi_{L,L'}(v) \in N'.$
\end{lemma}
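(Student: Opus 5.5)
The plan is to use the characterisation of normalisers from Theorem \ref{theocharnorm}. By that result it suffices to show that $\Phi_{L,L'}(v)$ and $\Phi_{L,L'}(v)^* = \Phi_{L,L'}(v^*)$ satisfy the NEP with respect to $\tD{L'} \subset \tZ{L'}{L'+1}$. Since $v^* \in N$ as well, it is enough to treat $\Phi_{L,L'}(v)$ itself. We argue by contradiction. Suppose there are $x' \in X_{L'}$ and two distinct points $y_1' \neq y_2' \in X_{L'}$, together with sequences $(k_n')_n$ excising $\rho_{x'}$ and $(l^{(m)}_n)_n$ excising $\rho_{y_m'}$ ($m=1,2$), such that $l^{(m)}_n \Phi_{L,L'}(v) k_n' \nrightarrow 0$ for both $m$. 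We may pass to subsequences along which these norms stay bounded below.

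The first step is to pull this back to the first stage. Since $\Phi_{L,L'}$ is unital and maps $\tD{L}$ into $\tD{L'}$ (Lemma \ref{lemma:nicenormpreserved} applied to the generators of $\tD{L}$, which are words, together with the remark after Theorem \ref{theo:XinLi}), the state $\rho_{x'}\circ \Phi_{L,L'}\vert_{\tD{L}}$ is a pure state $\rho_x$ on $\tD{L}$.

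Next we approximate $v$ by finite sums of normalisers which are controlled locally. By Lemma \ref{lemma:condexpindlim} and Corollary \ref{corexcisingextensions}, the unique extension $\bar\rho_{x'}$ of $\rho_{x'}$ is excised by $(k_n')_n$. The nonvanishing $l_n^{(m)}\Phi(v)k_n'\nrightarrow 0$ should be detected by $\bar\rho_{x'}(\Phi(v)^* l^{(m)2}_n \Phi(v))$ being bounded below. The key step is to replace $v$ near $x$ by a single element of $\mathcal{M}$. To do this we take an excising sequence $(k_n)_n$ for $\rho_x$ in $\tD{L}$ with $k_n(x)=1$. Then $\Phi(k_n)$ is $1$ at $x'$, so $\Phi(v)k_n' \approx \Phi(vk_n)k_n'$ after passing to subsequences (using Lemma \ref{lemma:charexcseq}). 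Since $v\in N$, only $y=\alpha_v(x)$ is hit, and $vk_n \approx l_n v k_n$ for a sequence $(l_n)_n$ excising $\rho_y$ (essentially as in Lemma \ref{lemmaexcisingalphavx0}). Lemma \ref{lemma:exnicenormaliser} then gives $w\in\mathcal{M}$ and $\lambda\neq 0$ with $l_nvk_n - \lambda l_nwk_n\to 0$. If $x\notin\mathrm{dom}(v)$, then $vk_n\to 0$ and we immediately get a contradiction. Applying $\Phi_{L,L'}$, which is contractive, we obtain $\Phi(v)k_n' \approx \lambda\Phi(l_n w k_n)k_n'$ up to a vanishing error.

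Finally, $\Phi(l_nwk_n)$ is a normaliser in $N'$, because $w\in\mathcal{M}$ and $\Phi(l_n),\Phi(k_n)\in\tD{L'}$. Moreover, $\Phi(l_nwk_n)\approx \Phi(l_n)\Phi(w)$ applied to $k_n'$. Fixing a large $n_0$, the element $u:=\Phi(l_{n_0}wk_{n_0})\in N'$ satisfies $l^{(m)}_n u k'_n \nrightarrow 0$ for both $m=1,2$, and this contradicts the NEP for $u$ established in the first half of the proof of Theorem \ref{theocharnorm}. The main obstacle is the uniformity in this argument: the approximation $\Phi(v)k_n'\approx\lambda\Phi(l_nwk_n)k_n'$ involves index $n$ on both levels simultaneously. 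I would handle it by first fixing $n_0$ from the first-stage sequences so that $\|vk_{n_0}-\lambda l_{n_0}wk_{n_0}\|$ and $\|l_{n_0}vk_{n_0}-vk_{n_0}\|$ are small relative to the lower bound on $\|l^{(m)}_n\Phi(v)k'_n\|$. For this I need $vk_n\approx l_nvk_n$ in norm and not just after compression, which follows from $vk_n^2v^*$ being supported near $y$ via (\ref{eq:homeoalphav}). I would then let only the second-stage index vary, using $\Phi(k_{n_0})k'_n \to k'_n$ since $\Phi(k_{n_0})(x')=1$.
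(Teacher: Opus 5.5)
Your route differs from the paper's, and it has a gap in one step. The paper pulls back all three points $x',y',z'$ to $X_L$ and uses the NEP of $v$ to get $y=z$. After that it only needs the doubly compressed comparison $l_nvk_n\approx\lambda l_nwk_n$ from Lemma~\ref{lemma:exnicenormaliser}(ii), transported to the second stage via $\Phi_{L,L'}(k_n)k_n'\approx k_n'$ and $\Phi_{L,L'}(l_n)l_n'\approx l_n'$.

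You avoid pulling back $y_1',y_2'$ by asking instead for a one-sided norm estimate: $\norm{vk_{n_0}-\lambda l_{n_0}wk_{n_0}}$ small. You get this by combining $\norm{(1-l_n)vk_n}\to0$ with the conclusion of Lemma~\ref{lemma:exnicenormaliser}(ii) along the same pair of sequences, and that combination is not justified.
\begin{itemize}
\item Lemma~\ref{lemma:exnicenormaliser}(ii) holds only ``after possibly passing to subsequences''. The subsequences built in its proof satisfy $l_n(1-\lambda_nwk_n^2w^*)\to0$, so $l_n$ must sit inside the region where the transported $k_n$ is already close to $1$.
\item Your estimate $\norm{(1-l_n)vk_n}\to0$ needs the opposite: $l_n\approx1$ on essentially the whole transported support of $k_n$. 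You arrange this by choosing $k_n$ after $l_n$.
\end{itemize}
These two requirements pull in opposite directions and cannot in general be met by one pair of sequences. So nothing you cite makes $\norm{vk_{n_0}-\lambda l_{n_0}wk_{n_0}}$ small.

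What you actually need is
\[
\norm{(v-\lambda w)k_n}\to0\quad\text{for \emph{every} sequence }(k_n)_n\text{ excising }\rho_x,
\]
and this follows from the paper's tools. Apply Lemma~\ref{lemma:exnicenormaliser}(ii) along its own subsequences and take norms. Using (\ref{eq:homeoalphav}),
\[
\norm{l_nvk_n}^2=\sup_{z\in\mathrm{dom}(v)}k_n(z)^2\,l_n(\alpha_v(z))^2\,v^*v(z)\to v^*v(x).
\]
Likewise $\norm{l_nwk_n}^2\to w^*w(x)$, because $x\in\mathrm{dom}(w)$ and $\alpha_w(x)=y$, as shown in the proof of that lemma. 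Hence $v^*v(x)=|\lambda|^2w^*w(x)$.

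Let $\bar\rho_x$ be the unique extension of $\rho_x$ to $\tZ{L}{L+1}$. By definition of $\lambda$ we have $\bar\rho_x(w^*v)=\lambda\,w^*w(x)$. Therefore
\[
\bar\rho_x\big((v-\lambda w)^*(v-\lambda w)\big)=v^*v(x)-|\lambda|^2w^*w(x)=0 .
\]
Corollary~\ref{corexcisingextensions} now gives $\norm{(v-\lambda w)k_n}^2=\norm{k_n(v-\lambda w)^*(v-\lambda w)k_n}\to0$.

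With this you can drop $l_{n_0}$ and take $u:=\Phi_{L,L'}(wk_{n_0})\in N'$. Your two-stage bookkeeping then goes through: fix $n_0$ first, then let the second-stage index run using $\Phi_{L,L'}(k_{n_0})k_n'\to k_n'$. This gives $l_n^{(m)}uk_n'\nrightarrow0$ for $m=1,2$, contradicting the NEP for $u$.

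Once repaired, your argument is a valid alternative that never pulls back the target points. The paper's version avoids the one-sided estimate by keeping compressions on both sides throughout.
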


\begin{proof}
    Write $X_{L'}$ for the spectrum of $\tD{L'}$ and assume that $\Phi_{L,L'}(v) \notin N'$. Then, by Theorem \ref{theocharnorm} there exist $x',y'\neq z' \in X_{L'}$ and sequences $(k_n')_n, (l_n')_n, (m_n')_n$ in $\tD{L'}$ excising the point evaluations $\rho_{x'}, \rho_{y'}, \rho_{z'} \in PS(\tD{L'})$, respectively, such that 
    \begin{equation}
      \label{eq:proof:lemma:normpreserved:1}
    l_n' \Phi_{L,L'}(v) k_n' \nrightarrow 0, \,\,\;m_n' \Phi_{L,L'}(v) k_n' \nrightarrow 0.
    \end{equation}
    We may additionally assume $\rho_{x'}(k_n')=1, \rho_{y'}(l_n')=1, \rho_{z'}(m_n')=1$ for $n \in \N.$
    Precomposing with $\Phi_{L,L'}$ yields pure states 
    \begin{equation*}
        \rho_x:= \rho_{x'} \circ \Phi_{L,L'}, \;\; \rho_y:= \rho_{y'}  \circ \Phi_{L,L'},  \,\, \rho_z:= \rho_{z'}  \circ \Phi_{L,L'},
    \end{equation*}
    on $\tD{L}$ given by point evaluations in some $x,y,z \in X_L$. Take excising sequences $(k_n)_n, (l_n)_n, (m_n)_n$ in $\tD{L}$ for $\rho_x, \rho_y, \rho_z$, respectively, such that $k_n(x) =1$, $l_n(y)=1$, $m_n(z)=1$ for each $n \in \N$. Upon passing to suitable subsequences, we have 
    \begin{equation}
    \label{eq:proof:lemma:normpreserved:2}
        \lim\limits_{n \to \infty} \Phi_{L,L'}(k_n) k_n' -k_n'= 0, \;\; \lim\limits_{n \to \infty}\Phi_{L,L'}(l_n) l_n' -l_n' = 0 ,\;\; \lim\limits_{n \to \infty} \Phi_{L,L'}(m_n) m_n' -m_n'= 0
    \end{equation}
    by the excision property. With (\ref{eq:proof:lemma:normpreserved:1}), this implies $l_n v k_n, m_n v k_n \nrightarrow 0$. By the NEP for $v$ (see Definition \ref{defi:propertyNEP}), we get $y=z$ and thus may assume $(l_n)_n = (m_n)_n$. Moreover, after passing to suitable subsequences with Lemma \ref{lemma:exnicenormaliser}, there are $w \in \mathcal{M}$ and $\lambda \in \C \, \backslash \{ 0 \}$ such that 
    \begin{equation}
    \label{eq:proof:lemma:normpreserved:3}
        \lim\limits_{n \to \infty} l_nvk_n - \lambda l_n w k_n=0 .
    \end{equation}
    Now, take $\varepsilon > 0$ and observe that for $n $ sufficiently large
    \begin{align*}
        \lambda l_n' \Phi_{L,L'}(w) k_n' \overset{(\ref{eq:proof:lemma:normpreserved:2})}{\approx}_\varepsilon \lambda l_n'\Phi_{L,L'}(l_nwk_n)k_n' \overset{(\ref{eq:proof:lemma:normpreserved:3})}{\approx}_\varepsilon l_n' \Phi_{L,L'}(l_nvk_n)k_n' \overset{(\ref{eq:proof:lemma:normpreserved:2})}{\approx}_\varepsilon l_n' \Phi_{L,L'}(v) k_n' \nrightarrow 0.
    \end{align*}
    Thus, we have $l_n'\Phi_{L,L'}(w) k_n' \nrightarrow 0.$ By the same argument with $(m_n')_n$ in place of $(l_n')_n$, we also get $m_n'\Phi_{L,L'}(w) k_n' \nrightarrow 0.$ This is a contradiction as  $\Phi_{L,L'}(w)$ is a normaliser by Lemma \ref{lemma:nicenormpreserved} and hence satisfies the NEP.
\end{proof}
Altogether, we get a $\mathrm{C}^*$-diagonal in the Jiang--Su algebra $\mathcal{Z}$ resulting from our construction of $\mathcal{Z}$ in Theorem \ref{theo:constrindlim}.

\begin{theo}
\label{theo:diaginZ}
    Let $(L_n)_n$ be an increasing sequence of integers as in Theorem \ref{theo:constrindlim}, i.e.\@ such that $\mathcal{Z} \cong \varinjlim (\tZ{L_n}{L_{n+1}}, \Phi_{L_n, L_{n+1}})$ with connecting $^*$-homomorphisms explicitly defined as in Proposition \ref{theo:starhom}. Then, there is a $\mathrm{C}^*$-diagonal in $\mathcal{Z}$ given by  
    \begin{equation*}
    \varinjlim (\tD{L_n}, \Phi_{L_n, L_{n+1}}\vert_{\tD{L_n}} ) \subset \mathcal{Z}.
    \end{equation*}
\end{theo}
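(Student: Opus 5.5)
The plan is to apply Theorem \ref{theo:XinLi} (1.10 in \cite{Li1}) to the system $(\tD{L_n} \subset \tZ{L_n}{L_n+1}, \Phi_{L_n,L_{n+1}})$. The limit algebra is identified with $\mathcal{Z}$ by Theorem \ref{theo:constrindlim}. Note that $\tZ{L_n}{L_{n+1}}$ in the statement should read $\tZ{L_n}{L_n+1}$. So it suffices to verify the hypotheses of Theorem \ref{theo:XinLi} at each finite stage.

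\begin{enumerate}
\item By Proposition \ref{prop:tD}, each $\tD{L_n}$ is a $\mathrm{C}^*$-diagonal in $\tZ{L_n}{L_n+1}$. In particular it is a Cartan subalgebra with a faithful conditional expectation $\tilde P_n$, and it is unital, since $1 = \sum_i \bc_i^*\bc_i + \bs^*\bs \in \tD{L_n}$.

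\item The connecting maps $\Phi_{L_n,L_{n+1}}$ are unital and injective. This holds by the remark after Proposition \ref{theo:starhom}: prime dimension drop algebras have no non-trivial projections, and the relation $\sum\tilde c_l^*\tilde c_l+\tilde s^*\tilde s=1$ forces unitality.

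\item Normalisers are preserved: $\Phi_{L_n,L_{n+1}}(N_{\tZ{L_n}{L_n+1}}(\tD{L_n})) \subset N_{\tZ{L_{n+1}}{L_{n+1}+1}}(\tD{L_{n+1}})$. This is exactly Lemma \ref{lemma:normpreserved}, applied with $L=L_n$ and $L'=L_{n+1}=L'(L_n,M_n,K_n)$. That lemma was proved for arbitrary $L\geq 2$ and arbitrary parameters $M,K\geq 2$. I would check that the $M_n,K_n$ chosen in Theorem \ref{theo:constrindlim} can be taken $\geq 2$, which is harmless since both propositions used there allow enlarging them.

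\item Diagonals are preserved: $\Phi_{L_n,L_{n+1}}(\tD{L_n}) \subset \tD{L_{n+1}}$. This follows from step 3, as noted after Theorem \ref{theo:XinLi}: elements of $\tD{L_n}$ are normalisers, and $\tD{L_{n+1}}$ contains an approximate unit, by 1.6 in \cite{LiaoTikuisis}. Alternatively, check directly that the generators $\Phi(\bc_i^*\bc_i)$ etc.\ are functions of $c_j^*c_j$, $s^*s$ and conjugates as computed in Proposition \ref{theo:starhom}.

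\item The conditional expectations are compatible: $\Phi_{L_n,L_{n+1}}|_{\tD{L_n}}\circ \tilde P_n = \tilde P_{n+1}\circ\Phi_{L_n,L_{n+1}}$. This follows from Lemma \ref{lemma:condexpindlim}, using unitality, injectivity and step 4.
\end{enumerate}

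Theorem \ref{theo:XinLi} then yields that $\varinjlim(\tD{L_n},\Phi|)$ is a $\mathrm{C}^*$-diagonal in $\varinjlim \tZ{L_n}{L_n+1}\cong\mathcal{Z}$, since each stage is a $\mathrm{C}^*$-diagonal.

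The substantive obstacle is step 3. It has already been isolated as Lemma \ref{lemma:normpreserved}, which rests on Theorem \ref{theocharnorm} and Lemma \ref{lemma:exnicenormaliser}. At this point the theorem is essentially bookkeeping. The only care needed is that the limit of the diagonals, taken in the abstract limit, coincides with the image subalgebra in $\mathcal{Z}$ under the isomorphism of Theorem \ref{theo:constrindlim}. This holds because all the connecting maps are injective, so $\varinjlim \tD{L_n}$ embeds as the closure of $\bigcup_n \Phi_{n,\infty}(\tD{L_n})$.
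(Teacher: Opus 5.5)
Your proposal is correct and matches the paper's proof: it also deduces the result from Theorem \ref{theo:XinLi}, using Lemma \ref{lemma:normpreserved} for normaliser preservation, the remark after Theorem \ref{theo:XinLi} for diagonal preservation, and Lemma \ref{lemma:condexpindlim} (via unitality and injectivity of the connecting maps) for compatibility of the conditional expectations. Your additional checks are bookkeeping the paper leaves implicit: unitality of $\tD{L_n}$, the typo $\tZ{L_n}{L_{n+1}}$, and $M_n,K_n\geq 2$.
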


\begin{proof}
    This now follows from Theorem \ref{theo:XinLi}, Lemma \ref{lemma:condexpindlim}, and Lemma \ref{lemma:normpreserved}.
\end{proof}

The theorem above shows the existence of a $\mathrm{C}^*$-diagonal in $\mathcal{Z}$, but the $\mathrm{C}^*$-diagonals may or may not depend on the choice of the sequence $(L_n)_n$. At this point, we do not have the tools to decide this type of question.\\ 

However, to understand the constructed $\mathrm{C}^*$-diagonal further, we can consider its diagonal dimension in the sense of \cite{LLW}. In fact, the diagonal dimension here will be $1$. To see this, it suffices to compute the diagonal dimension of $\tD{L} \subset \tZ{L}{L+1}$ for $L \in \N$ since 
\begin{align*}
    &\mathrm{dim}_{\mathrm{diag}} (\tilde{D} \subset \mathcal{Z}) \leq \liminf_{n \in \N}( \mathrm{dim}_{\mathrm{diag}} (\tD{L_n} \subset \tZ{L_n}{L_n+1})), \\
   &  \mathrm{dim}_{\mathrm{diag}} (\tilde{D} \subset \mathcal{Z})\geq  \mathrm{dim}_{\mathrm{\mathrm{nuc}}}\mathcal{Z}  = 1,
\end{align*}
see \cite{LLW}. In prime dimension drop algebras, one can even compute the diagonal dimension with respect to any diagonal. Here, we use the classical presentation of dimension drop algebras from (\ref{eq:defidimdrop}).

\begin{prop}
    Let $D \subset Z_{p,q}$ be a $\mathrm{C}^*$-diagonal for $p,q \in \N$ coprime. Then, we have $\mathrm{dim}_{\mathrm{diag}}(D \subset Z_{p,q})=1.$ In particular, this implies $\mathrm{dim}_{\mathrm{diag}}(\tilde{D} \subset \mathcal{Z})=1$ for the $\mathrm{C}^*$-diagonal $\tilde{D}$ constructed in Theorem \ref{theo:diaginZ}.
\end{prop}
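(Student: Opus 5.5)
We prove the two inequalities $\mathrm{dim}_{\mathrm{diag}}(D \subset Z_{p,q}) \geq 1$ and $\leq 1$ separately.

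\emph{Lower bound.} By \cite{LLW}, diagonal dimension dominates nuclear dimension. Since $p,q$ are coprime, $Z_{p,q}$ has no non-trivial projections. Its spectrum is therefore connected and infinite, so $Z_{p,q}$ is not an AF algebra. Hence $\mathrm{dim}_{\mathrm{nuc}} Z_{p,q} \geq 1$, which gives $\mathrm{dim}_{\mathrm{diag}}(D \subset Z_{p,q}) \geq 1$.

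\emph{Upper bound.} Here we must handle an arbitrary diagonal $D$, so we cannot use the explicit description of $\tD{L}$. We pass to the groupoid picture instead. By Kumjian--Renault, $(D\subset Z_{p,q})$ is isomorphic to $(C_0(G^{(0)}) \subset \mathrm{C}^*(G,\Sigma))$ for a twisted, principal, étale, second countable, locally compact Hausdorff groupoid $G$. Since $D$ is maximal abelian in a subhomogeneous algebra of bounded matrix size $pq$, all orbits of $G$ are finite, with cardinality at most $pq$.

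The centre $C([0,1])$ of $Z_{p,q}$ is contained in $D$, because a maximal abelian subalgebra contains the centre. This yields a continuous, orbit-invariant surjection $\pi: G^{(0)} \to [0,1]$. Over each point $t$, the fibre $\pi^{-1}(t)$ is a finite orbit union implementing the diagonal of the fibre algebra: $\M_{pq}$ for $t \in (0,1)$, $\M_p$ at $t=0$, and $\M_q$ at $t=1$. Thus $G^{(0)}$ is a compact one-dimensional space (finite-to-one over $[0,1]$), and $G$ is a principal groupoid with uniformly bounded finite orbits.

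We then produce covers of tower type as in \cite{LLW}. Fix a finite set and $\varepsilon>0$. Cover $[0,1]$ by two families of disjoint small intervals, with colour $0$ and colour $1$ (for instance, intervals around the points $k/N$ with $k$ even, respectively odd), and pull the cover back along $\pi$. Each pulled-back open set $U$ is $G$-invariant. On each such $U$, the restricted groupoid $G|_U$ is a finite-orbit principal groupoid over a contractible base interval. We then aim to show that the restricted twisted algebra is of the form $C_0(\text{interval}) \otimes \M_{k}$, with $D$ mapped onto $C_0 \otimes \D_k$.

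With this, a partition of unity subordinate to the two colours, with functions taken in $C([0,1]) \subset D$, gives two-coloured approximations. The resulting c.p.c.\ maps are built from central elements of $D$ and preserve the diagonals, so they satisfy the normaliser requirements in the definition of diagonal dimension. This yields $\mathrm{dim}_{\mathrm{diag}} \leq 1$.

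The main obstacle is the local triviality step. Over an interval, the finitely many orbits may branch: the orbit size can vary only by the splitting pattern dictated by rank, and the twist must be trivial over a contractible base. We plan to use that $G|_U$ is proper and that orbit sizes are locally constant on $(0,1)$, since the algebra fibres all have constant matrix size $pq$ there. Near the endpoints, choose the intervals small enough that the structure is that of a fixed orbit pattern collapsing at $t=0$ or $t=1$; there we use the direct description via the quotients $\M_p$ and $\M_q$, together with the uniqueness of diagonals in matrix algebras up to unitary conjugation, to trivialise.

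For the final assertion, combine these two bounds with the two inequalities displayed just before the proposition: the upper bound $\mathrm{dim}_{\mathrm{diag}}(\tilde{D} \subset \mathcal{Z}) \leq \liminf_n \mathrm{dim}_{\mathrm{diag}}(\tD{L_n} \subset \tZ{L_n}{L_n+1})$, and the lower bound $\mathrm{dim}_{\mathrm{diag}}(\tilde{D} \subset \mathcal{Z}) \geq \mathrm{dim}_{\mathrm{nuc}}\mathcal{Z} = 1$. Applied to $\tD{L_n} \subset Z_{L_n,L_n+1}$, where $L_n$ and $L_n+1$ are coprime, the upper bound gives $\leq 1$, so $\mathrm{dim}_{\mathrm{diag}}(\tilde{D} \subset \mathcal{Z}) = 1$.
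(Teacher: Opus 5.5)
Your lower bound and your final deduction for $\tilde{D}\subset\mathcal{Z}$ are the same as in the paper. One correction to the lower bound: a connected infinite spectrum does not rule out AF. The right reason is that a unital AF algebra without non-trivial projections is $\mathbb{C}$.

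The upper bound has a genuine gap. The ``local triviality'' step you flag as the main obstacle carries the whole argument, and as formulated it fails at the endpoints. Over $[0,\delta)$ the restricted algebra is $\{f\in C_0([0,\delta),\mathbb{M}_p\otimes\mathbb{M}_q): f(0)\in\mathbb{M}_p\otimes 1_q\}$, which is not of the form $C_0(\text{interval})\otimes\mathbb{M}_k$. Worse, the fibre MASAs $D(t)$ need not converge as $t\to 0$, so shrinking the interval does not give a trivialisation. Here is an example (with $q\geq 2$):
\begin{itemize}
\item Take $w\colon(0,1]\to U(q)$ continuous, with $w_t=1$ near $1$ and $w_t\mathbb{D}_q w_t^*$ oscillating as $t\to 0$.
\item Then $\{f\in Z_{p,q}: f(t)\in\mathbb{D}_p\otimes w_t\mathbb{D}_qw_t^* \text{ for } t\in(0,1]\}$ is a $\mathrm{C}^*$-diagonal. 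Fibrewise compression is continuous at $0$ because $f(t)\to f(0)\in\mathbb{M}_p\otimes 1_q$, which commutes with $1_p\otimes w_t$.
\item No unitary path $u_t$ with $u_t(\mathbb{D}_p\otimes\mathbb{D}_q)u_t^*=D(t)$ extends continuously to $0$.
\end{itemize}
Uniqueness of MASAs in the endpoint fibre $\mathbb{M}_p$ does not help with this.

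The remaining steps are also not carried out.
\begin{itemize}
\item To get $\varphi_n\circ\psi_n(f)\to f$, the trivialising unitaries must be almost constant on the supports of the tent functions, uniformly as the mesh shrinks. This fails near $t=0$ in the example above.
\item The normaliser condition does not follow from taking the partition of unity in the centre. The maps $\varphi^{(i)}$ send matrix units to elements like $u(h\otimes e_{k,l})u^*$, and these normalise $D$ only because $u$ conjugates the standard diagonal onto $D$ fibrewise.
\item At the endpoints you also need elements like $h\otimes v\otimes 1_q$ to normalise $D$. That is again a statement about the shape of $D$ near $0$ and $1$, which you have not established.
\end{itemize}

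The paper closes this gap with the Barlak--Raum classification \cite{BaRa}. Up to an automorphism of $Z_{p,q}$, $D$ has the form (\ref{eq:proof:prop:diagdim:1}), where $u$ is a continuous unitary path on all of $[0,1]$, equal to a permutation matrix on $[0,\frac13]$ and to $1$ on $[\frac23,1]$. The example above is just $\mathrm{Ad}(1_p\otimes w)$ applied to the standard diagonal, and $\mathrm{Ad}(1_p\otimes w)$ is indeed an automorphism of $Z_{p,q}$. With this normal form, the paper writes down explicit maps: $\psi_n$ evaluates at $0,\frac{1}{2n},\dots,1$ and conjugates by $u^*$, and $\varphi_n=\varphi_n^{(0)}\oplus\varphi_n^{(1)}$ uses tent functions conjugated by $u$. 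These satisfy $\psi_n(D)\subset D_{F_n}$, map normalisers to normalisers, and converge to the identity by uniform continuity of $u$.

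To finish your groupoid route without \cite{BaRa}, you would need a genuine endpoint analysis, which your proposal does not contain. For example, you could use normalisers of $D$ near $t=0$ (bisections joining the $p$ points over $0$) to build a continuous $\mathbb{M}_p$-block structure. You would then control the possibly oscillating $\mathbb{M}_q$-part using $f(t)\approx f(0)\in\mathbb{M}_p\otimes 1_q$.
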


\begin{proof}
 First, note that $\mathrm{dim}_{\mathrm{diag}}(D \subset Z_{p,q})\geq 1$ since $\mathrm{dim}_{\mathrm{nuc}}Z_{p,q}\geq 1$. Thus, it suffices to show $\mathrm{dim}_{\mathrm{diag}}(D \subset Z_{p,q})\leq 1$. By the classification result in \cite{BaRa}, we may assume that $D$ is of the form
 \begin{equation}
 \label{eq:proof:prop:diagdim:1}
     D= \{ f \in Z_{p,q} : f(t) \in u_t \mathbb{D}_p \otimes \mathbb{D}_q u_t^* \text{ for } t \in [\tfrac{1}{3}, \tfrac{2}{3}],\, f(t) \in \mathbb{D}_p \otimes \mathbb{D}_q \text{ else}\}
 \end{equation}
 where $(u_t)_{t \in [\frac{1}{3}, \frac{2}{3}]}$ is a path of unitaries in $\mathbb{M}_p \otimes \mathbb{M}_q$ with $u_{\frac{1}{3}}$ being a permutation matrix and $u_{\frac{2}{3}}=1$. In particular, we have $u_{\frac{1}{3}}\mathbb{D}_p \otimes \mathbb{D}_q u_{\frac{1}{3}}^* \subset \mathbb{D}_p \otimes \mathbb{D}_q$. Extend this path to a unitary $u= (u_t)_{t \in [0,1]} \in C([0,1], \mathbb{M}_p \otimes \mathbb{M}_q) $ with $u_t=u_{\frac{1}{3}}$ on $[0, \tfrac{1}{3}]$ and $u_t= 1$ on $[\tfrac{2}{3}, 1]$. Now, we define functions $h_{t_0, \delta}\in C([0,1])$ for $t_0 \in [0,1]$, $ \delta >0$ by
 \begin{equation*}
     h_{t_0, \delta} (t) = \begin{cases}
         \frac{t-t_0+\delta}{\delta} & \text{ if }t \in [t_0-\delta, t_0]\cap [0,1],\\
         \frac{t_0-t + \delta}{\delta} & \text{ if } t \in [t_0, t_0+ \delta]\cap [0,1],\\
         0  & \text{ else }.
     \end{cases}
 \end{equation*}
With these, we define c.p.c.\@ maps for $n \in \N$ by
\begin{gather*}
    \psi_n:Z_{p,q} \to \mathbb{M}_p \oplus (\bigoplus_{i=1}^{2n-1} \mathbb{M}_p\otimes \mathbb{M}_q ) \oplus \M_q, \\ \psi_n(f) = (f(0), (u_{\frac{1}{2n}}^* f(\tfrac{1}{2n})u_{\frac{1}{2n}}, \ldots, u_{\frac{2n-1}{2n}}^* f(\tfrac{2n-1}{2n})u_{\frac{2n-1}{2n}}), f(1)).
    \end{gather*}
Put
\begin{gather*}
    F_n:= \mathbb{M}_p \oplus (\bigoplus_{i=1}^{2n-1} \mathbb{M}_p\otimes \mathbb{M}_q ) \oplus \M_q,\\ 
    F_n^{(0)}:=\mathbb{M}_p \oplus (\bigoplus_{i=1, \,i \text{ even}}^{2n-1} \mathbb{M}_p\otimes \mathbb{M}_q ) \oplus \M_q, \;\;\;\;  F_n^{(1)}:=\bigoplus_{i=1, \,i \text{ odd}}^{2n-1} \mathbb{M}_p\otimes \mathbb{M}_q,
\end{gather*}
and write $D_{F_n} \subset F_n, D_{F_n^{(j)}}\subset F_n^{(j)},$ $ j =0,1$, for the canonical diagonals. Now, we define maps $\varphi_n :=\varphi_n^{(0)}\oplus \varphi_n^{(1)}: F_n^{(0)} \oplus F_n^{(1)} \to Z_{p,q}$ for $n \in \N$ by
\begin{align*}
     \varphi_n^{(0)}(x, (x_2, \ldots, x_{2n-2}), y) &
     = h_{0, \frac{1}{2n}}\otimes x \otimes 1_q + h_{1, \frac{1}{2n}}\otimes 1_p \otimes y + \sum\limits_{i=1}^{n-1} u(h_{\frac{2i}{2n}, \frac{1}{2n}}\otimes x_{2i}) u^*,\\
     \varphi_n^{(1)}( x_1,\ldots, x_{2n-1})&
     = \sum\limits_{i=1}^{n} u(h_{\frac{2i-1}{2n}, \frac{1}{2n}}\otimes x_{2i-1}) u^*.    
\end{align*}
Note that $\varphi_n$ is c.p.\@ and $\varphi_n^{(0)}, \varphi_n^{(1)}$ are c.p.c.\@ order zero.\\
Let us now check that $(\psi_n, \varphi_n, F_n)_n$ witnesses $\mathrm{dim}_{\mathrm{diag}}  (D \subset Z_{p,q} ) \leq 1.$ Indeed, we have $\psi_n(1) = 1_{D_{F_n}}$ which implies $\psi_n(D) \subset D_{F_n}$ by Remark 2.4 in \cite{LLW}. Next, we observe that for $v_0 \in N_{\M_p}(\mathbb{D}_p), v_1 \in N_{\M_q}(\mathbb{D}_q)$, the elements $h_{0, \frac{1}{2n}} \otimes v_0 \otimes 1_q, h_{1, \frac{1}{2n}}\otimes 1_p \otimes v_{1}$ normalise $D$ for $n \geq 2$, since $D$ is of the form described in (\ref{eq:proof:prop:diagdim:1}). Further, for $d \in D$, $h \in C_0((0,1))$, and $ w \in N_{\M_p\otimes\M_q}(\mathbb{D}_p\otimes \mathbb{D}_q)$ we get
\begin{equation*}
   u(h \otimes w) u^* du(\bar{h} \otimes w^*)u^*(t)= \begin{cases}
       \vert h(t) \vert^2 u_\frac{1}{3} wu_\frac{1}{3}^* d(t) u_\frac{1}{3}w^*u_\frac{1}{3}^* & \text{ if } t \in [0, \tfrac{1}{3}],\\
        \vert h(t) \vert^2 u_t wu_t^* d(t) u_tw^*u_t^* & \text{ if } t \in [\tfrac{1}{3}, \tfrac{2}{3}],\\
         \vert h(t) \vert^2wd(t) w^* & \text{ if } t \in [\tfrac{2}{3},1],
    \end{cases}
\end{equation*}
and an analogous result for $w^*$ in place of $w$. Therefore, with (\ref{eq:proof:prop:diagdim:1}) and since $u_\frac{1}{3}$ is a normaliser, we have  $ u(h \otimes w) u^*\in N_{Z_{p,q}}(D)$, and hence
\begin{equation*}
   \varphi_n^{(0)} (N_{F_n^{(0)}}(D_{F_n^{(0)}})) \subset N_{Z_{p,q}}(D), \;\;\; \varphi_n^{(1)} (N_{F_n^{(1)}}(D_{F_n^{(1)}})) \subset N_{Z_{p,q}}(D), \; \; n \geq 2,
\end{equation*}
as the summands of $\varphi_n^{(0)}, \varphi_n^{(1)}$ are mutually orthogonal.
Finally, since $u$ is uniformly continuous in each matrix unit, we observe for $f \in Z_{p,q}$, $\varepsilon> 0$, and $n$ large enough that
\begin{align*}
     \varphi_n \circ \psi_n(f)&
    = h_{0, \frac{1}{2n}}\otimes f(0) \otimes 1_q + h_{1, \frac{1}{2n}}\otimes 1_p \otimes f(1) + \sum\limits_{i=1}^{2n-1} u(h_{\frac{i}{2n}, \frac{1}{2n}}\otimes u_{\frac{i}{2n}}^*f(\tfrac{i}{2n}) u_{\frac{i}{2n}}) u^*\\
   & \approx_\varepsilon h_{0, \frac{1}{2n}}\otimes f(0) \otimes 1_q + h_{1, \frac{1}{2n}}\otimes 1_p \otimes f(1) + \sum\limits_{i=1}^{2n-1} h_{\frac{i}{2n}, \frac{1}{2n}}\otimes f(\tfrac{i}{2n}) \overset{n }{\longrightarrow} f.
\end{align*}
This shows that $(\psi_n, \varphi_n, F_n)_n$ is a c.p.c.\@ approximation of $Z_{p,q}$, which finishes the proof.
\end{proof}

\section[]{The spectrum }
\label{sec:spectrum}
To further analyse the $\mathrm{C}^*$-diagonal in the Jiang--Su algebra resulting from Theorem \ref{theo:diaginZ} let us investigate its spectrum. This is given by the inverse limit of the spectra throughout the inverse system constructed in Theorem \ref{theo:constrindlim}. So let us first determine these spaces. \\

Let $L \in \N$ with $L \geq 2$ and consider the compact space $\{1, \ldots, L\} \times \{0, \ldots, L\} \times [0,1]$. Write 
\begin{equation*}
X_{i,j}:=\{i \} \times \{j\} \times [0,1], \,\,\,0_{i,j}:= (i,j,0) \in X_{i,j},\,\,\, 1_{i,j}:= (i,j,1) \in X_{i,j}
\end{equation*}
for $1\leq i \leq L$, $ 0 \leq j \leq L$. Moreover, let $ \sim$ denote the equivalence relation generated by 
\begin{align*}
    \begin{cases}
    0_{i,j} \sim 0_{k,l}& \;\text{ if } j =l, 1 \leq j,l \leq L,\\
    0_{i,j} \sim 0_{k,l}& \;\text{ if } l=0, k=j,\\
    1_{i,j} \sim 1_{k,l} &\; \text{ if } j = l.\\
    \end{cases}
\end{align*}

With that, let us define 
\begin{equation*}
    X_L:= \{1, \ldots, L\} \times \{0, \ldots, L\} \times [0,1]/ \sim.
\end{equation*}

\begin{figure}[h!]
\centering

\begin{tikzpicture}[scale=1, transform shape]

\foreach \x in {0,1,2,3} {

\draw[gray] (5.25,1.5) --  (3,\x);
}

\foreach \x in {0,1,2,3} {
\foreach \i in {0,1,2,3} {
       
\draw[gray] (0,\x) to[in=157.5 +\i*15, out=22.5 -\i*15] (3,\x);

}
}

\filldraw[black] (0,3) circle(1pt) node[black, left]{$1_1^4$\hspace{2pt} };
\filldraw[black] (0,2) circle(1pt) node[black, left]{$1_2^4$\hspace{2pt} };
\filldraw[black] (0,1) circle(1pt) node[black, left]{$1_3^4$\hspace{2pt} };
\filldraw[black] (0,0) circle(1pt) node[black, left]{$1_4^4$\hspace{2pt}};

\filldraw[black] (3,3) circle(1pt) node[black, above]{\hspace{10pt}$0_1^4$ };
\filldraw[black] (3,2) circle(1pt) node[black, above]{\hspace{10pt}$0_2^4$ };
\filldraw[black] (3,1) circle(1pt) node[black, below]{\hspace{10pt}$0_3^4$ };
\filldraw[black] (3,0) circle(1pt) node[black, below]{\hspace{10pt}$0_4^4$ };

\filldraw[black] (5.25,1.5) circle(1pt) node[black, right]{ $1_0^4$};

\end{tikzpicture}
\caption{Picture of $X_4$.\label{fig:spectrum}}
\end{figure}
We write 
\begin{eqnarray*}
    & Q^{L} : \coprod_{k,l} X_{k,l}  \twoheadrightarrow X_L, \;\; Q_{i,j}^L : X_{i,j} \hookrightarrow \coprod_{k,l} X_{k,l}  \twoheadrightarrow X_L,\;\;
     1_j^{L}= Q^L(1_{1,j}), \;\; 0_i^{L}= Q^L(0_{i,0}).&
\end{eqnarray*}
If the choice of $L$ is clear from context, we will just write $Q, Q_{i,j}, 1_j$ and $0_i$.
Note that $X_L$ is one-dimensional, compact, metrisable,  path-connected, and locally path-connected, i.e.\@ each point has a neighbourhood base of path-connected sets. In fact, it turns out that this space is (homeomorphic to) the spectrum of $\tD{L}$.
\begin{prop}
\label{prop:spectrumfinitestage}
    Let $L \in \N$ with $L \geq 2$. Then, we have a homeomorphism
    \begin{equation*}
        \beta: PS(\tD{L}) \to X_L,  
    \end{equation*}
    where $PS(\tD{L})$ is equipped with the weak$^*$-topology. That is, the character spectrum of $\tD{L}$ is homeomorphic to $X_L$. 
\end{prop}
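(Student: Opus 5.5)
The plan is to use the ideal structure of $\tD{L}$ established in the proof of Proposition \ref{prop:tD}, specifically the isomorphisms (\ref{eq:isosdiagonals}). These give an extension
\begin{equation*}
0 \to J\cap\tD{L} \cong C_0((0,1))\otimes \D_L\otimes \D_{L+1} \to \tD{L} \to \D_L\oplus \D_{L+1}\to 0 .
\end{equation*}
As a set, $PS(\tD{L})$ therefore splits into an open part and a finite part. The open part is homeomorphic to the disjoint union of $L(L+1)$ open intervals, namely the points $\mathrm{ev}_{t,i,j}\circ\alpha'^{-1}$. The finite part is the spectrum of the quotient: $L$ points coming from $Q_{I_2}\circ\sigma_L$ and $L+1$ points coming from $Q_{I_1}\circ\sigma_{L+1}$.

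I define $\beta$ so that it sends the interval labelled $(i,j)$, i.e.\ $\rho=\rho_{t}^{i,j}$ with $\rho\circ\alpha'=\mathrm{ev}_{t,i,j}$, to $Q_{i,j}(t')$. Here $t'$ is an orientation-adjusted reparametrisation of $t$, chosen so that the ends match the gluing in $X_L$. The $L$ points with $\rho(\sigma_L(1))=1$, i.e.\ $\mathrm{ev}_k\circ(Q_{I_2}\circ\sigma_L)^{-1}$, and the $L+1$ points from $\D_{L+1}$ go to the classes $0_k$ and $1_j$ respectively. I will match the orientation conventions to the picture, with the $\D_L$-points at the left end and the $\D_{L+1}$-points at the right end. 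This makes $\beta$ a bijection, provided the boundary behaviour is as described in the next step.

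The key step is to determine, for each interval $(i,j)$, the limits of $\rho_t^{i,j}$ as $t\to0$ and $t\to1$ in the weak$^*$ topology. Because the generators of $\tD{L}$ are few, I evaluate them along the interval using the formulas for $\sigma_L(1)$, $\sigma_{L+1}(1)$ and $\theta(1)$ in terms of $\hat\sigma_{L+1}(1)$ from (\ref{eqproofprop:essideal0}), together with $\theta(e_{i,i}\otimes e_{j,j})$ and the description of $\alpha$ in (\ref{eq:definalpha}). On the interval $(i,j)$ the element $\hat\sigma_{L+1}(1)$ evaluates to $t$. The element $\hat\sigma_{L+1}(e_{j',j'})$ is $t\delta_{jj'}$, and the $\hat\sigma_L$-type diagonal elements are supported on the $\M_L$-index $i$. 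Since $a(t)$ is nonzero exactly on $(\tfrac12,1]$, and $a\circ(1-\mathrm{id})$ exactly on $[0,\tfrac12)$, the characters $\sigma_L(e_{k,k})$ and $\sigma_{L+1}(e_{m,m})$ select which quotient point the limit is.

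As $t\to0$, the limit is a character of $\D_L$, and it is $\mathrm{ev}_i$ or $\mathrm{ev}_j$ depending on which of the entangled $\M_L$-labels survives. The relation $\bar c_j^*\bar s\,\cdot\,\bar s^*\bar c_j$ transports the label $i$ to $j$ when $j\ge1$. This reproduces the identifications $0_{i,j}\sim0_{k,l}$ for $j=l\ge1$ and $0_{i,0}\sim0_{j,l}$ with $j=k$. As $t\to1$, the limit is $\mathrm{ev}_j$ on $\D_{L+1}$, which gives $1_{i,j}\sim1_{k,j}$. I expect this bookkeeping to be the main obstacle: getting exactly the relation generating $\sim$, in particular showing that the $t\to0$ end of the $(i,0)$-interval lands at the point $0_i$ while the $(i,j)$-intervals with $j\ge1$ are glued by $j$. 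I will check it by testing the characters on the explicit generators $\bar c_k^*\bar c_k$, $\bar s^*\bar s$, $\bar c_k^*\bar s\bar s^*\bar c_k$ and $\bar c_m^*\bar s\,\bar c_k^*\bar c_k\,\bar s^*\bar c_m$.

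Once the boundary limits are known, continuity of $\beta^{-1}$ follows. It is a continuous map on each closed interval $X_{i,j}$ into $PS(\tD{L})$: continuity on interior points comes from $\alpha'$, and at the endpoints from the limit computation. These maps are compatible with $\sim$, so $\beta^{-1}$ descends to a continuous bijection $X_L\to PS(\tD{L})$. Since $X_L$ is compact and $PS(\tD{L})$ is Hausdorff, this continuous bijection is a homeomorphism, and hence so is $\beta$.
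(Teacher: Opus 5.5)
Your proposal is correct, but it runs the compact--Hausdorff argument in the opposite direction from the paper.

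\textbf{The paper's route.} The paper defines $\beta$ as you do, with $\beta(\rho)=Q_{i,j}(t)$ when $\rho\circ\alpha'=\mathrm{ev}_{t,i,j}$, and gets bijectivity from the isomorphisms for $\tD{L}/(I_1\cap\tD{L})$, $\tD{L}/(I_2\cap\tD{L})$ and $J\cap\tD{L}$. It then proves that $\beta$ itself is continuous on the compact space $PS(\tD{L})$. Given $\rho_n\to\rho$ with $\rho$ a $\D_{L+1}$- or $\D_L$-point, it evaluates $\rho_n$ on test elements such as $(af_\varepsilon)(\bs^*\bs)\theta(1)$ or $(af_\varepsilon)(\bc_i^*(1-\bs\bs^*)\bc_i)\theta(1)$, where $\mathrm{supp}(f_\varepsilon)\subset(1-\varepsilon,1]$. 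This localises the parameters $t_n$ near the correct endpoint and restricts the labels $(i_n,j_n)$.

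\textbf{Your route.} You prove continuity of $\beta^{-1}$ on the compact space $X_L$ instead. You write down the characters along each closed arc and paste them via the universal property of the quotient topology. This is more explicit and avoids the sequence case analysis. Characters are determined by their values on the generators of $\tD{L}$, and weak$^*$ convergence of characters can be tested there, so everything reduces to finitely many scalar functions of $t$.

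Concretely, $\rho\circ\alpha'=\mathrm{ev}_{t,i,j}$ gives $\rho(\hat\sigma_{L+1}(1))=t$ and $\rho(\hat\theta(e_{k,k}\otimes e_{m,m}))=\delta_{ki}\delta_{mj}(t-t^2)$. From this:
\begin{itemize}
\item On the arc $(i,0)$: $\rho(\bs^*\bs)=t$ and $\rho(\bc_m^*\bc_m)=\delta_{mi}(1-t)$, and every generator of the form $\bc_m^*\bs(\cdot)\bs^*\bc_m$ vanishes.
\item On the arc $(i,j)$ with $j\geq 1$: $\rho(\bc_m^*\bc_m)=\delta_{mj}$, $\rho(\bs^*\bs)=0$, $\rho(\bc_m^*\bs\bs^*\bc_m)=\delta_{mj}t$, and $\rho(\bc_m^*\bs\bc_k^*\bc_k\bs^*\bc_m)=\delta_{mj}\delta_{ki}(t-t^2)$.
\end{itemize}
The endpoint limits then visibly give $0_j$ as $t\to0$ (respectively $0_i$ when $j=0$) and $1_j$ as $t\to1$. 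This is exactly the relation $\sim$. The paper's route, in exchange, never needs the full table of generator values, only the ideal structure.

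\textbf{Small corrections.}
\begin{itemize}
\item No reparametrisation is needed. Since $\rho(\hat\sigma_{L+1}(1))=t$, taking $t'=t$ already places the $\D_L$-points at $t=0$ and the $\D_{L+1}$-points at $t=1$, as in the paper's definition.
\item You say the $\hat\sigma_L$-type diagonal elements are supported on the $\M_L$-index $i$. This holds only on the arcs $(i,0)$. On the arcs $(i,j)$ with $j\geq1$ they are supported on index $j$, because $\bc_j^*\bc_j$ acts as a unit on $\bc_j^*\bs(\cdot)\bs^*\bc_j$, so $\rho(\hat\sigma_L(e_{m,m}))=\delta_{mj}(1-t)$. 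This is precisely the transport you invoke afterwards, so state it that way from the start.
\item Bijectivity of $\beta$ does not depend on the boundary behaviour. It is just the count of $L(L+1)$ open arcs, $L$ classes $0_i$ and $L+1$ classes $1_j$. Only continuity depends on the endpoint limits.
\end{itemize}
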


\begin{proof}
 In the following, we use the setup and notation from  Proposition \ref{prop:essideal}.
Let us define $\beta: PS(\tD{L}) \to X_L$ by
   \begin{equation*}
       \beta(\rho) = \begin{cases}
           1_j, & \text{if } \rho \circ \sigma_{L+1}\vert_{\mathbb{D}_{L+1}} = \mathrm{ev}_j \text{ for } 0 \leq j \leq L, \\
           0_i, & \text{if } \rho \circ \sigma_{L}\vert_{\mathbb{D}_L} = \mathrm{ev}_i \text{ for } 1 \leq i\leq L, \\
           Q_{i,j}(t) & \text{if } \rho \circ \alpha' = \mathrm{ev}_{t,i,j} \text{ for } t \in (0,1), 1\leq i \leq L, 0 \leq j \leq L.     \end{cases}
   \end{equation*}
Here, $\mathrm{ev}_i, \mathrm{ev}_j, \mathrm{ev}_{t,i,j}$ are the canonical pure states on $\mathbb{D}_L$, $\mathbb{D}_{L+1}$, $C_0((0,1))\otimes \mathbb{D}_L\otimes \mathbb{D}_{L+1}$, respectively. $\beta $ is well-defined since each $\rho \in PS(\tD{L})$ either factorises through $\D_L$ as $\rho\circ \sigma_L \vert_{\D_L}$,  through $\D_{L+1}$ as $\rho \circ \sigma_{L+1}\vert_{\D_{L+1}}$, or restricts to a pure state on $J \cap \tD{L}$.\\
Moreover, with the isomorphisms from (\ref{eq:isosdiagonals}), we get that $\beta$ is bijective. It remains to show that $\beta $ is continuous. Let $(\rho_n)_{ n}$ be a sequence of pure states with $\rho_n \to \rho$ in the weak$^*$-topology for some $\rho \in PS(\tD{L}).$ First, suppose $\rho$ annihilates $I_1$. Then, all but finitely many $\rho_n$ restrict to a pure state on $J \cap \tD{L}$ or satisfy $\rho_n = \rho$. Thus, we may assume that each $\rho_n$ lives on $J \cap \tD{L}$, i.e.\@ we have corresponding pure states $\rho_n \circ \alpha' =\mathrm{ev}_{t_n, i_n ,j_n}.$\\
   Now, let us first suppose that $\rho\circ \sigma_{L+1}\vert_{\D_{L+1}} = \mathrm{ev}_0$ on $\mathbb{D}_{L+1}.$ We note that $\rho(f(\bs^*\bs))=1 $ for every $f \in C_0((0,1])$ with $f(1)=1$ since $a(\bs^*\bs) -f(\bs^*\bs) \in I_1$ and $a(\bs^*\bs)=\sigma_{L+1}(e_{0,0})$.  Take $\varepsilon >0$ and let $f_\varepsilon \in C_0((0,1])$ with $f_\varepsilon(1)=1$ and $\mathrm{supp}(f_\varepsilon)\subset (1- \varepsilon,1]$. Under the isomorphism $\alpha'$ from (\ref{eq:isosdiagonals}), we have 
   \begin{equation*}
       \alpha'(abf_\varepsilon  \otimes 1_L \otimes e_{0,0}) = (a f_\varepsilon)(\hat{\sigma}_{L+1}(1)) b(\bs^*\bs) = (af_\varepsilon) (\bs^*\bs) \theta(1).
   \end{equation*}
   This implies $$\mathrm{ev}_{t_n, i_n, j_n} (abf_\varepsilon  \otimes 1_L \otimes e_{0,0}) = \rho_n  ((af_\varepsilon)(\bs^*\bs) \theta(1)) >0$$
   for $n$ large enough since $\rho_n(\theta(1)) > 0$ for all $n$, and $\rho_n ((af_\varepsilon)(\bs^*\bs)) \to \rho((af_\varepsilon)(\bs^*\bs) ) =1$. In particular, $j_n = 0$ and $t_n \in (1-\varepsilon,1]$ for large enough $n$. Since $\varepsilon$ was chosen arbitrarily, we get $t_n \to 1$ and conclude that
   \begin{equation*}
       \beta(\rho_n) =Q_{i_n,j_n}(t_n) \overset{n}{\longrightarrow} 1_{0} = \beta(\rho).
   \end{equation*}
   If $\rho$ corresponds to $\mathrm{ev}_i$ for some $1 \leq i \leq  L$, we can argue analogously.\\
   Next, consider the case that $\rho$ annihilates $I_2$, i.e.\@ $\rho \circ \sigma_L\vert_{\D_{L}} =\mathrm{ev}_i$ on $\mathbb{D}_L$ for some $1\leq i \leq L$. As above, we may assume that each $\rho_n$ restricts to a pure state on $J \cap \tD{L}$, i.e.\@ we have  pure states $\rho_n \circ \alpha' =\mathrm{ev}_{t_n,i_n,j_n}$, $n \in \N$. Note that $\rho(f(\bc_i^*(1-\bs\bs^*)\bc_i))=1$ for all $f \in C_0((0,1])$ with $f(1)=1$ as $(a-f)(\bc_i^*(1-\bs\bs^*)\bc_i) \in I_2$ and $\sigma_L(e_{i,i})=a(\bc_i^*(1-\bs\bs^*)\bc_i)$. Let $\varepsilon > 0$ and $f_\varepsilon \in C_0((0,1])$ as above. Write $\tilde{f}_\varepsilon:=f_\varepsilon\circ(1- \mathrm{id})$ and  $\tilde{a}:= a \circ (1 - \mathrm{id})$.
Similarly to the previous case, we then get
   \begin{equation*}
       \mathrm{ev}_{t_n, i_n, j_n} ( \tilde{a}b\tilde{f}_\varepsilon  \otimes 1_L \otimes e_{i,i} + \tilde{a}b \tilde{f}_\varepsilon  \otimes e_{i,i} \otimes e_{0,0}) = \rho_n( (af_\varepsilon)(\bc_i^*(1-\bs\bs^*)\bc_i ) \theta(1)) >0
   \end{equation*}
   for $n$ sufficiently large. Thus, we have $t_n \in (0, \varepsilon)$ and $(i_n, j_n) \in \{ 1, \dots , L\} \times \{i\} \cup \{(i,0)\}$ for large $n$. As $\varepsilon$ was arbitrarily small, this shows $t_n \to 0$ and hence $\beta(\rho_n) \to 0_i = \beta (\rho)$.\\
   If now $\rho$ does not vanish on $J \cap \tD{L}$, 
   then the same holds for all but finitely many $\rho_n$. With this and the correspondence to pure states on $C_0((0,1)) \otimes \mathbb{D}_L \otimes \mathbb{D}_{L+1}$ via $\alpha'$, one checks that $\beta(\rho_n) \to \beta(\rho).$
   \end{proof}
The next step is to understand the maps induced on spectral level by the connecting maps $\Phi_{L,L'}$ from Proposition \ref{theo:starhom} with $L'\geq L$ as in (\ref{eq:choiceofL'}). One way to do that is to analyse the behaviour of pure states after composing with $\Phi_{L,L'}$. To that end, let us first describe sequences that excise the prominent pure states on $\tD{L}$. For $n \in \N$, define functions on $[0,1]$ by
\begin{equation}
\label{eq:defilnmnknt}
    l_n(t) = \begin{cases}
        0& \text{ if } t \in [0, 1-\tfrac{1}{n}],\\
        n(t-(1-\frac{1}{n})) & \text{ if } t \in [1 -\tfrac{1}{n}, 1],
    \end{cases}  
     \,\,\,\,\,\,\,
      m_n(t) = \begin{cases}
        nt & \text{ if } t \in [0, \tfrac{1}{n}],\\
        1 & \text{ if } t \in [\tfrac{1}{n}, 1].
    \end{cases}  
\end{equation}
With these, we can write out excising sequences for the pure states on $\tD{L}$ evaluating in the endpoints of the interval segments in $X_L$ (cf.\@ Figure \ref{fig:spectrum}).
\begin{lemma}
\label{lemma:5excseq}
    Let $L \geq 2 $. Then, we have
    \begin{spacing}{1.3}
    \begin{itemize}
        \item[\upshape{(i)}] $(l_n(\bs^*\bs))_n$ excises $\beta^{-1}(1_0)$,
        \item[\upshape{(ii)}] $(\bc_i^* l_n(\bs\bs^*)\bc_i)_n $ excises $\beta^{-1}(1_i)$ for  $1 \leq i \leq L$, 
        \item[\upshape{(iii)}]  $(l_n(\bc_j^*\bc_j)( 1 -  \bc_j^*m_n(\bs\bs^*)\bc_j))_n$ excises $\beta^{-1}(0_j)$ for $1 \leq j \leq L$,
    \end{itemize}
    \end{spacing}
    where  $\bc_1, \ldots, \bc_L, \bs$ denote the generators of $\tZ{L}{L+1}$ and $\beta$ is the homeomorphism from \ref{prop:spectrumfinitestage}. 
\end{lemma}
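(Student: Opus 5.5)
The plan is to use Lemma \ref{lemma:charexcseq}, which reduces excision of a point evaluation $\rho_{x_0}$ on the abelian algebra $\tD{L}$ to a purely spatial statement. A sequence of norm-one positive elements $k_n$ excises $\rho_{x_0}$ if and only if, for every open neighbourhood $U$ of $x_0$, $\sup_{x\notin U}|k_n(x)|\to 0$. So in each of the three cases I will compute the Gelfand transform of the given element on $X_L$ via $\beta$, check that it has norm one, and check that its support shrinks to the relevant endpoint.

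To compute these transforms, I use the description of points of $X_L$ from Proposition \ref{prop:spectrumfinitestage}. For $t\in(0,1)$, the interior point $Q_{i,j}(t)$ corresponds to $\mathrm{ev}_{t,i,j}\circ\alpha'^{-1}$ on $J\cap\tD{L}$. The endpoints correspond to states factoring through $\sigma_{L+1}|_{\D_{L+1}}$ or $\sigma_L|_{\D_L}$. Under $\alpha'$, the element $f(\hat{\sigma}_{L+1}(1))\pi_\theta(e_{i,i}\otimes e_{j,j})$ corresponds to $f\otimes e_{i,i}\otimes e_{j,j}$. Hence on the segment $X_{i,j}$ the parameter $t$ is the value of $\hat\sigma_{L+1}(1)=\bs^*\bs+\sum_k\bc_k^*\bs\bs^*\bc_k$ compressed to the $(i,j)$-corner.

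Concretely, I will identify the following functions on $X_L$, using Lemma \ref{lemma:cpctricks} together with \eqref{eq:polynomialtrick1}--\eqref{eq:polynomialtrick3}:
\begin{itemize}
\item $\bs^*\bs$ equals $t$ on the segments $X_{i,0}$ and vanishes on the segments $X_{i,j}$ with $j\ge1$.
\item $\bc_j^*\bs\bs^*\bc_j$ equals $t$ on $X_{i,j}$ for $j\ge 1$ and vanishes elsewhere.
\item $\bc_j^*\bc_j$ equals $1-t$ on the segments $X_{j,0}$ and $X_{i,j}$ for $i\ne j$ (with $j\ge1$), and vanishes elsewhere.
\end{itemize}
The endpoint values then follow by continuity, consistently with the gluing relation $\sim$.

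Once these transforms are fixed, the three claims are direct.
\begin{itemize}
\item[(i)] $l_n(\bs^*\bs)$ is $l_n(t)$ on the $X_{i,0}$ and $0$ elsewhere. It equals $1$ only at $1_0$ and is supported in $\{t>1-\tfrac1n\}$, which lies in any neighbourhood of $1_0$.
\item[(ii)] Using $\bc_i^*l_n(\bs\bs^*)\bc_i=l_n(\bc_i^*\bs\bs^*\bc_i)$, which follows from \eqref{eq:lemma:cpctricks2} for the order zero map $\hat\sigma_{L+1}$, this element is $l_n(t)$ on the segments $X_{k,i}$ and $0$ elsewhere. Its support shrinks to $1_i$.
\item[(iii)] The factor $l_n(\bc_j^*\bc_j)$ is supported where $1-t>1-\tfrac1n$, i.e.\ near $t=0$, on the segments $X_{j,0}$ and $X_{k,j}$. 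However, at the left endpoints of the $X_{k,j}$ with $k\ne j$, which are glued to a different point (the class of $0_{k,j}$, $j\ge1$), $l_n(\bc_j^*\bc_j)$ need not vanish. The correction factor $1-\bc_j^*m_n(\bs\bs^*)\bc_j=1-m_n(t)$ on $X_{k,j}$ kills these for $t\ge \tfrac1n$ and near the glued point $0_{\cdot,j}$. The product has norm one, attained at $0_j$. Its support lies in a $\tfrac1n$-neighbourhood of $0_j$ along $X_{j,0}$, since on $X_{k,j}$ it is $l_n(1-t)(1-m_n(t))=0$.
\end{itemize}

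The main obstacle is the bookkeeping of case (iii): correctly identifying which endpoints are glued to $0_j$ and verifying the transform of $\bc_j^*\bc_j$ on the segments $X_{k,j}$. I would double-check this by evaluating both $\rho\circ\sigma_L$ and $\rho\circ\sigma_{L+1}$ at each endpoint.
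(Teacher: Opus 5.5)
Your overall route is legitimate and differs from the paper's. The paper never computes Gelfand transforms. Instead it checks, via Proposition 2.2 of \cite{AAP}, that $(1-k_n)_n$ is an approximate unit for the kernel of the relevant pure state, using the ideals $I_1,J$ and the central elements $\sigma_L(1),\sigma_{L+1}(1),\theta(1)$. You use the spatial criterion of Lemma \ref{lemma:charexcseq}, which makes the geometry explicit but puts all the weight on computing the transforms correctly. Your transforms of $\bs^*\bs$ and of $\bc_j^*\bs\bs^*\bc_j$ are correct, as is $\bc_i^*f(\bs\bs^*)\bc_i=f(\bc_i^*\bs\bs^*\bc_i)$ for $f\in C_0((0,1])$. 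So (i) and (ii) go through.

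Part (iii), however, rests on a wrong transform and a misreading of $\sim$.
\begin{itemize}
\item Since $\bc_j\bc_j^*=\bc_1^2$ and $\bc_1\bs=\bs$, you get $\bc_j^*\bc_j\,\bc_j^*\bs\bs^*\bc_j=\bc_j^*\bc_1^2\bs\bs^*\bc_j=\bc_j^*\bs\bs^*\bc_j$. Hence $\bc_j^*\bc_j$ acts as a unit on $\bc_j^*\bs\bs^*\bc_j$.
\item Its transform is therefore identically $1$ on every segment $X_{k,j}$, $1\le k\le L$, including $X_{j,j}$ and the point $1_j$. It equals $1-t$ only on $X_{j,0}$ and vanishes elsewhere.
\item Your table ($1-t$ on $X_{k,j}$ for $k\neq j$, and $0$ on $X_{j,j}$) cannot be right. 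It violates $\bc_j^*\bs\bs^*\bc_j\le\bc_j^*\bc_j$ for $t>1/2$. It also violates $\sum_k\bc_k^*\bc_k+\bs^*\bs=1$, since on $X_{j,j}$ it gives the value $0$.
\item By the definition of $\sim$, $0_{k,j}\sim 0_{j,0}$ for all $k$ when $j\ge 1$. So every $X_{k,j}$ starts at $0_j$; there is no ``different point''.
\end{itemize}

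As a result, your account of the correction factor is backwards, and your final claim is false. The factor $l_n(\bc_j^*\bc_j)$ alone does not localise at all: it equals $1$ on the whole block $\bigcup_k X_{k,j}$, in particular at $1_j$, for every $n$. It is $1-m_n(\bc_j^*\bs\bs^*\bc_j)$ that cuts this block down. On $X_{k,j}$ the product equals $1-m_n(t)$, which is $1$ at $t=0$, not $0$. Even your own expression $l_n(1-t)(1-m_n(t))$ equals $1$ at $t=0$.

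The correct bookkeeping is as follows. The product is $l_n(1-t)$ on $X_{j,0}$, equals $1-m_n(t)$ on each $X_{k,j}$, and is $0$ elsewhere. It is thus supported in $\{t<1/n\}$ on the $L+1$ segments meeting at $0_j$, and it takes the value $1$ at $0_j$. Lemma \ref{lemma:charexcseq} then yields (iii), and with this correction your argument is complete.
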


\begin{proof}
We again use the notation from Proposition  \ref{prop:essideal} and write $\iota:= \mathrm{id}\in C_0((0,1])$. \\
(i): $\beta^{-1}(1_0) $ annihilates $I_1 \cap \tD{L}$ and factorises as $\beta^{-1}(1_0) = \mathrm{ev}_{0} \circ Q_{L+1}$ where $Q_{L+1}$ denotes the quotient map $ \tD{L} \twoheadrightarrow \tD{L} /( I_1 \cap \tD{L})$. We may view $Q_{L+1}$ as mapping onto $\mathbb{D}_{L+1}$ by (\ref{eq:isosdiagonals}). 
To show that $(l_n(\bs^*\bs)_n)$ excises $\beta^{-1}(1_0)$, it suffices, by Proposition 2.2 in \cite{AAP}, to check that $(1-l_n(\bs^*\bs))_n$ is an approximate unit for $\ker(\beta^{-1}(1_0))$.
So take $y \in \ker(\beta^{-1}(1_0))$ and first suppose $y \in I_1$. Then, there exists $N \in \N$ with $l_N(\bs^*\bs)y \in J$ as  $l_n(\bs^*\bs) \sigma_{L}(1) = 0$ for sufficiently large $n$. 
Further, observe that $l_n(\bs^*\bs) \theta(1)\to 0$, whence $l_n(\bs^*\bs) x \to 0$ for any $x \in J$. Combining these two observations, we get 
\begin{equation*}
 \lim\limits_{n \to \infty}(1-l_n(\bs^*\bs)) y = y - \lim\limits_{n \to \infty}l_{n}(\bs^*\bs)l_N(\bs^*\bs)y= y.
\end{equation*}
Now, let $y \notin I_1$, i.e.\@ $Q_{L+1}(y) \neq 0$ but $\beta^{-1}(1_0)(y) = \mathrm{ev}_{0}(Q_{L+1}(y)) =0$. Then, we have 
\begin{equation*}
Q_{L+1}(y) = Q_{L+1}( \sum\limits_{i =1}^L \lambda_i \sigma_{L+1}(e_{i,i})),
\end{equation*}
for some $\lambda_i \in \C$. Hence, there is $z \in I_1\cap \tD{L}$ with $y = \sum_{i =1}^L \lambda_i \sigma_{L+1}(e_{i,i}) +z$. We get $(1-l_n(\bs^*\bs))y \to y$, as $l_n(\bs^*\bs) \sigma_{L+1}(e_{i,i}) \to 0 $  for $i \geq 1$ and we already checked $l_n(\bs^*\bs) z \to 0$.\\
(ii): $\beta^{-1}(1_i)$ for $1\leq i \leq L$ annihilates $I_1\cap \tD{L}$ and $\beta^{-1}(1_i) =\mathrm{ev}_i \circ Q_{L+1}$ on $\mathbb{D}_{L+1}$. We observe
$\bc_i^*l_n(\bs\bs^*)\bc_i \theta(1) \to 0$, and $\bc_i^*l_N(\bs\bs^*)\bc_i \sigma_{L}(1) = 0$
for $N \in \N$ sufficiently large. As above, this implies that $(1-\bc_i^*l_n(\bs\bs^*)\bc_i)_n$ is an approximate unit of $\ker(\beta^{-1}(1_i))$ which shows the assertion.\\
(iii): $\beta^{-1}(0_j)$ for $1\leq j \leq L$ annihilates $I_2 \cap \tD{L}$ and $\beta^{-1}(0_j)= \mathrm{ev}_j \circ Q_L$ on $\mathbb{D}_L$ where $Q_L$ denotes the quotient map onto $\D_{L}$.
Note that 
\begin{equation*}
l_N(\bc_j^*\bc_j)( 1 -  \bc_j^*m_N(\bs\bs^*)\bc_j) \sigma_{L+1}(1) =0
\end{equation*} 
for $N \in \N$ sufficiently large. Further, we have 
\begin{equation*}
    l_n(\bc_j^*\bc_j)b(\bs^*\bs)  \overset{n}{\longrightarrow}0, \;\;
    l_n(\bc_j^*\bc_j)( 1 -  \bc_j^*m_n(\bs\bs^*)\bc_j) b(\bc_j^*\bs\bs^*\bc_j) \overset{n}{\longrightarrow} 0.
\end{equation*}
This implies $l_n(\bc_j^*\bc_j)( 1 -  \bc_j^*m_n(\bs\bs^*)\bc_j) \theta(1) \to 0$. With these observations, one shows as in (i) and (ii) that $(1-l_n(\bc_j^*\bc_j)( 1 -  \bc_j^*m_n(\bs\bs^*)\bc_j))_n $ is an approximate unit on $\ker(\beta^{-1}(0_j))$.
\end{proof}
To see where the pure states considered in the latter lemma are mapped by the spectral connecting maps, it now suffices to consider the images of the respective excising sequences. 
\begin{lemma}
\label{lemma:toolconnectingspectralmaps}
    Let $L \geq 2$ and $\Phi_{L,L'}: \tZ{L}{L+1} \to \tZ{L'}{L'+1}$ as in Proposition \ref{theo:starhom} with $L'$ chosen as in (\ref{eq:choiceofL'}). Further, let $\rho_x \in PS(\tD{L'})$ be the pure state in some $x \in X_{L'}$ and $\psi_y \in PS(\tD{L})$ in some $y \in X_{L}$. Suppose there are sequences $(d_n')_n$ in $\tD{L'}$ excising $\rho_x$ and $(d_n)_n$ in $\tD{L}$ excising $\psi_y$ such that
    $$\lim\limits_{n \to \infty} d_n' - \Phi_{L,L'}(d_n) d_n'  = \lim\limits_{n \to \infty}(1- \Phi_{L,L'}(d_n))d_n' =0.$$
    Then, we have $\rho_x \circ \Phi_{L,L'} = \psi_y.$
\end{lemma}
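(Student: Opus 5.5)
The plan is to compare $\rho_x\circ\Phi_{L,L'}$ and $\psi_y$ directly through the excision identities, with no spectral bookkeeping. First, $\Phi_{L,L'}(\tD{L})\subset\tD{L'}$. This holds because Lemma \ref{lemma:normpreserved} gives that normalisers are preserved, and by the remark after Theorem \ref{theo:XinLi} this already forces the diagonals to be mapped into each other. Since $\Phi_{L,L'}$ is unital, $\rho_x\circ\Phi_{L,L'}$ is therefore a character on $\tD{L}$, so both sides of the claimed identity are pure states on $\tD{L}$. It then suffices to show $\rho_x(\Phi_{L,L'}(d))=\psi_y(d)$ for every $d\in\tD{L}$.

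Fix $d\in\tD{L}$ and write $\Phi=\Phi_{L,L'}$. The hypothesis gives $\Phi(d_n)d_n'-d_n'\to0$. Taking adjoints, and using that $d_n'$ and $\Phi(d_n)$ are positive, we also get $d_n'\Phi(d_n)-d_n'\to0$. No commutativity is needed here, although it is available anyway since both elements lie in $\tD{L'}$. Because all elements involved have norm at most $\max(1,\norm{d})$, we obtain
\begin{equation*}
d_n'\Phi(d)d_n' \approx d_n'\Phi(d_n)\Phi(d)\Phi(d_n)d_n' = d_n'\Phi\big(d_nd d_n\big)d_n' .
\end{equation*}
Since $(d_n)_n$ excises $\psi_y$ and $\Phi$ is contractive, $\Phi(d_ndd_n)\approx\psi_y(d)\Phi(d_n)^2$ in norm. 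Reversing the first step then gives
\begin{equation*}
d_n'\Phi(d)d_n'\approx \psi_y(d)\, d_n'\Phi(d_n)^2d_n' \approx \psi_y(d)\,(d_n')^2 ,
\end{equation*}
where all approximations become exact as $n\to\infty$.

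On the other hand, $(d_n')_n$ excises $\rho_x$, so $d_n'\Phi(d)d_n'\approx\rho_x(\Phi(d))(d_n')^2$. Subtracting the two expressions yields $\norm{(\rho_x(\Phi(d))-\psi_y(d))(d_n')^2}\to0$. Since $\norm{(d_n')^2}=\norm{d_n'}^2=1$ for all $n$, it follows that $\rho_x(\Phi(d))=\psi_y(d)$. As $d$ was arbitrary, $\rho_x\circ\Phi_{L,L'}=\psi_y$.

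The only genuinely delicate point is the first one, namely that $\rho_x\circ\Phi_{L,L'}$ is a state on $\tD{L}$ at all, i.e.\ that $\Phi_{L,L'}(\tD{L})\subset\tD{L'}$ and $\Phi_{L,L'}$ is unital. Both facts are already established, so the remaining argument is a routine $\varepsilon$-estimate with norm-bounded sequences. If one wanted to avoid the inclusion of diagonals, the same computation could be run for arbitrary $d\in\tZ{L}{L+1}$, using Corollary \ref{corexcisingextensions} to let $(d_n)_n$ and $(d_n')_n$ excise the unique pure state extensions; this gives equality of those extensions and hence of their restrictions.
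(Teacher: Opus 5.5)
Your proof is correct, but it takes a different, more direct route than the paper's. The paper argues by contraposition. Assuming $\rho_x\circ\Phi_{L,L'}\neq\psi_y$, it picks an auxiliary sequence $(\bar d_n)_n$ in $\tD{L}$ excising the character $\rho_x\circ\Phi_{L,L'}$ with $\rho_x\circ\Phi_{L,L'}(\bar d_n)=1$. It then uses the excision by $(d_n')_n$ to extract a subsequence with $\Phi_{L,L'}(\bar d_i)d'_{n_i}-d'_{n_i}\to 0$. Finally it invokes Lemma \ref{lemma:charexcseq}: sequences excising characters at distinct points are asymptotically orthogonal, so $d_{n_i}\bar d_i\to 0$, which forces $\|\Phi_{L,L'}(d_{n_i})d'_{n_i}-d'_{n_i}\|\to 1$ and contradicts the hypothesis.

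You instead sandwich $\Phi_{L,L'}(d)$ between copies of $d_n'$ and insert $\Phi_{L,L'}(d_n)$ on both sides via the hypothesis and its adjoint. The excision identities for $\psi_y$ and $\rho_x$ then give $\rho_x(\Phi_{L,L'}(d))=\psi_y(d)$ for each $d$ directly. Your version is shorter and more general. It needs no commutativity, no topology of the spectrum, no auxiliary excising sequence, and no diagonal-subsequence extraction. It also works verbatim for arbitrary states and $^*$-homomorphisms between $\mathrm{C}^*$-algebras, as your closing remark about the unique extensions reflects. The paper's proof, by contrast, is phrased in the point-separation language of excising sequences that it uses throughout Sections 3--5.

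Both arguments rely on $\Phi_{L,L'}(\tD{L})\subset\tD{L'}$, so that the excision by $(d_n')_n$ applies to elements $\Phi_{L,L'}(d)$. The paper uses this tacitly. You justify it correctly through Lemma \ref{lemma:normpreserved}, the remark following Theorem \ref{theo:XinLi}, and unitality of $\Phi_{L,L'}$.
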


\begin{proof}
    We prove this via contraposition. So assume that $\rho_x \circ \Phi_{L,L'} \neq \psi_y$. Take $(\bar{d}_n)_n$ in $\tD{L}$ excising $\rho_x \circ \Phi_{L,L'}$ with $\rho_x \circ \Phi_{L,L'}(\bar{d}_n)=1$ for each $n \in \N$ and let $(d_n)_n$ in $\tD{L}$, $(d_n')_n$ in $\tD{L'}$ be any sequences excising $\psi_y, \rho_x$, respectively. Then, we have 
    \begin{equation*}
   \lim\limits_{n \to \infty} \norm{d_n' \Phi_{L,L'}(\bar{d}_i) d_n'- (d_n')^2} = \lim\limits_{n \to \infty} \norm{d_n' \Phi_{L,L'}(\bar{d}_i) d_n'- \rho_x \circ \Phi_{L,L'}(\bar{d}_i) (d_n')^2} = 0
    \end{equation*}
    for any $i \in \N$ as $(d_n')_n$ excises $\rho_x$. Thus, there is a subsequence $(d'_{n_i})_n$ with 
    \begin{equation}
    \label{eq:proof:lemma:homspectra}
    \lim\limits_{i \to \infty} \norm{\Phi_{L,L'}(\bar{d}_{i}) d_{n_i}'  - d_{n_i}'}= 0.
    \end{equation}
    Further, by Lemma \ref{lemma:charexcseq}, we have $d_{n_i} \bar{d}_{i} \to 0$ as they excise different pure states. We get
    \begin{equation*}
        \liminf\limits_{i \in \N} \norm{\Phi_{L,L'}(d_{n_i}) d_{n_i}'-d_{n_i}'} \overset{(\ref{eq:proof:lemma:homspectra})} =  \liminf\limits_{i \in \N} \norm{\Phi_{L,L'}(d_{n_i} \bar{d}_{i} )d_{n_i}' -d_{n_i}'} = 
        \liminf\limits_{i \in \N} \norm{d_{n_i}'} = 1.
    \end{equation*}
    This implies $\Phi_{L,L'}(d_{n}) d_n'-d_n' \nrightarrow 0$.
\end{proof}

The latter two lemmas now allow us to check where some of the prominent points in the spectrum $X_L$ are mapped.

\begin{lemma}
\label{lemma:wherearepeakfctsmapped}
   Let $L,K,M \geq 2$ and $L', K'$ as in (\ref{eq:choiceofL'}). Further, let $\Phi_{L,L'}: \tZ{L}{L+1} \to \tZ{L'}{L'+1}$ as in Proposition \ref{theo:starhom} and $f: X_{L'} \to X_L$ be the induced map between the spectra of the respective diagonals. Then, we have
   \begin{equation*}
   f(1_0^{L'}) = 1_0^{L}, \;\; f(0_1^{L'} ) = 0_1^L= f(1^{L'}_{K'})  .
   \end{equation*}
\end{lemma}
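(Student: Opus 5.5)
The plan is to apply Lemma \ref{lemma:toolconnectingspectralmaps} three times, using the explicit excising sequences from Lemma \ref{lemma:5excseq}. In each case we take $(d_n')_n$ to be the excising sequence in $\tD{L'}$ for the given point of $X_{L'}$, and $(d_n)_n$ to be the excising sequence in $\tD{L}$ for the claimed image. It then suffices to verify that $(1-\Phi_{L,L'}(d_n))d_n' \to 0$, possibly after passing to subsequences of $(d_n)_n$; Lemma \ref{lemma:charexcseq} allows passing to subsequences while keeping the excision property. The images $\Phi_{L,L'}(d_n)$ can be computed with functional calculus from the formulas in the proof of Proposition \ref{theo:starhom}:
\begin{equation*}
\Phi_{L,L'}(\bs^*\bs) = h(s^*s) + \sum_{l=1}^L\sum_{i=1}^M \tfrac{i}{M}h(\cstarc{lK'+(l-1)M+i}), \qquad \Phi_{L,L'}(\bc_1^2) = \sum_{i=1}^{K'} g(\cstarc{i}) + \sum_{i=1}^M (g-\tfrac{i}{M}h)(\cstarc{K'+i}).
\end{equation*}
These are sums of functions of mutually orthogonal elements, using (\ref{eq:polynomialtrick1}) and the fact that $s^*s$ is not orthogonal but behaves compatibly by (\ref{eq:polynomialtrick2}), (\ref{eq:polynomialtrick3}).

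For $f(1_0^{L'})=1_0^L$, take $d_n' = l_n(s^*s)$ and $d_n=l_n(\bs^*\bs)$. The summand $h(s^*s)$ equals $1$ wherever $s^*s$ is near $1$. Since $h(s^*s)$ commutes with the remaining summands and these are supported where $s^*s$ is small, we get $\Phi_{L,L'}(l_n(\bs^*\bs)) \ge l_n(h(s^*s))$ on the relevant part. More concretely, multiplying $\Phi_{L,L'}(\bs^*\bs)$ by $l_k(s^*s)$ should give $h(s^*s)l_k(s^*s)$, because $l_k(s^*s)c_j^*c_j \to 0$ (via (\ref{eq:polynomialtrick3}), $c_j^*c_j$ is a function of $1-s^*s$ on this corner). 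Hence $\Phi_{L,L'}(l_n(\bs^*\bs))l_k(s^*s) = l_n(h)(s^*s)l_k(s^*s)$. For fixed $n$ this tends to $l_k(s^*s)$ up to an error that vanishes once $k$ is large relative to $n$, since $l_n\circ h = 1$ near $t=1$. A diagonal choice of subsequence then finishes this case.

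For $0_1^{L'}$, take $d_n' = l_n(c_1^2)(1-c_1^*m_n(ss^*)c_1)$ and $d_n$ the analogous element of $\tD{L}$ built from $\bc_1$ and $\bs$. The point is that $d_n'$ lives where $c_1^2\approx 1$ and $ss^*\approx 0$. There $\Phi(\bc_1^2)=g(c_1^2)\approx 1$, and $\Phi(\bs\bs^*)=\Phi(\bc_1\bs\bs^*\bc_1)$ is built from $\pi_\varphi(e_{1,2})h(s^*s)\pi_\varphi(e_{2,1})$, which is a function of $ss^*$ vanishing near $0$, plus terms $\pi_\psi(e_{1,\cdot})(\ldots)\pi_\psi(e_{\cdot,1})$, which by Lemma \ref{lemma:cpctricks} are functions of $c_1^2$ vanishing wherever $h$ does. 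Hence $\Phi(\bc_1^*m_n(\bs\bs^*)\bc_1)d_k'\to 0$ and $\Phi(l_n(\bc_1^2))d_k'\approx d_k'$.

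For $1_{K'}^{L'}$, take $d_n'=c_{K'}^*l_n(ss^*)c_{K'}$. This element sits under $c_{K'}^*c_{K'}$, where $g(\cstarc{K'})$ contributes to $\Phi(\bc_1^2)$. I expect this case to be the main obstacle. One has to check carefully that $d_n'$ is also killed by $\Phi(\bc_1^*m_n(\bs\bs^*)\bc_1)$. This is where the specific indexing $(l-1)M+i+1 \le LM+1\le K'$ in $\tilde s^{(2)}$ enters: the $\pi_\psi$-images land in blocks $\le LM+1$, and when $K'>LM+1$ these are disjoint from block $K'$. One also needs that $c_{K'}^*l_n(ss^*)c_{K'}$ sees $c_{K'}^*c_{K'}$ equal to $1$, so that $g=1$ there. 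Both are verified with Lemma \ref{lemma:cpctricks} and (\ref{eq:polynomialtrick1}), after which Lemma \ref{lemma:toolconnectingspectralmaps} yields $f(1_{K'}^{L'})=0_1^L$.
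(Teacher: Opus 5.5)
Your overall route is the paper's: three applications of Lemma \ref{lemma:toolconnectingspectralmaps} with the excising sequences of Lemma \ref{lemma:5excseq}, computing $\Phi_{L,L'}$ of these by functional calculus. In the first case the paper likewise passes to a subsequence $(l_n')_n$ with $(1-l_n\circ h)l_n'\to 0$. Note that $l_n\circ h$ equals $1$ only at $t=1$, so this is a continuity argument rather than an exact identity near $1$, but your conclusion there is right.

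The step that fails as written is the justification in the $0_1^{L'}$ case. By Lemma \ref{lemma:cpctricks}, $\stil{2}(\stil{2})^*=\sum_{l=1}^L\sum_{i=1}^M\frac{i}{M}h(\cstarc{(l-1)M+i+1})$. So these summands are functions of $c_j^*c_j$ with $2\le j\le LM+1$, not functions of $c_1^2$. Had they been functions $F(c_1^2)$ with $F$ vanishing exactly where $h$ does, your stated reason would not give the conclusion. Such an $F$ is non-zero near $t=1$, which is exactly where $l_k(c_1^2)$ is supported, so $m_n(F(c_1^2))\,l_k(c_1^2)(1-m_k(ss^*))$ would not tend to $0$. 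The correct reason is block orthogonality:
\begin{itemize}
\item by (\ref{eq:polynomialtrick1}) these summands are annihilated by $l_k(c_1^2)$;
\item $l_k(c_1^2)$ acts as the identity on $m_n\circ h(ss^*)$ because $c_1s=s$.
\end{itemize}
Hence $\Phi_{L,L'}(m_n(\bs\bs^*))\,l_k(c_1^2)(1-m_k(ss^*))=((m_n\circ h)(1-m_k))(ss^*)=0$ for $k\ge 2$. This is the same index observation you make in the $1_{K'}^{L'}$ case, and it is how the paper argues.

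In the $1_{K'}^{L'}$ case you also need to dispose of the summand $m_n\circ h(ss^*)$ of $\Phi_{L,L'}(m_n(\bs\bs^*))$. Lemma \ref{lemma:cpctricks} and (\ref{eq:polynomialtrick1}) do not cover it. It vanishes against $c_{K'}^*l_n(ss^*)c_{K'}$ because $ss^*c_{K'}^*=ss^*c_1c_{K'}^*=ss^*(c_{K'}c_1)^*=0$. With these two repairs your proof is the paper's. The $1_{K'}^{L'}$ case, which you expected to be the main obstacle, is then a direct computation with exact equalities.
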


\begin{proof}
   In the following, we will use the notation established in Section \ref{sec:construction}.
   In particular, we write $\bar{c}_1, \ldots, \bar{c}_L, \bar{s}$ for the generators of $\tZ{L}{L+1}$ and $c_1, \ldots, c_{L'}, s$ for the generators of $\tZ{L'}{L'+1}$.
   Further, we write $\beta_L, \beta_{L'}$ for the homeomorphisms from the character spectra of $\tD{L},$ $ \tD{L'}$ to $X_L, X_{L'}$, respectively. Moreover, we 
   write $\iota:= \mathrm{id} \in C_0((0,1]).$\\
   Now, by Lemma \ref{lemma:5excseq}, $\beta_L^{-1}(1_0^L) \in PS(\tD{L})$ is excised by the sequence $(l_n(\bs^* \bs))_n$ where the functions $l_n$ are defined as in (\ref{eq:defilnmnknt}). Recall $\Phi_{L,L'}(\bar{s})=\tilde{s}$ from (\ref{eq:defi:tildes}) and observe that
    \begin{equation*}
        \Phi_{L,L'}(\bs^*\bs) = h(s^*s) + \sum\limits_{l=1}^L \sum\limits_{i=1}^M \frac{i}{M} h (\cstarc{lK'+(l-1)M+i})
    \end{equation*}
    with pairwise orthogonal summands. Now, choose  a subsequence $(l_n')_n$ of $(l_n)_n$ with 
     \begin{equation*}
       \lim\limits_{n \to \infty} (1-l_n \circ h)  l_{n}' =0.
     \end{equation*}
     Then, the sequence $(l'_n(s^*s))_{n}$ excises $\beta_{L'}^{-1}(1_0^{L'})$ by Lemma \ref{lemma:5excseq}. We compute
    \begin{equation*}
      \lim\limits_{n \to \infty} \big(1- \Phi_{L,L'}(l_n(\bs^*\bs))\big) l_n'(s^*s) \overset{(\ref{eq:polynomialtrick3})}{=} \lim\limits_{n \to \infty}(1-l_n\circ h(s^*s)) l_n'(s^*s) = 0.
    \end{equation*}
    This implies $f(1_0^{L'})= 1_0^L$ with Lemma  \ref{lemma:toolconnectingspectralmaps}.    
    For $f(0_1^{L'})=0_1^L$, we note with (\ref{eq:defi:tildec}), (\ref{eq:defi:tildes}) that
    \begin{align*}
        l_n(\Phi_{L,L'}(\bc_1^2)) &= \sum\limits_{i=1}^{K'}l_n\circ g(c_i^*c_i) + \sum\limits_{i=1}^M l_n\circ(g - \frac{i}{M} h)(\cstarc{K'+i}), \\
        m_n(\Phi_{L,L'}(\bs\bs^*)) & = m_n\circ h(ss^*) + \sum\limits_{l=1}^L \sum\limits_{i=1}^M m_n \circ (\frac{i}{M} h) (\cstarc{(l-1)M+i+1}),
    \end{align*}
    where the functions $l_n, m_n$ are defined as in (\ref{eq:defilnmnknt}). We have for sufficiently large $n \in \N$ that 
    \begin{equation*}
    (1-l_n\circ g) l_n = 0,\;\; (m_n\circ h )  m_n=m_n \circ h.
    \end{equation*}
    This implies
    \begin{align*}
        \big(1 - \Phi_{L,L'}(l_n(\bc_1^2))\big) l_n(c_1^2) \overset{(\ref{eq:polynomialtrick1})}{=} (1 - l_n\circ g(c_1^2)) l_n(c_1^2) \overset{n}{\longrightarrow} 0,
    \end{align*}
    and, using $ss^*c_1=ss^*$,
    \begin{align*}
        \Phi_{L,L'}(m_n(\bs\bs^*)) l_n(c_1^2)(1- m_n(ss^*))  \overset{(\ref{eq:polynomialtrick1})}&{=} m_n\circ h (ss^*) - m_n \circ h(ss^*) m_n(ss^*) = 0.
    \end{align*}
    Combining these yields 
    \begin{equation*}
       \lim\limits_{n \to \infty} \big(1-\Phi_{L,L'}(l_n(\bc_1^2)(1-m_n(\bs\bs^*))) \big) l_n(c_1^2)(1-m_n(ss^*)) = 0
    \end{equation*}
    and hence $f(0_1^{L'})=0_1^L$ by Lemmas \ref{lemma:5excseq}, \ref{lemma:toolconnectingspectralmaps}. 
    Finally, to show $f(1_{K'}^{L'})= 0_1^L$, we compute
    \begin{equation*}
      \Phi_{L,L'}(l_n(\bc_1^2)) l_n(c_{K'}^* ss^* c_{K'})  = l_n\circ g( \cstarc{K'}) c_{K'}^* l_n(ss^*) c_{K'} = c_{K'}^* l_n(ss^*) c_{K'},  
    \end{equation*}
     using the relations of $\tZ{L'}{L'+1}$. Similarly, recall $m_n(\Phi_{L,L'}(\bs \bs^*))$ from above and observe
    \begin{equation*}
    \Phi_{L,L'}(m_n(\bs\bs^*)) l_n(c_{K'}^* ss^* c_{K'})= 0
    \end{equation*}
    for $n \in \N$. This yields
    \begin{equation*}
    \big(1 - \Phi_{L,L'}(l_n(\bc_1^2)(1-m_n(\bs\bs^*)))\big)   l_n(c_{K'}^* ss^* c_{K'})  = 0,
    \end{equation*}
    for $n \in \N,$ and hence $f(1_{K'}^{L'}) = 0_1^{L}$ by Lemmas \ref{lemma:5excseq}, \ref{lemma:toolconnectingspectralmaps}. 
    \end{proof}

With these preparations, we can now deduce some structural results about the spectrum of the $\mathrm{C}^*$-diagonal in the Jiang--Su algebra from Theorem \ref{theo:diaginZ}. To do that, fix one choice of a sequence $(L_n)_n$ such that $\varinjlim \tZ{L_n}{L_n+1} \cong \mathcal{Z}$ as in Theorem \ref{theo:constrindlim} and denote the $\mathrm{C}^*$-diagonal $\varinjlim \tD{L_n} \subset \mathcal{Z}$ from Theorem \ref{theo:diaginZ}  by $\tilde{D}$. 
Further, let $X_{L_{n}} $ denote the spectrum  of the diagonal $\tD{L_n}$ from Proposition \ref{prop:tD}. The injective $^*$-homomorphisms $\Phi_{L_n,L_{n+1}}\vert_{\tD{L_n}}$ induce surjective, continuous maps $f_{n+1,n}:X_{L_{n+1}} \to X_{L_n}$ and the spectrum  of $\tilde{D}$  is homeomorphic to the inverse limit 
\begin{equation*}
     X:= \varprojlim (X_{L_n}, f_{n+1,n}) \subset \prod_{n \in \N } X_{L_n}.
\end{equation*}
We denote the projection maps from the inverse limit to the finite stages by $f_n: X \to X_{L_n}$. These are surjective since all the connecting maps are.\\
We immediately observe that $X$ is compact, connected, metrisable, and one-dimensional, as this holds for all $X_{L_n}$. Moreover, $X$ is not locally connected, i.e.\@ there is a point in $X$ that does not admit a neighbourhood base of connected sets.

\begin{prop}
    The spectrum $X$ of $\tilde{D}$ is not locally connected.
\end{prop}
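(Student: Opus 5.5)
We will show that $X$ is not locally connected at the point $x_\infty := (1_0^{L_n})_n \in X$. This is a point of $X$ by Lemma \ref{lemma:wherearepeakfctsmapped}, since $f_{n+1,n}(1_0^{L_{n+1}}) = 1_0^{L_n}$. The idea is to exhibit a neighbourhood $U$ of $x_\infty$ such that every neighbourhood $V \subset U$ of $x_\infty$ contains points that cannot be joined to $x_\infty$ by a connected subset of $U$. The natural candidate is the open set
\begin{equation*}
U := f_1^{-1}(W),
\end{equation*}
where $W \subset X_{L_1}$ is a small open star around $1_0^{L_1}$: it meets only the $L_1$ arcs $X_{i,0}$ near their $1$-endpoints, and avoids all $0_i$ and all $1_j$ with $j \geq 1$.

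The key computation concerns the preimage $f_{2,1}^{-1}(W) \subset X_{L_2}$, and more generally $f_{n+1,n}^{-1}$ of such a star. By Lemma \ref{lemma:wherearepeakfctsmapped}, $f_{n+1,n}$ sends $1_0^{L_{n+1}}$ to $1_0^{L_n}$. It also sends $0_1^{L_{n+1}}$ and $1_{K'}^{L_{n+1}}$ to $0_1^{L_n}$, which lies outside the star. The plan is to use the explicit formula for $\Phi(\bs^*\bs)$ from the proof of Proposition \ref{theo:starhom}, namely
\begin{equation*}
h(s^*s) + \sum_{l,i} \tfrac{i}{M} h(\cstarc{lK'+(l-1)M+i}).
\end{equation*}
Evaluating this at points of $X_{L'}$ via $\beta_{L'}$ gives the following picture. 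Near $1_0^{L'}$, the element $\Phi(\bs^*\bs)$ is close to $1$ only on the star around $1_0^{L'}$ coming from $h(s^*s)$. The value $1$ is also attained on the blocks indexed by $lK'+lM$ (where $i=M$), which are far from $1_0^{L'}$: they sit in the left part of the picture and are separated from $1_0^{L'}$ by a region where $\Phi(\bs^*\bs)$ is small, namely near the $0$-endpoints, where $h(s^*s)$ vanishes.

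Consequently $f_{n+1,n}^{-1}(\text{small star})$ has several components. One is the star around $1_0^{L_{n+1}}$, now shrunk by factor $4$, as in Remark \ref{remark:Explicitchoice}. The others are separate pieces, namely the $M$-th blocks, which map onto (a neighbourhood of) $1_0^{L_n}$ as well.

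From this we obtain points of $X$ arbitrarily close to $x_\infty$ lying in a different component of $U$. For each $m$, pick $y^{(m)} \in X$ such that $f_k(y^{(m)})$ is in the star at $1_0$ for all $k \le m$, while $f_{m+1}(y^{(m)})$ lies in one of the extra components (an $M$-th block) of $f_{m+1,m}^{-1}(W_m)$. Such points exist by surjectivity of the $f_n$. Then $y^{(m)} \to x_\infty$.

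Now suppose $C \subset U$ is connected and contains both $x_\infty$ and $y^{(m)}$. We argue that $f_{m+1}(C)$ is connected, contained in $f_{m+1,1}^{-1}(W)$, and meets two different components of that set, which is a contradiction. This needs $f_{m+1,1}^{-1}(W)$ to split appropriately; it does so because the extra components of $f_{k+1,k}^{-1}(W_k)$ are disjoint from the star, and we pass inductively through the levels. Hence the component of $U$ containing $x_\infty$ is not a neighbourhood of $x_\infty$, so $X$ is not locally connected.

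The main obstacle is the rigorous identification of $f_{n+1,n}$ near $1_0$. Specifically, we must verify that points of $X_{L'}$ near the $0$-endpoints, or in intermediate blocks, map outside $W$, so that the preimage of the star genuinely disconnects. I would handle this with Lemma \ref{lemma:toolconnectingspectralmaps} together with explicit excising sequences of the form in Lemma \ref{lemma:5excseq} for interior points of the arcs $Q_{i,j}(t)$. Alternatively, I would argue directly with the continuous function $\Phi(\bs^*\bs)\circ\beta_{L'}^{-1}$: the preimage of $\{\bs^*\bs > 1-\varepsilon\}$ is the set where this function exceeds $1-\varepsilon$. The explicit formula shows this set is a disjoint union of the $1_0^{L'}$-star and pieces around the blocks with $i=M$, separated by its zero set near the $0$-endpoints.
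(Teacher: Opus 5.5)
Your argument is correct, and it takes a genuinely different route from the paper.

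\textbf{The paper's route.} The paper works at the point $(0_1^{L_n})_n$ with the neighbourhood $f_1^{-1}(X_{L_1}\setminus\{1_0^{L_1}\})$. It needs only the three point values from Lemma \ref{lemma:wherearepeakfctsmapped}. The points $1_{K'}^{L_{n+1}}$ and $0_1^{L_{n+1}}$ both map to $0_1^{L_n}$, yet lie in different components of $X_{L_{n+1}}\setminus\{1_0^{L_{n+1}}\}$. So any path joining them must pass through $1_0^{L_{n+1}}$, which maps to the excluded point $1_0^{L_1}$. The paper then goes from ``no path-connected neighbourhoods'' to ``not locally connected'' via 8.25 in \cite{Nadler}.

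\textbf{Your route.} You work at $(1_0^{L_n})_n$ and instead need a global description of the spectral map, through the single function $\Phi_{L,L'}(\bs^*\bs)$ on $X_{L'}$. This requires reading off from $\alpha'$ in Proposition \ref{prop:spectrumfinitestage} the following values:
\begin{itemize}
\item $s^*s=t$ and $c_i^*c_i=1-t$ at $Q_{i,0}(t)$;
\item $s^*s=0$ and $c_j^*c_j=1$ on the arcs $Q_{i,j}(t)$ with $j\geq 1$.
\end{itemize}
The paper never states these, but they follow quickly. In return, you can argue directly with the component of the open set $U$ containing the base point, with no path-connectedness theorem needed. You also get an explicit picture of how preimages of small stars split.

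\textbf{Two points to tighten.}

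First, the separating region is not near the $0$-endpoints. At $0^{L'}_{lK'+lM}$ one has $\Phi_{L,L'}(\bs^*\bs)=h(1)=1$. The zero set that actually separates is the middle part $t\in[\frac14,\frac34]$ of the arcs $X_{k,0}$ of $X_{L'}$, where $h(t)$ and $h(1-t)$ both vanish. Accordingly, each extra component consists of the bundle $lK'+lM$ together with an initial segment of the arc $X_{lK'+lM,0}$. That component maps onto a segment of the single arc $X_{l,0}$ ending at $1_0^L$, not onto a full neighbourhood of $1_0^L$.

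Second, $y^{(m)}\to x_\infty$ does not follow merely from $f_k(y^{(m)})$ lying in a star of fixed size at each level $k\leq m$. What makes it work is this:
\begin{itemize}
\item $\Phi_{L,L'}(\bs^*\bs)\equiv 1$ on the whole bundle $lK'+lM$;
\item $\bs^*\bs$ takes the value $1$ only at $1_0^L$;
\item hence that bundle is collapsed onto $1_0^L$.
\end{itemize}
Choose $f_{m+1}(y^{(m)})$ inside that bundle. Then $f_k(y^{(m)})=1_0^{L_k}$ for all $k\leq m$, which gives the convergence. Your inductive identification of the component of $f_{m+1,1}^{-1}(W)$ containing $1_0^{L_{m+1}}$ as the shrunken star then finishes the argument as you describe.
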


\begin{proof}
    Define $x = (x_n)_{n } = (0_1^{L_1}, 0_1^{L_2}, 0_1^{L_3}, 0_1^{L_4} \dots ) \in \prod_{n} X_{L_n}$. This is an element of $X$ by Lemma \ref{lemma:wherearepeakfctsmapped}. Take $U=X_{L_1} \backslash \{ 1_0^{L_1} \} \subset {X_{L_1}}$ and $V \subset f^{-1}_1(U) \subset X $ an open neighbourhood of $x$. We show by contradiction that $V$ is not path-connected, so let us assume the contrary. By 2.1.9 in \cite{Macias2018}, there is $n \in \N$ and $V_n \subset X_{L_n}$ an open neighbourhood of $x_n=0_1^{L_n}$ with $f^{-1}_n(V_n) \subset V.$  Observe that 
    \begin{equation*}
       f_{n+1,n}^{-1} (V_n)= f_{n+1}(f_n^{-1} (V_n)) \subset f_{n+1} (V)
    \end{equation*}
    since $f_n = f_{n+1,n} \circ f_{n+1}$ and the projection maps are surjective. In particular, we have $1^{L_{n+1}}_k, 0^{L_{n+1}}_1 \in f_{n+1}(V)$ for some $k \neq 1$ by Lemma \ref{lemma:wherearepeakfctsmapped}. Since $V$ was assumed to be path-connected, there is a path $\gamma: [0,1] \to f_{n+1}(V) \subset f_{n+1,1}^{-1}(U)$ connecting  $1^{L_{n+1}}_k$ and $0^{L_{n+1}}_1$. Since $X_{L_{n+1}} \backslash \{ 1_0^{L_{n+1}} \}$ is not connected and $1^{L_{n+1}}_k$ and $0^{L_{n+1}}_1$ lie in different connected components, we have $1_0^{L_{n+1}} \in \gamma([0,1])\subset  f_{n+1,1}^{-1}(U) $. This implies $1^{L_{1}}_0 \in U$ by Lemma \ref{lemma:wherearepeakfctsmapped}, a contradiction. Thus, there is no path-connected neighbourhood of $x$ contained in $f_1^{-1}(U)$ whence $X$ is not locally connected by 8.25 in \cite{Nadler}.
    \end{proof}
        
An interesting consequence of the previous proposition is that $X$ is not a Peano continuum, i.e.\@ a continuum which is locally connected in each point. In particular, the constructed $\mathrm{C}^*$-diagonal $\tilde{D} \subset \mathcal{Z}$ does not coincide with the examples with spectrum homeomorphic to a Peano continuum from Section 8 in \cite{Li1} and also does not fall in the scope of the machinery developed in \cite{Li2} to produce diagonals with Menger manifold spectrum.

\bibliographystyle{plain}
\bibliography{Mybib}

\begin{thebibliography}{10}

\bibitem{AAP}
Charles~A. Akemann, Joel Anderson, and Gert~K. Pedersen.
\newblock Excising states of {$\mathrm{C}^*$}-algebras.
\newblock {\em Canad. J. Math.}, 38(5):1239--1260, 1986.

\bibitem{AuMi}
Kyle Austin and Atish Mitra.
\newblock Groupoid models of {$\mathrm{C}^*$}-algebras and the {G}elfand
  functor.
\newblock {\em New York J. Math.}, 27:740--775, 2021.

\bibitem{BaLi1}
Sel\c{c}uk Barlak and Xin Li.
\newblock Cartan subalgebras and the {UCT} problem.
\newblock {\em Adv. Math.}, 316:748--769, 2017.

\bibitem{BaLi2}
Sel\c{c}uk Barlak and Xin Li.
\newblock Cartan subalgebras and the {UCT} problem, {II}.
\newblock {\em Math. Ann.}, 378(1-2):255--287, 2020.

\bibitem{BaRa}
Sel\c{c}uk Barlak and Sven Raum.
\newblock Cartan subalgebras in dimension drop algebras.
\newblock {\em J. Inst. Math. Jussieu}, 20(3):725--755, 2021.

\bibitem{CGSTW}
José~R. Carrión, James Gabe, Christopher Schafhauser, Aaron Tikuisis, and
  Stuart White.
\newblock Classifying $^*$-homomorphisms {I}: Unital simple nuclear
  $\mathrm{C}^*$-algebras, {A}r{X}iv preprint,
  \href{https://arxiv.org/abs/2307.06480}{arXiv:2307.06480}, 2023.

\bibitem{CFaH}
Lisa~Orloff Clark, James Fletcher, and Astrid an~Huef.
\newblock All classifiable {K}irchberg algebras are {$\mathrm{C}^*$}-algebras
  of ample groupoids.
\newblock {\em Expo. Math.}, 38(4):559--565, 2020.

\bibitem{DePuStr}
Robin~J. Deeley, Ian~F. Putnam, and Karen~R. Strung.
\newblock Constructing minimal homeomorphisms on point-like spaces and a
  dynamical presentation of the {J}iang--{S}u algebra.
\newblock {\em J. Reine Angew. Math.}, 742:241--261, 2018.

\bibitem{Elliott-Gong-Lin-Niu}
George~A. Elliott, Guihua Gong, Huaxin Lin, and Zhuang Niu.
\newblock On the classification of simple amenable {$\mathrm{C}^*$}-algebras
  with finite decomposition rank, {II}.
\newblock {\em J. Noncommut. Geom.}, 19(1):73--104, 2025.

\bibitem{EvSi2}
Samuel Evington and Philipp Sibbel.
\newblock {Principal groupoid models for stable UCT Kirchberg algebras, ArXiv
  preprint, \href{https://arxiv.org/abs/2605.30147v2}{arXiv:2605.30147}}, 2026.

\bibitem{EvSi1}
Samuel Evington and Philipp Sibbel.
\newblock {${\rm C}^*$}-diagonals with {C}antor spectrum in {C}untz algebras.
\newblock {\em J. Funct. Anal.}, 290(12):Paper No. 111418, 2026.

\bibitem{Exel}
Ruy Exel.
\newblock {On Kumjian's $\mathrm{C}^*$-diagonal and the opaque ideal, ArXiv
  preprint, \href{https://arxiv.org/abs/2110.09445}{arXiv:2110.09445}}, 2021.

\bibitem{RF1}
Jacob Feldman and Calvin~C. Moore.
\newblock Ergodic equivalence relations, cohomology, and von {N}eumann
  algebras.
\newblock {\em Bull. Amer. Math. Soc.}, 81(5):921--924, 1975.

\bibitem{RF2}
Jacob Feldman and Calvin~C. Moore.
\newblock Ergodic equivalence relations, cohomology, and von {N}eumann
  algebras. {I}.
\newblock {\em Trans. Amer. Math. Soc.}, 234(2):289--324, 1977.

\bibitem{RF3}
Jacob Feldman and Calvin~C. Moore.
\newblock Ergodic equivalence relations, cohomology, and von {N}eumann
  algebras. {II}.
\newblock {\em Trans. Amer. Math. Soc.}, 234(2):325--359, 1977.

\bibitem{GongLinNiu}
Guihua Gong, Huaxin Lin, and Zhuang Niu.
\newblock A classification of finite simple amenable {$\mathcal{Z}$}-stable
  {$\mathrm{C}^*$}-algebras, {II}: {$\mathrm{C}^*$}-algebras with rational
  generalized tracial rank one.
\newblock {\em C. R. Math. Acad. Sci. Soc. R. Can.}, 42(4):451--539, 2020.

\bibitem{JaWi}
Bhishan Jacelon and Wilhelm Winter.
\newblock {$\mathcal{Z}$} is universal.
\newblock {\em J. Noncommut. Geom.}, 8(4):1023--1042, 2014.

\bibitem{JiSu}
Xinhui Jiang and Hongbing Su.
\newblock On a simple unital projectionless {$\mathrm{C}^*$}-algebra.
\newblock {\em Amer. J. Math.}, 121(2):359--413, 1999.

\bibitem{RKir}
Eberhard Kirchberg and Mikael R{\o}rdam.
\newblock Infinite non-simple {$\mathrm{C}^*$}-algebras: Absorbing the {C}untz
  algebras {$\mathcal{ O}_\infty$}.
\newblock {\em Adv. Math.}, 167(2):195--264, 2002.

\bibitem{KoWi}
Grigoris Kopsacheilis and Wilhelm Winter.
\newblock {Paper-folding models for the CAR algebra, to appear in
  \textit{Ergod.\@ Theory Dyn.\@ Syst.}, ArXiv preprint,
  \href{https://arxiv.org/abs/2508.04837}{arXiv:2508.04837}}, 2025.

\bibitem{Kum}
Alexander Kumjian.
\newblock On {$\mathrm{C}^*$}-diagonals.
\newblock {\em Canad. J. Math.}, 38(4):969--1008, 1986.

\bibitem{LLW}
Kang Li, Hung-Chang Liao, and Wilhelm Winter.
\newblock {The diagonal dimension of sub-$\mathrm{C}^*$-algebras, ArXiv
  preprint,
  \href{https://doi.org/10.48550/arXiv.2303.16762}{arXiv:2303.16762}}, 2023.

\bibitem{Li1}
Xin Li.
\newblock Every classifiable simple {$\rm \mathrm{C}^*$}-algebra has a {C}artan
  subalgebra.
\newblock {\em Invent. Math.}, 219(2):653--699, 2020.

\bibitem{Li2}
Xin Li.
\newblock Constructing {M}enger manifold {$\mathrm{C}^*$}-diagonals in
  classifiable {$\mathrm{C}^*$}-algebras.
\newblock {\em Int. Math. Res. Not. IMRN}, (23):18992--19053, 2022.

\bibitem{LiRa}
Xin Li and Ali~I. Raad.
\newblock Constructing {$\rm \mathrm{C}^*$}-diagonals in {AH}-algebras.
\newblock {\em Trans. Amer. Math. Soc.}, 376(12):8857--8875, 2023.

\bibitem{LiRe}
Xin Li and Jean Renault.
\newblock Cartan subalgebras in {${\rm C}^*$}-algebras. {E}xistence and
  uniqueness.
\newblock {\em Trans. Amer. Math. Soc.}, 372(3):1985--2010, 2019.

\bibitem{LiaoTikuisis}
Hung-Chang Liao and Aaron Tikuisis.
\newblock {Almost finiteness, comparison, and tracial
  {$\mathcal{Z}$}-stability}.
\newblock {\em J. Funct. Anal.}, 282(3):Paper No. 109309, 2022.

\bibitem{Lor}
Terry~A. Loring.
\newblock {$\mathrm{C}^*$}-algebras generated by stable relations.
\newblock {\em J. Funct. Anal.}, 112(1):159--203, 1993.

\bibitem{Macias2018}
Sergio Mac\'{\i}as.
\newblock {\em Topics on continua}.
\newblock Springer, Cham, second edition, 2018.

\bibitem{Nadler}
Sam~B. Nadler, Jr.
\newblock {\em Continuum theory}, volume 158 of {\em Monographs and Textbooks
  in Pure and Applied Mathematics}.
\newblock Marcel Dekker, Inc., New York, 1992.

\bibitem{Oy}
Dolapo Oyetunbi.
\newblock {$\mathrm{C}^*$-diagonal of inductive limit of $1$-dimensional NCCW
  complexes, ArXiv preprint,
  \href{https://arxiv.org/abs/2505.04011}{arXiv:2505.04011}}, 2025.

\bibitem{Pit}
David~R. Pitts.
\newblock Normalizers and approximate units for inclusions of
  {$\mathrm{C}^*$}-algebras.
\newblock {\em Indiana Univ. Math. J.}, 72(5):1849--1866, 2023.

\bibitem{Ren1}
Jean Renault.
\newblock Cartan subalgebras in {$\mathrm{C}^*$}-algebras.
\newblock {\em Irish Math. Soc. Bull.}, 61:29--63, 2008.

\bibitem{RWi}
Mikael R{\o}rdam and Wilhelm Winter.
\newblock The {J}iang--{S}u algebra revisited.
\newblock {\em J. Reine Angew. Math.}, 642:129--155, 2010.

\bibitem{Rud}
Walter Rudin.
\newblock {\em Real and complex analysis}.
\newblock McGraw-Hill Book Co., New York, third edition, 1987.

\bibitem{Sato}
Yasuhiko Sato.
\newblock The {R}ohlin property for automorphisms of the {J}iang--{S}u algebra.
\newblock {\em J. Funct. Anal.}, 259(2):453--476, 2010.

\bibitem{Sch}
Andr\'e Schemaitat.
\newblock The {J}iang--{S}u algebra is strongly self-absorbing revisited.
\newblock {\em J. Funct. Anal.}, 282(6):Paper No. 109347, 2022.

\bibitem{SiWi}
Philipp Sibbel and Wilhelm Winter.
\newblock A {C}antor spectrum diagonal in {$\mathcal{O}_2$}.
\newblock {\em Proc. Amer. Math. Soc. Ser. B}, 12:210--217, 2025.

\bibitem{Tu}
Jean-Louis Tu.
\newblock La conjecture de {B}aum--{C}onnes pour les feuilletages moyennables.
\newblock {\em $K$-Theory}, 17(3):215--264, 1999.

\bibitem{WiZa}
Wilhelm Winter and Joachim Zacharias.
\newblock Completely positive maps of order zero.
\newblock {\em M\"unster J. Math.}, 2:311--324, 2009.

\end{thebibliography}

\end{document}